\documentclass[12pt]{amsart}
\usepackage{amssymb,latexsym,amsmath,epsfig, bbm}
\usepackage[hyphens]{url} 
\usepackage[colorlinks,linkcolor=blue]{hyperref}
\usepackage[utf8]{inputenc}
\usepackage[T1]{fontenc}
\usepackage{graphicx}
\usepackage{mathrsfs}

\hypersetup{colorlinks = true,linkcolor = blue,anchorcolor =red,citecolor = blue,filecolor = red,urlcolor = red,pdfauthor=author}

\def\pf{\begin{proof}}

	\usepackage{amssymb,textcomp, url}
	
	\usepackage{geometry}

	\newtheorem{thm}{Theorem}[section]
	\newtheorem{prop}[thm]{Proposition}
	\newtheorem{lem}[thm]{Lemma}
	\newtheorem{cor}[thm]{Corollary}
	\theoremstyle{definition}
	
	\newtheorem{defn}[thm]{Definition}

	\newtheorem{rem}[thm]{Remark}
	\numberwithin{equation}{section}
	
	\usepackage{t1enc,mathrsfs,amssymb,amsfonts,latexsym,pifont,fancybox}
	\newcommand{\bprop} {\begin{proposition}}
		\newcommand{\eprop} {\end{proposition}}
	\newcommand{\btheo} {\begin{theorem}}
		\newcommand{\etheo} {\end{theorem}}
	\newcommand{\blem} {\begin{lemma}}
		\newcommand{\elem} {\end{lemma}}
	\newcommand{\bcor} {\begin{corollary}}
		\newcommand{\ecor} {\end{corollary}}
	
	\newcounter{rea}
	\newcounter{red}
	\newcommand{\Be}{\begin{equation}}
		\newcommand{\Ee}{\end{equation}}
	\newcommand{\Bea}{\begin{eqnarray}}
		\newcommand{\Eea}{\end{eqnarray}}
	\newcommand{\Bes}{\begin{equation*}}
		\newcommand{\Ees}{\end{equation*}}
	\newcommand{\Beas}{\begin{eqnarray*}}
		\newcommand{\Eeas}{\end{eqnarray*}}
	\newcommand{\Ba}{\begin{array}}
		\newcommand{\Ea}{\end{array}}

	\scrollmode
	
	\scrollmode

	\title[Weakly, sufficiently or strongly localized operators] {Weakly, sufficiently or strongly localized operators on the Fock space in $\mathbb C^n$}

	\subjclass[2020]{32A36; 32A25; 46B50; 46E40; 46E50; 47B35; 47C15}  

\keywords{strongly localized operator, sufficiently localized operator, weakly localized operator, Fock space, Toeplitz operator, singular operator of convolution type, Toeplitz algebra}

\begin{document}

		\begin{abstract}
We study properties of the following four classes of operators on the Fock space in $\mathbb C^n:$ 1) weakly localized operators; 2) sufficiently localized operators in the sense of Xia and Zheng; 3) sufficiently localized operators; 4) strongly localized operators. In this respect, we examine composition operators, Toeplitz operators with a measure symbol whose total variation measure is a Fock-Carleson measure, and singular operators of convolution type introduced by Zhu, among others. We also provide a bounded operator which is not weakly localized and does not even belong to the Toeplitz algebra. 

Class 1) contains class 2), class 2) contains class 3), which clearly contains class 4). We prove that the first two inclusions are strict. Our proofs are in terms of singular operators of convolution type introduced by Zhu. The third inclusion was already known to be strict, as Wang, Cao and Zhu exhibited examples of composition operators which are sufficiently localized, but are not strongly localized. 
		\end{abstract}

			\author[D. B\'ekoll\`e]{David B\'ekoll\`e}
			\address{Department of Mathematics, Faculty of Science, University of Yaound\'e I, P.O. Box 812, Yaound\'e, Cameroon }
			\email{{\tt david.bekolle@univ-yaounde1.cm \& dbekolle@gmail.com}}
			\author[H. O. D\'efo]{Hugues Olivier D\'efo}
			\address{Department of Mathematics, Faculty of Science, University of Yaound\'e I, P.O. Box 812, Yaound\'e, Cameroon }
			\email{{\tt hugues.defo@facsciences-uy1.cm \& deffohugues@gmail.com}}
			\author[S. B. Difo]{Solange Bridgitte Difo}
			\address{AIMS-Cameroon, Crystal Gardens, P.O. Box 608, Limbe, Cameroon}
			\email{{\tt solange.difo@aims.cameroon.org}}
			\author[E. L. Tchoundja]{Edgar Landry Tchoundja}
			\address{Department of Mathematics, Faculty of Science, University of Yaound\'e I, P.O. Box 812, Yaound\'e, Cameroon }
			\email{\tt edgartchoundja@gmail.com}
		
		\maketitle

		\section{Introduction}\label{sec1}
Let $n$ be a positive integer and denote by $dV$ the standard volume measure in $\mathbb C^n.$ For $p>0,$ let $L^p_1 (\mathbb C^n, dV)$ be the Lebesgue space of measurable functions $f$ on $\mathbb C^n$ such that
$$\left \Vert f\right \Vert_{L_1^p}^p=\left (\frac p{2\pi} \right )^n\int_{\mathbb C^n} \left \vert f(z)e^{-\frac {|z|^2}2}\right \vert^pdV(z)<\infty.$$
Furthermore, for $p=\infty,$ we denote by $L^\infty_1 (\mathbb C^n, dV)$ be the Lebesgue space of measurable functions $f$ in $\mathbb C^n$ such that
$$\left \Vert f\right \Vert_{L_1^\infty}={ess\, sup}{\left \{\left \vert f(z)e^{-\frac {|z|^2}2}\right \vert: z\in \mathbb C^n\right \}}<\infty.$$
The classical Fock space $F^p_1$ is the space of entire functions on $\mathbb C^n$ which belong to $L^p_1 (\mathbb C^n, dV).$ Similarly, the 
Fock space $F^\infty_1$ is the space of entire functions on $\mathbb C^n$ which belong to $L^\infty_1 (\mathbb C^n, dV).$\\
Let $d\mu$ be the Gaussian measure in $\mathbb C^n.$ In terms of the standard volume measure $dV$ on $\mathbb C^n,$ it is given by
$$d\mu (z)=\pi^{-n}e^{-|z|^2}dV(z).$$
The Fock space $H^2 (\mathbb C^n, d\mu)$ is defined to be the subspace of the (Hilbert-)Lebesgue space $L^2 (\mathbb C^n, d\mu)$ consisting of entire functions. Clearly, $H^2 (\mathbb C^n, d\mu)=F^2_1.$ The symbol $K_z$ denotes the reproducing kernel and the symbol $k_z$ denotes the normalized reproducing kernel for $H^2 (\mathbb C^n, d\mu).$ That is,
$$K_z (\zeta)=e^{\langle \zeta, z\rangle}, \quad k_z (\zeta)=e^{\langle \zeta, z\rangle}e^{-\frac {|z|^2}2},\qquad z, \zeta \in \mathbb C^n.$$
The inner product on $H^2 (\mathbb C^n, d\mu)$ is denoted by $\langle \cdot, \cdot \rangle$ and its associated norm is denoted by $\left \vert \cdot\right  \vert.$ It holds: $\left \Vert K_z\right \Vert=e^{\frac {|z|^2}2}.$\\
The Fock spaces and linear operators defined on them have been studied by many authors and the basic reference for $n=1$ is \cite{Z} 
 In the sequel, we shall denote by $\mathcal L(H^2 (\mathbb C^n, d\mu))$ the space of all bounded linear operators on $H^2 (\mathbb C^n, d\mu).$ For  $T\in \mathcal L(H^2 (\mathbb C^n, d\mu)),$ the Berezin transform $\widetilde T: \mathbb C^n\rightarrow \mathbb C$ is the function defined by 
$$\widetilde T(z)=\langle Tk_z, k_z\rangle.$$
Let $f$ be a bounded measurable function in $\mathbb C^n.$  The Toeplitz operator $T_f$ with symbol $f$ is defined as follows.
$$T_f g(z)=\int_{\mathbb C^n} f(w)g(w)\overline{K_z (w)}d\mu (w), \quad z\in \mathbb C^n, \quad g\in H^2 (\mathbb C^n, d\mu).$$
In the sequel, a Toeplitz operator $T_f$ with bounded measurable symbol $f$ in $\mathbb C^n$ will simply be called {\textit{Toeplitz operator}}. We denote by $\mathcal T^{1}$ the operator-norm closure of the linear span of the Toeplitz operators.  In fact, $\mathcal T^{1}$ coincides with the Toeplitz algebra, which is the $C^*$-algebra generated by the Toeplitz operators. 
In 2012, Bauer and Isralowitz \cite{BI12}
proved the following theorem.
\begin{thm}\label{thm11}
If $T\in \mathcal L(H^2 (\mathbb C^n, d\mu)),$  then $T$ is compact if and only if $T\in \mathcal T^{1}$ and its Berezin transform $\widetilde T$ vanishes at infinity.
\end{thm}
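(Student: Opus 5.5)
The easy implication comes first. If $T$ is compact, then $\widetilde T(z)=\langle Tk_z,k_z\rangle\to 0$ as $|z|\to\infty$, because $k_z\to 0$ weakly: $\langle f,k_z\rangle=f(z)e^{-|z|^2/2}\to 0$ for polynomials $f$, the polynomials are dense, and $\|k_z\|=1$. A compact operator sends this weakly null family to a norm-null one. To show $T\in\mathcal T^1$, it suffices to put finite-rank operators in $\mathcal T^1$. Each rank-one operator $f\otimes g$ is a norm limit of finite linear combinations of $K_a\otimes K_b$, and each $K_a\otimes K_b$ is a Toeplitz operator up to norm approximation. For example, $T_{\chi_{B(0,r)}}$, suitably normalized and conjugated by the Weyl unitaries $W_a$, approximates the rank-one projection onto $k_a$, and $W_a$-conjugates of Toeplitz operators are Toeplitz. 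This is a routine density argument, and compact operators are norm limits of finite-rank ones.

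The main content is the converse. Assume $T\in\mathcal T^1$ and $\widetilde T\in C_0$. I would follow the Suárez/Bauer–Isralowitz scheme in three steps. First, show that every $T\in\mathcal T^1$ is a norm limit of operators of the form $\sum_j M_{\chi_{F_j}}\,T\,M_{\chi_{G_j}}$, projected back via $P$. Here $\{F_j\}$ is a lattice partition of $\mathbb C^n$ into cubes of side $r$, and $G_j$ is the $R$-neighbourhood of $F_j$. In other words, $\mathcal T^1$ consists of ``band-dominated'' operators. This holds for a single Toeplitz operator $T_f$ because the kernel satisfies $|K(z,w)|e^{-(|z|^2+|w|^2)/2}=e^{-|z-w|^2/2}$, which decays fast enough for a Schur test to bound the off-band part by $\sup_z\int_{|w-z|>R}e^{-|z-w|^2/2}\,dV(w)$, and this is small. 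The band-dominated property then passes to sums, products, and norm limits. Second, show that band-dominated operators satisfy $\|T\|_{e}\lesssim\limsup_{|z|\to\infty}\|TP\chi_{B(z,R)}\|$, so the essential norm is controlled locally. Third, convert the local quantity into Berezin data: show that for $T\in\mathcal T^1$, $\widetilde T\in C_0$ forces $\|Tk_z\|\to0$, and from there deduce that the essential norm vanishes.

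The third step is the main obstacle, since $\widetilde T\to0$ alone does not control $\|Tk_z\|$ for general bounded $T$. My first attempt would use the translation orbit. Define $T_z=W_z^*TW_z$. Since $T\in\mathcal T^1$ is band-dominated (uniformly localized), the family $\{T_z\}$ is relatively compact in the strong* operator topology. Take any net $z_\alpha\to\infty$ with $T_{z_\alpha}\to S$ strongly*. Then $\widetilde S(w)=\lim\widetilde T(z_\alpha+w)=0$ for every $w$, and injectivity of the Berezin transform gives $S=0$. Hence $\|Tk_z\|=\|T_zk_0\|\to0$, and likewise for $T^*$, uniformly over the translates of any finite set of functions. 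Combining this with the localization of the second step, the essential norm is bounded by $\limsup$ of these local norms, which are $0$. Therefore $T$ is compact.
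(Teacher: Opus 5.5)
The paper does not prove this theorem. It quotes it from Bauer--Isralowitz and uses it only as a black box. Your sketch is essentially their argument, an adaptation of Su\'arez's method: localization of $\mathcal T^1$, control of the essential norm by the local norms $\|TP\chi_{B(z,R)}\|$ for large $|z|$, and limit operators of $W_{z}^*TW_{z}$ that are killed by injectivity of the Berezin transform. It is correct in outline, but two steps are asserted rather than justified.

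First, conjugating a normalized $T_{\chi_{B(0,r)}}$ by $W_a$ only produces the projections $k_a\otimes k_a$. It does not give $k_a\otimes k_b$ with $a\neq b$. There are two easy fixes:
\begin{itemize}
\item Use the identity $(k_a\otimes k_a)(k_b\otimes k_b)=\langle k_b,k_a\rangle\, k_a\otimes k_b$, where $\langle k_b,k_a\rangle\neq 0$. This is legitimate once $\mathcal T^1$ is known to be an algebra.
\item Alternatively, note that a trace-class $A$ with $\mathrm{tr}\bigl(A\,(k_a\otimes k_a)\bigr)=\widetilde A(a)=0$ for all $a$ must vanish. So these projections span a norm-dense subspace of the compact operators.
\end{itemize}

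Second, and more importantly, the relative strong$^*$ compactness of $\{W_z^*TW_z\}$ is the heart of the converse. It does not follow from band-dominance alone. On $L^2(\mathbb R^{2n})$, multiplication by $\sin(|x|^2)$ is a band operator whose translates have no strongly convergent subnet as $|x|\to\infty$. What makes the claim true here is the Fock structure, in three steps:
\begin{itemize}
\item A bounded net always has a WOT-convergent subnet.
\item For each $u$, the entire functions $W_{z_\alpha}^*TW_{z_\alpha}k_u$ then converge locally uniformly, by a normal-families argument.
\item The uniform off-diagonal decay
$$\sup_z\int_{|w-z|>R}|\langle Tk_z,k_w\rangle|^2\,dV(w)\to 0 \quad (R\to\infty),$$
which your first step does supply, upgrades this local convergence to norm convergence.
\end{itemize}
Alternatively, you can prove strong convergence directly for $T_f$, using weak-$*$ compactness of $\{f(\cdot+z)\}$ and the weak-$*$-to-norm continuity of $f\mapsto T_fk_u$ on bounded sets. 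Then pass to products and norm limits.

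With these two points filled in, your argument is complete.
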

We quote \cite{IMW15}
: "However, it is in general very difficult to check whether a given operator $T$ is in the Toeplitz algebra, unless $T$ is itself a Toeplitz operator or a combination of a few Toeplitz operators, and as such, one would like a 'simpler" sufficient condition to guarantee this." As a 'simpler" sufficient condition, these authors \cite{IMW15}
 propose the \textit{weak localized} condition.

Wang, Cao and Zhu \cite{WCZ13}
introduced the first notion of sufficiently localized operators. In their Section 5, they proved  that every Toeplitz operator is also sufficiently localized in their own sense. Xia and Zheng \cite{XZ13}
 introduced another notion of sufficiently localized operators. In their paper, they gave several examples of such operators and they prove that an operator from this class is compact if and only if its Berezin transform vanishes at infinity. In particular, they showed that the $C^*$-algebra generated by their class of sufficiently localized operators contains the Toeplitz algebra. This follows from the following result: every Toeplitz operator is sufficiently localized in their sense \cite[Proposition 4.1]{XZ13}. Earlier, Wang, Gao and Zhu \cite[Section 5]{WCZ13} proved for $n=1$ that every Toeplitz operator is also sufficiently localized in their own sense.

The general setting for Bargmann-Fock spaces is the following. Let $d^c=\frac i4\left (\overline{\partial} -\partial \right )$ and let $d$ be the usual exterior derivative; the real-valued function $\phi\in C^2 (\mathbb C^n)$ is such that $c\omega_0 <dd^c \phi <C\omega_0$ holds uniformly pointwise in $\mathbb C^n$ for some positive constants $c$ and $C$ (in the sense of positive $(1, 1)$ forms), where $\omega_0= dd^c \left \vert \cdot \right \vert^2$ is the standard Euclidean K\"ahler form. The generalized Bargmann-Fock space $F^p_\phi, 0<p<\infty$ is the space of entire functions in $\mathbb C^n$ such that $fe^{-\phi}\in L^p (\mathbb C^n, dV).$ 
The Bargmann-Fock space $F^2_\varphi$ is a reproducing kernel Hilbert space and we denote by $K_z^\phi$ its reproducing kernel. The Toeplitz kernel $T_f^\phi$ with symbol $f\in L^\infty (\mathbb C^n)$ is defined as follows.
$$T_f^\phi g(z)=\int_{\mathbb C^n} f(w)g(w)\overline{K_z^\phi (w)}e^{-2\phi (w)}dV(w), \quad z\in \mathbb C^n, \quad f\in F^2_\varphi.$$

In \cite{IMW15}
, Isralowitz, Mitkovski and Wick introduced the notion of weakly localized operators defined on the generalized Bargmann-Fock space $F^2_\phi$ and they showed in their Proposition 3.2 that every Toeplitz operator is weakly localized. 
These authors also established in terms of the vanishing at infinity of their corresponding Berezin transforms, 
a sufficient condition for the compactness of weakly localized operators on $F^p_\phi, \hskip 1truemm 1<p<\infty$ \cite[Theorem 1.3]{IMW15} 

In the sequel, we restrict to the case where $\phi (z)=\frac {\alpha|z|^2}2$ and to their corresponding Lebesgue spaces $L^p_\alpha$ and  Fock spaces $F^p_\alpha,$ with $p, \alpha>0.$

Finally, Xia \cite{X15}
 outlined the proof of the following result.
\begin{thm}\label{thm12}
The $C^*$-algebra generated by the class of weakly localized operators on $H^2 (\mathbb C^n, d\mu)$ coincides with $\mathcal T^1.$
\end{thm}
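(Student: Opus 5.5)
We have to show two inclusions: $\mathcal T^1$ is contained in the $C^*$-algebra $C^*(\mathcal W)$ generated by the class $\mathcal W$ of weakly localized operators, and conversely $C^*(\mathcal W)\subset \mathcal T^1$. We use the definition of weak localization from \cite{IMW15} for $\phi(z)=|z|^2/2$. An operator $T$ is weakly localized if
$$\sup_{z}\int_{\mathbb C^n}|\langle Tk_z,k_w\rangle|\,dV(w)<\infty,$$
the same bound holds for $T^*$, and
$$\lim_{r\to\infty}\sup_z\int_{|w-z|>r}|\langle Tk_z,k_w\rangle|\,dV(w)=0,$$
again together with the same condition for $T^*$. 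This class is clearly a linear space, closed under adjoints, and by the Schur test it consists of bounded operators.

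\textbf{First inclusion.} By Proposition 3.2 of \cite{IMW15}, every Toeplitz operator $T_f$ with $f\in L^\infty$ is weakly localized. Hence the linear span of the Toeplitz operators lies in $\mathcal W$. Since $C^*(\mathcal W)$ is norm closed, it contains the norm closure $\mathcal T^1$ of that span.

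\textbf{Second inclusion.} Because $\mathcal T^1$ is a $C^*$-algebra, it suffices to show $\mathcal W\subset\mathcal T^1$.

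1. \emph{Closure under products.} For $S,T\in\mathcal W$, write
$$\langle STk_z,k_w\rangle=\int \langle Tk_z,k_u\rangle\langle Sk_u,k_w\rangle\,dV(u)/\pi^n,$$
using the reproducing formula. The Schur-type estimates then show that $ST$ satisfies both integrability conditions, where the tail condition is split according to whether $|u-z|>r/2$ or $|u-w|>r/2$. So $\mathcal W$ is a $*$-algebra. This step is not strictly needed, but it clarifies the structure.

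2. \emph{Approximation by localized pieces.} Fix a lattice $r\mathbb Z^{2n}$ and a partition of $\mathbb C^n$ into cubes $Q_j$ of side $r$. For large $R$, define
$$T_R=\sum_{j}\ \sum_{k:\,|c_j-c_k|<R} M_{1_{Q_j}}\,T\,M_{1_{Q_k}},$$
viewed on $L^2(d\mu)$ and compressed by the Bergman-type projection $P$. Equivalently, $T=PTP$ and we truncate the kernel $\langle Tk_z,k_w\rangle$ near the diagonal. The tail condition and the Schur test give $\|T-PT_RP\|\to0$ as $R\to\infty$, uniformly in the choice of partition.

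3. \emph{Each truncated piece belongs to $\mathcal T^1$.} This is the key point, and I expect it to be the main obstacle. The approach I would try first is the one in \cite{XZ13} and \cite{X15}. Approximate $PM_{1_{Q_j}}TM_{1_{Q_k}}P$ by operators of the form
$$\sum c_{j,k}\,k_{a_j}\otimes k_{a_k},$$
with uniformly bounded coefficients $c_{j,k}=\langle Tk_{a_k},k_{a_j}\rangle$, where $a_j,a_k$ range over a fine lattice and $|a_j-a_k|<R$. Discretizing the integral $T=\pi^{-2n}\iint\langle Tk_u,k_v\rangle k_v\otimes k_u$ over cells gives an error controlled by the uniform continuity of the kernel in the translated variables, which comes from the Weyl operators, together with the off-diagonal decay. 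Then write the band operator $\sum_{|a-b|<R} c_{a,b}\,k_a\otimes k_b$ as a finite sum, over shifts $v$ with $|v|<R$, of the operators
$$D_v=\sum_a c_{a,a+v}\,k_a\otimes k_{a+v}.$$
Next, use the identity $k_a\otimes k_{a+v}=W_a(k_0\otimes k_v)W_a^*$. Each $D_v$ is then a norm-convergent combination of Toeplitz-type operators. More concretely, $D_v$ equals $\sum_a c_{a,a+v}W_a(k_0\otimes k_0)W_a^*$ multiplied on the right by a fixed Weyl operator $W_v$, up to a phase. The operator $\sum_a c_a W_a(k_0\otimes k_0)W_a^*$ is the Toeplitz operator $T_\nu$ with the bounded discrete measure $\nu=\sum_a c_a\delta_a$, which is a Fock–Carleson measure. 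Such $T_\nu$ lies in $\mathcal T^1$, since it is approximable by $T_f$ after convolving $\nu$ with a small heat kernel. The remaining difficulty is the factor $W_v$, which alone is not in $\mathcal T^1$. To handle it, I would keep the product $T_\nu W_v$ and show it equals a Toeplitz-type operator with a bounded modulated symbol: the phases $e^{i\,\mathrm{Im}\langle a,v\rangle}$ are absorbed into the $c$'s, and $k_0\otimes k_v$ is itself a limit of Toeplitz operators with compactly supported symbols, by Theorem \ref{thm11}, because it is compact. This yields $D_v\in\mathcal T^1$, hence $T\in\mathcal T^1$.
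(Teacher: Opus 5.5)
\textbf{The gap is in the last step.} The first three stages are sound: band truncation from the two tail conditions, discretization of $T=\pi^{-2n}\iint\langle Tk_u,k_v\rangle\,k_v\otimes k_u$, and splitting the band operator into finitely many diagonals $D_v=T_\nu W_v^{*}$, with phases absorbed into the coefficients. Completed, this would be a self-contained proof; the paper itself does not prove this statement and defers to Xia and Difo. The last step, however, rests on a false claim: $W_v$ \emph{does} belong to $\mathcal T^1$. In the paper's notation, $V_v=e^{|v|^2/2}\,T_{\chi_v}$ with the unimodular symbol $\chi_v(w)=e^{2i\operatorname{Im}\langle w,v\rangle}$. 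Indeed, $\widetilde{V_v}(z)=e^{-|v|^2/2}e^{2i\operatorname{Im}\langle z,v\rangle}$ and $\widetilde{T_{\chi_v}}(z)=e^{-|v|^2}e^{2i\operatorname{Im}\langle z,v\rangle}$, and the Berezin transform is injective. This agrees with Example~2 in Subsection~\ref{ssec82}, where $V_a$ is noted to lie in the Toeplitz algebra.

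The workaround you propose instead does not work. The map $X\mapsto\sum_a c_aW_aXW_a^*$ is not bounded in operator norm. Take $a$ running over $\delta\mathbb Z^{2n}$ and $X=T_{1_{B(0,\rho)}}$. Then $\|X\|\le 1$, while $\sum_aW_aXW_a^*=T_{\sum_a 1_{B(a,\rho)}}$ has norm of order $(\rho/\delta)^{2n}$. So an operator-norm approximation of $k_0\otimes k_v$ by Toeplitz operators tells you nothing about $D_v$. Moreover, Theorem~\ref{thm11} only places $k_0\otimes k_v$ in $\mathcal T^1$; it says nothing about compactly supported symbols. That route would need approximation in a localized trace-type norm, which you do not supply.

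\textbf{The repair is immediate.} Since $\mathcal T^1$ is an algebra, $D_v=T_\nu W_v^{*}\in\mathcal T^1$ as soon as $T_\nu\in\mathcal T^1$. For the latter, your heat-kernel argument still needs a check. Because $T_{\nu*h_t}=\int h_t(y)\,W_yT_\nu W_y^*\,dV(y)$, you must show that $y\mapsto W_yT_\nu W_y^*$ is norm continuous; this is a routine Schur-test estimate.

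There is also a phase issue in the discretization step. The map $u\mapsto k_u$ is not uniformly norm continuous on $\mathbb C^n$, since $\langle k_u,k_b\rangle=e^{-|u-b|^2/2}e^{i\operatorname{Im}\langle b,u\rangle}$. Use instead that $\langle Tk_u,k_v\rangle\,k_v\otimes k_u$ is unchanged when the phases of $k_u$ and $k_v$ are changed, and normalize phases cell by cell. After that, only $\|T\|$ and the local Lipschitz bound for $u\mapsto k_u$ modulo phase enter. Your appeal to ``uniform continuity of the kernel'' is not available a priori for a general weakly localized $T$, and it is not needed.
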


In his paper, Xia gave a detailed proof for the analogous result for the Bergman space on the unit ball of $\mathbb C^n.$ Very recently, Difo \cite{D25}
 provided a detailed proof for the previous theorem.

We shall denote by $\mathcal{WL}$ (resp. $\mathcal{SL}, \rm{XZ-}\mathcal{SL})$ the class of weakly localized (resp. sufficiently localized (in the sense of Wang-Cao-Zhu), sufficiently localized in the sense of Xia-Zheng) operators on $H^2 (\mathbb C^n, d\mu).$ We shall show below the following two inclusions.
$$\mathcal{SL}\subset \rm{XZ-}\mathcal{SL}\subset \mathcal{WL}.$$
We denote by $C^\star (\mathcal{WL})$ (resp. $C^\star (\mathcal{SL}), C^\star (\rm{XZ-}\mathcal{SL}))$ the $C^\star-$algebra generated by the class $\mathcal{WL}$ (resp. $\mathcal{SL}, \rm{XZ-}\mathcal{SL}).$ It follows from Theorem \ref{xia} that
$$C^\star (\mathcal{SL})\subset C^\star (\rm{XZ-}\mathcal{SL})\subset C^\star (\mathcal{WL})=\mathcal T^1\subset C^\star (\mathcal{SL}).$$
This implies the equality $$C^\star (\mathcal{WL})=\mathcal T^1=C^\star (\mathcal{SL})=C^\star (\rm{XZ-}\mathcal{SL}).$$
This raises the following question: are the inclusions $\mathcal{SL}\subset \rm{XZ-}\mathcal{SL}\subset \mathcal{WL}$ strict?

We settle these questions in the affirmative. Our main result is the following.

\begin{thm}\label{thm13}
The following two inclusions are strict.
$$\mathcal{SL}\subsetneq \rm{XZ-}\mathcal{SL}\subsetneq \mathcal{WL}.$$ 
\end{thm}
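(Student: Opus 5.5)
We need the definitions. For $T\in\mathcal L(H^2(\mathbb C^n,d\mu))$ we work with the kernel function $|\langle Tk_z,k_w\rangle|$.

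\begin{itemize}
\item $T\in\mathcal{SL}$ (Wang--Cao--Zhu) means there are $C>0$ and $\beta>2n$ such that both $|\langle Tk_z,k_w\rangle|$ and $|\langle T^*k_z,k_w\rangle|$ are bounded by $C(1+|z-w|)^{-\beta}$.
\item $T\in{\rm XZ-}\mathcal{SL}$ (Xia--Zheng) means $\sup_z\int|\langle Tk_z,k_w\rangle|\,|z-w|^{\gamma}dV(w)<\infty$ for some $\gamma>2n$, together with the same condition for $T^*$.
\item $T\in\mathcal{WL}$ means $\sup_z\int|\langle Tk_z,k_w\rangle|\,dV(w)<\infty$, the same for $T^*$, and
$$\lim_{r\to\infty}\sup_z\int_{|z-w|>r}|\langle Tk_z,k_w\rangle|\,dV(w)=0,$$
again also for $T^*$.
\end{itemize}

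The plan is to produce two explicit counterexamples. Both will be singular operators of convolution type in the sense of Zhu, whose kernel has the form $K(z,w)=e^{\langle z,w\rangle}\varphi(z-\bar w)$ or equivalently $\langle Tk_z,k_w\rangle=e^{i\,\mathrm{Im}\langle w,z\rangle}\psi(w-z)$, with $\psi$ a radial-type function. For such operators the modulus $|\langle Tk_z,k_w\rangle|=|\psi(w-z)|$ depends only on $w-z$. Each of the localization conditions then reduces to a decay or integrability condition on $\psi$ alone, uniformly in $z$ and automatically symmetric for $T^*$ when $\psi(-u)=\overline{\psi(u)}$.

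\textbf{Step 1: the two inclusions.} We first check $\mathcal{SL}\subset{\rm XZ-}\mathcal{SL}\subset\mathcal{WL}$, since the introduction only asserts them.
\begin{itemize}
\item For the first inclusion, take $2n<\gamma<\beta-2n$. The pointwise bound $(1+|z-w|)^{-\beta}$ then makes $\int|\langle Tk_z,k_w\rangle|\,|z-w|^{\gamma}dV(w)$ finite, because $\gamma-\beta<-2n$.
\item For the second inclusion, split the integral at $|z-w|=1$. Near $z$ use $|\langle Tk_z,k_w\rangle|\le\|T\|$. Far from $z$, the weight $|z-w|^\gamma\ge r^\gamma$ gives the tail bound $\sup_z\int_{|z-w|>r}|\langle Tk_z,k_w\rangle|\,dV(w)\le Cr^{-\gamma}\to0$.
\end{itemize}

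\textbf{Step 2: boundedness.} We need to know when a convolution-type operator with prescribed $\psi$ is bounded on $H^2$. The cleanest route is to construct $T$ as a Weyl/Berezin-type quantization. Alternatively, we can use Schur's test: $\int|\psi(u)|\,dV(u)<\infty$ suffices for boundedness, by Zhu's result that the kernel $\int|K(z,w)|e^{-|z|^2/2-|w|^2/2}dV$ is controlled by $\|\psi\|_{L^1}$. We must also make sure $\psi$ genuinely comes from an operator on $H^2$. To do this, define $T=\int_{\mathbb C^n}\chi(a)W_a\,dV(a)$, a superposition of Weyl unitaries $W_a$ with $\chi\in L^1$. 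Then $\|T\|\le\|\chi\|_1$, and $\langle Tk_z,k_w\rangle=\int\chi(a)\langle W_ak_z,k_w\rangle dV(a)$. The modulus of this is a Gaussian convolution of $|\chi|$ up to a phase. Choosing $\chi\ge0$ radial, up to phases, lets us control the decay profile of $|\langle Tk_z,k_w\rangle|$ as a function of $|z-w|$.

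\textbf{Step 3: the examples.}
\begin{itemize}
\item \emph{Separating ${\rm XZ-}\mathcal{SL}$ from $\mathcal{SL}$.} Choose a profile that is integrable against $|u|^\gamma$ for some $\gamma>2n$ but has no pointwise bound $(1+|u|)^{-\beta}$ with $\beta>2n$. One way is a sparse sequence of bumps at radii $R_k\to\infty$, with heights $h_k=R_k^{-2n}$ and tiny widths. These bumps violate every bound $(1+|u|)^{-\beta}$ with $\beta>2n$. Their total mass times $R_k^{\gamma}$ is summable once the widths are small enough.
\item \emph{Separating $\mathcal{WL}$ from ${\rm XZ-}\mathcal{SL}$.} Choose $|\psi(u)|\approx(1+|u|)^{-2n}\log^{-2}(2+|u|)$. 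This $\psi$ is in $L^1$ and has tails tending to $0$. However, $\int|\psi||u|^\gamma=\infty$ for every $\gamma>0$.
\end{itemize}

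\textbf{Main obstacle.} The delicate point is Step 2 combined with Step 3: realizing a prescribed modulus profile exactly, or at least up to two-sided comparability, as $|\langle Tk_z,k_w\rangle|$ for a bounded operator. Superposing Weyl operators convolves $\chi$ with a Gaussian, which smooths it. This is harmless for the logarithmic-tail example but destroys thin bumps. For the first separation, we therefore plan to use bumps of width about $1$, and to get the pointwise failure from heights $h_k=R_k^{-2n}$ alone. The mass of the $k$-th bump is then about $R_k^{-2n}$, and against $|u|^\gamma$ it contributes about $R_k^{\gamma-2n}$. This diverges, so the bump construction needs the radial/angular spreading to be adjusted. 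The fix we would try first is to work in $n\ge1$ with bumps concentrated near single points, rather than spread over full spheres. Also, positivity ($\chi\ge0$) prevents cancellation in the Gaussian convolution, which gives the lower bounds.
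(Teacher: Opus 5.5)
\textbf{The proposal targets the wrong classes.} In this paper, ${\rm XZ-}\mathcal{SL}$ is \emph{defined} by the pointwise bound $|\langle Tk_z,k_w\rangle|\le C(1+|z-w|)^{-\beta}$ with $\beta>2n$ (Definition~\ref{def23}). That is what you call $\mathcal{SL}$. The paper's $\mathcal{SL}=\bigcup_{p>2}\mathcal L_p$ is, by Corollary~\ref{cor311}, equivalent to Gaussian off-diagonal decay $|\langle Tk_z,k_w\rangle|\le Ce^{-\epsilon|z-w|^2}$. The weighted-integral condition $\sup_z\int|\langle Tk_z,k_w\rangle|\,|z-w|^\gamma\,dV(w)<\infty$ that you attribute to Xia--Zheng plays no role.

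So the first strict inclusion needs a bounded operator whose kernel decays polynomially (order $>2n$) but not like a Gaussian. Nothing in your proposal addresses this. Your bump construction separates two other classes, and by your own computation it does not even close (the contributions $R_k^{\gamma-2n}$ diverge). Step~1 is also wrong on its own terms: choosing $2n<\gamma<\beta-2n$ needs $\beta>4n$, so for $\beta$ close to $2n$ your pointwise class is not contained in your weighted one.

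Your structural claim that $|\langle Tk_z,k_w\rangle|$ depends only on $w-z$ is false for Zhu's operators. By \eqref{eq83} it equals $e^{-|z-w|^2/2}|\varphi(w-\bar z)|$, which also depends on $\mathrm{Im}\,z$; this is why every condition in Corollary~\ref{cor84} carries a $\sup_z$. For your Weyl superposition $T=\int\chi(a)V_a\,dV(a)$ one has $\langle V_ak_z,k_w\rangle=e^{i\,\mathrm{Im}\langle w,z\rangle+i\,\mathrm{Im}\langle w+z,a\rangle}e^{-|w-z-a|^2/2}$. Hence $\chi\ge 0$ rules out cancellation only on the anti-diagonal $w=-z$.

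That is still enough to salvage your idea. Take $G(u)=e^{-|u|^2/2}$ and $0\le\chi\in L^1$ radial. Then:
\begin{itemize}
\item $\|T\|\le\|\chi\|_{L^1}$, and $T^\star$ has the same form since $V_a^\star=V_{-a}$;
\item $|\langle Tk_z,k_w\rangle|\le(\chi*G)(w-z)$ for all $z,w$, with equality when $w=-z$.
\end{itemize}
Now $\chi(a)=(1+|a|)^{-\beta}$ with $\beta>2n$ gives an element of ${\rm XZ-}\mathcal{SL}\setminus\mathcal{SL}$. Your logarithmic profile $\chi(a)=(1+|a|)^{-2n}\log^{-2}(2+|a|)$ gives an element of $\mathcal{WL}\setminus{\rm XZ-}\mathcal{SL}$. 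Indeed, $\chi*G\ge c\,\chi$ defeats every bound $(1+|u|)^{-\beta}$ with $\beta>2n$, while $\chi*G\in L^1$ gives the uniform tail conditions.

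Carried out against the correct definitions, this would be a constructive alternative to the paper's route. The paper uses the explicit symbol $\varphi(z)=e^{z^2/2}(\sin z/z)^\beta$ for the first inclusion (Theorem~\ref{thm97}). For the second it uses a non-constructive uniform-boundedness/Riesz-representation argument over the family $m=\widehat g$, $g\in L^1$ (Theorem~\ref{thm94}). As written, however, your proposal does not prove the statement.
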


We point out that all the examples provided in \cite{XZ13}
 of XZ-sufficiently localized operators are sufficiently localized (in the sense of Wang-Cao-Zhu). The proof of the first set inequality is 'direct': we exhibit a class of examples of XZ-sufficiently localized operators which are not sufficiently localized (in the sense of Wang-Cao-Zhu). The proof of the second set inequality is not constructive: it relies on an application of the uniform boundedness principle and the Riesz representation theorem for the dual of $L^1.$

The plan of the paper is as follows. In Section \ref{sec2}
, we give the definitions of the classes of localized operators; in that section, following Sadeghi-Zorboska \cite{SZ21}
  we also introduce the notion of strongly localized operator. It is clear that every strongly localized operator is sufficiently localized in the sense of Wang, Cao and Zhu. 
In Section \ref{sec3}
, we establish properties of localized operators. In particular, we prove that the class of sufficiently localized operators introduced by Wang, Cao and Zhu \cite{WCZ13}
 is contained in the class of sufficiently localized operators introduced by Xia and Zheng \cite{XZ13}
  and the latter class is contained in the class of weakly localized operators. In Section \ref{sec4}
  , we discuss different examples of sufficiently localized operators, among which some studied in \cite{WCZ13}
   and \cite{XZ13}.
    Wang, Cao and Zhu \cite[Section 3]{WCZ13}
     exhibited examples of composition operators which are sufficiently localized, but are not strongly localized. In this section, we shall revisit these examples (Theorem \ref{thm42}
     ). In Section \ref{sec5}
     , we investigate the localization properties of the class of Toeplitz operators with complex measure symbols whose total variation measure is a Fock-Carleson measure.
In Section \ref{sec6}
, we study an example of a bounded linear operator on $H^2 (\mathbb C^n, d\mu)$ which is not weakly localized although its Berezin transform vanishes at infinity; to this aim, we show that it is not compact and we conclude through a combination of Theorems \ref{thm11} 
and \ref{thm12}.
 This operator is not even a member of the Toeplitz algebra. We further study a class of operators of the same type which are compact on $H^2 (\mathbb C^n, d\mu).$ In Section \ref{sec7}
 , we study the class of composition operators on $H^2 (\mathbb C^n, d\mu).$

In Section \ref{sec8}
, we study the class of singular operators of convolution type introduced by Zhu \cite{Z15}
. Our main result (Theorem \ref{thm13}
) is proved in Section \ref{sec9} 
(Corollary \ref{cor95}
and Corollary \ref{cor98}
) in terms of these singular operators of convolution type. Finally, in Section \ref{sec10}
, we list the questions we were not able to solve in this paper.

Throughout this article, we use $C$ to denote a positive constant whose value may change from line to line, but does not depend on functions being considered and the constants on which it depends will be specified in parentheses when necessary. We shall also denote by $p^\prime$ the conjugate exponent of $p\in (1, \infty).$

\section{Definitions}\label{sec2}
\subsection{Admissible operators on $H^2 (\mathbb C^n, d\mu)$}\label{ssec21}
We start with the definition of an admissible operator on $H^2 (\mathbb C^n, d\mu).$ We refer to \cite{WCZ13} 
and \cite{MW14}.
.

\begin{defn}\label{def21}
We denote by $\mathcal S$ the linear span of the normalized reproducing kernels $k_z.$ A linear operator $T: \mathcal S\rightarrow H^2 (\mathbb C^n, d\mu)$ is said to be {\textit {admissible on}} $H^2 (\mathbb C^n, d\mu)$ if there exists a linear operator $T^\star: \mathcal S\rightarrow H^2 (\mathbb C^n, d\mu)$ such that the duality relation
\begin{equation}\label{eq21}
\langle Tk_z, k_w\rangle=\langle k_z, T^\star k_w\rangle
\end{equation}
holds for all $z, w\in \mathbb C^n.$
\end{defn}

In view of the atomic decomposition theorem on $H^2 (\mathbb C^n, d\mu)$ (\cite[Theorem 8.2]{JPZ17}
 for $n\geq1$, \cite[Theorem 2.34]{Z12}
 for $n=1$), the subspace $\mathcal S$ is dense in $H^2 (\mathbb C^n, d\mu).$ Moreover, every bounded linear operator on $H^2 (\mathbb C^n, d\mu)$ is admissible.

We next show the following proposition \cite[Lemma 2.3]{WCZ13}.

\begin{prop}\label{pro22}
Every admissible operator $T$ on  $H^2 (\mathbb C^n, d\mu)$ extends to an integral operator $\overbrace{T}$ on $L^2_1 (\mathbb C^n, dV)$ defined as follows. For all $z\in \mathbb C^n$ and $f\in L^2_1 (\mathbb C^n, dV),$ we have
$$\overbrace{T}f(z)=\int_{\mathbb C^n} R(z, w)f(w)d\mu (w),$$
where $R(z, w):=\overline{T^\star K_z (w)}.$
\end{prop}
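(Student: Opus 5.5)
The plan is to define the integral operator $\overbrace{T}$ by the stated formula on all of $L^2_1(\mathbb C^n, dV)$ and then check two things. First, the formula makes sense, at least pointwise in $z$. Second, $\overbrace{T}$ agrees with $T$ on $\mathcal S$, and hence on any function where $T$ is defined, after composing with the Fock projection $P$. By Definition \ref{def21}, $T^\star K_z = e^{\frac{|z|^2}2}T^\star k_z\in H^2(\mathbb C^n, d\mu)$. Since $d\mu = \pi^{-n}e^{-|z|^2}dV$ and $L^2_1(\mathbb C^n,dV)$ differ only by a constant factor in the norm, $L^2_1(\mathbb C^n, dV)=L^2(\mathbb C^n,d\mu)$ as sets. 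So for each fixed $z$, the function $w\mapsto R(z,w)=\overline{T^\star K_z(w)}$ lies in $L^2(\mathbb C^n,d\mu)$. By Cauchy--Schwarz, the integral $\int R(z,w)f(w)\,d\mu(w)=\langle f, T^\star K_z\rangle_{L^2(d\mu)}$ converges absolutely for every $f\in L^2_1$ and every $z$. This gives a well-defined linear map $f\mapsto \overbrace{T}f$ into functions on $\mathbb C^n$.

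Next I verify the extension property on kernels. Take $f=k_u$, or equivalently $f=K_u$ by linearity. The reproducing property and the duality relation \eqref{eq21}, rescaled by the factors $e^{\frac{|z|^2}2}e^{\frac{|u|^2}2}$, give
$$\overbrace{T}K_u(z)=\langle K_u, T^\star K_z\rangle=\langle TK_u, K_z\rangle = TK_u(z).$$
Linearity then yields $\overbrace{T}g=Tg$ for all $g\in\mathcal S$. For a general $f\in L^2(\mathbb C^n,d\mu)$, since $T^\star K_z$ is holomorphic, $\langle f, T^\star K_z\rangle=\langle Pf, T^\star K_z\rangle$. Hence $\overbrace{T}=\overbrace{T}\circ P$ formally. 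This confirms that $\overbrace{T}$ is the natural extension of $T$, defined as $T$ on $H^2$ wherever $T$ is defined and killing the orthogonal complement of $H^2$.

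The main subtlety is interpretive rather than computational. The proposition asserts only that $T$ extends to an integral operator given pointwise by this kernel; it does not assert boundedness on $L^2_1$, which would fail for general admissible $T$. I would therefore state explicitly that $\overbrace{T}f(z)$ is defined for every $z$, with the estimate $|\overbrace{T}f(z)|\le \|f\|\,\|T^\star K_z\|$, and that it coincides with $T$ on the dense subspace $\mathcal S$. If $T$ is bounded, then $\overbrace{T}=TP$ is bounded on $L^2_1$. The only step requiring care is keeping track of the normalization constants between $k_z$ and $K_z$ and between the norms of $L^2_1(dV)$ and $L^2(d\mu)$.
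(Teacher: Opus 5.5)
Your proposal is correct and follows essentially the paper's argument. The paper also uses the reproducing property together with the rescaled duality relation $\langle Tk_w, K_z\rangle=\langle k_w, T^\star K_z\rangle$ to obtain $Tf(z)=\langle f, T^\star K_z\rangle$ for $f\in\mathcal S$, and then observes that, since $T^\star K_z\in H^2(\mathbb C^n,d\mu)$, the same formula makes sense on $L^2_1$. Your Cauchy--Schwarz justification of well-definedness and your remark that $\overbrace{T}=\overbrace{T}\circ P$ (with $\overbrace{T}=TP$ when $T$ is bounded) simply fill in the step the paper dismisses as ``easy to check''.
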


\begin{proof}
For all $z, w\in \mathbb C^n,$ the relation (2.1) 
 is equivalent to 
\begin{equation}\label{eq22}
\langle Tk_w, K_z\rangle=\langle k_w, T^\star K_z\rangle.
\end{equation}
For $f\in \mathcal S,$ we have $f=\sum_{j=1}^m a_jk_{z_j},$ with $a_j\in \mathbb C,\hskip 1truemm z_j\in \mathbb C^n.$ Using the reproducing kernel property and the relation \eqref{eq22}, it holds

\begin{eqnarray*}
Tf(z)&= &\langle Tf, K_z\rangle\\
&=&\sum_{j=1}^m a_j\langle Tk_{z_j}, K_z\rangle=\sum_{j=1}^m a_j\langle k_{z_j}, T^\star K_z\rangle\\
&=&\langle \sum_{j=1}^m a_jk_{z_j}, T^\star K_z\rangle=\langle f, T^\star K_z\rangle\\
&=&\int_{\mathbb C^n} f(w)\overline{T^\star K_z (w)}d\mu (w).
\end{eqnarray*}

Since $T^\star K_z\in H^2 (\mathbb C^n, d\mu),$ it is easy to check that the operator $T$ on  $H^2 (\mathbb C^n, d\mu)$ extends to the integral operator $\overbrace{T}$ on $L^2_1 (\mathbb C^n, dV)$ defined as 
$$\overbrace{T}f(z)=\int_{\mathbb C^n} R(z, w)f(w)d\mu (w),\quad z\in \mathbb C^n,$$
where $R(z, w):=\overline{T^\star K_z (w)}.$
\end{proof}

As usual, we shall denote again by $T$ the extension $\overbrace{T}$ of the admissible operator $T.$

\subsection{Sufficiently, strongly and weakly localized operators on $H^2 (\mathbb C^n, d\mu)$}\label{ssec22}
The notion of XZ-{\textit {sufficiently localized operators}} was introduced in \cite{XZ13}.

\begin{defn}\label{def23}
An admissible operator $T$ on $H^2 (\mathbb C^n, d\mu)$ is said to be XZ-{\it {sufficiently localized}} if there exist constants $2n<\beta<\infty$ and $0<C<\infty$ such that
$$\left \vert \langle Tk_z, k_w\rangle\right \vert \leq \frac C{\left (1+\left \vert z-w\right \vert \right )^\beta}$$
for all $z, w\in \mathbb C^n.$
\end{defn}

We denote by XZ-$\mathcal{SL}$ the collection  of XZ-sufficiently localized  operators on $H^2 (\mathbb C^n, d\mu).$

We next recall the notion of {\textit {sufficiently localized operator}} introduced in \cite{XZ13}.
We first recall that, for every $z\in \mathbb C^n,$ the unitary operator $U_z$ is defined on $H^2 (\mathbb C^n, d\mu)$ by
$$U_z f(w)=f(z-w)k_z (w), \qquad f\in H^2 (\mathbb C^n, d\mu).$$
\begin{defn}\label{def24}
Let  $p$ be a positive real.
\begin{enumerate}
\item
An admissible operator $T$ on $H^2 (\mathbb C^n, d\mu)$ is said to be $p$-{\textit {localized}} if 
$$\sup \limits_{z\in \mathbb C^n} \int_{\mathbb C^n} \left \vert U_zTU_z \mathbbm{1} (w)\right \vert^pd\mu(z)<\infty.$$
Here, the symbol $\mathbbm{1}$ denotes the constant function on $\mathbb C^n,$ identically equal to $1.$ 
\item
An admissible operator $T$ on $H^2 (\mathbb C^n, d\mu)$ is said to be {\textit {sufficiently localized}} if it is $p$-localized for some $p> 2.$ It is said to be {\textit {strongly localized}} if it is $p$-localized for all $p> 2.$
%
\end{enumerate}
\end{defn}

We call $\mathcal L_p$ the space of $p$-localized operators,  $\mathcal{SL}$ the space of sufficiently localized operators and $\mathcal {S}t\mathcal {SL}$ the space of strongly localized operators. Clearly, 
$$\mathcal {SL}=\cup_{2< p<\infty} \mathcal L_p, \hskip 2truemm \mathcal {S}t\mathcal {SL}=\cap_{2< p<\infty} \mathcal L_p\,\, \textrm{and}\,\, \mathcal {S}t\mathcal {SL}\subset \mathcal {SL}.$$

The notion of {\textit {weakly localized operator}} was introduced in \cite{IMW15}.

\begin{defn}\label{def25}
An admissible operator $T$ on $H^2 (\mathbb C^n, d\mu)$ is said to be {\textit {weakly localized}} if it satisfies the conditions 
$$\sup \limits_{z\in \mathbb C^n} \int_{\mathbb C^n} \left \vert \langle Tk_z, k_w\rangle\right \vert dV(w)<\infty, \qquad \sup \limits_{z\in \mathbb C^n} \int_{\mathbb C^n} \left \vert \langle T^\star k_z, k_w\rangle \right \vert dV(w)<\infty$$
and
$$\lim \limits_{r\rightarrow \infty} \sup \limits_{z\in \mathbb C^n} \int_{|z-w|\geq r} \left \vert \langle Tk_z, k_w\rangle\right \vert dV(w)=0, \qquad \lim \limits_{r\rightarrow \infty} \sup \limits_{z\in \mathbb C^n} \int_{|z-w|\geq r} \left \vert \langle T^\star k_z, k_w\rangle \right \vert dV(w)=0.$$
\end{defn}

\section{Properties of localized operators}\label{sec3}
\subsection{Preliminary properties}\label{ssec31}
In view of the H\"older inequality, 
we get the following proposition.

\begin{prop}\label{pro31}
For all $0<q<p<\infty,$ the following inclusion holds: 
$$\mathcal L_p \subset \mathcal L_q.$$
\end{prop}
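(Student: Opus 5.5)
The plan is to apply H\"older's inequality for the probability measure $d\mu$. The definition of $p$-localized contains an evident typo in the integration variable. I read it as
$$\sup_{z\in\mathbb C^n}\int_{\mathbb C^n}\left\vert U_zTU_z\mathbbm{1}(w)\right\vert^p d\mu(w)<\infty,$$
that is, $\sup_z \Vert U_zTU_z\mathbbm{1}\Vert_{L^p(d\mu)}<\infty$, where the integration is in $w$ against the Gaussian measure.

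The key observation is that $d\mu(w)=\pi^{-n}e^{-|w|^2}dV(w)$ has total mass $\mu(\mathbb C^n)=1$. So for $0<q<p<\infty$ we can apply H\"older's inequality with exponents $p/q>1$ and its conjugate $(p/q)'$ to the product $|g|^q\cdot 1$, where $g=U_zTU_z\mathbbm{1}$. This gives
$$\int_{\mathbb C^n}|g(w)|^q\,d\mu(w)\le\Big(\int_{\mathbb C^n}|g(w)|^p\,d\mu(w)\Big)^{q/p}\,\mu(\mathbb C^n)^{1-q/p}=\Big(\int_{\mathbb C^n}|g(w)|^p\,d\mu(w)\Big)^{q/p}.$$
Equivalently, $\Vert g\Vert_{L^q(d\mu)}\le\Vert g\Vert_{L^p(d\mu)}$. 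This inequality holds for every $z$ and involves no constant depending on $z$.

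Taking the supremum over $z\in\mathbb C^n$ on both sides then gives
$$\sup_z\int|U_zTU_z\mathbbm{1}|^q\,d\mu\le\Big(\sup_z\int|U_zTU_z\mathbbm{1}|^p\,d\mu\Big)^{q/p}<\infty$$
whenever $T\in\mathcal L_p$, so $T\in\mathcal L_q$. Admissibility of $T$ is part of both definitions and is unchanged. Measurability of $w\mapsto U_zTU_z\mathbbm{1}(w)$ is automatic, since $U_zTU_z\mathbbm{1}$ is entire: $\mathbbm{1}=k_0\in\mathcal S$ and $U_z$ maps kernels to kernels up to unimodular constants, so $U_z\mathbbm{1}\in\mathcal S$ and $T$ applies to it.

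There is no real obstacle here. The only point needing care is that the normalizing constant $\pi^{-n}$ makes $\mu$ a probability measure; this is what makes the $L^p(d\mu)$ norms increase in $p$. It fails for $dV$, so it matters that the integral is taken against $d\mu$.
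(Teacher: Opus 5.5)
Your proposal is correct and is the same argument the paper uses: the paper justifies the inclusion by citing H\"older's inequality, and that works precisely because $d\mu$ is a probability measure. You were also right to read the integration variable in the definition of $p$-localized as $w$ rather than $z$.
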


It is easy to check that for every $p>2,$ the collection $\mathcal L_p$ is a vector subspace of the complex vector space of all linear operators on $H^2 \left (\mathbb C^n, d\mu\right ).$ We consider the following functional on $\mathcal L_p:$
$$\left \Vert T\right \Vert_{\mathcal L_p}=\sup \limits_{z\in \mathbb C^n} \left \Vert U_zTU_z \mathbbm 1\right \Vert_{L^p (\mathbb C^n, d\mu)}.$$

We shall also need the following lemma.

\begin{lem}\label{lem32}
Let $T$ be an admissible operator on $H^2 (\mathbb C^n, d\mu).$ Let $z\in \mathbb C^n.$  For all $p>0,$ we have
\begin{equation}\label{eq31}
\int_{\mathbb C^n} \left \vert U_zTU_z \mathbbm 1(w)\right \vert^pd\mu (w)=\frac 1{\pi^n}\int_{\mathbb C^n} \left \vert \langle Tk_z, k_w\rangle\right \vert^pe^{\left (\frac p2-1\right )|z-w|^2}2dV(w).
\end{equation}
\end{lem}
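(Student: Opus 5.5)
The plan is a direct computation: evaluate $U_zTU_z\mathbbm 1$ pointwise, take absolute values, and finish with a translation change of variables. The only inputs are the reproducing property of $K_\zeta$ and the explicit formula for $k_z$. No boundedness of $T$ is needed, since $U_z\mathbbm 1$ lies in $\mathcal S$ and $Tk_z\in H^2(\mathbb C^n,d\mu)$.

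First, $U_z\mathbbm 1(w)=\mathbbm 1(z-w)k_z(w)=k_z(w)$, so $U_z\mathbbm 1=k_z\in\mathcal S$ and $Tk_z$ is defined. Applying $U_z$ gives
$$U_zTU_z\mathbbm 1(w)=(Tk_z)(z-w)\,k_z(w).$$
Write $u=z-w$. By the reproducing property and $\Vert K_u\Vert=e^{|u|^2/2}$,
$$(Tk_z)(u)=\langle Tk_z,K_u\rangle=e^{\frac{|u|^2}2}\langle Tk_z,k_u\rangle.$$
Hence
$$|U_zTU_z\mathbbm 1(w)|^p=\left|\langle Tk_z,k_{z-w}\rangle\right|^p e^{\frac p2|z-w|^2}\,|k_z(w)|^p.$$

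Next, combine the kernel modulus with the Gaussian weight. Since $|k_z(w)|=e^{\mathrm{Re}\langle w,z\rangle-\frac{|z|^2}2}$, we have $|k_z(w)|\,e^{-\frac{|w|^2}2}=e^{-\frac{|z-w|^2}2}$. Therefore
$$|k_z(w)|^p e^{-|w|^2}=e^{-\frac p2|z-w|^2}\,e^{\left(\frac p2-1\right)|w|^2}.$$
Multiplying by $\pi^{-n}$ (from $d\mu$), the factors $e^{\pm\frac p2|z-w|^2}$ cancel, and the integrand becomes
$$\pi^{-n}\left|\langle Tk_z,k_{z-w}\rangle\right|^p e^{\left(\frac p2-1\right)|w|^2}\,dV(w).$$

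Finally, substitute $w\mapsto z-w$, which preserves $dV$. Then $k_{z-w}$ becomes $k_w$ and $|w|^2$ becomes $|z-w|^2$. This yields exactly the right-hand side of \eqref{eq31}, where the stray factor "$2$" in the displayed formula is presumably a typographical artifact.

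The only place requiring care is keeping track of which point the kernel is evaluated at: the normalization $e^{|u|^2/2}$ must be taken with $u=z-w$ and not with $w$, since that is what makes the exponents collapse. Everything else is routine algebra, and the identity holds in $[0,\infty]$ whether or not the integrals are finite.
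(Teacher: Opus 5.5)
Your computation is correct and is essentially the paper's proof: write $U_zTU_z\mathbbm 1(w)=(Tk_z)(z-w)\,k_z(w)$, convert the point evaluation via $K_u=e^{|u|^2/2}k_u$, simplify the Gaussian exponents, and translate $w\mapsto z-w$. The paper performs the translation first and the kernel simplification afterwards. You are also right that the stray factor $2$ in the displayed formula is a typo, since the paper's own computation ends without it.
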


\begin{proof}
Using the change of variable $t=z-w,$ the equality $K_t=k_te^{\frac {|t|^2}2}$ and the reproducing kernel property, it holds
\begin{eqnarray*}
\int_{\mathbb C^n} \left \vert U_zTU_z \mathbbm 1(w)\right \vert^pd\mu (w)
&= &\int_{\mathbb C^n} \left \vert U_zTk_z (w)\right \vert^pd\mu (w)\\
&=&\frac 1{\pi^n}\int_{\mathbb C^n} \left \vert Tk_z (z-w)k_z (w)\right \vert^pe^{-|w|^2}dV (w)\\
&=&\frac 1{\pi^n}\int_{\mathbb C^n} \left \vert Tk_z (t)k_z (z-t)\right \vert^pe^{-|z-t|^2}dV (t)\\
&=&\frac 1{\pi^n}\int_{\mathbb C^n} \left \vert \langle Tk_z, K_t)\rangle \right \vert^p \left \vert k_z (z-t)\right \vert^pe^{-|z-t|^2}dV (t)\\
&=&\frac 1{\pi^n}\int_{\mathbb C^n} \left \vert \langle Tk_z, k_t)\rangle \right \vert^p e^{\frac {p|t|^2}2}\left \vert e^{\langle z-t, z\rangle-\frac {|z|^2}2}\right \vert^pe^{-|z-t|^2}dV (t)\\
&=&\frac 1{\pi^n}\int_{\mathbb C^n} \left \vert \langle Tk_z, k_t)\rangle \right \vert^p e^{\frac {p|z-t|^2}2}e^{-|z-t|^2}dV (t).
\end{eqnarray*}

\end{proof}

We next prove the following proposition.

\begin{prop}\label{pro33}
For each $p\in (2, \infty),$ the pair $\left (\mathcal L_p, \left \Vert \cdot\right \Vert_{\mathcal L_p} \right )$ is a normed complex vector space.
\end{prop}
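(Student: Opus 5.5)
We need to verify that $\Vert\cdot\Vert_{\mathcal L_p}$ is finite on $\mathcal L_p$, is absolutely homogeneous, satisfies the triangle inequality, and is definite.

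Finiteness is the definition of $\mathcal L_p$. For any fixed $z$, the map $T\mapsto U_zTU_z\mathbbm 1=U_zTk_z$ is linear in $T$ (note $U_z\mathbbm 1=k_z$). Hence $\Vert U_z(\lambda T)U_z\mathbbm 1\Vert_{L^p(d\mu)}=|\lambda|\,\Vert U_zTU_z\mathbbm 1\Vert_{L^p(d\mu)}$, and taking the supremum over $z$ gives homogeneity. Since $p>2\geq 1$, Minkowski's inequality in $L^p(\mathbb C^n,d\mu)$ gives, for each $z$,
$$\Vert U_z(S+T)U_z\mathbbm 1\Vert_{L^p(d\mu)}\leq \Vert S\Vert_{\mathcal L_p}+\Vert T\Vert_{\mathcal L_p}.$$
Taking the supremum over $z$ yields the triangle inequality, and in particular shows again that $\mathcal L_p$ is a vector space.

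The main point is definiteness. Suppose $\Vert T\Vert_{\mathcal L_p}=0$. Then for every $z$ we have $U_zTk_z=0$ $\mu$-a.e. By Lemma \ref{lem32},
$$\int_{\mathbb C^n}\left\vert\langle Tk_z,k_w\rangle\right\vert^p e^{(\frac p2-1)|z-w|^2}\,dV(w)=0 .$$
Since the weight is strictly positive, $\langle Tk_z,k_w\rangle=0$ for a.e. $w$. The function $w\mapsto \langle Tk_z,k_w\rangle=Tk_z(w)e^{-|w|^2/2}$ is continuous, because $Tk_z\in H^2(\mathbb C^n,d\mu)$ is entire. It therefore vanishes for every $w$, so $Tk_z\equiv 0$ for each $z$. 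By linearity $T$ vanishes on $\mathcal S$, which is the domain of the admissible operator $T$. Hence $T=0$ as an operator on $\mathcal S$. If $T$ is regarded through its extension from Proposition \ref{pro22}, its kernel $R(z,w)=\overline{T^\star K_z(w)}$ satisfies $Tf(z)=\langle f,T^\star K_z\rangle$. Since $T f=0$ for all $f\in\mathcal S$ and $\mathcal S$ is dense, $T^\star K_z\perp H^2$, so $T^\star K_z=0$ and the extension is zero as well.

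The only delicate step is passing from "zero almost everywhere" to "zero as an operator". This is handled by the continuity of entire functions together with the density of $\mathcal S$, or simply by the fact that admissible operators are defined on $\mathcal S$.
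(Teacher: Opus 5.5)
Your proof is correct and follows essentially the same route as the paper. The paper leaves homogeneity and the triangle inequality as an exercise, and gets definiteness from Lemma \ref{lem32}: it forces $\langle Tk_z,k_w\rangle=0$, hence the kernel $R(z,w)$ of Proposition \ref{pro22} vanishes and $T=0$. Your use of continuity to pass from ``almost every $w$'' to ``every $w$'' is a small piece of care that the paper's proof glosses over.
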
 

\begin{proof}
The proof of the identity $\left \Vert \lambda T\right \Vert_{\mathcal L_p}=|\lambda|\left \Vert T\right \Vert_{\mathcal L_p}$ and the triangular inequality are left as an exercise. We just provide the proof of  the implication $\left \Vert T\right \Vert_{\mathcal L_p}=0 \Rightarrow T=0.$ In view of Lemma \ref{lem32}, the assumption $\left \Vert T\right \Vert_{\mathcal L_p}=0$ is equivalent to the following statement: for all $w, z\in \mathbb C^n,$ we have
$$0=\langle Tk_w, k_z\rangle=\langle k_w, T^\star k_z\rangle=e^{-\frac {|w|^2+|z|^2}2}\overline{T^\star K_z (w)},$$
or equivalently, $\overline{T^\star K_z (w)}=R(z, w)\equiv 0.$ The kernel $R$ was defined in Proposition \ref{pro22}. 
It follows from the same proposition that $T=0.$
\end{proof}

\subsection{Properties of weakly localized operators}\label{ssec32}
We now prove the following theorem. For $q=2$ and for sufficiently localized operators, its second part was proved  in \cite[Theorem  2.4]{WCZ13}
; it was also proved in \cite[Theorem  3.1]{MW14}
 for more general Bergman-type spaces. 

\begin{thm}\label{thm34}
Every admissible operator on $H^2 (\mathbb C^n, d\mu)$ which satisfies the first two conditions of weakly localized operators  extends into a bounded linear operator on the Fock spaces $F^q_1$ for every $q\in [1, \infty].$ It also extends into a bounded linear operator from $L^2_1 (\mathbb C^n, dV)$ to $F^2_1.$ 
\end{thm}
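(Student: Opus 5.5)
We will use the integral representation of Proposition \ref{pro22}: for $f\in\mathcal S$ (and then by extension),
$$Tf(z)=\int_{\mathbb C^n} R(z,w)f(w)\,d\mu(w),\qquad R(z,w)=\overline{T^\star K_z(w)}.$$
The first step is to rewrite the kernel in terms of the normalized kernels. From $\langle k_w, T^\star k_z\rangle=\langle Tk_w,k_z\rangle$ we get
$$|R(z,w)|=e^{\frac{|z|^2+|w|^2}2}\,\vert\langle Tk_w,k_z\rangle\vert=e^{\frac{|z|^2+|w|^2}2}\,\vert\langle k_z,T^\star k_w\rangle\vert .$$
Therefore the weighted kernel $H(z,w):=e^{-\frac{|z|^2}2}|R(z,w)|e^{-|w|^2}e^{\frac{|w|^2}2}$ satisfies $H(z,w)=\vert\langle Tk_w,k_z\rangle\vert$. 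Consequently, for $g(w)=f(w)e^{-|w|^2/2}$,
$$\vert Tf(z)\vert e^{-\frac{|z|^2}2}\le \pi^{-n}\int_{\mathbb C^n}\vert\langle Tk_w,k_z\rangle\vert\,|g(w)|\,dV(w).$$
This reduces boundedness on $F^q_1$ (indeed on $L^q_1$ restricted to the range of $T$) to boundedness on $L^q(dV)$ of the positive integral operator with kernel $H(z,w)=\vert\langle Tk_w,k_z\rangle\vert$.

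Next we apply Schur's test with constant test function, i.e.\ the Schur/Young-type lemma. We need
$$\sup_z\int H(z,w)\,dV(w)<\infty\quad\text{and}\quad\sup_w\int H(z,w)\,dV(z)<\infty.$$
The second is $\sup_w\int\vert\langle Tk_w,k_z\rangle\vert dV(z)$, which is exactly the first weak-localization condition (after renaming variables). The first is $\sup_z\int \vert\langle k_w,T^\star k_z\rangle\vert dV(w)=\sup_z\int\vert\langle T^\star k_z,k_w\rangle\vert dV(w)$, which is the second condition. Schur's test then gives $L^q(dV)$ boundedness for every $q\in[1,\infty]$: for $q=1$ use the column bound, for $q=\infty$ the row bound, and interpolate (or apply Hölder directly) for $1<q<\infty$.

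We then extend from $\mathcal S$ to the Fock spaces. For $q<\infty$, $\mathcal S$ is dense in $F^q_1$ (atomic decomposition), so the estimate extends by continuity, and the extension agrees with the integral formula. For $q=\infty$, where $\mathcal S$ is not dense, we instead define $T$ on $F^\infty_1$ directly by the integral formula. This is legitimate because the kernel bound makes the integral absolutely convergent for $f\in F^\infty_1$, and the result is entire since $R(\cdot,w)$ is antiholomorphic in $w$ and holomorphic in $z$ (as $T^\star K_z$ is conjugate-holomorphic in $z$). This consistency and holomorphy check is the main technical point; holomorphy in $z$ can be justified via differentiation under the integral, or by noting that $Tf(z)=\langle f,T^\star K_z\rangle$ together with $\langle T^\star K_z,K_\zeta\rangle=\overline{TK_\zeta(z)}$.

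Finally, the $L^2_1\to F^2_1$ statement follows from the same Schur estimate applied to arbitrary $f\in L^2_1$ via the extended integral operator $\overbrace{T}$ of Proposition \ref{pro22}. Its output $z\mapsto\langle f,T^\star K_z\rangle$ is entire, and the Schur bound gives $\|\overbrace{T}f\|_{L^2_1}\le C\|f\|_{L^2_1}$; hence $\overbrace{T}$ maps $L^2_1$ boundedly into $F^2_1$.
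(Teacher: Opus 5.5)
Your proof is correct and uses the same kernel bound as the paper, but it is organized differently.

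\textbf{How the two routes differ.} The paper proves the two endpoint estimates separately: on $F^1_1$ by Fubini--Tonelli with the first condition, and on $F^\infty_1$ with the second. It then reaches $1<q<\infty$ by complex interpolation \emph{of Fock spaces}. It handles $L^2_1\to F^2_1$ by a separate Schur test on $L^2(d\mu)$ for the kernel $|T^\star K_z(w)|$, with test function $e^{|w|^2/2}$. You instead move everything, via the weight $e^{-|w|^2/2}$, to the single positive kernel $H(z,w)=|\langle Tk_w,k_z\rangle|$ on $L^q(dV)$. The Schur row and column bounds of this kernel are exactly the two weak-localization conditions, and the paper's test function $e^{|w|^2/2}$ is your constant test function after the change of weight.

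\textbf{What your route buys.} You get all $q\in[1,\infty]$ at once from H\"older alone, with no Fock-space interpolation theorem. You also avoid checking that the $F^1_1$ and $F^\infty_1$ extensions are compatible. Since the bound never uses holomorphy of $f$, you actually get $\overbrace{T}:L^q_1\to F^q_1$ for every $q$, and the theorem's second assertion is the case $q=2$. You are also more careful than the paper at $q=\infty$. There $\mathcal S$ is not dense, and the paper applies the pointwise estimate to $f\in F^\infty_1$ without saying what $Tf$ is.

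\textbf{What it costs.} You must check that the output is entire, and your second hint (via $\langle T^\star K_z,K_\zeta\rangle=\overline{TK_\zeta(z)}$) does not do this by itself. For $f\in L^2_1$ it is immediate: $\overbrace{T}f=T(Pf)$, where $P$ is the orthogonal projection onto $H^2(\mathbb C^n,d\mu)$. In general, apply Lemma \ref{lem37} to the entire function $R(\cdot,w)=TK_w$. This gives $\sup_{\zeta\in B(z,1)}|R(\zeta,w)|\le Ce^{(|z|+1)^2/2}e^{|w|^2/2}\int_{B(z,2)}H(u,w)\,dV(u)$. Integrating against $|f|\,d\mu$ gives at most a constant times $\int_{B(z,2)}\int H(u,w)|f(w)|e^{-|w|^2/2}\,dV(w)\,dV(u)$, which is finite by your own Schur bound. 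This locally uniform majorant justifies dominated convergence and Morera (or differentiation under the integral), which finishes the argument.
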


\begin{proof}
Let $T$ be an admissible operator on $H^2 (\mathbb C^n, d\mu)$ which satisfies the first two conditions of weakly localized operators. We write:
$$C:=\max \left \{\sup \limits_{z\in \mathbb C^n} \int_{\mathbb C^n} \left \vert \langle Tk_z, k_w\rangle\right \vert dV(w), \quad \sup \limits_{z\in \mathbb C^n} \int_{\mathbb C^n} \left \vert \langle T^\star k_z, k_w\rangle \right \vert dV(w)\right \}<\infty.$$

We first prove that $T$ extends into a bounded linear operator on $F^1_1.$ Since the subspace $\mathcal S$ defined in subsection \ref{ssec21} 
 is also dense in $F^1_1,$ it suffices to show that there exists a positive constant $c$ such that for all $f\in \mathcal S,$ we have
\begin{equation}\label{eq32}
\left \Vert Tf\right \Vert_{F^1_1}\leq c\left \Vert f\right \Vert_{F^1_1}.
\end{equation}
According to Proposition \ref{pro22}, we have 
\begin{equation*}
Tf(z)=\int_{\mathbb C^n} f(w)\overline{T^\star K_z (w)}d\mu (w)
\end{equation*} 
and then
\begin{eqnarray}
\left \vert Tf(z)e^{-\frac {|z|^2}2}\right \vert&=&\left \vert \int_{\mathbb C^n} f(w)\overline{T^\star k_z (w)}d\mu (w)\right \vert \nonumber\\
&\leq &\int_{\mathbb C^n} |f(w)|\left \vert T^\star k_z (w)\right \vert d\mu (w) \nonumber\\
&=& \int_{\mathbb C^n} |f(w)|\left \vert \langle T^\star k_z, K_w \rangle\right \vert d\mu (w) \nonumber\\
&=&\frac 1{\pi^n}\int_{\mathbb C^n} |f(w)|\left \vert \langle T^\star k_z, k_w \rangle\right \vert e^{-\frac {|w|^2}2}dV(w).\label{eq33}
\end{eqnarray}
By the Fubini-Tonelli theorem, it follows that
\begin{eqnarray*}
\int_{\mathbb C^n} \left \vert Tf(z)e^{-\frac {|z|^2}2}\right \vert dV(z)
&\leq & \frac 1{\pi^n}\int_{\mathbb C^n} \left \{\int_{\mathbb C^n} |f(w)|\left \vert \langle T^\star k_z, k_w \rangle\right \vert e^{-\frac {|w|^2}2}dV(w)\right \}dV(z)\\
&=&\frac 1{\pi^n}\int_{\mathbb C^n} |f(w)|\left (\int_{\mathbb C^n} \left \vert \langle Tk_w, k_z \rangle\right \vert dV(z)\right )e^{-\frac {|w|^2}2}dV(w)\\
&\leq& \frac 1{\pi^n}\int_{\mathbb C^n} |f(w)|\left (\sup \limits_{w\in \mathbb C^n}\int_{\mathbb C^n} \left \vert \langle Tk_w, k_z \rangle\right \vert dV(z)\right )e^{-\frac {|w|^2}2}dV(w)\\
&\leq&\frac C{\pi^n}\int_{\mathbb C^n} |f(w)|e^{-\frac {|w|^2}2}dV(w).
\end{eqnarray*}

This finishes the proof of \eqref{eq32}.

Furthermore, for all $f\in F^\infty_1,$ we have from \eqref{eq32} that
$$\sup \limits_{z\in \mathbb C^n} \left \vert Tf(z)e^{-\frac {|z|^2}2}\right \vert\leq \frac {\left \Vert f\right \Vert_{F^1_1}}{\pi^n} \sup \limits_{z\in \mathbb C^n} \int_{\mathbb C^n} \left \vert \langle T^\star k_z, k_w \rangle\right \vert dV(w)\leq \frac C{\pi^n}\left \Vert f\right \Vert_{F^1_1}<\infty.$$
Hence, $T$ is bounded on $F^1_1$ and on $F^\infty_1.$ It follows from the complex interpolation (\cite[Theorem 7.3]{JPZ17} for $n\geq1,$ \cite[Theorem 2.30]{Z12} for $n=1)$ that $T$ is bounded on $F^q_1$ for all $1\leq q \leq \infty.$ In particular, $T$ is bounded on $F^2_1=H^2 (\mathbb C^n, d\mu).$

We prove next that $T$ extends into a bounded linear operator from $L^2_1 (\mathbb C^n, dV)$ to $F^2_1.$ According to Proposition \ref{pro22}, it suffices to show that the 'positive' integral operator $T^+$ defined as
$$T^+f(z)=\int_{\mathbb C^n} |T^\star K_z (w)|f(w)d\mu (w)$$
is bounded on $L^2_1 (\mathbb C^n, dV).$ This follows through Schur's lemma. Indeed, with the Schur test function $h(w)=e^{\frac {|w|^2}2},$ on the one hand, we have
\begin{eqnarray*}
\int_{\mathbb C^n} |T^\star K_z (w)|e^{\frac {|w|^2}2}d\mu (w)
&=&\frac 1{\pi^n}\int_{\mathbb C^n} |\langle T^\star k_z, k_w\rangle|e^{\frac {|z|^2}2}e^{\frac {|w|^2}2}e^{\frac {|w|^2}2}e^{-|w|^2}dV(w)\\
&=&\frac 1{\pi^n}e^{\frac {|z|^2}2}\int_{\mathbb C^n} |\langle T^\star k_z, k_w\rangle|dV(w)\\
&\leq& \frac C{\pi^n}e^{\frac {|z|^2}2},
\end{eqnarray*}

where $C=\sup \limits_{z\in \mathbb C^n}\int_{\mathbb C^n} \left \vert \langle T^\star k_w, k_z \rangle\right \vert dV(w)<\infty.$ On the second hand, we have

\begin{eqnarray*}
\int_{\mathbb C^n} |T^\star K_z (w)|e^{\frac {|z|^2}2}d\mu (z)
&=&\frac 1{\pi^n}\int_{\mathbb C^n} |\langle T^\star k_z, k_w\rangle|e^{\frac {|z|^2}2}e^{\frac {|w|^2}2}e^{\frac {|z|^2}2}e^{-|z|^2}dV(z)\\
&=&\frac 1{\pi^n}e^{\frac {|w|^2}2}\int_{\mathbb C^n} |\langle T^\star k_z, k_w\rangle|dV(z)\\
&=&\frac 1{\pi^n}e^{\frac {|w|^2}2}\int_{\mathbb C^n} |\langle Tk_w, k_z\rangle|dV(z)\\
&\leq& \frac C{\pi^n}e^{\frac {|w|^2}2}.
\end{eqnarray*}

This finishes the proof of the theorem.
\end{proof}

We next prove the following lemma.

\begin{lem}\label{lem35}
Every admissible operator on $H^2 (\mathbb C^n, d\mu)$ which fulfils the first two conditions of weakly localized operators is a member of $\mathcal L_2.$
\end{lem}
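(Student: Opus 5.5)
By Lemma \ref{lem32} with $p=2$, the exponential weight $e^{(\frac p2-1)|z-w|^2}$ equals $1$. So membership in $\mathcal L_2$ is equivalent to
$$\sup_{z\in\mathbb C^n}\int_{\mathbb C^n}\left\vert\langle Tk_z,k_w\rangle\right\vert^2\,dV(w)<\infty .$$
The plan is to derive this from the first weak-localization condition $\sup_z\int|\langle Tk_z,k_w\rangle|\,dV(w)\le C$, together with a uniform pointwise bound on $|\langle Tk_z,k_w\rangle|$.

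The pointwise bound comes from Theorem \ref{thm34}: an admissible operator satisfying the first two conditions of weak localization extends to a bounded operator on $F^2_1=H^2(\mathbb C^n,d\mu)$. Since $\Vert k_z\Vert=\Vert k_w\Vert=1$, Cauchy--Schwarz gives
$$\left\vert\langle Tk_z,k_w\rangle\right\vert\le \Vert T\Vert_{\mathcal L(H^2)}$$
for all $z,w\in\mathbb C^n$. Alternatively, without invoking boundedness on $H^2$, one could use the $F^\infty_1$ estimate from the proof of Theorem \ref{thm34}. There, $|Tf(w)e^{-|w|^2/2}|$ is controlled by $\Vert f\Vert_{F^1_1}$ via the second condition, and applying this to $f=k_z$, whose $F^1_1$ norm is a constant independent of $z$, bounds $|\langle Tk_z,k_w\rangle|=|Tk_z(w)|e^{-|w|^2/2}$ uniformly.

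Combining the two facts, for each $z$:
$$\int_{\mathbb C^n}\left\vert\langle Tk_z,k_w\rangle\right\vert^2dV(w)\le \Vert T\Vert\int_{\mathbb C^n}\left\vert\langle Tk_z,k_w\rangle\right\vert dV(w)\le C\Vert T\Vert .$$
Taking the supremum over $z$ and applying Lemma \ref{lem32} (with the factor $\pi^{-n}$) shows $\sup_z\Vert U_zTU_z\mathbbm 1\Vert^2_{L^2(d\mu)}<\infty$, i.e. $T\in\mathcal L_2$.

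The only delicate point is ensuring that the pointwise bound holds for the extension of $T$ as it acts on $k_z$. Since $k_z\in\mathcal S$, the value $\langle Tk_z,k_w\rangle$ is the original one and no extension issue arises, so this step should be straightforward.
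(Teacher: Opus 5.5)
Your proof is correct, but it is organized differently from the paper's.

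The paper also starts from Theorem~\ref{thm34}, and then only uses that $U_z$ is unitary and $\Vert \mathbbm 1\Vert=1$, which gives $\Vert U_zTU_z\mathbbm 1\Vert^2\le \Vert T\Vert^2$. In your notation this is the identity $\pi^{-n}\int_{\mathbb C^n}|\langle Tk_z,k_w\rangle|^2\,dV(w)=\Vert Tk_z\Vert^2\le\Vert T\Vert^2$, which is just Lemma~\ref{lem32} with $p=2$. So once your main route has invoked boundedness on $H^2(\mathbb C^n,d\mu)$, the $L^\infty\times L^1$ step and the first weak-localization condition are valid but superfluous. The paper's argument uses boundedness alone, and therefore actually shows that every bounded operator lies in $\mathcal L_2$.

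Your alternative route is the genuinely different one, and it is worth keeping. Apply \eqref{eq33} directly to $f=k_z$ and use $|k_z(\zeta)|e^{-|\zeta|^2/2}=e^{-|z-\zeta|^2/2}\le 1$. This gives $|\langle Tk_z,k_w\rangle|\le \pi^{-n}\int_{\mathbb C^n}|\langle T^\star k_w,k_\zeta\rangle|\,dV(\zeta)$, which is bounded uniformly in $w$ by the second condition. The first condition makes the kernel uniformly integrable in $w$, so it is uniformly square-integrable. This proves the lemma for a merely admissible operator without passing through the complex-interpolation step in Theorem~\ref{thm34}. The bound that comes out of \eqref{eq33} is naturally in terms of $\Vert f\Vert_{F^\infty_1}$ rather than $\Vert f\Vert_{F^1_1}$ as the paper writes it. This does not matter here, since both norms of $k_z$ equal $1$.
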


\begin{proof}
Let $T$ be an admissible operator on $H^2 (\mathbb C^n, d\mu)$ which fulfils the first two conditions of weakly localized operators. We recall that each $U_z$ is an isometry on $H^2 (\mathbb C^n, d\mu).$ From the previous theorem, we know that $T$ is bounded on $H^2 (\mathbb C^n, d\mu).$ Therefore, for all $z\in \mathbb C^n,$ 
\begin{eqnarray*}
\int_{\mathbb C^n} \left \vert U_zTU_z \mathbbm 1(w)\right \vert^2 d\mu (w)
&=&\left \Vert U_zTU_z\right \Vert^2_{H^2 (\mathbb C^n, d\mu)}
\leq \left \Vert U_z\right \Vert^2 \left \Vert T\right \Vert^2 \left \Vert U_z\right \Vert^2 \int_{\mathbb C^n} \left \vert \mathbbm 1(w)\right \vert^2 d\mu (w)\\
&=&\left \Vert T\right \Vert^2.
\end{eqnarray*}
Whence
$$\sup \limits_{z\in \mathbb C^n} \int_{\mathbb C^n} \left \vert U_zTU_z \mathbbm 1(w)\right \vert^2 d\mu (z)\leq \left \Vert T\right \Vert^2<\infty.$$
This completes the proof.
\end{proof} 

We end this subsection with the following proposition.

\begin{prop}\label{pro36}
Let $T\in \mathcal L (H^2 (\mathbb C^n, d\mu)).$ Suppose that $T$ satisfies the last two conditions of weakly localized operators. Then $T$ satisfies the first two conditions of weakly localized operators.
\end{prop}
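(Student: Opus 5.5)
The argument splits $\int_{\mathbb C^n}|\langle Tk_z,k_w\rangle|\,dV(w)$ into a tail and a ball: the tail is handled by the hypothesis, the ball by boundedness of $T$. The same argument applied to $T^\star$ gives the second condition.

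First I fix the radius. By the third condition of Definition \ref{def25}, there is $r_0>0$ such that
$$\sup_{z\in\mathbb C^n}\int_{|z-w|\geq r_0}|\langle Tk_z,k_w\rangle|\,dV(w)\leq 1.$$
I choose $r_0$ large enough that the analogous bound also holds for $T^\star$, using the fourth condition. The tail part is then bounded by $1$, uniformly in $z$.

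Next I bound the ball part. Since $T\in\mathcal L(H^2(\mathbb C^n,d\mu))$ and $\Vert k_z\Vert=\Vert k_w\Vert=1$, the Cauchy--Schwarz inequality gives $|\langle Tk_z,k_w\rangle|\leq \Vert T\Vert$ for all $z,w$. Therefore
$$\int_{|z-w|<r_0}|\langle Tk_z,k_w\rangle|\,dV(w)\leq \Vert T\Vert\, V(B(0,r_0)).$$
This bound is also independent of $z$. Adding the two pieces yields
$$\sup_{z}\int_{\mathbb C^n}|\langle Tk_z,k_w\rangle|\,dV(w)\leq 1+\Vert T\Vert\, V(B(0,r_0))<\infty.$$

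For the adjoint, $T^\star$ is the Hilbert-space adjoint of the bounded operator $T$, so $\Vert T^\star\Vert=\Vert T\Vert$ and hence $|\langle T^\star k_z,k_w\rangle|\leq\Vert T\Vert$. The fourth condition controls the tail, and the same splitting gives the second uniform bound.

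There is no real obstacle here. The only point to check is that the hypothesis is applied at a single fixed radius $r_0$ where the supremum over $z$ is finite. The limit statement gives exactly this, since each term in a convergent sequence is eventually finite. Boundedness of $T$ supplies the uniform pointwise bound that the ball part needs.
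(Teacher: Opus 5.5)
Your proposal is correct and follows the paper's proof essentially step for step. Like the paper, you fix a radius from the limit hypothesis (the paper takes $\epsilon=1$ to get $\delta_1$), bound the tail by $1$, and bound the ball part by $\Vert T\Vert\, V(B(0,r_0))$ via Cauchy--Schwarz; you then repeat the argument for $T^\star$ using the boundedness of the adjoint.
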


\begin{proof}
By definition, $\lim \limits_{r\rightarrow \infty} \sup \limits_{z\in \mathbb C^n} \int_{|z-w|\geq r} \left \vert \langle Tk_z, k_w\rangle\right \vert dV(w)=0$ means that: for every $\epsilon>0,$ there exists $\delta >0$ such that for every positive number $r$, the following implication holds
$$r\geq \delta \Rightarrow \sup \limits_{z\in \mathbb C^n} \int_{|z-w|\geq r} \left \vert \langle Tk_z, k_w\rangle\right \vert dV(w)<\epsilon.$$
In particular, for $\epsilon =1,$ there exists $\delta_1>0$ such that
\begin{equation}\label{eq34}
\sup \limits_{z\in \mathbb C^n} \int_{|z-w|\geq \delta_1} \left \vert \langle Tk_z, k_w\rangle\right \vert dV(w)<1.
\end{equation}
Since $T$ is bounded on $H^2 (\mathbb C^n, d\mu),$ then using the Cauchy-Schwarz inequality, we have
\begin{eqnarray}
\int_{|z-w|<\delta_1} \left \vert \langle Tk_z, k_w\rangle\right \vert dV(w)&\leq& \int_{|z-w|<\delta_1}  \left \Vert Tk_z\right \Vert \left \Vert k_w\right \Vert dV(w) \nonumber\\
&\leq& \left \Vert T\right \Vert \int_{|z-w|<\delta_1} dV(w) \nonumber\\
&=&c_n\delta_1^{2n}\left \Vert T\right \Vert. \label{eq35}
\end{eqnarray}
We deduce from \eqref{eq34} and \eqref{eq35} that
$$\sup \limits_{z\in \mathbb C^n} \int_{\mathbb C^n} \left \vert \langle Tk_z, k_w\rangle\right \vert dV(w)<1+c_n\delta_1^{2n}<\infty.$$
Since $T^\star$ is also bounded as well as $T,$ we conclude that $T$ satisties the first two conditions of weakly localized operators.
\end{proof} 

\subsection{Sufficiently localized operators are XZ-sufficiently localized}\label{ssec33}
In this subsection, we prove that sufficiently localized operators  are XZ-sufficiently localized.  This fact was established in \cite[Lemma 2.2]{WCZ13}
. We give the proof for completeness. 
For $\alpha, p >0,$  let $L^p_\alpha (\mathbb C^n, dV)$ be the Lebesgue space of measurable functions $f$ on $\mathbb C^n$ such that
$$\left \Vert f\right \Vert_{L_\alpha^p}^p=\left (\frac p{2\pi} \right )^n\int_{\mathbb C^n} \left \vert f(z)e^{-\frac {\alpha|z|^2}2}\right \vert^pdV(z)<\infty.$$
The Fock space $F^p_\alpha$ is the space of entire functions on $\mathbb C^n$ which belong to $L^p_\alpha (\mathbb C^n, dV).$ 

We next record the following lemma (\cite[Lemma 2.1]{IZ10}
  and \cite[Lemma 2.32]{Z12} 
  for $n=1.$) 

\begin{lem}\label{lem37}
Let $\alpha, r$ and $p$ be  positive numbers. Then there exists some positive constant $C=C(\alpha, r, p)$ such that
$$\left \vert f(z)e^{-\frac {\alpha |z|^2}2}\right \vert^p\leq C\int_{B(z, r)} \left \vert f(w)e^{-\frac {\alpha |w|^2}2}\right \vert^p dV(w)$$
for all entire functions in $\mathbb C^n.$
\end{lem}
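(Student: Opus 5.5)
The plan is to reduce the estimate to the classical sub-mean value property of $|g|^p$ for an entire function $g$, after removing the Gaussian weight by a translation-type factor. Fix $z\in\mathbb C^n$ and an entire $f$. Set
$$g(w)=f(w)e^{-\alpha\langle w,z\rangle+\frac{\alpha|z|^2}{2}}.$$
This is entire in $w$, and $|g(z)|=|f(z)|e^{-\frac{\alpha|z|^2}2}$. For general $w$, expanding $|w-z|^2$ gives
$$-\frac{\alpha|w|^2}{2}=-\alpha\,\mathrm{Re}\langle w,z\rangle+\frac{\alpha|z|^2}{2}-\frac{\alpha|w-z|^2}{2}.$$
Hence
$$|g(w)|=\left|f(w)e^{-\frac{\alpha|w|^2}2}\right|e^{\frac{\alpha|w-z|^2}{2}}.$$

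Next, since $g$ is entire, $\log|g|$ is plurisubharmonic, so $|g|^p=e^{p\log|g|}$ is plurisubharmonic and in particular subharmonic on $\mathbb R^{2n}$. This holds for every $p>0$, which is why $p<1$ causes no trouble. The sub-mean value property over the ball $B(z,r)$ then gives
$$|g(z)|^p\le \frac{1}{|B(z,r)|}\int_{B(z,r)}|g(w)|^p\,dV(w).$$

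Finally, insert the identity for $|g(w)|$. On $B(z,r)$ we have $e^{\frac{p\alpha|w-z|^2}{2}}\le e^{\frac{p\alpha r^2}{2}}$. This yields the lemma with
$$C=\frac{e^{p\alpha r^2/2}}{c_nr^{2n}},$$
where $c_nr^{2n}$ is the volume of $B(z,r)$. The constant depends only on $\alpha,r,p$ (and $n$), and not on $z$ or $f$.

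The only delicate point is the subharmonicity of $|g|^p$ for small $p$. If one prefers to avoid plurisubharmonicity, an alternative is to apply the one-variable sub-mean property of $|g|^p$ on each complex line through $z$ and integrate over spheres using polar coordinates. The exponent identity is routine. If the integral on the right is infinite, the inequality is trivial.
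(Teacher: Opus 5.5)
Your proof is correct. The auxiliary entire function $g(w)=f(w)e^{-\alpha\langle w,z\rangle+\frac{\alpha|z|^2}{2}}$ satisfies $|g(w)|=\bigl|f(w)e^{-\frac{\alpha|w|^2}{2}}\bigr|e^{\frac{\alpha|w-z|^2}{2}}$, and the sub-mean value property of the subharmonic function $|g|^p$ (valid for every $p>0$) over $B(z,r)$ then gives the estimate with $C=e^{p\alpha r^2/2}/(c_nr^{2n})$, independent of $z$ and $f$.

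The paper gives no proof of its own and cites \cite[Lemma 2.1]{IZ10} and \cite[Lemma 2.32]{Z12}. There the argument is essentially yours: the sub-mean value property combined with the translation/modulation that moves the Gaussian weight to the point $z$.
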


We rely on the following corollary (\cite[Corollary 2.8]{Z12} for $n=1).$ 

\begin{cor}\label{cor38}
Let $\alpha, p >0$ and $f\in F^p_\alpha.$ Then for all $z\in \mathbb C^n,$ we have
$$|f(z)|\leq \left \Vert f\right \Vert_{F^p_\alpha}e^{\frac {\alpha|z|^2}2}.$$
\end{cor}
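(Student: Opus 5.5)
This is the standard pointwise estimate, and I would derive it directly from Lemma \ref{lem37} with a fixed radius and a scaling argument. Fix $f\in F^p_\alpha$ and $z\in\mathbb C^n$. Lemma \ref{lem37} with any radius $r$ gives
$$\left \vert f(z)e^{-\frac {\alpha |z|^2}2}\right \vert^p\leq C(\alpha,r,p)\int_{B(z, r)} \left \vert f(w)e^{-\frac {\alpha |w|^2}2}\right \vert^p dV(w)\le C(\alpha,r,p)\left(\frac{2\pi}{p}\right)^n\Vert f\Vert_{F^p_\alpha}^p .$$
So $|f(z)|\le C'\Vert f\Vert_{F^p_\alpha}e^{\alpha|z|^2/2}$ with some constant $C'$. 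The real content of the corollary is that the constant is exactly $1$. This depends on the normalization $(p/2\pi)^n$ in the norm, which should make the relevant constant sharp, presumably with the paper's convention $\left(\frac{p\alpha}{2\pi}\right)^n$ understood for general $\alpha$.

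To get constant $1$, I would reduce to $z=0$ by a translation that is an isometry of $F^p_\alpha$. Define
$$g(w)=f(z+w)e^{-\alpha\langle w,z\rangle-\frac{\alpha|z|^2}{2}}.$$
Then $|g(w)|e^{-\alpha|w|^2/2}=|f(z+w)|e^{-\alpha|z+w|^2/2}$, because $\mathrm{Re}\,\alpha\langle w,z\rangle+\alpha|z|^2/2+\alpha|w|^2/2=\alpha|z+w|^2/2$. Hence $\Vert g\Vert_{F^p_\alpha}=\Vert f\Vert_{F^p_\alpha}$ and $|g(0)|=|f(z)|e^{-\alpha|z|^2/2}$. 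It therefore suffices to prove $|g(0)|\le\Vert g\Vert_{F^p_\alpha}$ for every entire $g$.

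For the case $z=0$, I would use subharmonicity of $|g|^p$, valid for all $p>0$ since $g$ is holomorphic. Working in polar coordinates on $\mathbb C^n$, the average of $|g|^p$ over each sphere $|w|=\rho$ is at least $|g(0)|^p$. Integrating against the radial weight $e^{-p\alpha|w|^2/2}$ gives
$$\int_{\mathbb C^n}|g(w)|^pe^{-\frac{p\alpha|w|^2}{2}}dV(w)\ge |g(0)|^p\int_{\mathbb C^n}e^{-\frac{p\alpha|w|^2}{2}}dV(w)=|g(0)|^p\left(\frac{2\pi}{p\alpha}\right)^n .$$
Multiplying by the normalizing constant, which equals $(p\alpha/2\pi)^n$ and reduces to the stated $(p/2\pi)^n$ when $\alpha=1$, gives $|g(0)|^p\le\Vert g\Vert_{F^p_\alpha}^p$.

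The main obstacle is purely one of bookkeeping. The constant $1$ requires the normalization to be exactly the reciprocal of the Gaussian integral $(2\pi/(p\alpha))^n$, so I need to check how the paper's normalization reads for general $\alpha$. If it stays $(p/2\pi)^n$, the inequality carries an extra factor $\alpha^{-n/p}$, and I would state the result with that constant. Either way, the translation-isometry step and the sub-mean-value step are routine.
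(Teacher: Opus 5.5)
Your argument is correct, and it is the standard proof behind the result the paper cites. That source is Zhu's book, where the $F^p_\alpha$ norm carries the factor $\frac{p\alpha}{2\pi}$. The proof moves $z$ to the origin with the isometric weighted translation, then applies the sub-mean-value inequality for $|g|^p$ on spheres against the radial Gaussian weight. The paper itself gives no proof beyond that citation, and your preliminary estimate via Lemma \ref{lem37} is not needed.

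One correction to your final paragraph: the bookkeeping goes the other way. Keep the paper's literal factor $(p/2\pi)^n$ in the norm. Then your computation gives $|g(0)|^p\le (p\alpha/2\pi)^n(2\pi/p)^n\Vert g\Vert_{F^p_\alpha}^p=\alpha^n\Vert g\Vert_{F^p_\alpha}^p$, so the constant is $\alpha^{n/p}$, not $\alpha^{-n/p}$.

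Consequently, the statement as printed is true for $\alpha\le 1$. This covers its only use in the paper, $\alpha=2/p<1$ in Proposition \ref{pro310}. It is false for $\alpha>1$: for $f\equiv 1$ one has $\Vert 1\Vert_{F^p_\alpha}=\alpha^{-n/p}<1$, and $z=0$ violates the inequality. Your proposed replacement constant $\alpha^{-n/p}$ fails for $\alpha>1$ on the same example, since it would require $1\le\alpha^{-2n/p}$.
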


We shall also use the following elementary lemma.

\begin{lem}\label{lem39}
Let $\beta$ and $\epsilon$ be two positive numbers such that $\beta>\epsilon.$ Then for all $x>0,$ it holds:
$$e^{-\epsilon x}\leq \frac {\left (\frac \beta \epsilon\right )^\beta}{ (1+x)^\beta}.$$
\end{lem}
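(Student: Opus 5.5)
We reduce the inequality to a one-variable calculus statement about maximizing a product. Since both sides are positive for $x>0$, the claim is equivalent to
$$(1+x)^\beta e^{-\epsilon x}\leq \left (\frac \beta \epsilon\right )^\beta \qquad (x>0).$$
Set $g(x)=(1+x)^\beta e^{-\epsilon x}$ on $[0,\infty)$. We bound $\sup g$ by locating its maximum through the logarithmic derivative.

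We compute $\frac{d}{dx}\log g(x)=\frac{\beta}{1+x}-\epsilon$. This is positive for $1+x<\beta/\epsilon$ and negative afterwards. Since $\beta>\epsilon$, the critical point $x_0=\frac\beta\epsilon-1$ is strictly positive. Hence $g$ attains its global maximum on $[0,\infty)$ at $x_0$, with value
$$g(x_0)=\left(\frac\beta\epsilon\right)^\beta e^{-(\beta-\epsilon)}.$$
Because $\beta-\epsilon>0$, we have $e^{-(\beta-\epsilon)}\le 1$, so $g(x)\le g(x_0)\le (\beta/\epsilon)^\beta$ for all $x>0$. Rearranging gives the stated inequality.

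An alternative, derivative-free route runs through the elementary bound $1+y\le e^y$. Applying it with $y=\frac{\epsilon}{\beta}(1+x)-1$ gives $\frac{\epsilon}{\beta}(1+x)\le e^{\frac{\epsilon}{\beta}(1+x)-1}\le e^{\frac{\epsilon x}{\beta}}$. The last step uses $\frac{\epsilon}{\beta}-1<0$. Raising both sides to the power $\beta$ yields $(1+x)^\beta\left(\frac{\epsilon}{\beta}\right)^\beta\le e^{\epsilon x}$, which is exactly the claim.

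There is no real obstacle here. The only point needing care is to use the hypothesis $\beta>\epsilon$: it guarantees that the critical point lies in $(0,\infty)$ and that the factor $e^{-(\beta-\epsilon)}$ is at most $1$. In the second route it plays the same role, making the exponent $\frac{\epsilon}{\beta}-1$ negative.
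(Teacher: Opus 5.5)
Your proof is correct, and both routes work. The logarithmic-derivative computation correctly places the maximum of $(1+x)^\beta e^{-\epsilon x}$ at $x_0=\frac\beta\epsilon-1>0$. The substitution $y=\frac\epsilon\beta(1+x)-1$ in $1+y\le e^y$ is also legitimate, because both sides are positive before you raise them to the power $\beta$.

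The paper gives no proof of this lemma. It simply calls it elementary and uses it in Proposition~\ref{pro310}, so there is no argument to compare with, and your write-up supplies the missing justification.

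Your two routes are really the same inequality. If you keep the factor $e^{\frac\epsilon\beta-1}$ in the second route instead of discarding it, you get exactly the sharp bound from the first: $(1+x)^\beta e^{-\epsilon x}\le(\beta/\epsilon)^\beta e^{-(\beta-\epsilon)}$. Equality holds at $x_0$, which is where $1+y\le e^y$ is tight.

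The hypothesis on $\beta$ cannot simply be dropped. For $\beta<\epsilon$ the stated inequality fails as $x\to0^+$: the left side tends to $1$, while the right side tends to $(\beta/\epsilon)^\beta<1$.
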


We prove the following proposition.

\begin{prop}\label{pro310}
Sufficiently localized operators are {\rm {XZ}}-sufficiently localized.
\end{prop}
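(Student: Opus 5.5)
Let $T$ be sufficiently localized, so $T\in\mathcal L_p$ for some $p>2$. The plan is to obtain a pointwise bound on $\langle Tk_z,k_w\rangle$ from the integral bound of Lemma \ref{lem32}, using the subharmonicity-type estimate of Lemma \ref{lem37} (or Corollary \ref{cor38}) applied to a suitable entire function, and then to convert the Gaussian decay into polynomial decay with Lemma \ref{lem39}.

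\emph{Step 1 (entire function).} Fix $z$. The function $g_z:=U_zTU_z\mathbbm 1=U_zTk_z$ is entire, and $T\in\mathcal L_p$ gives
$$\sup_{z}\int_{\mathbb C^n}|g_z(w)|^p\,d\mu(w)=\Vert T\Vert_{\mathcal L_p}^p<\infty.$$
In terms of the Fock norm this reads $\Vert g_z\Vert_{F^p_\alpha}\le C\Vert T\Vert_{\mathcal L_p}$ with $\alpha=2/p$, since $e^{-|w|^2}=\bigl(e^{-\alpha|w|^2/2}\bigr)^p$. Corollary \ref{cor38} then yields
$$|g_z(u)|\le C\Vert T\Vert_{\mathcal L_p}\,e^{|u|^2/p}\qquad\text{for all } u,z.$$

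\emph{Step 2 (express the matrix coefficient).} From the computation in the proof of Lemma \ref{lem32}, with $t=z-u$,
$$|g_z(u)|=|\langle Tk_z,k_t\rangle|\,e^{|z-t|^2/2}=|\langle Tk_z,k_t\rangle|\,e^{|u|^2/2}.$$
Combining this with Step 1 gives
$$|\langle Tk_z,k_t\rangle|\le C\Vert T\Vert_{\mathcal L_p}\,e^{-\left(\frac12-\frac1p\right)|z-t|^2}.$$
The exponent is strictly negative exactly because $p>2$; this is where sufficient localization is used, and the bookkeeping of the exponents here (checking that $1/p$ beats $1/2$) is the main point to verify carefully.

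\emph{Step 3 (polynomial decay).} Set $\epsilon=\frac12-\frac1p>0$, take any $\beta>\max(2n,\epsilon)$, and apply Lemma \ref{lem39} with $x=|z-t|^2$. Then
$$e^{-\epsilon|z-t|^2}\le C_\beta(1+|z-t|^2)^{-\beta}\le C_\beta(1+|z-t|)^{-\beta},$$
using $(1+|z-t|)^2\le 2(1+|z-t|^2)$ up to adjusting constants. This is the estimate required by Definition \ref{def23}. Since $T$ is admissible, $T$ is XZ-sufficiently localized.
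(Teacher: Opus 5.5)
Your proposal is correct and follows essentially the same route as the paper: a uniform $F^p_{2/p}$ bound on $U_zTU_z\mathbbm 1$, the pointwise estimate of Corollary~\ref{cor38}, and the identity $|U_zTU_z\mathbbm 1(u)|=|\langle Tk_z,k_{z-u}\rangle|\,e^{|u|^2/2}$ giving the Gaussian bound $e^{-(\frac12-\frac1p)|z-t|^2}$, followed by Lemma~\ref{lem39}. Your last step, which converts $(1+|z-t|^2)^{-\beta}$ into $(1+|z-t|)^{-\beta}$, makes explicit a detail the paper leaves implicit.
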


\begin{proof}
Let $T$ be a sufficiently localized operator. Then for all $z\in \mathbb C^n,$ we have $U_zTU_z \mathbbm 1\in F^p_\alpha$ for some $p>2,$ with $\alpha=\frac 2p.$ It follows from the previous corollary that for all $z, w\in \mathbb C^n,$ we have
$$\left \vert U_zTU_z \mathbbm 1(w)\right \vert \leq \left \Vert U_zTU_z \mathbbm 1\right \Vert_{F^p_{\frac 2p}}e^{\frac {|w|^2}p}.$$
We set $C=\sup \limits_{z\in \mathbb C^n} \left \Vert U_zTU_z \mathbbm 1\right \Vert_{F^p_{\frac 2p}}<\infty.$ We shall use the following identity:
$$U_zTU_z \mathbbm 1(w)=k_z (w)(Tk_z)(z-w),$$
from which we deduce that
$$\left \vert k_z (w)(Tk_z)(z-w)\right \vert \leq Ce^{\frac {|w|^2}p}.$$
We apply the change of variable $z-w=\zeta;$ the latter inequality becomes
$$\left \vert k_z (z-\zeta)(Tk_z)(\zeta)\right \vert \leq Ce^{\frac {|z-\zeta|^2}p}.$$
This amounts to
$$\left \vert e^{\langle z-\zeta, z\rangle}e^{-\frac {|z|^2}2}\langle Tk_z, k_\zeta\rangle e^{\frac {|\zeta|^2}2}\right \vert \leq Ce^{\frac {|z-\zeta|^2}p}.$$
We easily reach to the following inequality:
$$\left \vert \langle Tk_z, k_\zeta\rangle \right \vert \leq Ce^{\left (\frac 1p-\frac 12 \right )|z-\zeta|^2}.$$
The required conclusion follows from an application of Lemma  \ref{lem39}
since $\frac 1p-\frac 12<0.$ 
\end{proof}

From the proof of the previous proposition, we deduce a necessary and sufficient condition for an admissible operator on $H^2 (\mathbb C^n, d\mu)$ to be sufficiently localized.

\begin{cor}\label{cor311}
Let $T$ be admissible operator on $H^2 (\mathbb C^n, d\mu).$ The following three conditions are equivalent.
\begin{enumerate}
\item
$T$ is sufficiently localized;
\item
There exist some $\epsilon >0$ and a positive constant $C=C(\epsilon)$ such that for all $z, w\in \mathbb C^n,$ the following estimate holds:
$$\left \vert \langle Tk_z, k_w\rangle\right \vert \leq Ce^{-\epsilon |z-w|^2}.$$
\item
$T^\star$ is sufficiently localized.
\end{enumerate}
\end{cor}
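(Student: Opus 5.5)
The plan is to prove (1)$\Leftrightarrow$(2) directly from Lemma \ref{lem32}, and then obtain (3) from (2) by the symmetry of the matrix coefficients under taking adjoints.

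For (1)$\Rightarrow$(2), I reuse the proof of Proposition \ref{pro310}. There the pointwise estimate of Corollary \ref{cor38}, applied to $U_zTU_z\mathbbm 1\in F^p_{2/p}$, gave
$$\left\vert \langle Tk_z,k_\zeta\rangle\right\vert\leq Ce^{(\frac 1p-\frac 12)|z-\zeta|^2}.$$
So (2) holds with $\epsilon=\frac12-\frac1p>0$, since $p>2$.

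For (2)$\Rightarrow$(1), I will use the identity \eqref{eq31} of Lemma \ref{lem32}. For any $p>2$ it gives
$$\int_{\mathbb C^n}\left\vert U_zTU_z\mathbbm 1\right\vert^pd\mu=\frac1{\pi^n}\int_{\mathbb C^n}\left\vert\langle Tk_z,k_w\rangle\right\vert^pe^{(\frac p2-1)|z-w|^2}dV(w).$$
Inserting the bound from (2), the right-hand side is at most
$$\frac{C^p}{\pi^n}\int_{\mathbb C^n}e^{-(p\epsilon-\frac p2+1)|z-w|^2}dV(w).$$
This integral is finite and independent of $z$ as soon as $p\epsilon-\frac p2+1>0$, i.e. $p(\frac12-\epsilon)<1$. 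If $\epsilon\geq\frac12$, every $p>2$ works. Otherwise any $p\in(2,\frac1{1/2-\epsilon})$ works, and this interval is nonempty because $\frac1{1/2-\epsilon}>2$. In either case $T\in\mathcal L_p$ for some $p>2$, so $T$ is sufficiently localized. This choice of exponent is the only delicate point: it has to respect both $p>2$ and the decay rate $\epsilon$.

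For (2)$\Leftrightarrow$(3), note first that $T^\star$ is admissible with adjoint $T$, by Definition \ref{def21}. By \eqref{eq21},
$$\left\vert\langle T^\star k_z,k_w\rangle\right\vert=\left\vert\langle k_z,Tk_w\rangle\right\vert=\left\vert\langle Tk_w,k_z\rangle\right\vert,$$
and the bound in (2) is symmetric in $z,w$. Hence (2) holds for $T$ if and only if it holds for $T^\star$. Applying the already proved equivalence (1)$\Leftrightarrow$(2) to $T^\star$ then gives (3)$\Leftrightarrow$(2).
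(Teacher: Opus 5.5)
Your proposal is correct and follows the paper's proof essentially step for step. The paper also gets (1)$\Rightarrow$(2) from the pointwise estimate in the proof of Proposition~\ref{pro310} with $\epsilon=\frac12-\frac1p$, and (2)$\Rightarrow$(1) by inserting the Gaussian bound into the identity of Lemma~\ref{lem32} with $2<p<\frac{1}{1/2-\epsilon}$. It likewise gets (2)$\Leftrightarrow$(3) from $|\langle T^\star k_z,k_w\rangle|=|\langle Tk_w,k_z\rangle|$. Your explicit handling of the case $\epsilon\geq\frac12$ and your check that $T^\star$ is itself admissible (with adjoint $T$) make explicit what the paper leaves implicit.
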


\begin{proof}
$(1)\Rightarrow (2)$ For $T\in \mathcal L_p, p>2,$ we showed in the proof of the previous proposition that $T$ satisfies the implication (2) with $\epsilon=\frac 12-\frac 1p.$ 

$(2)\Rightarrow (1)$ According to the equation \eqref{eq31}, we have
$$
\int_{\mathbb C^n} \left \vert U_zTU_z \mathbbm 1(w)\right \vert^pd\mu (w)=\frac 1{\pi^n}\int_{\mathbb C^n} \left \vert \langle Tk_z, k_w\rangle\right \vert^pe^{\frac {p|z-w|^2}2}e^{-|z-w|^2}dV(w).
$$
Without loss of generality, we may suppose that $\epsilon <\frac 12.$ 
The assertion (1) follows easily with $2<p<\frac 1{\frac 12-\epsilon}.$ 

\item
$(2)\Leftrightarrow (3)$
We notice that $\left \vert\langle T^\star k_z, k_w\rangle \right \vert=\left \vert\langle Tk_w, k_z\rangle \right \vert.$ So the equivalence $(1)\Leftrightarrow (2)$ is valid with $T^\star$ in the place of $T.$
\end{proof}

We also deduce the following corollary.

\begin{cor}\label{cor312}
$\mathcal{SL}$ is a $\ast$-algebra.
\end{cor}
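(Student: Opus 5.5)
We prove that $\mathcal{SL}$ is a linear subspace, closed under taking adjoints and under composition. The characterization in Corollary \ref{cor311}(2) reduces everything to a Gaussian off-diagonal decay estimate:
$$\left\vert\langle Tk_z,k_w\rangle\right\vert\le C_T e^{-\epsilon_T|z-w|^2}\qquad\text{for all } z,w\in\mathbb C^n.$$

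Linearity is immediate. If $T_1,T_2$ satisfy the decay estimate with $\epsilon_1,\epsilon_2$, then $\lambda T_1+T_2$ satisfies it with $\epsilon=\min(\epsilon_1,\epsilon_2)$. Closure under adjoints is exactly $(1)\Leftrightarrow(3)$ of Corollary \ref{cor311}.

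For products, first note that each $T\in\mathcal{SL}$ is bounded on $H^2(\mathbb C^n,d\mu)$. Indeed, Gaussian decay makes $\int_{\mathbb C^n}|\langle Tk_z,k_w\rangle|\,dV(w)$ bounded uniformly in $z$, and likewise for $T^\star$, since $|\langle T^\star k_z,k_w\rangle|=|\langle Tk_w,k_z\rangle|$. Theorem \ref{thm34} then applies. Hence $ST$ is a bounded, and therefore admissible, operator.

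To estimate $\langle STk_z,k_w\rangle$, insert the reproducing formula $f=\pi^{-n}\int_{\mathbb C^n}\langle f,k_u\rangle k_u\,dV(u)$ with $f=Tk_z$. Pairing with $S^\star k_w$ gives
$$\langle STk_z,k_w\rangle=\frac1{\pi^n}\int_{\mathbb C^n}\langle Tk_z,k_u\rangle\langle Sk_u,k_w\rangle\,dV(u).$$
This is justified because $Tk_z\in H^2$ and the expansion converges weakly. Alternatively, one can write $\langle Tk_z,S^\star k_w\rangle$ as an integral against $d\mu$ and rewrite the integrand pointwise via $Tk_z(u)=\langle Tk_z,k_u\rangle e^{|u|^2/2}$ and $\overline{S^\star k_w(u)}=\langle Sk_u,k_w\rangle e^{|u|^2/2}$. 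This gives the same identity directly, by Cauchy–Schwarz integrability.

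Therefore
$$|\langle STk_z,k_w\rangle|\le \frac{C_SC_T}{\pi^n}\int_{\mathbb C^n}e^{-\epsilon|z-u|^2-\epsilon|u-w|^2}\,dV(u),\qquad \epsilon=\min(\epsilon_S,\epsilon_T).$$
The elementary identity $|z-u|^2+|u-w|^2=2\left|u-\frac{z+w}2\right|^2+\frac12|z-w|^2$ evaluates this Gaussian integral as $c_n\epsilon^{-n}e^{-\frac\epsilon2|z-w|^2}$. So $ST$ satisfies condition (2) with $\epsilon/2$, and Corollary \ref{cor311} gives $ST\in\mathcal{SL}$.

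The only delicate step is justifying the inner-product identity for admissible operators, that is, making sure $T$ and $S$ are genuinely bounded so the integrals converge. This is handled by the appeal to Theorem \ref{thm34} above. The rest is the Gaussian computation.
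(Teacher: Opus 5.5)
Your proof is correct and follows essentially the same route as the paper: closure under adjoints comes from Corollary~\ref{cor311}, and closure under products comes from writing $\langle STk_z,k_w\rangle=\pi^{-n}\int_{\mathbb C^n}\langle Tk_z,k_u\rangle\langle Sk_u,k_w\rangle\,dV(u)$ and bounding the resulting Gaussian integral. Your additions are refinements rather than a different argument: you check linearity explicitly, you justify boundedness of $S$ and $T$ via Theorem~\ref{thm34} so that the composition and the duality identity make sense, and you complete the square exactly to get decay rate $\epsilon/2$ instead of the paper's $\epsilon/4$, which also keeps the factor $\pi^{-n}$ the paper drops.
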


\begin{proof}
Since $T^\star \in \mathcal{SL}$ if $T\in \mathcal{SL}$ according to Corollary \ref{cor311}
, it remains to show that $T_2T_1\in \mathcal{SL}$ whenever $T_1, T_2\in \mathcal{SL}.$ According to Corollary \ref{cor311}
, we suppose that there exist some $\epsilon >0$ and a positive constant $C=C(\epsilon)$ such that for all $z, w\in \mathbb C^n,$ the following estimate holds:
\begin{equation}\label{eq36}
\left \vert \langle T_jk_z, k_w\rangle\right \vert \leq Ce^{-\epsilon |z-w|^2}, \quad j=1, 2.
\end{equation}
We must prove that there exists some $\eta >0$ and a positive constant $C'=C'(\eta)$ such that for all $z, w\in \mathbb C^n,$ the following estimate holds:
$$\left \vert \langle T_2T_1 k_z, k_w\rangle\right \vert \leq C'e^{-\eta |z-w|^2}.$$
We have
\begin{eqnarray*}
\langle T_2T_1 k_z, k_w\rangle
&=&\langle T_1 k_z, T_2^\star k_w\rangle\\
&=&\frac 1{\pi^n}\int_{\mathbb C^n} T_1 k_z (\zeta)\overline{T_2^\star k_w (\zeta)}e^{-|\zeta|^2}dV(\zeta)\\
&=&\int_{\mathbb C^n} \langle T_1 k_z, k_\zeta\rangle \overline{\langle T_2^\star k_w, k_\zeta\rangle}dV(\zeta)\\
&=&\int_{\mathbb C^n} \langle T_1 k_z, k_\zeta\rangle \overline{\langle k_w, T_2 k_\zeta\rangle}dV(\zeta).
\end{eqnarray*}
Then
\begin{eqnarray*}
\left \vert \langle T_2T_1 k_z, k_w\rangle \right \vert
&\leq& \int_{\mathbb C^n} \left \vert\langle T_1 k_z, k_\zeta\rangle \right \vert \left \vert \langle T_2 k_\zeta, k_w\rangle \right \vert dV(\zeta)\\
&\leq& C^2\int_{\mathbb C^n} e^{-\epsilon \left (|z-\zeta|^2+|w-\zeta|^2\right )}dV(\zeta).
\end{eqnarray*}
For the latest inequality, we applied \eqref{eq36}. Since $|z-\zeta|^2+|w-\zeta|^2\geq \frac 12|z-w|^2,$ we obtain:

\begin{eqnarray*}
\left \vert \langle T_2T_1 k_z, k_w\rangle \right \vert
&\leq& C^2 e^{-\frac \epsilon 4|z-w|^2}\int_{\mathbb C^n} e^{-\frac \epsilon 2|z-\zeta|^2}dV(\zeta)\\
&=&c_\epsilon C^2 e^{-\frac \epsilon 4|z-w|^2},
\end{eqnarray*}

where $c_\epsilon:=\int_{\mathbb C^n} e^{-\frac \epsilon 2|z-\zeta|^2}dV(\zeta)<\infty.$ The conclusion follows with $\eta=\frac \epsilon 4$ and $C'=c_\epsilon C^2.$
\end{proof}

We deduce the following corollary.

\begin{cor}\label{cor313}
The $C^\star-$algebra $C^\star (\mathcal{SL})$ generated in $\mathcal L(H^2 (\mathbb C^n, d\mu))$ by the $\ast$-algebra $\mathcal{SL}$ of sufficiently localized operators is equal to the operator-norm closure of $\mathcal{SL}.$
\end{cor}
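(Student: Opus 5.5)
We must show that $C^\star(\mathcal{SL})$ coincides with the operator-norm closure $\overline{\mathcal{SL}}$ of $\mathcal{SL}$ in $\mathcal L(H^2(\mathbb C^n,d\mu))$. First we check that $\mathcal{SL}$ consists of bounded operators. By Proposition \ref{pro310}, every $T\in\mathcal{SL}$ satisfies $|\langle Tk_z,k_w\rangle|\le C(1+|z-w|)^{-\beta}$ with $\beta>2n$. The adjoint $T^\star$ is sufficiently localized by Corollary \ref{cor311}, so the same kind of bound holds for it. Since $\beta>2n$, both quantities in the first two conditions of Definition \ref{def25} are finite. Theorem \ref{thm34} then shows that $T$ extends to a bounded operator on $H^2(\mathbb C^n,d\mu)$. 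Hence $\mathcal{SL}\subset\mathcal L(H^2(\mathbb C^n,d\mu))$, and the $C^\star$-algebra it generates makes sense.

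Next we prove the two inclusions. The inclusion $\overline{\mathcal{SL}}\subset C^\star(\mathcal{SL})$ is immediate: $C^\star(\mathcal{SL})$ is norm closed and contains $\mathcal{SL}$. For the reverse inclusion, it suffices to show that $\overline{\mathcal{SL}}$ is a $C^\star$-subalgebra containing $\mathcal{SL}$, since $C^\star(\mathcal{SL})$ is the smallest such algebra. By Corollary \ref{cor312}, $\mathcal{SL}$ is a $\ast$-algebra, and it suffices to show that the norm closure of a $\ast$-subalgebra of a $C^\star$-algebra is again a $\ast$-subalgebra. We check this by a limit argument. Take $S,T\in\overline{\mathcal{SL}}$ with $S_k\to S$ and $T_k\to T$, where $S_k,T_k\in\mathcal{SL}$.
\begin{itemize}
\item Sums and scalar multiples: $S_k+\lambda T_k\to S+\lambda T$.
\item Products: $\|S_kT_k-ST\|\le\|S_k\|\,\|T_k-T\|+\|S_k-S\|\,\|T\|\to0$, and $\|S_k\|$ is bounded, so $S_kT_k\to ST$.
\item Adjoints: $\|S_k^\star-S^\star\|=\|S_k-S\|$, so $S_k^\star\to S^\star$.
\end{itemize}
Each approximating operator lies in $\mathcal{SL}$ by Corollary \ref{cor312}, so $\overline{\mathcal{SL}}$ is closed under these operations. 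Being norm closed by construction, $\overline{\mathcal{SL}}$ is a $C^\star$-algebra, and the equality follows.

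One identification needs care: the adjoint in Corollary \ref{cor312} is the formal adjoint $T^\star$ from Definition \ref{def21}. For a bounded operator it must agree with the Hilbert-space adjoint. Both satisfy the duality relation \eqref{eq21} on the dense span $\mathcal S$. By Theorem \ref{thm34} applied to $T^\star$, the formal adjoint is also bounded, so the two coincide by density. This compatibility check and the boundedness step above are the only points requiring any thought; the rest is the standard closure argument.
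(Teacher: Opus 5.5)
Your proof is correct and follows the same route as the paper, which states this corollary as an immediate consequence of Corollary \ref{cor312}: the norm closure of a $\ast$-subalgebra of $\mathcal L(H^2(\mathbb C^n,d\mu))$ is already a $C^\star$-algebra. Your additional checks are sound refinements of points the paper takes for granted: boundedness of sufficiently localized operators (via Proposition \ref{pro310} and Theorem \ref{thm34}), and agreement of the formal adjoint with the Hilbert-space adjoint, which in fact follows directly from \eqref{eq21} and the density of $\mathcal S$.
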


We end this subsection with the following remark.

\begin{rem}\label{rem314}
All the examples of XZ-sufficiently localized operators given in \cite{XZ13} 
belong to $\mathcal{SL}.$ This raises the following question: is the inclusion $\mathcal{SL}\subset XZ-\mathcal{SL}$ strict?
\end{rem}

\subsection{XZ-sufficiently localized operators are weakly localized}\label{ssec34}
In this subsection, we prove the following proposition.

\begin{prop}\label{pro315}
Every {\rm {XZ}}-sufficiently localized operator is weakly localized.
\end{prop}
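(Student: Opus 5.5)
The plan is to verify the four conditions of Definition \ref{def25} directly from the pointwise decay in Definition \ref{def23}. Let $T$ be XZ-sufficiently localized, with constants $2n<\beta<\infty$ and $0<C<\infty$ such that
$$\left \vert \langle Tk_z, k_w\rangle\right \vert \leq \frac C{\left (1+\left \vert z-w\right \vert \right )^\beta}, \qquad z, w\in \mathbb C^n.$$
The argument has three steps.

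First, we transfer the same bound to $T^\star$. By the duality relation \eqref{eq21} we have $\langle T^\star k_z, k_w\rangle=\overline{\langle Tk_w, k_z\rangle}$. Hence
$$\left \vert \langle T^\star k_z, k_w\rangle\right \vert\leq \frac C{(1+|z-w|)^\beta},$$
because the right-hand side is symmetric in $z$ and $w$. It therefore suffices to verify the first and third conditions of Definition \ref{def25} for $T$; the second and fourth conditions for $T^\star$ then follow verbatim.

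Second, we use integrability of the majorant. We translate $w\mapsto w+z$ and pass to polar coordinates in $\mathbb R^{2n}$:
$$\sup_{z\in\mathbb C^n}\int_{\mathbb C^n}\frac{dV(w)}{(1+|z-w|)^\beta}=\int_{\mathbb C^n}\frac{dV(u)}{(1+|u|)^\beta}=\sigma_{2n-1}\int_0^\infty \frac{t^{2n-1}}{(1+t)^\beta}\,dt,$$
where $\sigma_{2n-1}$ denotes the surface measure of the unit sphere in $\mathbb R^{2n}$. The right-hand side is finite precisely because $\beta>2n$, and it is independent of $z$. This gives the first two conditions.

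Third, we handle the tails. In the same way,
$$\sup_{z}\int_{|z-w|\geq r}\left \vert \langle Tk_z, k_w\rangle\right \vert dV(w)\leq C\sigma_{2n-1}\int_r^\infty \frac{t^{2n-1}}{(1+t)^\beta}\,dt.$$
The right-hand side is the tail of a convergent integral, so it tends to $0$ as $r\to\infty$. One may also bound it explicitly by $C'r^{2n-\beta}$. This yields the last two conditions, and $T$ is weakly localized.

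I do not expect a genuine obstacle. The only point needing care is that the admissibility hypothesis gives exactly the symmetry used in the first step. No boundedness of $T$ needs to be assumed in advance, since Theorem \ref{thm34} then shows a posteriori that $T$ is bounded. The crux is simply the threshold $\beta>2n$ in Definition \ref{def23}, which is exactly what makes the majorant integrable in real dimension $2n$.
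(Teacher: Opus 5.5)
Your proof is correct. Its core computation is the same as in the paper: the symmetry $|\langle T^\star k_z,k_w\rangle|=|\langle Tk_w,k_z\rangle|$ obtained from \eqref{eq21}, a translation and polar coordinates in $\mathbb R^{2n}$, and the integrability of $t^{2n-1}(1+t)^{-\beta}$, which holds exactly when $\beta>2n$.

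The one real difference is how you get the first two (uniform $L^1$) conditions. The paper estimates only the tails $\int_{|z-w|\geq r}$. It then invokes Lemma~\ref{lem35}, presumably meaning Proposition~\ref{pro36}, to pass from the tail conditions to the global ones. But Proposition~\ref{pro36} is stated for $T\in\mathcal L(H^2(\mathbb C^n,d\mu))$, and it uses $\|T\|$ to control the near-diagonal region $\{|z-w|<\delta\}$. Boundedness of an admissible XZ-sufficiently localized operator is not known beforehand; it only follows afterwards, from Theorem~\ref{thm34}. Your route integrates the majorant $C(1+|z-w|)^{-\beta}$ over all of $\mathbb C^n$, so it gets the first two conditions directly from the pointwise bound and avoids that dependence. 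Equivalently, the paper's argument is repaired by using $|\langle Tk_z,k_w\rangle|\leq C$ from Definition~\ref{def23} itself on the near-diagonal region, which your computation does implicitly. Your version is therefore slightly more self-contained and costs nothing extra.
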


\begin{proof}
Let $T$ be a XZ-sufficiently localized operator. In view of Lemma \ref{lem35}
, it suffices to show that $T$ satisfies the last two conditions of a weakly localized operator. 
Let $z\in \mathbb C^n;$ using the change of variables $\zeta=z-w$ and the spherical coordinates, it holds

\begin{eqnarray*}
\int_{|z-w|\geq r} \left \vert \langle Tk_z, k_w\rangle\right \vert dV(w)
&\leq& C\int_{|z-w|\geq r} \frac 1{(1+|z-w|)^\beta}dV(w)\\
&=&C\int_{|\zeta|\geq r} \frac 1{(1+|\zeta|)^\beta}dV(\zeta)\\
&=&Cc_n \int_r^\infty \frac {\rho^{2n-1}}{(1+\rho)^\beta}d\rho\\
&\leq& Cc_n \int_r^\infty \frac {1}{\rho^{\beta-2n+1}}d\rho\\
&=&\frac {Cc_n}{\beta-2n}\frac 1{r^{\beta-2n}}.
\end{eqnarray*}

It follows that
$$\lim \limits_{r\rightarrow \infty} \sup \limits_{z\in \mathbb C^n} \int_{|z-w|\geq r} \left \vert \langle Tk_z, k_w\rangle\right \vert dV(w)\leq \frac {Cc_n}{\beta-2n}\lim \limits_{r\rightarrow \infty} \frac 1{r^{\beta-2n}}=0.$$
Using the duality relation \eqref{eq21} of an admissible operator and the similar way with $T,$ we also have
\begin{eqnarray*}
\lim \limits_{r\rightarrow \infty} \sup \limits_{z\in \mathbb C^n} \int_{|z-w|\geq r} \left \vert \langle T^\star k_z, k_w\rangle\right \vert dV(w)
&=&\lim \limits_{r\rightarrow \infty} \sup \limits_{z\in \mathbb C^n} \int_{|z-w|\geq r} \left \vert \langle T k_w, k_z\rangle\right \vert dV(w)\\
&\leq& \frac {Cc_n}{\beta-2n}\lim \limits_{r\rightarrow \infty} \frac 1{r^{\beta-2n}}=0.
\end{eqnarray*}

\end{proof}

Combining the previous proposition with Proposition \ref{pro310}, we obtain at once the following corollary. 

\begin{cor}\label{cor316}
Every sufficiently localized operator is weakly localized.
\end{cor}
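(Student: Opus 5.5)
The corollary follows by chaining two inclusions already proved: Proposition \ref{pro310} gives $\mathcal{SL}\subset \rm{XZ-}\mathcal{SL}$, and Proposition \ref{pro315} gives $\rm{XZ-}\mathcal{SL}\subset \mathcal{WL}$.

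Let $T$ be sufficiently localized, so $T\in\mathcal L_p$ for some $p>2$. The plan has two steps.

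First, I invoke Proposition \ref{pro310}. As in its proof (see also Corollary \ref{cor311}), it gives
$$\left\vert\langle Tk_z,k_w\rangle\right\vert\leq Ce^{-\epsilon|z-w|^2}, \qquad \epsilon=\tfrac12-\tfrac1p>0.$$
By Lemma \ref{lem39}, applied with any $\beta>2n$ (and $\beta>\epsilon$), this is dominated by $C'(1+|z-w|)^{-\beta}$. Hence $T$ is XZ-sufficiently localized.

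Second, I apply Proposition \ref{pro315}, which shows that every XZ-sufficiently localized operator is weakly localized. For that proposition's appeal to Lemma \ref{lem35} to be legitimate, the first two conditions of Definition \ref{def25} must hold. These follow directly from the displayed estimate, by integrating the bound $(1+|z-w|)^{-\beta}$ in $w$ for $T$ and, via $|\langle T^\star k_z,k_w\rangle|=|\langle Tk_w,k_z\rangle|$, for $T^\star$. Since $\beta>2n$, both integrals are finite uniformly in $z$.

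There is no real obstacle in this argument. The only point needing care is this check that an XZ-sufficiently localized operator satisfies the integrability (first two) conditions, so that Proposition \ref{pro315} is fully justified. The tail conditions are then exactly the computation already carried out in that proposition.
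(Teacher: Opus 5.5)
Your proposal is correct and is exactly the paper's argument: the corollary is obtained by chaining Proposition \ref{pro310} ($\mathcal{SL}\subset$ XZ-$\mathcal{SL}$) with Proposition \ref{pro315} (XZ-$\mathcal{SL}\subset\mathcal{WL}$). Your direct check of the first two conditions of Definition \ref{def25} from the bound $C(1+|z-w|)^{-\beta}$, $\beta>2n$, is a worthwhile patch. The paper's appeal to Lemma \ref{lem35} in Proposition \ref{pro315} is really meant to be Proposition \ref{pro36}, and that result presupposes $T$ bounded, which for a merely admissible $T$ is itself derived (Theorem \ref{thm34}) from those same two conditions.
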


\subsection{The Toeplitz algebra $\mathcal T^1$ is a subspace of $\mathcal L_2$}\label{ssec35}
We prove the following theorem.

\begin{thm}\label{thm317} The following inclusions hold: $\mathcal T^1 \subset \mathcal L (H^2 (\mathcal C^n, d\mu))$ and $\mathcal T^1 \subset \mathcal L_2.$
\end{thm}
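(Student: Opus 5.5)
The first inclusion needs no real work. By definition $\mathcal T^1$ is the operator-norm closure, inside $\mathcal L(H^2(\mathbb C^n,d\mu))$, of the linear span of the Toeplitz operators $T_f$ with $f\in L^\infty$. Each $T_f$ is bounded, since $T_f=P M_f$ with $P$ the orthogonal projection from $L^2(\mathbb C^n,d\mu)$ onto $H^2(\mathbb C^n,d\mu)$, so that $\|T_f\|\le\|f\|_\infty$. Hence the span sits in $\mathcal L(H^2(\mathbb C^n,d\mu))$, and so does its norm closure, because $\mathcal L(H^2(\mathbb C^n,d\mu))$ is a Banach space. In particular every element of $\mathcal T^1$ is admissible, as remarked after Definition \ref{def21}. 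This makes membership in $\mathcal L_2$ a meaningful question.

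For the second inclusion I would show directly that every bounded operator lies in $\mathcal L_2$, exactly as in the proof of Lemma \ref{lem35}. There boundedness was the only ingredient actually used, and it came from Theorem \ref{thm34}. So let $T\in\mathcal T^1$. Then $T$ is bounded by the first part. Each $U_z$ is unitary on $H^2(\mathbb C^n,d\mu)$, and the constant function $\mathbbm 1$ has norm $1$ because $\mu$ is a probability measure. Therefore, for every $z\in\mathbb C^n$,
$$\int_{\mathbb C^n}\left\vert U_zTU_z\mathbbm 1(w)\right\vert^2d\mu(w)=\left\Vert U_zTU_z\mathbbm 1\right\Vert^2\le \left\Vert T\right\Vert^2 .$$
Taking the supremum over $z$ gives $\|T\|_{\mathcal L_2}\le\|T\|<\infty$, so $T\in\mathcal L_2$. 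As a cross-check, Lemma \ref{lem32} with $p=2$ rewrites this quantity as $\pi^{-n}\int|\langle Tk_z,k_w\rangle|^2dV(w)=\|Tk_z\|^2\le\|T\|^2$, using the reproducing formula $\|h\|^2=\pi^{-n}\int|\langle h,k_w\rangle|^2dV(w)$.

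I expect no genuine obstacle here. The only point to be careful about is that $\mathcal L_2$ is defined for admissible operators, so membership has to be justified through boundedness, which was established in the first step. An alternative route would be to use that Toeplitz operators are XZ-sufficiently localized, hence weakly localized by Proposition \ref{pro315}, and then apply Lemma \ref{lem35}. That route still needs a closure argument, and the closure argument is again just the inequality $\|\cdot\|_{\mathcal L_2}\le\|\cdot\|$ above, so the direct argument is preferable.
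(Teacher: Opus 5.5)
Your proof is correct, but it takes a different and more direct route than the paper. The paper never uses $\|T\|$ itself. It picks Toeplitz operators $T_{f_j}\to T$ in operator norm and writes $\langle Tk_z,k_w\rangle=\lim_j e^{-|w|^2/2}(T_{f_j}k_z)(w)$. Fatou's lemma then gives $\int|\langle Tk_z,k_w\rangle|^2dV(w)\le\pi^n\liminf_j\|T_{f_j}k_z\|^2\le\pi^n\sup_j\|T_{f_j}\|^2$. The paper makes this supremum finite with the uniform boundedness principle, and then repeats the argument for $T^\star$ via $T_{\overline{f_j}}\to T^\star$.

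Your argument is the computation already in the proof of Lemma \ref{lem35}, or equivalently Lemma \ref{lem32} with $p=2$ together with $\pi^{-n}\int|\langle Tk_z,k_w\rangle|^2dV(w)=\|Tk_z\|^2$. It dispenses with the approximation, Fatou's lemma and the uniform boundedness principle. The last of these is superfluous in the paper anyway, since a norm-convergent sequence is automatically norm-bounded. The paper's extra step for $T^\star$ is also not needed, because membership in $\mathcal L_2$ as defined involves only $T$.

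Your argument also proves more: $\mathcal L(H^2(\mathbb C^n,d\mu))\subset\mathcal L_2$, with $\|T\|_{\mathcal L_2}\le\|T\|$. A side benefit is that the strictness $\mathcal T^1\subsetneq\mathcal L_2$ of Corollary \ref{cor89} then follows at once from any bounded operator outside $\mathcal T^1$, such as the one in Section \ref{sec6}. The paper's approximation argument is tailored to $\mathcal T^1$ and does not reveal this.
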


\begin{proof}
The first inclusion is well-known; it is even known to be strict. 

Let us prove the second inclusion. Let $T$ be a member of $\mathcal T^1.$ There is a sequence $\{f_j\}_{j=1}^\infty$ of bounded measurable functions in $\mathbb C^n$ such that $\{T_{f_j}\}_{j=1}^\infty \rightarrow T$ in the operator norm. 
We have
$$\langle Tk_z, k_w\rangle=\langle \left (\lim \limits_{j\rightarrow \infty} T_{f_j}\right ) k_z, k_w\rangle=\lim \limits_{j\rightarrow \infty} \langle T_{f_j}k_z, k_w\rangle =e^{-\frac {|w|^2}2} \lim \limits_{j\rightarrow \infty} \left (T_{f_j} k_z\right )(w).$$
So 
\begin{eqnarray*}
\int_{\mathbb C^n} \left \vert \langle Tk_z, k_w\rangle \right \vert^2dV(w)
& = &\int_{\mathbb C^n} \lim \limits_{j\rightarrow \infty} \left\vert \left(T_{f_j} k_z\right)(w)\right\vert^2d\mu (w)\\
&\leq & \liminf \limits_{j\rightarrow \infty} \int_{\mathbb C^n} \left \vert \left(T_{f_j} k_z\right)(w)\right\vert^2d\mu (w) \\
&\leq & \sup \limits_{j} \int_{\mathbb C^n} \left\vert \left (T_{f_j} k_z\right )(w)\right\vert^2d\mu (w) \\
&\leq& \sup \limits_{j} \left \Vert T_{f_j}\right \Vert^2 \left \Vert k_z \right \Vert^2_{H^2 (\mathbb C^n, d\mu)}=\sup \limits_{j} \left \Vert T_{f_j}\right \Vert^2.
\end{eqnarray*}

For the first inequality, we applied Fatou's Lemma. Next, from a consequence of the uniform boundedness principle (cf. e.g. \cite[Corollary 2.3]{B11}
), we obtain $\sup \limits_{j} \left \Vert T_{f_j}\right \Vert<\infty.$ Indeed, $\{T_{f_j}\}_{j=1}^\infty$ is a sequence of continuous linear operators on $H^2 (\mathbb C^n, d\mu)$ such that for every $F\in H^2 (\mathbb C^n, d\mu),$ $T_{f_j} F$ converges in $H^2 (\mathbb C^n, d\mu)$ (as $j\rightarrow \infty)$ to the limit $TF.$ We conclude that
$$\sup \limits_{z\in \mathbb C^n} \int_{\mathbb C^n} \left \vert \langle Tk_z, k_w\rangle \right \vert^2dV(w)\leq \sup \limits_{j} \left \Vert T_{f_j}\right \Vert^2<\infty.$$
The same result holds for $T^\star$ in the place of $T,$ since $(T_{f_j})^\star=T_{\overline{f_j}}$ and then $\{T_{\overline {f_j}}\}_{j=1}^\infty \rightarrow T^\star$ in the operator norm.
\end{proof}

We shall show later (Corollary \ref{cor89}
) that the second inclusion in the previous theorem is strict. To recap, we state the following corollary.

\begin{cor}\label{cor318}
The following inclusion hold:
\begin{eqnarray*}
\mathcal St\mathcal L \subsetneq \mathcal S\mathcal L \subset {\rm {XZ-}}\mathcal S\mathcal L\subset \mathcal W\mathcal L&\subsetneq \mathcal T^1&\subsetneq \mathcal L_2\\
& 
	\subsetneq
&\\
&\mathcal L(H^2 (\mathbb C^n, d\mu))& .
\end{eqnarray*}
\end{cor}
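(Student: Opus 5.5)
The chain in Corollary \ref{cor318} is a collection of inclusions, each of which should already be available or follow quickly from earlier results; I will check them one link at a time.

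\textbf{The first row.} The inclusion $\mathcal St\mathcal L\subset\mathcal S\mathcal L$ is immediate from Definition \ref{def24}. Its strictness comes from the composition operators of Wang, Cao and Zhu \cite[Section 3]{WCZ13}, which are sufficiently localized but not strongly localized; these are revisited in Theorem \ref{thm42}. The inclusion $\mathcal S\mathcal L\subset{\rm XZ-}\mathcal S\mathcal L$ is Proposition \ref{pro310}. The inclusion ${\rm XZ-}\mathcal S\mathcal L\subset\mathcal W\mathcal L$ is Proposition \ref{pro315}. These two inclusions are stated only as non-strict here, since their strictness is the content of Theorem \ref{thm13}.

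\textbf{The link $\mathcal W\mathcal L\subsetneq\mathcal T^1$.} For the inclusion, every $T\in\mathcal W\mathcal L$ is bounded on $H^2(\mathbb C^n,d\mu)$ by Theorem \ref{thm34}. It also lies in $C^\star(\mathcal W\mathcal L)$, and this algebra equals $\mathcal T^1$ by Theorem \ref{thm12}. Strictness is the delicate point, because it cannot be deduced from the inclusions alone. I would argue as follows.
\begin{itemize}
\item[(i)] $\mathcal W\mathcal L$ is not closed under operator-norm limits, whereas $\mathcal T^1$ is closed.
\item[(ii)] If the two classes were equal, then $\mathcal W\mathcal L$ would be a $C^\star$-algebra.
\end{itemize}
To contradict (ii), I would produce a $T\in\mathcal T^1$ that fails the localization conditions. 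The simplest route is to take a compact operator, which lies in $\mathcal T^1$ by Theorem \ref{thm11}, and to choose it as a norm limit of finite-rank operators built from $k_{z_j}$'s. The goal is to make $\sup_z\int|\langle Tk_z,k_w\rangle|\,dV(w)=\infty$. For example, take $T=\sum_j c_j\, k_{a_j}\otimes k_{b_j}$ with $|a_j-b_j|\to\infty$ and $c_j\to0$ slowly. Then $\langle Tk_{b_j},k_w\rangle$ carries mass about $c_j$ near $w=a_j$, at distance $|a_j-b_j|$ from $b_j$. Consequently the tail condition $\lim_{r\to\infty}\sup_z\int_{|z-w|\ge r}\cdots=0$ fails once $c_j$ decays more slowly than the available room permits, for instance if $c_j$ is constant along a sparse subsequence. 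Constant $c_j$ would break compactness, so the right choice is presumably a non-compact $T$ in $\mathcal T^1$. One natural candidate is a unitary shift $U_a$-type product, such as $W_a f(\zeta)=f(\zeta-a)k_a(\zeta)$, which lies in $\mathcal T^1$ as a norm limit of Toeplitz operators. A cleaner alternative is $T=\sum_j P_j$ built from translated blocks that are widely separated. Alternatively, I would simply cite the paper's later examples in Section \ref{sec8} (Zhu's singular convolution operators), which lie in $\mathcal T^1$ but not in $\mathcal W\mathcal L$. This is the main obstacle I expect.

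\textbf{The second row.} The inclusion $\mathcal T^1\subset\mathcal L_2$ is Theorem \ref{thm317}, and its strictness is Corollary \ref{cor89}. The inclusion $\mathcal T^1\subset\mathcal L(H^2)$ is trivial. Its strictness is classical, and can also be read off from the operator of Section \ref{sec6}: that operator is bounded and has vanishing Berezin transform, yet it is not compact. By Theorem \ref{thm11} it therefore does not belong to $\mathcal T^1$.
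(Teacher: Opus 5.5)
Apart from one link, your plan cites exactly what the paper relies on. The paper states this corollary as a recap, with no separate proof, of Theorem \ref{thm42}, Propositions \ref{pro310} and \ref{pro315}, Theorem \ref{thm12}, Theorem \ref{thm317}, Corollary \ref{cor89}, and the operator of Section \ref{sec6}. The gap is the strictness of $\mathcal{WL}\subsetneq\mathcal T^1$: you flag it but never establish it, and none of your candidate routes works.

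\begin{itemize}
\item The operator $W_af(\zeta)=f(\zeta-a)k_a(\zeta)$ is exactly $V_a$ of Subsection \ref{ssec41}. It is strongly localized by Proposition \ref{pro41}, hence weakly localized, so it is not a counterexample.
\item The appeal to Section \ref{sec8} is not available. There the paper only proves that $S_\varphi\in\mathcal{WL}$ forces $m\in BUC(\mathbb R^n)$ (Corollary \ref{cor86}), i.e.\ $S_\varphi\in\mathcal T^1$ (Theorem \ref{thm85}). It explicitly leaves the converse open. The one class it examines, $m=\widehat g$ with $g\in L^1$, turns out to be weakly localized (Theorem \ref{thm92}). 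So no element of $\mathcal T^1\setminus\mathcal{WL}$ is exhibited there.
\item Claim (i) is unproved and is essentially the statement you need.
\item The ``translated blocks'' suggestion is too vague to check.
\end{itemize}
Since the paper gives no argument for this link either, you have to supply one.

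Your compact-operator idea was dropped for the wrong reason. Aiming at the tail condition with individually small, well-separated pieces is the wrong mechanism. A compact operator can instead violate the first (integrability) condition of Definition \ref{def25} through a poorly localized vector.

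Take $f\in F^2_1\setminus F^1_1$, for instance $f(z)=(e^{z_1^2/2}-1)/z_1$. Consider the rank-one operator $Th=\langle h,k_0\rangle f$, where $k_0=\mathbbm 1$. It is compact, so $T\in\mathcal T^1$ by Theorem \ref{thm11}. On the other hand, $Tk_0=f$ gives
\[
\int_{\mathbb C^n}\left\vert\langle Tk_0,k_w\rangle\right\vert\,dV(w)=\int_{\mathbb C^n}|f(w)|\,e^{-\frac{|w|^2}{2}}\,dV(w)=\infty ,
\]
so $T\notin\mathcal{WL}$.

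Your own sum actually produces such an operator if you take all $b_j=0$, the $a_j$ very sparse, and $(c_j)\in\ell^2\setminus\ell^1$; then $T$ is rank one with $f=\sum_j c_jk_{a_j}$. With this example in place, the rest of your chain goes through as written.
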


\section{A family of strongly localized operators. A class of sufficiently localized operators which are not strongly localized}\label{sec4}
\subsection{A family of strongly localized operators}\label{ssec41}
In this subsection, we prove that the family $\{V_a \}_{a\in \mathbb C^n}$ of the unitary operators defined on $H^2 (\mathbb C^n, d\mu)$ as follows
$$V_a f(\zeta)=f(\zeta-a)k_a (\zeta), \quad \zeta \in \mathbb C^n, \hskip 2truemm f\in H^2 (\mathbb C^n, d\mu)$$
 is a family of strongly localized operators. It was shown in \cite[Section 4]{XZ13}
  that these operators are XZ-sufficiently localized. In particular, for $a=0,$ the operator $V_0=I$ (the identity operator) is strongly localized.

\begin{prop}\label{pro41}
For each $a\in \mathbb C^n,$ the associated operator $V_a$ is strongly localized.
\end{prop}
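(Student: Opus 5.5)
To prove that $V_a$ is strongly localized, the plan is to show that $V_a\in\mathcal L_p$ for every $p>2$. By Lemma \ref{lem32} this reduces to bounding
$$\sup_{z\in\mathbb C^n}\int_{\mathbb C^n}\left\vert\langle V_ak_z,k_w\rangle\right\vert^p e^{\left(\frac p2-1\right)|z-w|^2}dV(w),$$
so the whole argument rests on an explicit formula for $\left\vert\langle V_ak_z,k_w\rangle\right\vert$. We need such a formula anyway, since $V_a$ is unitary and hence admissible.

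\textbf{Step 1: compute $V_ak_z$.} We have
$$V_ak_z(\zeta)=e^{\langle \zeta-a,z\rangle-\frac{|z|^2}2}\,e^{\langle\zeta,a\rangle-\frac{|a|^2}2}=e^{\langle\zeta,z+a\rangle}\,e^{-\langle a,z\rangle-\frac{|z|^2+|a|^2}2}.$$
Since $|e^{\langle\zeta,z+a\rangle}|$ is what enters, we can write $V_ak_z=c(a,z)\,k_{z+a}$ with
$$|c(a,z)|=e^{-\mathrm{Re}\langle a,z\rangle-\frac{|z|^2+|a|^2}2+\frac{|z+a|^2}2}=1.$$
Thus $V_a k_z$ is a unimodular multiple of $k_{z+a}$. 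This is the analogue of the Weyl-operator identity, and it is also consistent with $V_a$ being unitary.

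\textbf{Step 2: evaluate the matrix coefficients.} From Step 1,
$$\left\vert\langle V_ak_z,k_w\rangle\right\vert=\left\vert\langle k_{z+a},k_w\rangle\right\vert=e^{-\frac{|z+a-w|^2}2},$$
using the standard identity $|\langle k_u,k_w\rangle|=e^{-|u-w|^2/2}$, which follows directly from the formula for $K_z$.

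\textbf{Step 3: check the integrability.} The integrand in Lemma \ref{lem32} is
$$\exp\left(-\tfrac p2|z+a-w|^2+\left(\tfrac p2-1\right)|z-w|^2\right).$$
Substituting $t=w-z$, this becomes independent of $z$:
$$\int_{\mathbb C^n}\exp\left(-\tfrac p2|t-a|^2+\left(\tfrac p2-1\right)|t|^2\right)dV(t).$$
Expanding $|t-a|^2=|t|^2-2\mathrm{Re}\langle t,a\rangle+|a|^2$, the exponent equals
$$-|t|^2+p\,\mathrm{Re}\langle t,a\rangle-\tfrac p2|a|^2.$$
This is a Gaussian with negative-definite quadratic part, so it is integrable for every $p>0$, and the supremum over $z$ is finite because the integral does not depend on $z$. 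This gives $V_a\in\mathcal L_p$ for all $p>2$, which is the claim.

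The only delicate point is Step 1. One must get the sign conventions in $V_a$ right, since $V_a f(\zeta)=f(\zeta-a)k_a(\zeta)$ differs from $U_z$, and verify that the phase has modulus one. Once $V_ak_z$ is identified as a unimodular multiple of $k_{z+a}$, the rest is a direct Gaussian computation.
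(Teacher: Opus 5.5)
Your proposal is correct and follows the paper's proof: both identify $V_ak_z$ as a unimodular multiple of $k_{z+a}$, deduce $\left\vert\langle V_ak_z,k_w\rangle\right\vert=e^{-\frac{|z+a-w|^2}{2}}$, and substitute this into Lemma \ref{lem32}. The only difference is minor: you expand the square and evaluate the resulting Gaussian exactly, which is independent of $z$ and finite for every $p$, whereas the paper bounds the cross term $\mathrm{Re}\,\langle z-w,a\rangle$ by $\frac{|z-w|^2}{2\epsilon}+\frac{\epsilon|a|^2}{2}$ with $\epsilon>1$, obtains $V_a\in\mathcal L_p$ for $2<p<2\epsilon$, and then lets $\epsilon$ be arbitrary.
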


\begin{proof}
Let $a\in \mathbb C^n.$ For all $z, \zeta \in \mathbb C^n,$ we have
\begin{eqnarray*}
V_a k_z (\zeta)&=&k_z (\zeta-a)k_a (\zeta)\\
&=&e^{\langle \zeta-a, z\rangle}e^{-\frac {|z|^2}2}e^{\langle \zeta, a\rangle}e^{-\frac {|a|^2}2}\\
&=&e^{\langle \zeta, z+a\rangle-\langle a, z\rangle-\frac 12|z+a|^2+\Re e\hskip 1truemm \langle a, z\rangle}\\
&=&e^{\langle \zeta, z+a\rangle-\frac 12|z+a|^2-i\Im m\hskip 1truemm \langle a, z\rangle}\\
&=&k_{z+a} (\zeta)e^{-i\Im m\hskip 1truemm \langle a, z\rangle}.
\end{eqnarray*}

Next, by the reproducing kernel property, we obtain:
\begin{eqnarray*}
\left \vert \langle V_a k_z, k_w\rangle \right \vert
&=&\left \vert \langle k_{z+a} e^{-i\Im m\hskip 1truemm \langle a, z\rangle}, k_w\rangle\right \vert= \left \vert \langle k_{z+a}, k_w\rangle \right \vert\\
&=&e^{-\frac {|w|^2}2}\left \vert \langle k_{z+a}, K_w\rangle \right \vert=e^{-\frac {|w|^2}2}\left \vert k_{z+a}(w)\right \vert\\
&=&e^{-\frac {|w|^2}2}\left \vert e^{\langle w, z+a\rangle-\frac {|z+a|^2}2}\right \vert\\
&=&e^{-\frac {|z-w+a|^2}2}.
\end{eqnarray*}

We shall apply the following elementary observation: for all $\epsilon >0,$ we have 
$$\Re e\hskip 1truemm \langle z-w, a\rangle\leq \frac {|z-w|^2}{2\epsilon}+\frac {\epsilon |a|^2}2.$$
In conclusion,
\begin{eqnarray*}
-\frac {|z-w+a|^2}2
&=&-\frac {|z-w|^2}2-\frac {|a|^2}2+\Re e\hskip 1truemm \langle z-w, a\rangle\\
&\leq& -\frac {|z-w|^2}2-\frac {|a|^2}2+\frac {|z-w|^2}{2\epsilon}+\frac {\epsilon |a|^2}2\\
&=&\left (\frac 1{2\epsilon}-\frac 12\right )|z-w|^2+\left (\frac \epsilon 2-\frac 12\right )|a|^2
\end{eqnarray*}
So $\left \vert \langle V_a k_z, k_w\rangle \right \vert \leq e^{\left (\frac \epsilon 2-\frac 12\right )|a|^2}e^{\left (\frac 1{2\epsilon}-\frac 12\right )|z-w|^2}.$ 
This at once implies that $V_a$ is a XZ-sufficiently localized operator if $\epsilon  >1.$ Moreover, in this case, in view of Lemma \ref{lem32}, we have:
\begin{eqnarray*}
\int_{\mathbb C^n} \left \vert U_zV_aU_z \mathbbm 1(w)\right \vert^pd\mu (w)
&=&\int_{\mathbb C^n} \left \vert \langle V_a k_z, k_w\rangle \right \vert^p e^{\left (\frac p2-1\right )|z-w|^2}dV(w)\\
&\leq& e^{p\left (\frac \epsilon 2-\frac 12\right )|a|^2}\int_{\mathbb C^n} e^{\left (p\left (\frac 1{2\epsilon}-\frac 12\right )+\left (\frac p{2}-1\right ) \right )|z-w|^2}dV(w)\\
&=&e^{p\left (\frac \epsilon 2-\frac 12\right )|a|^2}\int_{\mathbb C^n} e^{\left (\frac p{2\epsilon}-1\right )|z-w|^2}dV(w)\\
&=&e^{p\left (\frac \epsilon 2-\frac 12\right )|a|^2}\int_{\mathbb C^n} e^{\left (\frac p{2\epsilon}-1\right )|\zeta|^2}dV(\zeta).
\end{eqnarray*}

This integral converges if $\frac p{2\epsilon}-1<0,$ that is, if $p<2\epsilon.$ Since the adjoint of $V_a$ is $V_a^\star=V_{-a},$ we conclude that 
\begin{equation*}
V_a\in \bigcap_{\epsilon >1} \bigcap_{2<p<2\epsilon} \mathcal L_p=\bigcap_{2<p<\infty} \mathcal L_p:
\end{equation*}
each operator $V_a$ is strongly localized.
\end{proof}

\subsection{A class of sufficiently localized operators which are not strongly localized}\label{ssec42}
Let $r\in [0, 1).$ We define the operator $T_r$ as follows.
$$T_r f(z)=f(-rz), \quad z\in \mathbb C$$
for any entire function in $\mathbb C.$ This is the particular case $A=-rI_n, \hskip 2truemm B=0$ ($I_n$ is the $n\times n$ identity matrix)  of the  composition operators $C_\varphi, \varphi (z)=Az+B,$ studied in Section 7 
below. The following theorem was proved by Wang, Cao and Zhu \cite[Section 3]{WCZ}. 

\begin{thm}\label{thm42}
Let $p\in (2, \infty).$ 
\begin{enumerate}
\item
Let $0<r<1.$ The operator $T_r$ belongs to $\mathcal L_p$ if $2<p\leq \frac 4{1+r}.$  It does not belong to $\mathcal L_p$ if $p>\frac 4{1+r}.$ 
\item
The operator $T_0$ belongs to $\mathcal L_p$ if $2<p\leq 4.$  It does not belong to $\mathcal L_p$ if $p>4.$
\end{enumerate}
\end{thm}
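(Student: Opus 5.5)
The plan is to compute $\int_{\mathbb C^n}\vert U_zT_rU_z\mathbbm 1\vert^p\,d\mu$ in closed form via Lemma \ref{lem32}, and then read off exactly when its supremum over $z$ is finite. Both items of Theorem \ref{thm42} are handled at once, since $r=0$ is allowed throughout. I work in $\mathbb C^n$, with $T_rf(z)=f(-rz)$.

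First I check that $T_r$ is admissible. For $0\le r<1$ the operator $T_r$ is a bounded composition operator on $H^2(\mathbb C^n,d\mu)$; this follows from the monomial expansion, since $z^\alpha\mapsto(-r)^{|\alpha|}z^\alpha$ and $r<1$ makes $T_r$ a contraction. So $T_r$ is admissible by the remark after Definition \ref{def21}.

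Next I compute the kernel entries $\langle T_rk_z,k_w\rangle$. We have $T_rk_z(\zeta)=e^{\langle -r\zeta,z\rangle}e^{-|z|^2/2}$, and the reproducing property gives $\langle T_rk_z,k_w\rangle=e^{-|w|^2/2}T_rk_z(w)$. Taking moduli,
$$\vert\langle T_rk_z,k_w\rangle\vert=\exp\Big(-\tfrac{|z|^2+|w|^2}{2}-r\,\Re\langle w,z\rangle\Big).$$
I insert this into \eqref{eq31} and expand $(\tfrac p2-1)|z-w|^2=(\tfrac p2-1)(|z|^2+|w|^2-2\Re\langle w,z\rangle)$. The exponent of the integrand then collapses to
$$-(|z|^2+|w|^2)-a\,\Re\langle w,z\rangle,\qquad a:=p(1+r)-2>0 .$$

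Then I evaluate the Gaussian integral in $w$. Completing the square, $|w|^2+a\Re\langle w,z\rangle=|w+\tfrac a2 z|^2-\tfrac{a^2}{4}|z|^2$, so
$$\int_{\mathbb C^n}e^{-|w|^2-a\Re\langle w,z\rangle}dV(w)=\pi^n e^{a^2|z|^2/4}.$$
Therefore
$$\int_{\mathbb C^n}\vert U_zT_rU_z\mathbbm 1(w)\vert^p\,d\mu(w)=\exp\Big(\big(\tfrac{a^2}{4}-1\big)|z|^2\Big).$$
The supremum over $z\in\mathbb C^n$ is finite (equal to $1$) exactly when $a^2\le4$. Since $a>0$, this is the condition $p(1+r)-2\le2$, that is $p\le\frac4{1+r}$. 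If instead $p>\frac4{1+r}$, the expression tends to $\infty$ as $|z|\to\infty$, so $T_r\notin\mathcal L_p$. Setting $r=0$ gives the threshold $4$ of item (2).

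The only delicate step is the bookkeeping of the exponent: the sign of the cross term and the factor $(\tfrac p2-1)$ coming from \eqref{eq31}. I would double-check it against the case $r=0$, $p=4$, where the integrand should be exactly $e^{-|z|^2-|w|^2-2\Re\langle w,z\rangle}$ and the integral should come out constant in $z$, equal to $1$.
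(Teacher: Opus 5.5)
Your proposal is correct and follows essentially the paper's route: an explicit Gaussian computation giving $\int_{\mathbb C^n}\vert U_zT_rU_z\mathbbm 1\vert^p\,d\mu=\exp\bigl(-p(1+r)\bigl(1-\tfrac{p(1+r)}4\bigr)|z|^2\bigr)$, which is your $\exp\bigl((\tfrac{a^2}4-1)|z|^2\bigr)$, from which the threshold $p\le\frac4{1+r}$ is read off. The only difference is cosmetic: you pass through Lemma \ref{lem32} and the kernel entries $\langle T_rk_z,k_w\rangle$, whereas the paper computes $U_zT_rU_z\mathbbm 1(w)=e^{-(1+r)|z|^2}e^{(1+r)\langle w,z\rangle}$ directly; you also treat $r=0$ and general $n$ in one stroke.
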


\begin{proof}
\begin{enumerate}
\item
We have:
\begin{eqnarray*}
U_zT_rU_z \mathbbm 1 (w)&=&k_z (w)\left (T_r k_z\right ) (z-w)\\
&=&k_z (w)k_z (-r(z-w))\\
&=&e^{-(r+1)|z|^2}e^{w\cdot \overline {(1+r)z}}.
\end{eqnarray*}
Then 
\begin{eqnarray*}
\int_{\mathbb C} \left \vert U_zT_rU_z \mathbbm 1 (w)\right \vert^p d\mu (z)
&=&\frac 1{\pi}e^{-p(r+1)|z|^2}\int_{\mathbb C^n} \left \vert e^{w\cdot \overline {(1+r)z}}\right \vert^pe^{-|w|^2}dV(w)\\
&=&\frac 1{\pi}e^{-p(r+1)|z|^2}\int_{\mathbb C^n} \left\vert e^{w\cdot \overline{\left(\frac {(1+r)pz}2\right)}}\right\vert^2e^{-|w|^2}dV(w)\\
&=&\frac 1{\pi}e^{-p(r+1)|z|^2}e^{\left(\frac {(1+r)p|z|}2 \right)^2}\\
&=&\frac 1{\pi}e^{-p(1+r)|z|^2\left(1-\frac{(1+r)p}4 \right)}.
\end{eqnarray*}
We obtain that  $\sup \limits_{z\in \mathbb C} \int_{\mathbb C} \left \vert U_zT_rU_z \mathbbm 1 (w)\right \vert^p d\mu (w)<\infty$ (the operator $T_r$ belongs to $\mathcal L_p)$ if and only if $1-\frac {(1+r)p}4\geq 0.$ This condition amounts to $2<p\leq \frac 4{1+r}.$ 
\item
The proof of (2) is elementary. We leave it to the reader.
\end{enumerate}
\end{proof}

\section{Other examples of localized operators. The class of Toeplitz operators with complex measure symbols}\label{sec5}
The analogous study for the Bergman space in the unit disc was performed in \cite{Z22}.
\subsection{Reminder of some measure theory}\label{ssec51}
In this subsection, for basic measure theory, we refer to  \cite[Chapter 6]{R87}.
 We start with the following definition.
\begin{defn}\label{def51}
We denote by $\mathcal B$ the Borel $\sigma$-algebra in $\mathbb C^n.$ We set $\mathcal K_l=\left \{z\in \mathbb C^n: |z| \leq l\right \}$ so that $\mathbb C^n=\cup_{l=1}^\infty \mathcal K_l.$ We call $\mathcal B_l$ the Borel algebra on $\mathcal K_l.$ Notice that $\mathcal B_l=\{A\cap \mathcal K_l: A\in \mathcal B\}.$

Let $\nu$ be a complex set-function on the Borel $\sigma$-algebra $\cup_{l=1}^\infty \mathcal B_l$ in $\mathbb C^n,$ which is such that the complex set-function $\omega$ defined on $\cup_{l=1}^\infty \mathcal B_l$ by 
\begin{equation}\label{eq51}
\omega (A)=\int_A e^{-|z|^2}d\nu (z), \quad A\in \mathcal B_l, \quad l=1, 2, \cdots,
\end{equation}
extends to a complex measure on $\mathbb B$ as follows: for every member $A$ of $\mathcal B,$
$$\omega (A)=\lim \limits_{l\rightarrow \infty}\int_{A\cap \mathcal K_l} e^{-|z|^2}d\nu (z), \quad A\in \mathcal B.$$
We denote by $|\omega|$ the total variation measure of $\omega$ on $\mathcal B,$ and we call $|\nu|$ the positive measure $|\nu|$ defined on $\mathcal B$ by 
\begin{equation}\label{eq52}
d|\nu| (z)=e^{|z|^2}d|\omega| (z), \quad z\in \mathbb C^n. 
\end{equation}
\end{defn}

We next consider the following decomposition of $\omega:$
\begin{equation}\label{dec} 
\omega=\omega^1-\omega^2+i\left (\omega^3-\omega^4\right ),
\end{equation}
where $\omega^1=\frac 12 \left (|\omega|+\Re e \hskip 1truemm \omega \right ), \hskip 1truemm \omega^2=\frac 12 \left (|\omega|-\Re e \hskip 1truemm \omega\right ), \hskip 1truemm \omega^3=\frac 12 \left (|\omega|+\Im m \hskip 1truemm \omega\right ), \hskip 1truemm \omega^4=\frac 12 \left (|\omega|-\Im m \hskip 1truemm \omega \right ).$ For all $l=1, 2, \cdots,$ each $\omega^j, j=1, 2, 3, 4,$ is a positive measure on $\mathcal B_l.$ We extend $\omega^j$ to $\mathcal B$ as follows:
\begin{equation}\label{lim} 
\omega^j (A)=\lim \limits_{l\rightarrow \infty} \omega^j (A\cap \mathcal K_l), \quad A\in \mathcal B.
\end{equation}
We prove the following lemma.
\begin{lem}\label{lem52}
The non-negative set-functions $\omega^j, \hskip 1truemm j=1, 2, 3, 4,$ are well-defined on $\mathcal B$ and define positive measures on $\mathcal B.$ Moreover, for every member $A$ of $\mathcal B,$ we have $\omega^j (A)\leq |\omega| (A), j=1, 2, 3, 4.$
\end{lem}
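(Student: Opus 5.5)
The plan is to exploit the fact that $\omega$ is a genuine complex measure on $\mathcal B$, so $|\omega|$, $\Re e\,\omega$ and $\Im m\,\omega$ are finite signed or positive measures on all of $\mathcal B$. A subtlety is that the decomposition \eqref{dec} is first written on each $\mathcal B_l$. There $\omega^j$ is the restriction to $\mathcal K_l$ of $\frac 12(|\omega|\pm\Re e\,\omega)$ or $\frac 12(|\omega|\pm\Im m\,\omega)$. So the first step is to check compatibility: for $A\in\mathcal B$ and $l\le l'$, the value $\omega^j(A\cap\mathcal K_l)$ computed in $\mathcal B_l$ agrees with the one computed in $\mathcal B_{l'}$. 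This holds because the total variation of the restriction of $\omega$ to $\mathcal K_l$ equals the restriction of $|\omega|$ to $\mathcal K_l$. I will verify that from the definition of total variation: partitions of a subset $E\subset \mathcal K_l$ are the same whether viewed in $\mathcal B_l$ or in $\mathcal B$. With compatibility in hand, $\omega^j(A\cap \mathcal K_l)=\frac12\big(|\omega|(A\cap\mathcal K_l)\pm \Re e\,\omega(A\cap\mathcal K_l)\big)$ for $j=1,2$, and similarly with $\Im m$ for $j=3,4$.

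Next comes well-definedness of the limit in \eqref{lim}. Since each $\omega^j$ is a positive measure on $\mathcal B_l$, and $A\cap \mathcal K_l$ increases with $l$, the sequence $\omega^j(A\cap\mathcal K_l)$ is nondecreasing. It is also bounded, since $|\Re e\,\omega(E)|\le|\omega(E)|\le|\omega|(E)$ gives $\omega^j(E)\le|\omega|(E)\le|\omega|(\mathbb C^n)<\infty$. Hence the limit exists and is finite. Better still, continuity from below of the finite measures $|\omega|$ and $\Re e\,\omega$ on $\mathcal B$ (and likewise of $\Im m\,\omega$) identifies the limit: $\omega^j(A)=\frac12\big(|\omega|(A)\pm\Re e\,\omega(A)\big)$, and analogously for $j=3,4$.

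Once this identity is established, countable additivity on $\mathcal B$ is immediate. Each $\omega^j$ is a linear combination of countably additive finite set functions, and it is nonnegative because $|\Re e\,\omega(A)|\le|\omega|(A)$ and $|\Im m\,\omega(A)|\le|\omega|(A)$. Finally, the bound $\omega^j(A)\le|\omega|(A)$ follows from the same inequality: for example, $\frac12(|\omega|(A)+\Re e\,\omega(A))\le\frac12(|\omega|(A)+|\omega|(A))$. The only step requiring care is the compatibility of total variations across the $\mathcal B_l$. I expect this to be routine via the partition definition, but if it proves awkward I would instead directly define the $\omega^j$ on $\mathcal B$ by the Jordan-type formulas and check that they agree with \eqref{lim}.
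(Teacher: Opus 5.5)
Your proposal is correct and follows the paper's route. For each $A\in\mathcal B$ the sequence $\omega^j(A\cap\mathcal K_l)$ is nondecreasing and bounded by $|\omega|(A\cap\mathcal K_l)\le|\omega|(\mathbb C^n)$, so the limit defining $\omega^j(A)$ exists and satisfies $\omega^j(A)\le|\omega|(A)$. Your extra step, identifying this limit via continuity from below as $\frac12\big(|\omega|(A)\pm\Re e\,\omega(A)\big)$ (resp.\ with $\Im m\,\omega$), gives countable additivity on $\mathcal B$ at once; the paper only asserts this, though it also follows from the fact that a setwise nondecreasing limit of measures is a measure.
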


\begin{proof}
It is clear that each $\nu^j, \hskip 1truemm j=1, 2, 3, 4,$ is a positive measure on $\cup_{l=1}^\infty \mathcal B_l.$ Moreover, for each 
for each $j=1, 2, 3, 4,$ the sequence $\left \{\omega^j (A\cap \mathcal K_l\right \}_{l=1}^\infty$ is non-decreasing $(A\in \mathbb B)$ and 
for each $l,$ we have  $\omega^j (A\cap \mathcal K_l\leq |\omega| (A\cap \mathcal K_l).$ Since the sequence $\left \{|\omega| (A\cap \mathcal K_l) \right \}_{l=1}^\infty$ is convergent, it is bounded above and so is the sequence $\left \{\omega^j (A\cap \mathcal K_l\right \}_{l=1}^\infty.$ This implies that the sequence $\left \{\omega^j (A\cap \mathcal K_l\right \}_{l=1}^\infty$ is convergent: this justifies the existence of the limit in \eqref{lim}: each $\omega^j, \hskip 1truemm j=1, 2,\cdots,$ is a finite positive measure on $\mathbb B.$ Finally, we obtain that for every member $A$ of $\mathcal B,$ we have $\omega^j (A)\leq |\omega| (A), j=1, 2, 3, 4.$ 
\end{proof}

\subsection{Definition and preliminary results}\label{ssec52}
We begin with the following proposition.

\begin{prop}\label{pro53}
Let $\nu$ be a complex function on the Borel $\sigma$-algebra  $\cup_{l=1}^\infty \mathcal B_l$ which satisfies the hypotheses of Definition \ref{def51} and we call $\omega$ the associated complex measure on $\mathcal B.$ 
We suppose that the total variation measure $|\omega|$ of $\omega$ satisfies the condition
\begin{equation}\label{eq55}
\int_{\mathbb C^n} \left \vert K_z(w)\right \vert d|\omega| (w)<\infty
\end{equation}
for all $z\in \mathbb C^n.$
The following function $F$ is well-defined in $\mathbb C^n:$ 
$$F(z)=\int_{\mathbb C^n}  \overline{K_z (w)}f(w)e^{-|w|^2}d\nu (w), \quad z\in \mathbb C^n, \hskip 2truemm f\in \mathcal S.$$
\end{prop}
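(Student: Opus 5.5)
The plan is to rewrite the integral defining $F(z)$ as an integral against the complex measure $\omega$ rather than against $\nu$. Then I will show that the integrand is absolutely integrable with respect to $|\omega|$, using hypothesis \eqref{eq55}. The formal identity behind this is $e^{-|w|^2}d\nu(w)=d\omega(w)$ from \eqref{eq51}, which gives
$$F(z)=\int_{\mathbb C^n}\overline{K_z(w)}f(w)\,d\omega(w).$$
To give this a precise meaning, I will define $F(z)$ as the limit as $l\to\infty$ of the integrals over $\mathcal K_l$. On each compact $\mathcal K_l$ the integral is unambiguous, because $\overline{K_z}f$ is continuous and hence bounded there. Splitting $\omega=\omega^1-\omega^2+i(\omega^3-\omega^4)$ as in \eqref{dec}, it then suffices to prove the following claim: for each $j$, the integral $\int_{\mathbb C^n}|K_z(w)||f(w)|\,d\omega^j(w)$ is finite. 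By monotone convergence this integral equals the limit of the integrals over $\mathcal K_l$, so the limits exist and $F(z)$ is a finite number.

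To prove the claim, I reduce to a single kernel. Since $f\in\mathcal S$, we have $f=\sum_{k=1}^m a_k k_{z_k}$, and therefore
$$|K_z(w)f(w)|\le \sum_k |a_k|e^{-|z_k|^2/2}\,|K_z(w)K_{z_k}(w)|.$$
Moreover $K_z(w)K_{z_k}(w)=e^{\langle w,z+z_k\rangle}=K_{z+z_k}(w)$. So the integrand is dominated by a finite linear combination of functions $|K_{\zeta}(w)|$ with $\zeta=z+z_k$. By Lemma \ref{lem52} we have $\omega^j\le|\omega|$, so each term is bounded by $\int|K_\zeta|\,d|\omega|$, which is finite by \eqref{eq55} applied at the point $\zeta$. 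This proves the claim. Absolute integrability then lets me apply dominated convergence, so the limit over $\mathcal K_l$ exists for the complex measure $\omega$ and equals $\int\overline{K_z}f\,d\omega$, independently of the exhaustion.

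The main obstacle is the measure-theoretic bookkeeping. The object $\nu$ is only a set function on $\cup_l\mathcal B_l$, so one must check that on each $\mathcal K_l$ the integral against $e^{-|w|^2}d\nu$ coincides with the integral against $\omega$ restricted to $\mathcal B_l$. I would handle this by first verifying the identity on simple functions, which is exactly \eqref{eq51}, and then passing to continuous bounded functions on $\mathcal K_l$ by uniform approximation. The key algebraic trick is the product identity $K_zK_{z_k}=K_{z+z_k}$. It is what makes hypothesis \eqref{eq55}, which is stated only for single kernels, sufficient for all of $\mathcal S$.
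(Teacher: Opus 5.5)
Your proposal is correct and follows the paper's proof. Both reduce to showing $\int_{\mathbb C^n}|K_z(w)||f(w)|\,d|\omega|(w)<\infty$, expand $f\in\mathcal S$ into finitely many normalized kernels, and use $K_zK_{z_k}=K_{z+z_k}$ together with \eqref{eq55}. Your extra bookkeeping (identifying $e^{-|w|^2}d\nu$ with $d\omega$ on each $\mathcal K_l$, and the monotone/dominated convergence step) only makes explicit what the paper leaves implicit; the detour through the $\omega^j$ is unnecessary, since you bound each of them by $|\omega|$ anyway.
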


\begin{proof}
It suffices to show that
$$\int_{\mathbb C^n}  |K_z (w)||f(w)d|\omega| (w), \quad z\in \mathbb C^n, \hskip 2truemm f\in \mathcal S.$$
Since $f\in \mathcal S$ can be written $f(w)=\sum_{j=1}^J a_jk_{z_j} (w),$ it is enough to prove that
$$\int_{\mathbb C^n}  |K_z (w)||K_{z_j}(w)d|\omega| (w), \quad z, z_j\in \mathbb C^n,$$
or what amounts to the same,
$$\int_{\mathbb C^n}  |e^{w\cdot \bar{z}}e^{w\cdot \bar{z}_j}|(w)d|\omega| (w)=\int_{\mathbb C^n} |e^{w\cdot (\overline{z+z_j})}|d|\omega| (w)=\int_{\mathbb C^n} |K_{z+z_j} (w)|d|\omega| (w).$$
The required conclusion follows from \eqref{eq55}.
\end{proof}

\begin{defn}\label{def54}
Let $\nu$ be a complex function on the Borel $\sigma$-algebra $\cup_{l=1}^\infty \mathcal B_l$ which satisfies the hypotheses of the previous proposition. 
The {\textit {Toeplitz operator}} $T_\nu$ {\textit {with measure symbol}} $\nu$ is densely defined on $H^2 (\mathbb C^n, d\mu)$ as
$$T_\nu f(z)= \int_{\mathbb C^n}  \overline{K_z (w)}f(w)d\omega (w), \quad z\in \mathbb C^n, \hskip 2truemm f\in \mathcal S.$$
\end{defn}

The following lemma was proved in \cite[Lemma 2.2]{HL11}.
 We give its proof for completeness.

\begin{lem}\label{lem55}
Let $m$ be a positive measure on $\mathcal B$ and $\alpha >0, r >0, \hskip 1truemm 0<p<\infty.$ Then there exists some positive constant $C=C(r, p)$ such that
$$\int_{\mathbb C^n} \left \vert f(z)e^{-\alpha |z|^2}\right \vert^p dm(z)\leq C\int_{\mathbb C^n} \left \vert f(z)e^{-\alpha |z|^2}\right \vert^p m(B(z, r))dV(z)$$
for all entire functions in $\mathbb C^n.$
\end{lem}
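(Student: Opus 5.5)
The plan is to combine the subharmonicity estimate of Lemma \ref{lem37} with Fubini--Tonelli. First apply Lemma \ref{lem37}, with $\alpha$ replaced by $2\alpha$ to match the weight $e^{-\alpha|z|^2}$. For every entire $f$ and every $z\in\mathbb C^n$ this gives
$$\left\vert f(z)e^{-\alpha|z|^2}\right\vert^p\leq C\int_{B(z,r)}\left\vert f(w)e^{-\alpha|w|^2}\right\vert^p dV(w),$$
with $C=C(2\alpha,r,p)$. The point is that $\vert f\vert^p$ is plurisubharmonic. To account for the weight, one translates $z$ to the origin by multiplying $f$ by the exponential factor $e^{-2\alpha\langle\cdot,z\rangle}$, which is exactly what the proof of Lemma \ref{lem37} does.

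Next, integrate this pointwise inequality against $dm(z)$ over $\mathbb C^n$. Everything is nonnegative, so Tonelli's theorem allows us to exchange the order of integration:
$$\int_{\mathbb C^n}\left\vert f(z)e^{-\alpha|z|^2}\right\vert^p dm(z)\leq C\int_{\mathbb C^n}\left\vert f(w)e^{-\alpha|w|^2}\right\vert^p\left(\int_{\mathbb C^n}\mathbbm 1_{B(w,r)}(z)\,dm(z)\right)dV(w).$$
Here we use that $\mathbbm 1_{B(z,r)}(w)=\mathbbm 1_{B(w,r)}(z)$, since $|z-w|<r$ is symmetric in $z$ and $w$.

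The inner integral equals $m(B(w,r))$, which gives the claimed inequality after renaming the variable. Both sides may be infinite, and the inequality remains valid in that case. Measurability of $(z,w)\mapsto\mathbbm 1_{\{|z-w|<r\}}\,\vert f(w)e^{-\alpha|w|^2}\vert^p$ is clear, because the function is Borel on $\mathbb C^n\times\mathbb C^n$. The measure $m$ should be $\sigma$-finite for Tonelli to apply. If $m$ is only a positive Borel measure, the inequality still holds by monotone convergence applied to the restrictions $m|_{\mathcal K_l}$, or we simply assume local finiteness.

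The only subtle point is the constant. Lemma \ref{lem37} yields a constant depending on $(\alpha,r,p)$, whereas the statement writes $C(r,p)$. I would check that for the Gaussian weight the dependence on $\alpha$ is harmless, or simply record that $C$ may depend on $\alpha$ as well. Apart from that, the argument is routine.
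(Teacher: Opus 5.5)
Your proposal is correct and is essentially the paper's proof. You apply the pointwise estimate of Lemma~\ref{lem37}, and you rightly note it must be used with $2\alpha$ in place of $\alpha$ to match the weight $e^{-\alpha|z|^2}$. You then integrate against $dm$ and swap the integrals by Tonelli using $\mathbbm 1_{B(z,r)}(w)=\mathbbm 1_{B(w,r)}(z)$.

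Your caveats are also sound. The constant genuinely depends on $\alpha$ (take $m=\delta_a$ and $f(w)=e^{2\alpha\langle w,a\rangle}$ to force $C\geq (p\alpha/\pi)^n$), which the paper's notation $C(r,p)$ suppresses. The $\sigma$-finiteness worry is harmless: if $m$ gives infinite mass to some compact set, then $m(B(w,r))=\infty$ on an open set and the right-hand side is $+\infty$ for $f\not\equiv 0$.
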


\begin{proof}
Applying Lemma \ref{lem37},
 we get
$$\left \vert f(z)e^{-\alpha |z|^2}\right \vert^p\leq C\int_{B(z, r)} \left \vert f(w)e^{-\alpha |w|^2}\right \vert^p dV(w).$$
Integrating both sides with respect to $dm,$ we obtain:
\begin{eqnarray*}
\int_{\mathbb C^n} \left \vert f(z)e^{-\alpha |z|^2}\right \vert^p dm(z)
&\leq & C\int_{\mathbb C^n} \left(\int_{B(z, r)} \left \vert f(w)e^{-\alpha |w|^2}\right\vert^p dV(w)\right)dm(z)\\
&=&C\int_{\mathbb C^n} \left \vert f(w)e^{-\alpha |w|^2}\right \vert^p \left (\int_{\mathbb C^n} \chi_{B(w, r)} (z)dm(z)\right )dV(w)\\
&=&C\int_{\mathbb C^n} \left \vert f(w)e^{-\alpha |w|^2}\right \vert^p m(B(w, r))dV(w).
\end{eqnarray*}

To get the first equality, we applied the Fubini-Tonelli theorem.
\end{proof}

From this lemma, we deduce the following corollary.

\begin{cor}\label{cor56}
Let $m$ be a positive measure on $\mathcal B$ such that for some positive number $r,$ we have $m(B(\cdot, r))\in L^\infty.$ Then for every positive number $p,$  there exists some positive constant $C=C(r, p)$ such that
\begin{equation*}
\int_{\mathbb C^n} \left \vert f(z)e^{-\alpha |z|^2}\right \vert^p dm(z)\leq C\left \Vert m\right \Vert \int_{\mathbb C^n} \left \vert f(z)e^{-\alpha |z|^2}\right \vert^p dV(z)
\end{equation*}
for all entire functions in $\mathbb C^n.$ Here, $\left \Vert m\right \Vert=\left \Vert m (B(\cdot, r)\right \Vert_{L^\infty (\mathbb C^n)}.$
\end{cor}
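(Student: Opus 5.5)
This is a direct consequence of Lemma \ref{lem55}: once the factor $m(B(z,r))$ is bounded uniformly in $z$, it can be pulled out of the integral. Fix $\alpha>0$, $r>0$ as in the hypothesis and $p>0$, and let $f$ be an entire function on $\mathbb C^n$.

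First we apply Lemma \ref{lem55} with the same $m$, $\alpha$, $r$, $p$. It gives a constant $C=C(r,p)$, independent of $f$ and of $m$, such that
$$\int_{\mathbb C^n} \left \vert f(z)e^{-\alpha |z|^2}\right \vert^p dm(z)\leq C\int_{\mathbb C^n} \left \vert f(z)e^{-\alpha |z|^2}\right \vert^p m(B(z, r))\,dV(z).$$

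Next we use the hypothesis $m(B(\cdot,r))\in L^\infty$, which says $m(B(z,r))\leq \left\Vert m\right\Vert$ for almost every $z$ with respect to $dV$. The integrand on the right is nonnegative, so we may bound $m(B(z,r))$ by $\left\Vert m\right\Vert$ inside the $dV$-integral. The exceptional set is $dV$-null and does not affect the integral. This gives
$$\int_{\mathbb C^n} \left \vert f(z)e^{-\alpha |z|^2}\right \vert^p dm(z)\leq C\left\Vert m\right\Vert\int_{\mathbb C^n} \left \vert f(z)e^{-\alpha |z|^2}\right \vert^p dV(z),$$
which is the claim, with the same constant $C(r,p)$.

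There is no real obstacle in this argument. The only point to check is that the constant from Lemma \ref{lem55} does not depend on $m$ or on $f$. This holds because that constant comes from the sub-mean-value estimate of Lemma \ref{lem37}, which depends only on $\alpha$, $r$ and $p$. The argument also needs no integrability assumption on $f$: both sides may be $+\infty$, and the inequality is then trivially valid.
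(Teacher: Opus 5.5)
Your proof is correct and is the same deduction the paper intends. The paper gives no separate argument beyond noting that the corollary follows from Lemma \ref{lem55}, which amounts to bounding $m(B(z,r))$ by $\left\Vert m\right\Vert$ almost everywhere in the right-hand integral, and your remark that the constant actually depends on $\alpha$ as well as on $r$ and $p$ (through Lemma \ref{lem37}) is a fair clarification of the paper's notation $C(r,p)$.
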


The following definition was introduced in  \cite{Z12} 
for $n=1.$

\begin{defn}\label{def57}
A positive measure $m$ on $\mathcal B$  is called {\textit {Fock-Carleson measure}} if there exists a positive constant $C=C(m)$ such that
\begin{equation*}
\int_{\mathbb C^n} \left \vert f(z)e^{-\alpha |z|^2}\right \vert^p dm(z)\leq C\int_{\mathbb C^n} \left \vert f(z)e^{-\alpha |z|^2}\right \vert^p dV(z)
\end{equation*}
for all entire functions in $\mathbb C^n.$  
\end{defn}

\begin{rem}\label{rem58}
\begin{enumerate}
\item[1.]
The converse in Corollary \ref{cor56}
is also true  \cite[Theorem 3.1]{HL11} 
 (cf. \cite[Theorem 2.3]{IZ10} and \cite[Theorem 3.29]{Z12} for $n=1.$)
Namely, the following two assertions are equivalent.
\begin{enumerate}
\item
The measure $m$ is a Fock-Carleson measure;
\item
For some positive number $r,$ we have $m(B(\cdot, r))\in L^\infty.$
\end{enumerate}
\item[2.]
The condition  \eqref{eq55}
 is valid when $|\nu|$ is a Fock-Carleson measure (the positive measure $|\nu|$ was defined in the equation \eqref{eq52} of Definition \ref{def51}.
 Indeed, since $K_z$ is an entire function in $\mathbb C^n,$
in this case, for $\alpha=1,$ we get:
\begin{eqnarray*}
\int_{\mathbb C^n} \left \vert K_z(w)\right \vert d|\omega| (w)
&=&\int_{\mathbb C^n} \left \vert K_z(w)\right \vert e^{-|w|^2}d|\nu| (w)\\
&\leq& C\int_{\mathbb C^n} \left \vert K_z(w)\right \vert e^{-|w|^2}dV(w)\\
&= &C\int_{\mathbb C^n} \left \vert K_{\frac z2}(w)\right \vert^2 e^{-|w|^2}dV(w)\\
&=&Ce^{\left (\frac {|z|}2\right )^2}<\infty.
\end{eqnarray*}
\end{enumerate}
\end{rem}

We next prove the following proposition which generalizes  \cite[Lemma 4.1]{HL11}.

\begin{prop}\label{pro59}
Let $\nu$ be a complex function on the Borel $\sigma$-algebra  in $\mathbb C^n$ which satisfies the hypotheses of Definition \ref{def51}. 
Suppose moreover that $|\nu|$ is a Fock-Carleson measure. 
Then for all $f, g\in \mathcal S,$ we have
\begin{equation}\label{eq56}
\langle T_\nu f, g\rangle= \int_{\mathbb C^n} f(w)\overline{g(w)}d\omega (w).
\end{equation}
\end{prop}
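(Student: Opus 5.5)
By sesquilinearity, and since every element of $\mathcal S$ is a finite linear combination of normalized kernels (hence of the $K_a$), it suffices to prove \eqref{eq56} for $f=K_a$ and $g=K_b$ with $a,b\in\mathbb C^n$. The plan is to write the inner product as a double integral, justify Fubini by an absolute-convergence estimate built from the Fock-Carleson hypothesis, and then collapse the inner integral with the reproducing property.

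By Definition \ref{def54}, $T_\nu K_a(z)=\int_{\mathbb C^n}\overline{K_z(w)}K_a(w)\,d\omega(w)$, so that
$$\langle T_\nu K_a,K_b\rangle=\int_{\mathbb C^n}\left(\int_{\mathbb C^n}\overline{K_z(w)}K_a(w)\,d\omega(w)\right)\overline{K_b(z)}\,d\mu(z).$$
Once the order of integration can be interchanged, the inner integral in $z$ is $\int_{\mathbb C^n}\overline{K_z(w)}\,\overline{K_b(z)}\,d\mu(z)=\overline{\langle K_b,K_w\rangle}=\overline{K_b(w)}$ by the reproducing property. This yields $\int_{\mathbb C^n} K_a(w)\overline{K_b(w)}\,d\omega(w)$, which is \eqref{eq56}.

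The main obstacle is justifying Fubini, since $\omega$ is complex. The first step is to pass to $|\omega|$, or to apply the argument to each positive part $\omega^j$ from \eqref{dec}; Lemma \ref{lem52} gives $\omega^j\le|\omega|$. It then suffices to show
$$I:=\int_{\mathbb C^n}\int_{\mathbb C^n}|K_z(w)||K_a(w)||K_b(z)|\,d\mu(z)\,d|\omega|(w)<\infty .$$
For the inner $z$-integral I would use $|K_z(w)K_b(z)|=|K_{w+b}(z)|$. A Gaussian computation (completing the square, as in Remark \ref{rem58}) gives
$$\int|K_c(z)|\,d\mu(z)=\int|K_{c/2}(z)|^2\,d\mu(z)=e^{|c|^2/4}.$$
Hence the inner integral is $e^{|w+b|^2/4}$, and
$$I=\int_{\mathbb C^n} e^{|w+b|^2/4}\,|e^{\langle w,a\rangle}|\,e^{-|w|^2}\,d|\nu|(w).$$
This is bounded by $C_{a,b}\int e^{-\frac34|w|^2+C|w|}\,d|\nu|(w)$.

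To see that this last integral is finite, I would invoke Remark \ref{rem58}(1): the Fock-Carleson property gives $|\nu|(B(\cdot,r))\in L^\infty$. Covering $\mathbb C^n$ by unit cubes, the integral is dominated by $\sup_z|\nu|(B(z,r))\sum_k e^{-\frac34|k|^2+C|k|+C'}<\infty$. Alternatively, one can apply Corollary \ref{cor56} to a suitable entire function. Fubini–Tonelli then applies to each $\omega^j$, and summing over $j$ completes the proof.
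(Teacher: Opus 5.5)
Your proof is correct. It has the same skeleton as the paper's: reduce to kernels, justify Fubini through a Tonelli bound, then collapse the inner integral with the reproducing property. The difference is that you run the Tonelli estimate in the opposite order, so the Fock--Carleson hypothesis enters at a different place.

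\textbf{The paper's order.} The paper integrates in $w$ first. For each fixed $z$ it applies the Carleson inequality of Definition \ref{def57} (with $\alpha=p=1$) to the entire function $w\mapsto K_z(w)k_u(w)$, with a constant independent of $z$. This replaces $d|\nu|$ by $dV$. What remains are two explicit Gaussian integrals, the last being a $\frac34$-Gaussian in $z$ against $dV(z)$.

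\textbf{Your order.} You integrate in $z$ first, which is an exact identity. The same $\frac34$-Gaussian then has to be integrated in $w$ against $d|\nu|(w)$. For that you invoke the ball characterization of Remark \ref{rem58}(1); you only need its elementary direction, which follows by testing the Carleson inequality on kernels. You then finish either with a covering, or with Corollary \ref{cor56} applied to $f\equiv 1$ with, e.g., $\alpha=\frac12$, $p=1$, together with $e^{-\frac34|w|^2+C|w|}\le C'e^{-\frac12|w|^2}$.

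\textbf{What each buys.} The paper's order uses the Carleson inequality exactly in its defining form and needs no covering. Yours isolates precisely what is required, namely $\int e^{-\delta|w|^2}\,d|\nu|(w)<\infty$ for some $\delta<\frac34$. This is much weaker than the Fock--Carleson property, so your argument gives \eqref{eq56} under that weaker hypothesis. Your explicit passage through the positive parts $\omega^j\le|\omega|$ of Lemma \ref{lem52} also makes rigorous the use of Fubini for the complex measure $\omega$, which the paper leaves implicit.
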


\begin{proof}
It suffices to show that for all $u, v\in \mathbb C^n,$ we have
$$\langle T_\nu k_u, k_v\rangle=\int_{\mathbb C^n} k_u (w)\overline{k_v (w)}d\omega (w).$$
To justify an application of the Fubini theorem, we point out that
\begin{eqnarray*}
&&\int_{\mathbb C^n} \left (\int_{\mathbb C^n} |K_z (w)k_u (w)|d|\omega| (w)\right ) |k_v (z)|e^{-|z|^2}dV(z)\\
&=&
\int_{\mathbb C^n} \left (\int_{\mathbb C^n} |K_z (w)k_u (w)|e^{-|w|^2}d|\nu| (w)\right )|k_v (z)|e^{-|z|^2}dV(z)\\
&\leq & C\int_{\mathbb C^n} \left (\int_{\mathbb C^n} |K_z (w)k_u (w)|e^{-|w|^2}dV(w)\right )|k_v (z)|e^{-|z|^2}dV(z)\\
&\leq & C\int_{\mathbb C^n} \left (\int_{\mathbb C^n} |e^{\langle w, z+u\rangle}|e^{-\frac {|u|^2}2}e^{-|w|^2}dV(w)\right )|e^{\langle z, v\rangle}e^{-\frac {|v|^2}2}|e^{-|z|^2}dV(z)\\
&=&Ce^{-\frac {|u|^2+|v|^2}2}\int_{\mathbb C^n} \left (\int_{\mathbb C^n} |e^{\langle w, \frac {z+u}2\rangle}|^2e^{-|w|^2}dV(w)\right )e^{\Re e \hskip 1truemm\langle z, v\rangle}e^{-|z|^2}dV(z)\\
&=&Ce^{-\frac {|u|^2+|v|^2}2} \int_{\mathbb C^n} e^{\frac {|z+u|^2}4}e^{\Re e \hskip 1truemm\langle z, v\rangle}e^{-|z|^2}dV(z).
\end{eqnarray*}

For the first inequality, we applied the fact that $|\nu|$ is a Fock-Carleson measure. An easy calculation gives 
\begin{eqnarray*}
\frac {|z+u|^2}4+\Re e \hskip 1truemm\langle z, v\rangle-|z|^2
&=&\frac {|u|^2}4-\frac {3|z|^2}4+\Re e \hskip 1truemm\langle z, \frac u2+v\rangle\\
&=&\frac {|u|^2}4-\frac 34\left (|z|^2-\frac 43 \Re e \hskip 1truemm\langle z, \frac u2+v\rangle\right )\\
&=&\frac {|u|^2}4-\frac 34\left (\left \vert z-\frac 23\left (\frac u2+v \right ) \right \vert^2-\frac 49\left \vert \frac u2+v \right \vert^2 \right ).
\end{eqnarray*}
Whence
\begin{eqnarray}\label{eq57}
\int_{\mathbb C^n} \left (\int_{\mathbb C^n} |K_z (w)k_u (w)|d|\nu| (w)\right )|k_v (z)|e^{-|z|^2}dV(z)
&\leq& C(u, v)\int_{\mathbb C^n} e^{-\frac 34\left \vert z-\frac 23\left (\frac u2+v \right ) \right \vert^2}dV(z) \nonumber\\
&=&C(u, v)\int_{\mathbb C^n} e^{-\frac 34\left \vert \zeta \right \vert^2}dV(\zeta)<\infty. \label{Fub}
\end{eqnarray}
Now, we obtain:
\begin{eqnarray}
\langle T_\nu k_u, k_v\rangle
&=&\int_{\mathbb C^n} \left (\int_{\mathbb C^n} \overline{K_z (w)}k_u (w)e^{-|w|^2}d\nu (w)\right )\overline{k_v (z)}d\mu (z) \nonumber\\
&=& \int_{\mathbb C^n} \overline {\left (\int_{\mathbb C^n} K_z (w)k_v (z)d\mu (z)\right )}k_u (w)e^{-|w|^2}d\nu (w) \label{toep1}\\ 
&=& \int_{\mathbb C^n} k_u(w)\overline{k_v(w)}e^{-|w|^2}d\nu (w) \label{toep2}.\label{eq59}
\end{eqnarray}
The  equality \eqref{toep1} 
follows from an application of the Fubini theorem, while the equality \eqref{toep2} 
 follows from an application of the reproducing kernel property.
\end{proof}

From the previous lemma, we deduce the following corollary.

\begin{cor}\label{cor510}
Let $\nu$ be a complex function on the Borel $\sigma$-algebra $\cup_{l=1}^\infty \mathcal B_l$ in $\mathbb C^n,$ which satisfies the hypotheses of Definition \ref{def51}. 
Suppose moreover that $|\nu|$ is a Fock-Carleson measure. Then the Toeplitz operator $T_\nu$ is an admissible operator.
\end{cor}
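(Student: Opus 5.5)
To prove Corollary \ref{cor510}, we will show directly that $T_\nu$ satisfies Definition \ref{def21}: we exhibit a linear operator $T_\nu^\star:\mathcal S\rightarrow H^2(\mathbb C^n,d\mu)$ with $\langle T_\nu k_z,k_w\rangle=\langle k_z,T_\nu^\star k_w\rangle$ for all $z,w\in\mathbb C^n$. The natural candidate is the Toeplitz operator with the conjugate symbol, $T_\nu^\star:=T_{\bar\nu}$. Here $\bar\nu$ is the set function $A\mapsto\overline{\nu(A)}$. Its associated complex measure is $\bar\omega$, and $|\bar\omega|=|\omega|$. Hence $|\bar\nu|=|\nu|$ is again a Fock-Carleson measure, and $\bar\nu$ satisfies the hypotheses of Definition \ref{def51} and condition \eqref{eq55} (Remark \ref{rem58}, item 2). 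So $T_{\bar\nu}$ is defined on $\mathcal S$ by Proposition \ref{pro53} and Definition \ref{def54}.

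The first step is to check that $T_\nu$ and $T_{\bar\nu}$ actually map $\mathcal S$ into $H^2(\mathbb C^n,d\mu)$. For $f=k_u$ we have $T_\nu k_u(z)=\int_{\mathbb C^n}\overline{K_z(w)}k_u(w)\,d\omega(w)$.
\begin{itemize}
\item This is entire in $z$: the integrand is antiholomorphic in $w$ through $\overline{K_z(w)}=e^{\langle z,w\rangle}$, holomorphic in $z$, and locally uniformly dominated in $z$ by $|K_{z+u}(w)|$-type bounds. Those bounds are $|\omega|$-integrable by \eqref{eq55}, so differentiation under the integral sign is legitimate.
\item For square integrability, the estimate already used to justify Fubini in the proof of Proposition \ref{pro59} is the tool. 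Using the Fock-Carleson property to replace $d|\nu|$ by $dV$, we get $|T_\nu k_u(z)|\leq C e^{-|u|^2/2}\int|e^{\langle w,z+u\rangle}|e^{-|w|^2}dV(w)=C'e^{-|u|^2/2}e^{|z+u|^2/4}$.
\item Then $\int |T_\nu k_u|^2 d\mu\lesssim\int e^{|z+u|^2/2-|z|^2}dV(z)<\infty$ by completing the square as in \eqref{eq57}.
\end{itemize}
Linearity extends all of this to $\mathcal S$. The same argument applies to $T_{\bar\nu}$.

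The second step is the duality relation, using \eqref{eq56} of Proposition \ref{pro59} twice. We have
\[
\langle T_\nu k_z,k_w\rangle=\int_{\mathbb C^n} k_z\overline{k_w}\,d\omega
=\overline{\int_{\mathbb C^n} k_w\overline{k_z}\,d\bar\omega}=\overline{\langle T_{\bar\nu}k_w,k_z\rangle}=\langle k_z,T_{\bar\nu}k_w\rangle .
\]
This gives \eqref{eq21} with $T^\star=T_{\bar\nu}$, and the corollary follows.

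The main obstacle is the first step. Proposition \ref{pro59} tacitly treats $T_\nu k_u$ as an element of $H^2$ when it forms an inner product, so we must make sure the Gaussian bound above is really available. It relies only on the Fock-Carleson inequality with $p=1$ and $\alpha=1$ applied to the entire function $w\mapsto K_{z+u}(w)$, together with an explicit Gaussian integral. The only care needed is with the convention of Definition \ref{def57}, where the exponent $e^{-\alpha|z|^2}$ should be matched to $d|\omega|=e^{-|w|^2}d|\nu|$ exactly as in Remark \ref{rem58}.
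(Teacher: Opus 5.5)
Your proposal is correct and follows the paper's proof. Like the paper, you take $T_{\bar\nu}$ as the formal adjoint, note that $|\bar\nu|=|\nu|$ is Fock--Carleson, and apply Proposition \ref{pro59} to both $\nu$ and $\bar\nu$ to obtain \eqref{eq21}. Your extra first step is a valid detail that the paper leaves implicit: the Gaussian bound $|T_\nu k_u(z)|\lesssim e^{-|u|^2/2}e^{|z+u|^2/4}$, together with holomorphy in $z$, shows that $T_\nu$ and $T_{\bar\nu}$ actually map $\mathcal S$ into $H^2(\mathbb C^n,d\mu)$.
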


\begin{proof}
We denote by $\overline{\nu}$ the complex function on the Borel $\sigma$-algebra  in $\mathbb C^n$ defined by $\overline{\nu} (A)=\overline{\nu (A)}$ for every Borel set $A$ ($\overline{\nu}$ is the complex-conjugate measure of the measure $\nu).$ It is elementary to check that $\overline{\nu}$ is a complex function on the Borel $\sigma$-algebra $\cup_{l=1}^\infty \mathcal B_l$ in $\mathbb C^n,$ which satisfies the hypotheses of Definition \ref{def51}.
 Moreover, $|\overline 
\nu|=|\nu|$ and so  $|\overline{\nu}|$ is also a Fock-Carleson measure. Hence, by assertion 2 of Remark \ref{rem58}, the measure $\overline{\nu}$ satisfies the condition \eqref{eq55}
 and then the function $T_{\overline \nu} g$ is well-defined for all $g\in \mathcal S.$ Next, Proposition \ref{pro59}
 applies to $\overline{\nu}$ so that for all $g, f\in \mathcal S,$ we have
\begin{eqnarray*}
\langle T_{\overline \nu} g, f\rangle
&=& \int_{\mathbb C^n} g(w)\overline{f(w)}e^{-|w|^2}d\overline \nu (w)\\
&=&\overline{\int_{\mathbb C^n} f(w)\overline{g(w)}e^{-|w|^2}d\nu (w)}\\
&=&\overline{\langle T_{\nu} f, g\rangle}.
\end{eqnarray*}
We have shown that $\left (T_\nu\right )^\star=T_{\overline{\nu}}$ in the relation \eqref{eq21} of an admissible operator. 
\end{proof}

In the rest of the section, we adapt the argument of Zorboska \cite{Z22}
for the Bergman space on the unit disc. We first prove the following lemma.

\begin{lem}\label{lem511}
Let $\nu$ be a complex function on the Borel $\sigma$-algebra $\cup_{l=1}^\infty \mathcal B_l$ in $\mathbb C^n,$ which satisfies the hypotheses of Definition \ref{def51} 
 and satisfies the condition \eqref{eq55}.
 For all $z\in \mathbb C^n,$ we have the following identity
\begin{equation}\label{Zo}
U_zT_\nu U_z \mathbbm 1=T_{\nu \circ \varphi_z} \mathbbm 1,
\end{equation}
where  
\begin{eqnarray*}
\varphi_z:&\mathbb C^n\,\rightarrow&\mathbb C^n\\
&w\,\mapsto&z-w
\end{eqnarray*}

and $\nu \circ \varphi_z$ is the image measure of $\nu$ under the mapping $\varphi_z.$
\end{lem}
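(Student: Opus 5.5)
The plan is a direct computation: evaluate both sides of \eqref{Zo} at an arbitrary point $x\in\mathbb C^n$ and match them by the change of variables $w=\varphi_z(t)=z-t$. I would first settle well-definedness, then compute the left-hand side, then push forward the measure, and finally check a Gaussian exponent identity.

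First I record that $U_z\mathbbm 1=k_z$, since $\mathbbm 1(z-w)=1$. By Definition \ref{def54} and $\overline{K_u(w)}=e^{\langle u,w\rangle}$, we get
$$T_\nu k_z(u)=\int_{\mathbb C^n} e^{\langle u,w\rangle}k_z(w)e^{-|w|^2}d\nu(w).$$
This integral converges absolutely by Proposition \ref{pro53}, since $k_z\in\mathcal S$ and \eqref{eq55} holds. Applying $U_z$ and evaluating at $x$ gives
$$U_zT_\nu U_z\mathbbm 1(x)=k_z(x)\,(T_\nu k_z)(z-x)=\int_{\mathbb C^n} k_z(x)\,e^{\langle z-x,w\rangle}k_z(w)e^{-|w|^2}d\nu(w).$$

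Next I interpret $\nu\circ\varphi_z$ as the image measure, $A\mapsto\nu(\varphi_z^{-1}(A))=\nu(z-A)$, using that $\varphi_z$ is an involution. This gives the transfer formula
$$\int g(t)\,d(\nu\circ\varphi_z)(t)=\int g(z-w)\,d\nu(w).$$
I would justify the formula first for compactly supported $g$ on the sets $\mathcal K_l$, and then pass to the limit in the sense of Definition \ref{def51}. Along the way I must check that $\nu\circ\varphi_z$ again satisfies the hypotheses of Definition \ref{def51} and condition \eqref{eq55}, so that $T_{\nu\circ\varphi_z}\mathbbm 1$ makes sense. The natural route is to show that the modulus of the transported integrand is exactly the modulus of the integrand above. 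Absolute convergence then follows from \eqref{eq55} for $\nu$, via the estimate in Proposition \ref{pro53} applied to $K_{\bar\cdot}$ times $k_z$.

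Substituting $w=z-t$, it remains to verify the pointwise identity
$$k_z(x)\,e^{\langle z-x,z-t\rangle}k_z(z-t)e^{-|z-t|^2}=e^{\langle x,t\rangle}e^{-|t|^2}.$$
To see this, write $k_z(x)=e^{\langle x,z\rangle-|z|^2/2}$ and $k_z(z-t)=e^{|z|^2/2-\langle t,z\rangle}$. Then use
$$\langle z-x,z-t\rangle-|z-t|^2=\langle t-x,z-t\rangle=\langle t,z\rangle-|t|^2-\langle x,z\rangle+\langle x,t\rangle.$$
Summing the exponents, everything cancels except $\langle x,t\rangle-|t|^2$. The right-hand side of the identity is exactly the integrand of $T_{\nu\circ\varphi_z}\mathbbm 1(x)=\int \overline{K_x(t)}\,e^{-|t|^2}d(\nu\circ\varphi_z)(t)$, which proves \eqref{Zo}.

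The algebra is routine. The step I expect to need the most care is the measure-theoretic bookkeeping: $\nu$ is only defined on the $\mathcal B_l$, and the Gaussian weight $e^{-|w|^2}$ changes to $e^{-|z-t|^2}$ under translation. So $\omega\circ\varphi_z$ is not literally the $\omega$-measure associated with $\nu\circ\varphi_z$. I would handle this by working with $|\nu|$-absolutely convergent integrals on the compact sets $\mathcal K_l$, where translation just maps $\mathcal K_l$ into $\mathcal K_{l+\lceil |z|\rceil}$, and then letting $l\to\infty$ using dominated convergence.
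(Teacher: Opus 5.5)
Your proposal is correct and follows the paper's proof. Both arguments use $U_z\mathbbm 1=k_z$ to write $U_zT_\nu U_z\mathbbm 1$ pointwise as an integral against $\nu$, move $T_{\nu\circ\varphi_z}\mathbbm 1$ back to an integral against $\nu$ via the image-measure formula, and reduce to the same Gaussian exponent identity; yours is the paper's identity after the substitution $w=z-t$. The measure-theoretic bookkeeping you add is omitted in the paper but is sound, since the weight change $e^{-|z-w|^2+|w|^2}=e^{-|z|^2+2\mathrm{Re}\langle z,w\rangle}$ is $|\omega|$-integrable by \eqref{eq55} at the point $2z$.
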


\begin{proof}
Let $w\in \mathbb C^n.$ On the one hand, we have 
$$U_z \mathbbm 1 (w)=k_z (w),$$
$$ T_\nu U_z \mathbbm 1 (w)= \int_{\mathbb C^n} \overline{K_w (\zeta)}k_z (\zeta)e^{-|\zeta|^2}d\nu (\zeta)$$ 
and
$$U_zT_\nu U_z \mathbbm 1 (w)=k_z(w) \int_{\mathbb C^n} \overline{K_{z-w} (\zeta)}k_z (\zeta)e^{-|\zeta|^2}d\nu (\zeta).$$
On the other hand, we have
\begin{eqnarray*}
T_{\nu \circ \varphi_z} \mathbbm 1 (w)&= &\int_{\mathbb C^n} \overline{K_w (\zeta)}\mathbbm 1 (\zeta)e^{-|\zeta|^2}d\left (\nu \circ \varphi_z\right ) (\zeta)\\
&=&\int_{\mathbb C^n} \overline{K_w (z-\zeta)}e^{-|z-\zeta|^2}d\nu (\zeta)
\end{eqnarray*}
by the definition of the image measure $\nu \circ \varphi_z.$ It now suffices to show that for all $w, z, \zeta \in \mathbb C^n,$ the following equality holds:
\begin{equation}\label{eq511}
k_z(w)\overline{K_{z-w} (\zeta)}k_z (\zeta)e^{-|\zeta|^2}=\overline{K_w (z-\zeta)}e^{-|z-\zeta|^2}.
\end{equation}
The left-hand side of \eqref{eq511} is equal to
$$e^{\langle w, z\rangle}e^{-\frac {|z|^2}2}e^{\langle z-w, \zeta \rangle}e^{\langle \zeta, z\rangle}e^{-\frac {|z|^2}2}e^{-|\zeta|^2}=e^{\langle w, z-\zeta\rangle}e^{-|z|^2}e^{2\Re e \hskip 1truemm\langle z, \zeta\rangle}e^{-|\zeta|^2}=e^{\langle w, z-\zeta\rangle}e^{-|z-\zeta|^2},$$
while its right-hand side is equal to
$$e^{\langle w, z-\zeta\rangle}e^{-|z-\zeta|^2}.$$
This finishes the proof of the lemma.
\end{proof}

\subsection{The main result}\label{ssec53}
We record the following duality result for Fock spaces  (\cite[Theorem 7.4]{JPZ17} for $n\geq1$, \cite[Theorem 2.23]{Z12} for $n=1.$)

\begin{thm}\label{thm512}
Suppose $\alpha, \beta >0, \hskip 1truemm 1<p<\infty.$ Then the topological dual space of $F^p_\alpha$ can be identified with $F^{p'}_\beta$ under the integral pairing
$$\langle f, g\rangle_\gamma=\lim \limits_{l\rightarrow \infty} \frac \gamma \pi \int_{\mathcal K_l} f(z)\overline{g(z)}e^{-\gamma |z|^2}dV(z),$$
where $\gamma=\sqrt {\alpha \beta}.$
\end{thm}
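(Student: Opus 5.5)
The plan is to reduce to the classical symmetric case $\alpha=\beta$ by a dilation, and to use Taylor expansions to make sense of the pairing $\langle\cdot,\cdot\rangle_\gamma$ as a limit over the balls $\mathcal K_l$. The limit is needed because, in general, $\gamma=\sqrt{\alpha\beta}\le\frac{\alpha+\beta}2$, so $f\overline g e^{-\gamma|z|^2}$ need not be integrable even when $f\in F^p_\alpha$ and $g\in F^{p'}_\beta$.

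\textbf{Step 1: the symmetric case $\alpha=\beta=\gamma$.}
\begin{itemize}
\item Hölder's inequality gives $|\langle f,g\rangle_\alpha|\le C\Vert f\Vert_{F^p_\alpha}\Vert g\Vert_{F^{p'}_\alpha}$, with an absolutely convergent integral, so every $g\in F^{p'}_\alpha$ defines a bounded functional on $F^p_\alpha$.
\item For the converse, I would first show that the orthogonal projection
$$P_\alpha h(z)=\frac\alpha\pi\int_{\mathbb C^n} h(w)e^{\alpha\langle z,w\rangle}e^{-\alpha|w|^2}dV(w)$$
is bounded on $L^p_\alpha$ for $1<p<\infty$. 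After multiplying by the weights, its kernel is dominated by $C e^{-\frac\alpha2|z-w|^2}$. This is an integrable convolution kernel, so boundedness follows from Young's inequality (equivalently, from Schur's test).
\item Given $\Lambda\in (F^p_\alpha)^*$, extend it by Hahn–Banach to $L^p_\alpha$ and represent it, via the Riesz theorem for $L^p$, by some $h$ with $he^{-\alpha|\cdot|^2/2}\in L^{p'}$.
\item Set $g=P_\alpha h\in F^{p'}_\alpha$. Then $\Lambda(f)=\langle f,h\rangle_\alpha=\langle P_\alpha f,h\rangle_\alpha=\langle f,P_\alpha h\rangle_\alpha$ for $f$ in a dense class (e.g. kernels), using Fubini and self-adjointness.
\item Injectivity is immediate: $\langle K_z,g\rangle_\alpha=\overline{g(z)}$, so $g=0$ if it pairs to zero against all kernels. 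Equivalence of norms then comes from the open mapping theorem, or directly from the two estimates above.
\end{itemize}

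\textbf{Step 2: the pairing in Taylor coefficients.} Write $f=\sum_k a_kz^k$ and $g=\sum_k b_kz^k$. Monomials are orthogonal on every ball $\mathcal K_l$ with respect to the radial weight $e^{-\gamma|z|^2}$. The partial sums converge uniformly on $\mathcal K_l$, so
$$\frac\gamma\pi\int_{\mathcal K_l}f\overline g\,e^{-\gamma|z|^2}dV=\sum_k a_k\overline{b_k}\,c_k(l),$$
where $c_k(l)$ increases to the full moment $c_k\propto k!/\gamma^{|k|}$ as $l\to\infty$. One must then justify passing $l\to\infty$ inside the sum, i.e. show that $\sum_k|a_kb_k|\,k!/\gamma^{|k|}<\infty$. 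I expect this to be the main obstacle. I would obtain it from Cauchy estimates on the coefficients of $f$ and $g$, or, better, from Step 3 together with Abel summation applied to the absolutely convergent symmetric pairing.

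\textbf{Step 3: the dilation.} Let $t=\sqrt{\beta/\alpha}$ and define $(Dg)(z)=g(tz)$. A change of variables shows that $D$ is an isomorphism from $F^{p'}_\beta$ onto $F^{p'}_{\beta/t^2}=F^{p'}_\alpha$, with proportional norms. In coefficients, the coefficient of $z^k$ in $Dg$ is $b_k\,t^{|k|}$, so
$$\langle f,Dg\rangle_\alpha=\sum_k a_k\overline{b_k}\,t^{|k|}\,\frac{k!}{\alpha^{|k|}}\cdot(\text{normalization}).$$
Since $t/\alpha=1/\gamma$, this equals $\langle f,g\rangle_\gamma$ up to a fixed constant. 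The left-hand side is an absolutely convergent integral by Step 1, and its coefficient expansion converges absolutely by Parseval/Abel, which supplies the convergence needed in Step 2. Consequently $g\mapsto\langle\cdot,g\rangle_\gamma$ is the composition of the isomorphism $D$ with the Step 1 identification $F^{p'}_\alpha\cong(F^p_\alpha)^*$. Hence it is an isomorphism of $F^{p'}_\beta$ onto $(F^p_\alpha)^*$, which is exactly the identification stated in the theorem.
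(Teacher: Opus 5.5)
The paper gives no proof of this statement; it only quotes it from \cite{JPZ17} and \cite{Z12}. Your outline therefore has to stand on its own. Step 1 (boundedness of $P_\alpha$ plus Hahn--Banach and Riesz) and the dilation idea in Step 3 are the standard route and are fine, up to a slip. The map $z\mapsto g(tz)$ sends $F^{p'}_\beta$ onto $F^{p'}_{\beta t^2}$, not onto $F^{p'}_{\beta/t^2}$. So you need $t=\sqrt{\alpha/\beta}=\alpha/\gamma$, and only with this choice does $t/\alpha=1/\gamma$ hold.

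\textbf{The gap.} The real problem is how you obtain the existence of $\lim_l\int_{\mathcal K_l}$ and its equality with $\langle f,Dg\rangle_\alpha$. You reduce this to $\sum_k|a_kb_k|\,k!/\gamma^{|k|}<\infty$, which is false in general for $p\neq 2$. For instance, take random-sign coefficients for one factor and positive coefficients for the other, as for $H^p\times H^{p'}$ on the disc. None of your proposed tools supplies it:
\begin{itemize}
\item Cauchy estimates give only $|a_k|\lesssim(\alpha e/k)^{k/2}$ and $|b_k|\lesssim(\beta e/k)^{k/2}$ (for $n=1$). The terms are then bounded by $e^kk!/k^k\asymp\sqrt k$, which is not summable.
\item Parseval is available only for $p=2$.
\item Abel summation yields only $\lim_{r\to 1^-}\sum_k a_k\overline{b_k}\,r^{2|k|}k!/\gamma^{|k|}=\langle f,Dg\rangle_\alpha$. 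This is neither absolute convergence nor the truncation-to-$\mathcal K_l$ limit, which is a different summation method.
\end{itemize}

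\textbf{The fix.} You need an exact identity between truncated pairings, which requires no convergence of coefficient series. Monomials are orthogonal on spheres, and Taylor series converge absolutely and uniformly on compact sets. Since $Dg$ has coefficients $b_kt^{|k|}$ and $t\rho^2=r^2$, for every $r>0$
\[
\int_{\mathbb S} f(r\zeta)\overline{g(r\zeta)}\,d\sigma(\zeta)=\sum_k a_k\overline{b_k}\,r^{2|k|}\Vert \zeta^k\Vert^2_{L^2(\sigma)}=\int_{\mathbb S} f(\rho\zeta)\overline{(Dg)(\rho\zeta)}\,d\sigma(\zeta),\qquad \rho=r\sqrt{\gamma/\alpha}.
\]
Integrate in polar coordinates and substitute $r=\rho\sqrt{\alpha/\gamma}$, so that $\gamma r^2=\alpha\rho^2$ and $r^{2n-1}dr=(\alpha/\gamma)^n\rho^{2n-1}d\rho$. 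This gives
\[
\Big(\frac{\gamma}{\pi}\Big)^n\int_{\mathcal K_l} f\,\overline{g}\,e^{-\gamma|z|^2}dV=\Big(\frac{\alpha}{\pi}\Big)^n\int_{|z|\le l\sqrt{\gamma/\alpha}} f\,\overline{Dg}\,e^{-\alpha|z|^2}dV .
\]
The right-hand integrand lies in $L^1$ by H\"older, since $f\in F^p_\alpha$ and $Dg\in F^{p'}_\alpha$. Hence the limit defining $\langle f,g\rangle_\gamma$ exists for all $f,g$ and equals $\langle f,Dg\rangle_\alpha$. With this replacing Step 2, and with the corrected $t$, your Steps 1 and 3 complete the proof.
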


Our result for Toeplitz operators is the following.

\begin{thm}\label{thm513}
Let $\nu$ be a complex function on the Borel $\sigma$-algebra $\cup_{l=1}^\infty \mathcal B_l$ in $\mathbb C^n,$ which satisfies the hypotheses of Definition \ref{def51}.
Suppose moreover that the measure $|\nu|$ is a Fock-Carleson measure. Then the associated Toeplitz operator $T_\nu$ is a sufficiently localized operator. More precisely, it belongs to $\mathcal L_p, \hskip 1truemm 2<p<4.$
\end{thm}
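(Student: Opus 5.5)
The plan is to verify directly the integral criterion of Lemma \ref{lem32}, using a Gaussian off-diagonal estimate for the matrix entries of $T_\nu$. By Corollary \ref{cor510}, $T_\nu$ is admissible with $(T_\nu)^\star=T_{\overline\nu}$. By \eqref{eq31}, $T_\nu\in\mathcal L_p$ as soon as
$$\sup_{z\in\mathbb C^n}\int_{\mathbb C^n}\left\vert\langle T_\nu k_z,k_w\rangle\right\vert^p e^{\left(\frac p2-1\right)|z-w|^2}dV(w)<\infty .$$
So everything reduces to showing that there is a constant $C$ with
$$\left\vert\langle T_\nu k_z,k_w\rangle\right\vert\leq Ce^{-\frac{|z-w|^2}{4}},\qquad z,w\in\mathbb C^n.$$

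To get this estimate, I start from Proposition \ref{pro59}:
$$\langle T_\nu k_z,k_w\rangle=\int_{\mathbb C^n}k_z(\zeta)\overline{k_w(\zeta)}\,e^{-|\zeta|^2}d\nu(\zeta),$$
so that
$$\left\vert\langle T_\nu k_z,k_w\rangle\right\vert\leq\int_{\mathbb C^n}\left\vert K_z(\zeta)K_w(\zeta)\right\vert e^{-|\zeta|^2}e^{-\frac{|z|^2+|w|^2}2}\,d|\nu|(\zeta).$$
The function $f=K_zK_w$ is entire. I apply the Fock--Carleson property of $|\nu|$ to $f$, with $\alpha=1$ and $p=1$; equivalently, I use Lemma \ref{lem55} together with the boundedness of $|\nu|(B(\cdot,r))$ from Remark \ref{rem58}. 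This replaces $d|\nu|$ by $C\,dV$, with $C$ independent of $z$ and $w$. Completing squares gives the pointwise identity
$$|k_z(\zeta)k_w(\zeta)|e^{-|\zeta|^2}=e^{-\frac{|\zeta-z|^2}{2}-\frac{|\zeta-w|^2}{2}}.$$
Writing $\zeta-\frac{z+w}2=\eta$, we have $|\zeta-z|^2+|\zeta-w|^2=2|\eta|^2+\frac{|z-w|^2}{2}$. Hence the remaining integral is $c_n e^{-\frac{|z-w|^2}4}$, which is the desired Gaussian bound.

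Inserting the bound into the criterion, the integrand is at most
$$C^pe^{\left(\frac p2-1-\frac p4\right)|z-w|^2}=C^pe^{\left(\frac p4-1\right)|z-w|^2}.$$
After the translation $w\mapsto z-w$, its integral is finite and independent of $z$ exactly when $p<4$. This yields $T_\nu\in\mathcal L_p$ for every $p\in(2,4)$, and therefore $T_\nu\in\mathcal{SL}$. The same argument applies verbatim to $T_{\overline\nu}=(T_\nu)^\star$, since $|\overline\nu|=|\nu|$; this is also consistent with Corollary \ref{cor311}.

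The main point to watch is the Carleson step. The Fock--Carleson inequality must be applied to the entire function $K_zK_w$ with the weight $e^{-|\zeta|^2}$ (that is, $\alpha=1$, $p=1$), and its constant must not depend on $z$ and $w$. This holds because the constant depends only on $|\nu|$ and on $r$. Everything else is explicit Gaussian integration.
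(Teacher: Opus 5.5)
Your proof is correct, but it follows a different route from the paper's.

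\textbf{The paper's route.} The paper works at the level of operators. It uses Lemma \ref{lem511} to rewrite $U_zT_\nu U_z\mathbbm 1$ as $T_{\nu\circ\varphi_z}\mathbbm 1$, and notes that $\mathbbm 1\in F^p_{2(1-\frac p4)}$ exactly when $p<4$. It then invokes the external result Theorem \ref{thm514} (from \cite{HL11}), which gives boundedness of Toeplitz operators $F^p_{2(1-\frac p4)}\to F^p_{\frac 2p}$ with norm controlled by $\sup_w |\nu|(B(w,r))$. This quantity is invariant under the translations $\varphi_z$. After splitting $\nu$ into the four positive parts of \eqref{dec}, the family $\{T_{\nu\circ\varphi_z}\}_z$ is therefore uniformly bounded.

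\textbf{Your route.} You instead derive the pointwise off-diagonal bound $|\langle T_\nu k_z,k_w\rangle|\le Ce^{-\frac{|z-w|^2}4}$. You need only Proposition \ref{pro59} and the same $p=1$, $\alpha=1$ Fock--Carleson step that the paper itself uses inside the proof of Proposition \ref{pro59}. You then integrate via Lemma \ref{lem32}.

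\textbf{Comparison.} Your argument is more elementary and self-contained: it needs neither \cite[Theorem 4.2]{HL11} nor the decomposition of $\nu$, since you dominate by $|\nu|$ directly. It also yields more. Your bound is condition (2) of Corollary \ref{cor311} with $\epsilon=\frac14$, which recovers the same range $p<\frac{1}{1/2-\epsilon}=4$. Since $|\overline\nu|=|\nu|$, it simultaneously gives XZ-sufficient and weak localization of both $T_\nu$ and $T_\nu^\star=T_{\overline\nu}$. The paper's route, for its part, explains the threshold $p=4$ as the requirement that $\mathbbm 1$ lie in the weighted Fock space $F^p_{2(1-p/4)}$. It also fits the conjugation framework the authors adapt from Zorboska \cite{Z22}.
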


\begin{proof}
In view of Lemma \ref{lem511},
 the required result is equivalent to the following estimate.
$$\sup \limits_{z\in \mathbb C^n} \int_{\mathbb C^n} \left \vert T_{\nu \circ \varphi_z}\mathbbm 1 (w)\right \vert^pd\mu(z)<\infty$$
for all $p\in (2, 4).$ Since $\mathbbm 1\in F^p_{2\left (1-\frac p4\right )},$ it suffices to show that the family of Toeplitz operators $\left \{T_{\nu \circ \varphi_z}\right \}_{z\in \mathbb C^n}$ is uniformly bounded from $F^p_{2\left (1-\frac p4\right )}$ to $F^p_{\frac 2p}.$

We shall rely on the following theorem  \cite[Theorem 4.2]{HL11}. 

\begin{thm}\label{thm514}
Let $p\in (1, \infty)$ and let $m$ be a positive measure in $\mathbb C^n$ satisfying the estimate
\begin{equation}\label{eq512}
\int_{\mathbb C^n} \left \vert K_z(w)\right \vert e^{-|w|^2}dm (w)<\infty
\end{equation}
for all $z\in \mathbb C^n.$ The following two statements are equivalent.
\begin{enumerate}
\item
$T_m: F^p_{2\left (1-\frac p4\right )} \rightarrow F^p_{\frac 2p}$ is bounded;
\item
$m(B(\cdot, r))\in L^\infty (\mathbb C^n)$ for some (all) $r>0.$
\end{enumerate}
Moreover, there exists an absolute positive constant $C$ independent of $m$ such that
$$\left \Vert T_m f \right \Vert_{F^p_{\frac 2p}}\leq C\left \Vert m\right \Vert^{\frac 1p} \left \Vert f\right \Vert_{F^p_{2\left (1-\frac p4\right )} }$$
for all $f\in F^p_{2\left (1-\frac p4\right )}.$ 
\end{thm}

Let $\nu$ be a complex function on the Borel $\sigma$-algebra $\cup_{l=1}^\infty \mathcal B_l$ in $\mathbb C^n,$ which satisfies the hypotheses of Definition \ref{def51}. 
According to \eqref{dec}, 
 we have the decomposition $\nu=\nu^1-\nu^2+i(\nu^3-\nu^4),$ where each $\nu^j, j=1, 2, 3, 4,$ is a positive measure on $\mathcal B$ such that $\nu^j \leq |\nu|.$  

We recall that $\nu^j \circ \varphi_z (B(w, r))=\nu^j (B(z-w, r))$ for all $z, w\in \mathbb C^n.$ Then for all $z\in \mathbb C^n,$ each positive measure $\nu^j$ satisfies $\nu^j \circ \varphi_z (B(\cdot, r))\in L^\infty (\mathbb C^n)$ with an $L^\infty (\mathbb C^n)$-norm smaller than that of $|\nu| (B(\cdot, r))=\int_{B(\cdot, r)}d|\nu| (w).$  

Suppose further that the total variation measure $|\nu|$ is a Fock-Carleson measure. Applying Theorem \ref{thm514},
 we obtain that each $T_{\nu^j\circ \varphi_z}: F^p_{2\left (1-\frac p4\right )}\rightarrow F^p_{\frac 2p}$ is uniformly bounded $(j=1, 2, 3, 4)$ and since $T_{\nu\circ \varphi_z}=(T_{\nu^1\circ \varphi_z}-T_{\nu^2\circ \varphi_z})
+i(T_{\nu^3\circ \varphi_z}-T_{\nu^4\circ \varphi_z}),$ the family $\left \{T_{\nu\circ \varphi_z} \right \}_{z\in \mathbb C^n}$ is uniformly bounded from $F^p_{2\left (1-\frac p4\right )}$ to $F^p_{\frac 2p}.$ 
\end{proof}

Theorem \ref{thm514}
 applies to the following two particular cases: 1) $d\nu=fdV,$ where $f$ is a bounded measurable function on $\mathbb C^n$ (in this case, $T_\nu=T_f$ is a 'Toeplitz operator' in the sense of the Introduction); 2) $\nu$ is a Fock-Carleson (positive) measure on $\mathcal B.$ Indeed, for case 1), 
we have
$$\int_{B(z, r)} |f|dV\leq \left \Vert f\right \Vert_{L^\infty (\mathbb C^n)}V(B(z, r)=c_n\left \Vert f\right \Vert_{L^\infty (\mathbb C^n)}r^{2n}.$$
For case 2), we can also apply Theorem \ref{thm514}.

Comparing with the case of the Bergman space on the unit disc, we ask the following question. Under the assumptions of Theorem \ref{thm514},
 is the Toeplitz operator $T_\nu$ a strongly localized operator?

\section{A bounded linear operator on $H^2 (\mathbb C, d\mu)$ which is not a member of the Toeplitz algebra. Compact linear operators of the same type}\label{sec6}
In this section, we restrict to $n=1$ and we study the analogue of an operator which was studied by Axler-Zheng \cite{AZ98} 
for the Bergman space on the unit disc.

\subsection{A bounded linear operator on $H^2 (\mathbb C, d\mu)$ which is not a member of the Toeplitz algebra}\label{ssec61}
It is well-known \cite[Proposition 2.1]{Z12}
 that the sequence $\{e_m\}_{m=0}^\infty$ given by $e_m (z)=\frac {z^m}{\sqrt {m!}}$ is an orthonormal basis for the Hilbert space $H^2 (\mathbb C, d\mu).$ We consider the linear operator $T$ defined on $H^2 (\mathbb C, d\mu)$ by
\begin{equation}\label{eq61}
T\left (\sum_{m=0}^\infty c_me_m\right )=\sum_{m=0}^\infty c_{2^m}e_{2^m}.
\end{equation}
It is easy to check that $T$ is bounded on $H^2 (\mathbb C, d\mu).$ Now, according to the combination of  Theorem \ref{thm11} and Theorem \ref{thm12},
 to obtain that $T$ is not weakly localized, it suffices to show that $T$ is not compact and its Berezin transform vanishes at infinity. According to Theorem \ref{thm11}, $T$ is not even a member of the Toeplitz algebra $\mathcal T^1.$

We first prove that $T$ is not compact. The sequence $\{e_m\}_{m=0}^\infty$ converges weakly to zero on $H^2 (\mathbb C, d\mu).$ Indeed, for $g=\sum_{m=0}^\infty c_mz^m\in H^2 (\mathbb C, d\mu),$ we have $\sum_{m=0}^\infty \left \vert c_m\right \vert^2<\infty$ and so $\{c_m\}\rightarrow 0.$ Hence, $\langle e_m, g\rangle=c_m\rightarrow 0.$ Suppose that $T$ is compact; then the sequence $\{\left \Vert Te_m \right \Vert\}$ would converge to zero. However, we have:
$$\left \Vert Te_m-Te_j\right \Vert=\left \Vert e_{2^m}-e_{2^j}\right \Vert=2;$$
so, the sequence $\{\left \Vert Te_m \right \Vert\}$ would not be a Cauchy sequence on $H^2 (\mathbb C, d\mu)$ and hence, it would not converge.

We next show that the Berezin transform $\widetilde T$ of $T$ vanishes at infinity. Since
 $$k_z=e^{-\frac {|z|^2}2}\sum_{m=0}^\infty \frac {\bar{z}^m}{\sqrt {m!}}e_m,$$
  we have $Tk_z=e^{-\frac {|z|^2}2}\sum_{m=0}^\infty \frac {\bar{z}^{2^m}}{\sqrt {(2^m)!}}e_{2^m}$ and hence:
$$\widetilde T (z)=e^{-|z|^2}\sum_{m=0}^\infty \left \vert \frac {z^{2^m}}{\sqrt {(2^m)!}}\right \vert^2\left \Vert e_{2^m}\right \Vert^2=e^{-|z|^2}\sum_{m=0}^\infty \frac {|z|^{2^{m+1}}}{\left (2^m\right )!}.$$
Applying the change of variable $t=|z|^2,$ the required conclusion follows from the next proposition.

\begin{prop}\label{pro61}
The following identity is valid.
\begin{equation}\label{eq62}
\lim \limits_{t\rightarrow \infty} \sum_{m=0}^\infty  \frac {t^{2^m}}{(2^m)!}e^{-t}=0.
\end{equation}
\end{prop}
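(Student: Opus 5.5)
The idea is that the Poisson weights $p_k(t)=e^{-t}t^k/k!$ concentrate, as $t\to\infty$, in a window of width of order $\sqrt t$ around $k=t$. The exponents $2^m$ are so sparse that only $O(1)$ of them can fall in such a window, and each individual weight is $O(t^{-1/2})$. So we split the sum according to how far $2^m$ is from $t$.

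First, the pointwise bound. Using Stirling's formula, $\max_k p_k(t)\le C/\sqrt{t}$ for $t\ge 1$; the maximum is attained at $k=\lfloor t\rfloor$. Hence every single term of the series in \eqref{eq62} is at most $C t^{-1/2}$.

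Next, the tails. For large $t$, let $m_0$ be the unique integer with $2^{m_0}\le t<2^{m_0+1}$. Then at most two exponents satisfy $t/2\le 2^m\le 2t$, namely $m_0$ and $m_0+1$, and these contribute at most $2Ct^{-1/2}$. For the remaining exponents:

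\begin{enumerate}
\item[(i)] For $2^m<t/2$: the ratio $p_{k+1}/p_k=t/(k+1)\ge 2$ for $k<t/2$, so $p_k(t)\le 2^{k-\lfloor t/2\rfloor}p_{\lfloor t/2\rfloor}(t)$. Summing over these $k$ (in fact over all $k<t/2$) gives at most $2\,p_{\lfloor t/2\rfloor}(t)\le 2P(N\le t/2)$, where $N$ is Poisson$(t)$. By Chebyshev, $P(N\le t/2)\le 4/t\to 0$.
\item[(ii)] For $2^m>2t$: similarly $p_{k+1}/p_k\le 1/2$ for $k\ge 2t$, so these terms sum to at most $2P(N\ge 2t)\le 2/t$, again by Chebyshev.
\end{enumerate}

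Combining, the full sum is $O(t^{-1/2})\to 0$, which proves \eqref{eq62}.

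The only delicate step is the uniform bound $\max_k p_k(t)=O(t^{-1/2})$. I would prove it directly from Stirling's formula in the form $k!\ge \sqrt{2\pi k}\,(k/e)^k$. This gives $p_k(t)\le (2\pi k)^{-1/2}e^{\,k-t+k\log(t/k)}$, and the exponent is $\le 0$ by concavity of $\log$ (the inequality $\log x\le x-1$ with $x=t/k$). Near $k=t$ this yields $p_k(t)\lesssim t^{-1/2}$; for $k$ far from $t$ the tail bounds (i)–(ii) already apply, while the case $k=0$ is trivial since $p_0(t)=e^{-t}$.

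In fact the Chebyshev tails make even this refinement optional. Since the window $[t/2,2t]$ contains at most two exponents, it suffices to know that each term with $2^m$ in the window tends to $0$, which the Stirling bound above gives.
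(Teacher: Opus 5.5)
Your proof is correct, and it takes a genuinely different route from the paper. The paper does not localize in $k$ at all. It uses that $e^{-t}t^k/k!\,\chi_{(0,\infty)}(t)$ is, up to normalization, the Fourier transform of $(1-ix)^{-(k+1)}$. It then shows via Wallis integrals and Stirling that $\int_{\mathbb R}|1-ix|^{-(2^m+1)}\,dx\lesssim 2^{-m/2}$, so that $\Phi(x)=\sum_m(1-ix)^{-(2^m+1)}$ lies in $L^1(\mathbb R)$, and concludes with the Riemann--Lebesgue lemma.

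The two arguments use the lacunarity of $\{2^m\}$ differently. In the paper it enters only through $\sum_m 2^{-m/2}<\infty$. In your argument it enters through the fact that a dyadic window around $t$ contains boundedly many exponents, while Poisson concentration (Chebyshev) disposes of the rest. Your version is more elementary, avoids Fourier inversion, and gives the explicit rate $O(t^{-1/2})$. The paper's version needs no case analysis in $k$ and applies verbatim to any exponent sequence $(k_m)$ with $\sum_m k_m^{-1/2}<\infty$.

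Your Stirling bound $p_k(t)\le(2\pi k)^{-1/2}$, valid for all $t>0$ and $k\ge 1$, is exactly the pointwise content of the paper's $L^1$ estimate. On its own it already proves the proposition. Since $\sum_m 2^{-m/2}<\infty$, the series converges uniformly in $t$ by the Weierstrass M-test, and each term tends to $0$, so the sum tends to $0$. Neither the tails (i)--(ii) nor the Fourier transform is actually needed.

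There are two small slips, both harmless. When $t$ is a power of $2$, the closed window $[t/2,2t]$ contains three exponents ($t/2$, $t$, $2t$), not two. Also, the ratio comparisons in (i)--(ii) are superfluous: $\sum_{k<t/2}p_k(t)=P(N<t/2)\le 4/t$ and $\sum_{k>2t}p_k(t)=P(N>2t)\le 1/t$ follow from Chebyshev directly.
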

\begin{proof}
For a function $f\in L^1 (\mathbb R),$ we denote by $\widehat{f}$ its Fourier transform defined by
$$\widehat{f} (\xi)=\int_{-\infty}^\infty f(t)e^{-it\xi}dt, \quad \xi\in \mathbb R.$$
We shall use the Riemann-Lebesgue lemma.

\begin{lem}\cite[Proposition 2.2.17]{G08}
For a function $f$ in $L^1 (\mathbb R),$ we have that
$$|\widehat f(t)|\rightarrow 0 \quad {\rm{as}} \quad |t|\rightarrow \infty.$$
\end{lem}

We shall also use the following lemma.

\begin{lem}\label{lem63}
For each positive integer $m,$ the function $\varphi (t)=t^me^{-t}\chi_{(0, \infty} (t)$ is the Fourier transform of the function $f(x)=\frac {m!}{(1-ix)^{m+1}}.$
\end{lem}

\begin{proof}
Using the change of variable $\tau=(1+i\xi)t,$ Cauchy's Theorem for holomorphic functions and the Gamma integral, it is easy to prove that $\widehat \varphi (\xi)=\frac {m!}{(1+i\xi)^{m+1}}.$ We have $\varphi, \widehat \varphi \in L^1 (\mathbb R).$ So by the Fourier inversion on $L^1 (\mathbb R)$  (cf. e.g. \cite[Exercise 2.2.6]{G08}),
 we have $\left (\widehat \varphi \right )^\vee=\varphi$ a. e. where $\vee$ stands for
$$\psi^\vee (t)=\int_{-\infty}^\infty \psi (\xi)e^{i\xi t}d\xi \quad (\psi \in L^1 (\mathbb R), \hskip 1truemm t\in \mathbb R).$$ 
In our case, we get
$$\varphi (t)=\left (\widehat \varphi \right )^\vee (t)=\int_{-\infty}^\infty \frac {m!}{(1+i\xi)^{m+1}}e^{i\xi t}d\xi \hskip 2truemm a. e.$$
and taking the conjugate, we obtain:
$$\overline{\varphi (t)}=\varphi (t)=\int_{-\infty}^\infty \frac {m!}{(1-i\xi)^{m+1}}e^{-i\xi t}d\xi \hskip 2truemm a. e.$$
In other words, $\varphi=\widehat f$ with $f(x)=\frac {m!}{(1-ix)^{m+1}}.$
\end{proof}

It follows from the previous lemma that the function $F(t)=\sum_{m=0}^\infty  \frac {t^{2^m}}{(2^m)!}e^{-t}\chi_{(0, \infty} (t)$ is the Fourier transform of the function $\Phi (x)=\sum_{m=0}^\infty \frac 1{(1-ix)^{2^m+1}}.$ 
We now show that $\Phi\in L^1 (\mathbb R),$ i.e. 
$$I:=\int_{-\infty}^\infty \left \vert \sum_{m=0}^\infty \frac 1{(1-ix)^{2^m+1}}\right \vert dx < \infty.$$
We have
\begin{eqnarray*}
I&\leq& \sum_{m=0}^\infty \int_{-\infty}^\infty \frac 1{(x^2+1)^{\frac {2^m+1}2}}dx\\
&=&2\sum_{m=0}^\infty \int_0^\infty \frac 1{(x^2+1)^{\frac {2^m+1}2}}dx\\
&=&2\left \{\int_0^\infty \frac 1{x^2+1}dx+\int_0^\infty \frac 1{(x^2+1)^{\frac 32}}dx+\sum_{m=2}^\infty \int_0^\infty \frac 1{(x^2+1)^{\frac {2^m+1}2}}dx      \right \}.
\end{eqnarray*}

It suffices to prove that 
\begin{equation}\label{sum}
\sum_{m=2}^\infty \int_0^\infty \frac 1{(x^2+1)^{\frac {2^m+1}2}}dx< \infty.
\end{equation}
For each $m,$ we have
\begin{equation}\label{eq64}
\int_0^\infty \frac 1{(x^2+1)^{\frac {2^m+1}2}}dx\leq \int_0^\infty \frac 1{(x^2+1)^{2^{m-1}}}dx.
\end{equation}
Referring to \cite[Formula 3.249]{GR07},
 we shall use the following formula:
\begin{eqnarray*}
\int_0^\infty \frac 1{(x^2+1)^{2^{m-1}}}dx&=&\frac {\pi\times 1\times 3\times 5\times \cdots \times (2^m-3)}{2\times 2\times 4\times \cdots \times (2^m-2)}
\end{eqnarray*}
To conclude, we shall show that $\sum_{m=2}^\infty \frac {1\times 3\times 5\times \cdots \times (2^m-3)}{2\times 4\times \cdots \times (2^m-2)}< \infty.$
For a positive integer $p\geq 3,$ we have
\begin{eqnarray*}
\frac {1\times 3\times 5\times \cdots \times (2p-1)}{2\times 4\times \cdots \times (2p)}&=&\frac {(2p-1)!}{2^2\times 4^2\times \cdots (2p-2)^2\times (2p)}\\
&=&\frac {(2p-1)!}{2^{2(p-1)}\times 1^2\times 2^2\times \cdots (p-1)^2 \times (2p)}\\
&=&\frac {(2p-1)!}{2^{2(p-1)}\times [(p-1)!]^2 \times (2p)}.
\end{eqnarray*}
According to Stirling's formula, when $p\rightarrow \infty,$ we have the equivalence:
\begin{eqnarray*}
\frac {(2p-1)!}{2^{2(p-1)}\times [(p-1)!]^2 \times (2p)}
&\sim& \frac {\left (\frac {2p-1}e \right )^{2p-1}\sqrt {2\pi (2p-1)}}{2^{2(p-1)}\times \left (\frac {p-1}e \right )^{2(p-1)}\times 2\pi (p-1) \times (2p)}\\
&=&\frac {2^2}{2^{2p}}\times \frac {\left (\frac {p-1}e \right )^2}{\frac {2p-1}e} \times \left (\frac {\frac {2p-1}e}{\frac {p-1}e} \right )^{2p}\times \frac {\sqrt {2\pi (2p-1)}}{2\pi (p-1)\times (2p)}\\
&=&2^2\times \frac {\left (\frac {p-1}e \right )^2}{\frac {2p-1}e}\times \left (\frac {\frac {2p-1}e}{\frac {2(p-1)}e} \right )^{2p}\times \frac {\sqrt {2\pi (2p-1)}}{2\pi (p-1)\times (2p)}
\end{eqnarray*}
Since $\lim \limits_{p\rightarrow \infty} \left (\frac {2p-1}{2(p-1)} \right )^{2p}=e,$ we obtain:
\begin{eqnarray*}
\frac {(2p-1)!}{2^{2(p-1)}\times [(p-1)!]^2 \times (2p)}&\sim 2^2\frac {p^2}{2pe}\times e \times \frac 1{\sqrt {2\pi}}\times \frac {\sqrt {2p}}{p\sqrt {2p}} = \frac 1{\sqrt {\pi p}}.
\end{eqnarray*}
We take $p=2^{m-1}-1;$ we deduce that when $m\rightarrow \infty,$ we have the equivalence:
\begin{equation}\label{eq65}
\sum_{m=2}^\infty \int_0^\infty \frac 1{(x^2+1)^{2^{m-1}}}dx\sim \frac 1{\sqrt \pi}\sum_{m=2}^\infty \frac 1{\left (2^{m-1}-1\right )^{\frac 12}}<\infty.
\end{equation}
This finishes the proof of \eqref{sum}. 
 The proposition is entirely proved.
\end{proof}

\subsection{Compact linear operators of the same type}\label{ssec62}
Let $\gamma=\{\gamma_m\}_{m=0}^\infty$ be a bounded sequence of complex numbers. We consider the bounded linear operator $T=T_\gamma$ on $H^2 (\mathbb C^n, d\mu)$ defined as
\begin{equation*}
T\left (\sum_{m=0}^\infty c_me_m\right )=\sum_{m=0}^\infty \gamma_m c_{2^m}e_{2^m}.
\end{equation*}
This operator is isomorphic to the bounded operator
\begin{eqnarray*} 
\widehat{T}:&\ell^2&\rightarrow\ell^2 \\
&\sum_{m=0}^\infty c_m\widehat{e}_m&\mapsto \sum_{m=0}^\infty \gamma_m c_{2^m}\widehat{e}_{2^m},
\end{eqnarray*} 
where $\{\widehat{e}_m\}_{m=0}^\infty$ is the canonical basis of $\ell^2.$ It is well-known (cf. e.g. \cite[Exercise 6.1]{B11})
 that $\widehat{T}$ is compact on $\ell^2$ if and only if the sequence $\{\gamma_m\}_{m=0}^\infty$ converges to zero. Under this condition, $T$ is also compact on $H^2 (\mathbb C^n, d\mu);$ it then follows easily from Proposition \ref{pro61}
  that the Berezin transform $\widetilde{T} (z)$ vanishes at infinity. This could also deduced from Theorem \ref{thm11}
   as well as the membership of $T$ in the Toeplitz algebra $\mathcal T^1.$ 

We end this subsection with two questions. Let $\gamma=\{\gamma_m\}_{m=0}^\infty$ be a  sequence of complex numbers which converges to zero.
\begin{enumerate}
\item
Is the associated operator $T_\gamma$ weakly localized?
\item
Is it XZ-sufficiently localized?
\end{enumerate}

\section{The class of composition operators on the Fock space of $\mathbb C^n$}\label{sec7}
\subsection{Definitions and previous results}\label{ssec71}
In this section, we consider a general class which contains the class of operators $\{T_r\}_{0\leq r< 1}$ studied in Subsection \ref{ssec42}: the class of composition operators of the Fock space $H^2 (\mathbb C^n, d\mu).$ 

We recall the following definition.

\begin{defn}\label{def71}
Given a holomorphic mapping $\varphi: \mathbb C^n \rightarrow \mathbb C^n,$ the composition operator $C_{\varphi}$ is defined on $H^2 (\mathbb C^n, d\mu)$ as 
$$C_{\varphi} (f)=f\circ \varphi, \quad f\in H^2 (\mathbb C^n, d\mu).$$
\end{defn} 

We  reiterate the question of Sadeghi \cite{S21} at the end of his PhD dissertation for the Bergman space $L^2_a (\mathcal D, dA)$ on the unit disc $\mathcal D.$ Which composition operators are weakly (strongly) localized on $L^2_a (\mathcal D, dA)?$ ($dA$ is the Lebesgue area measure on $\mathbb D$). This time, the question is for the Fock space in the complex space $\mathbb C^n:$ we completely answer this question in this setting. 

We record the following two theorems \cite[Theorem 1 and Theorem 2]{CMS03}.

\begin{thm}\label{thm72}
Suppose $\varphi: \mathbb C^n \rightarrow \mathbb C^n$ is a holomorphic mapping.
\begin{enumerate}
\item
If $C_{\varphi}$ is bounded on $H^2 (\mathbb C^n, d\mu),$ then $\varphi (z)=Az+B,$ where $A$ is an $n\times n$ matrix and $B$ is an $n\times 1$ vector. Furthermore, $\left \Vert A\right \Vert \leq 1,$ and if $|A\zeta|=|\zeta|$ for some $\zeta \in \mathbb C^n,$ then $\langle A\zeta, B\rangle =0.$
\item
Conversely, suppose that $\varphi (z)=Az+B,$ where $A$ is an $n\times n$ matrix with $\left \Vert A\right \Vert \leq 1,$ and $B$ is an $n\times 1$ vector. If $\langle A\zeta, B\rangle =0$ whenever $|A\zeta|=|\zeta|,$ then $C_{\varphi}$ is bounded on $H^2 (\mathbb C^n, d\mu).$
\end{enumerate}
\end{thm}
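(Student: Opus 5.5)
The plan is to prove the two directions separately. Necessity will come from the action of $C_\varphi^\star$ on reproducing kernels, and sufficiency from a singular value decomposition that reduces everything to one complex variable.

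\emph{Necessity (part 1).} Assume $C_\varphi$ is bounded and set $M=\Vert C_\varphi\Vert$. By the reproducing property, $\langle f, C_\varphi^\star K_z\rangle = f(\varphi(z)) = \langle f, K_{\varphi(z)}\rangle$, so $C_\varphi^\star K_z = K_{\varphi(z)}$. Since $\Vert K_w\Vert=e^{|w|^2/2}$, this gives $e^{|\varphi(z)|^2/2}\le M e^{|z|^2/2}$, that is,
$$|\varphi(z)|^2\le |z|^2+2\log M \qquad (z\in\mathbb C^n).$$
Each component $\varphi_j$ is therefore an entire function of at most linear growth. The Cauchy estimates on large polydiscs kill all derivatives of order $\geq 2$, so $\varphi(z)=Az+B$.

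Fix $\zeta\in\mathbb C^n$ and put $z=t\zeta$ with $t\in\mathbb C$. The inequality becomes
$$|t|^2|A\zeta|^2+2\,\Re e\,\big(t\langle A\zeta,B\rangle\big)+|B|^2\le |t|^2|\zeta|^2+2\log M .$$
Dividing by $|t|^2$ and letting $|t|\to\infty$ gives $|A\zeta|\le|\zeta|$, hence $\Vert A\Vert\le 1$. If $|A\zeta|=|\zeta|$, the quadratic terms cancel and $\Re e\,(t\langle A\zeta,B\rangle)$ stays bounded above for all $t\in\mathbb C$. Choosing $t=s\,\overline{\langle A\zeta,B\rangle}$ with $s\to+\infty$ then forces $\langle A\zeta,B\rangle=0$.

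\emph{Sufficiency (part 2).} First remove the unitary parts of $A$. Write $A=UDV^\star$ with $U,V$ unitary and $D=\mathrm{diag}(s_1,\dots,s_n)$, where $0\le s_j\le 1$. For a unitary $W$, the operator $f\mapsto f\circ W$ is unitary on $H^2(\mathbb C^n,d\mu)$, because $d\mu$ is rotation invariant. Since $\varphi(z)=U\big(DV^\star z+U^\star B\big)$, we have $C_\varphi = C_{V^\star\cdot}\,C_{\psi}\,C_{U\cdot}$, where $\psi(w)=Dw+B'$ and $B'=U^\star B$. It therefore suffices to show that $C_\psi$ is bounded.

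Now translate the hypothesis. Taking $\zeta=Ve_j$ gives $A\zeta=s_jUe_j$, so $\langle A\zeta,B\rangle=s_jB'_j$. Whenever $s_j=1$ we have $|A\zeta|=|\zeta|$, and the hypothesis then yields $B'_j=0$. Next use the identification $H^2(\mathbb C^n,d\mu)\cong\bigotimes_{j=1}^n H^2(\mathbb C,d\mu)$: monomials factor, so $C_\psi$ is the tensor product of the one-variable operators $f\mapsto f(s_jw+B'_j)$. A tensor product of bounded operators is bounded, with norm equal to the product of the norms, so it is enough to treat $n=1$.

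\emph{The one-variable case.} The coordinates with $s_j=1$ contribute the identity, since $B'_j=0$ there. For $s=0$, the operator $f\mapsto f(b)$ is bounded by Corollary \ref{cor38}. For $0<s<1$, substitute $u=sw+b$:
$$\Vert f(s\cdot+b)\Vert^2=\frac{1}{\pi s^2}\int_{\mathbb C}|f(u)|^2e^{-|u-b|^2/s^2}\,dV(u).$$
Because $1/s^2>1$, completing the square gives $e^{-|u-b|^2/s^2}\le C(s,b)\,e^{-|u|^2}$, which yields the bound $\Vert f(s\cdot+b)\Vert\le C\Vert f\Vert$.

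The main obstacle I expect is making the tensor-product reduction rigorous, namely checking that $C_\psi$ really agrees with the tensor product of the one-variable operators on the dense span of monomials and then extends. If that becomes awkward, the fallback is to do the change of variables directly in $\mathbb C^n$. On the coordinates with $s_j=1$ the map is the identity, and on the remaining coordinates one applies the same Gaussian comparison; this avoids tensor products altogether at the cost of heavier notation.
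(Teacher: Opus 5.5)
The paper gives no proof of this statement. It only records it from \cite[Theorems 1 and 2]{CMS03}, so there is no internal argument to compare yours with. Your proof is correct and complete on its own.

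\textbf{Necessity.} The four steps are all right:
\begin{itemize}
\item the identity $C_\varphi^\star K_z=K_{\varphi(z)}$;
\item the resulting bound $|\varphi(z)|^2\le |z|^2+2\log\Vert C_\varphi\Vert$;
\item the Cauchy estimates on polydiscs, which force $\varphi$ to be affine;
\item the test points $z=t\zeta$, with $t=s\,\overline{\langle A\zeta,B\rangle}$ and $s\to+\infty$, which give both $\Vert A\Vert\le 1$ and the orthogonality condition.
\end{itemize}

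\textbf{Sufficiency.} The singular value decomposition is the same tool the paper later imports as Theorem \ref{thm77}. The factorization $C_\varphi=C_{V^\star}C_\psi C_U$ has the correct order, since $C_{\alpha\circ\beta}=C_\beta C_\alpha$. The deduction $B'_j=0$ whenever $s_j=1$ is correct. The three one-variable cases are handled correctly. For $0<s<1$ one can make the Gaussian comparison explicit: $\sup_u\bigl(|u|^2-|u-b|^2/s^2\bigr)=|b|^2/(1-s^2)$.

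Three minor remarks.
\begin{itemize}
\item With the convention $\langle a,b\rangle=\sum a_i\overline{b_i}$, one gets $\langle AVe_j,B\rangle=s_j\overline{B'_j}$ rather than $s_jB'_j$. This is harmless, since only vanishing is used.
\item The extension step you flag as the main obstacle is routine. If polynomials $p_k\to f$ in $H^2(\mathbb C^n,d\mu)$, then $p_k\to f$ pointwise by Corollary \ref{cor38}, because $|f(z)|\le\Vert f\Vert e^{|z|^2/2}$. Hence $p_k\circ\psi\to f\circ\psi$ pointwise. At the same time $p_k\circ\psi=(\bigotimes_j T_j)p_k$ converges in norm, hence pointwise, to $(\bigotimes_j T_j)f$. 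Therefore $f\circ\psi=(\bigotimes_j T_j)f\in H^2(\mathbb C^n,d\mu)$.
\item If you use your fallback of working directly in $\mathbb C^n$, the coordinates with $s_j=0$ cannot be handled by a change of variables. There you would apply the one-variable point-evaluation bound slice by slice and then integrate with Fubini's theorem.
\end{itemize}
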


\begin{thm}\label{thm73}
Suppose $\varphi: \mathbb C^n \rightarrow \mathbb C^n$ is a holomorphic mapping.
\begin{enumerate}
\item
If $C_{\varphi}$ is compact on $H^2 (\mathbb C^n, d\mu),$ then $\varphi (z)=Az+B,$ where  $\left \Vert A\right \Vert < 1.$
\item
Conversely, suppose that $\varphi (z)=Az+B,$ where $A$ is an $n\times n$ matrix with $\left \Vert A\right \Vert \leq 1,$ and $B$ is an $n\times 1$ vector. If $\left \Vert A\right \Vert < 1,$ then $C_{\varphi}$ is compact on $H^2 (\mathbb C^n, d\mu).$ 
\end{enumerate}
\end{thm}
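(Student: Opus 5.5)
For part (1), I would first note that a compact operator is bounded, so Theorem \ref{thm72} gives $\varphi(z)=Az+B$ with $\left\Vert A\right\Vert\leq 1$, and $\langle A\zeta,B\rangle=0$ whenever $|A\zeta|=|\zeta|$. It then remains to exclude $\left\Vert A\right\Vert=1$. I would argue by contradiction. Suppose $\left\Vert A\right\Vert=1$. Since the unit sphere of $\mathbb C^n$ is compact, there is a unit vector $\zeta$ with $|A\zeta|=1$, and hence $\langle A\zeta,B\rangle=0$.

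The tool is the adjoint action on kernels. From the reproducing property, $\langle f, C_\varphi^\star K_w\rangle=f(\varphi(w))=\langle f,K_{\varphi(w)}\rangle$, so $C_\varphi^\star K_w=K_{\varphi(w)}$. Put $w=t\zeta$ with $t\to\infty$. The normalized kernels $k_{t\zeta}$ tend to $0$ weakly, because $\langle f,k_{t\zeta}\rangle=f(t\zeta)e^{-t^2/2}\to0$ on the dense set of polynomials and $\left\Vert k_{t\zeta}\right\Vert=1$. Since $C_\varphi^\star$ is compact along with $C_\varphi$, this forces $\left\Vert C_\varphi^\star k_{t\zeta}\right\Vert\to0$. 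On the other hand,
$$\left\Vert C_\varphi^\star k_{t\zeta}\right\Vert^2=\frac{\left\Vert K_{tA\zeta+B}\right\Vert^2}{\left\Vert K_{t\zeta}\right\Vert^2}=e^{|tA\zeta+B|^2-t^2}=e^{|B|^2+2t\,\Re e\,\langle A\zeta,B\rangle}=e^{|B|^2}\geq1 .$$
Here we used $|A\zeta|=1$ and $\langle A\zeta,B\rangle=0$. This contradiction shows $\left\Vert A\right\Vert<1$.

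For part (2), assume $\left\Vert A\right\Vert=a<1$. I would show directly that $\left\Vert C_\varphi f_j\right\Vert\to0$ for every sequence $f_j\to0$ weakly in $H^2(\mathbb C^n,d\mu)$. Such a sequence is bounded, say $\left\Vert f_j\right\Vert\leq M$. Since $f_j(z)=\langle f_j,K_z\rangle$, it tends to $0$ pointwise. By Corollary \ref{cor38} with $p=2$, $\alpha=1$, we have $|f_j(u)|\leq Me^{|u|^2/2}$, and therefore
$$|f_j(Az+B)|^2e^{-|z|^2}\leq M^2e^{|Az+B|^2-|z|^2}.$$
Choose $r$ with $a^2<r<1$. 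Then $|Az+B|^2\leq a^2|z|^2+2a|z||B|+|B|^2\leq r|z|^2+C$ for some constant $C$. Hence the dominating function is bounded by $M^2e^{C}e^{-(1-r)|z|^2}$, which is integrable. Dominated convergence then gives $\left\Vert C_\varphi f_j\right\Vert^2=\pi^{-n}\int|f_j(\varphi(z))|^2e^{-|z|^2}dV(z)\to0$, so $C_\varphi$ is compact.

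The only delicate point is in part (1): checking that a norming vector $\zeta$ exists, and that the orthogonality $\langle A\zeta,B\rangle=0$ from Theorem \ref{thm72} kills the term that is linear in $t$. That step is exactly what produces the uniform lower bound $e^{|B|^2}$.
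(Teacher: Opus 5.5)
Your proof is correct. The paper gives no argument for this statement. It quotes it from \cite{CMS03} alongside Theorem \ref{thm72}, so your proposal supplies a self-contained proof where the paper relies on a citation. It uses only tools already in the paper: $\left\Vert K_z\right\Vert^2=e^{|z|^2}$, Corollary \ref{cor38}, and Theorem \ref{thm72}, which is itself only cited.

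\textbf{Part (1).} Testing compactness with $\left\Vert C_\varphi^\star k_w\right\Vert^2=e^{|\varphi(w)|^2-|w|^2}$ is the right choice. The Berezin transform of Lemma \ref{lem74} would not do, because $e^{-|z|^2+\Re e\,\langle Az+B,z\rangle}$ can vanish at infinity for a non-compact $C_\varphi$. The first case in Corollary \ref{cor79} is an example: a unitary $W$ with no fixed unit vector.

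You also do not need the orthogonality $\langle A\zeta,B\rangle=0$. Replacing $\zeta$ by $-\zeta$ if necessary, you may assume $\Re e\,\langle A\zeta,B\rangle\geq 0$. This already gives $\left\Vert C_\varphi^\star k_{t\zeta}\right\Vert^2\geq e^{|B|^2}$ for $t\geq 0$. So only the affine form and $\left\Vert A\right\Vert\leq 1$ from Theorem \ref{thm72} are used.

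\textbf{Part (2).} Your dominating integral $\pi^{-n}\int_{\mathbb C^n}e^{|Az+B|^2-|z|^2}\,dV(z)$ equals $\sum_\alpha\left\Vert C_\varphi e_\alpha\right\Vert^2$ for the orthonormal basis of normalized monomials, since $\sum_\alpha|e_\alpha(u)|^2=e^{|u|^2}$. So the same estimate shows that $C_\varphi$ is in fact Hilbert--Schmidt when $\left\Vert A\right\Vert<1$. It also makes explicit that $C_\varphi$ maps $H^2(\mathbb C^n,d\mu)$ boundedly into itself, which the weak-to-norm compactness criterion presupposes.
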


\subsection{The Berezin transform of a bounded composition operator}\label{ssec72}
According to Theorem \ref{thm11},
 if $C_{\varphi}$ is compact, then its Berezin transform vanishes at infinity. Otherwise, if $C_{\varphi}$ is not compact and its Berezin transform vanishes at infinity, then $C_{\varphi}$ is not a weakly localized operator;  this follows by contradiction  from a combination of Theorem \ref{thm11} 
 and Theorem \ref{thm12}
  According to Theorem \ref{thm11}
  , $C_{\varphi}$ does not even belong to the Toeplitz algebra $\mathcal T^1.$ It will therefore be worthy to determine those bounded, non-compact composition operators whose Berezin transform does not vanish at infinity. We prove the following lemma.

\begin{lem}\label{lem74}
Suppose that $\varphi (z)=Az+B,$ where $A$ is an $n\times n$ matrix with $\left \Vert A\right \Vert =1,$ and $B$ is an $n\times 1$ vector. We suppose further that $\langle A\zeta, B\rangle =0$ whenever $|A\zeta|=|\zeta|.$
The following two statements are equivalent.
\begin{enumerate} 
\item
The Berezin transform $\widetilde{C_{\varphi}}$ of $C_{\varphi}$ vanishes at infinity;
\item
The following equality holds:
$$\lim \limits_{|z|\rightarrow \infty} \left \{|z|^2-\Re e \hskip 1truemm \left (\langle Az, z\rangle +\langle B, z\rangle\right )\right \}=\infty.$$
\end{enumerate}
\end{lem}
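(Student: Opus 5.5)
Compute the Berezin transform explicitly. Since $C_\varphi k_z(w)=k_z(Aw+B)=e^{\langle Aw+B,z\rangle-\frac{|z|^2}2}$, the reproducing property gives
$$\widetilde{C_\varphi}(z)=\langle C_\varphi k_z,k_z\rangle=e^{-\frac{|z|^2}2}\,\overline{\langle K_z, C_\varphi k_z\rangle}=e^{-\frac{|z|^2}2}\,(C_\varphi k_z)(z)=e^{-|z|^2+\langle Az,z\rangle+\langle B,z\rangle}.$$
Here $\langle u,v\rangle=u\cdot\bar v$ throughout, matching the kernel convention $K_z(\zeta)=e^{\langle \zeta,z\rangle}$. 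Taking moduli,
$$\bigl|\widetilde{C_\varphi}(z)\bigr|=\exp\Bigl(-\bigl\{|z|^2-\Re e\,(\langle Az,z\rangle+\langle B,z\rangle)\bigr\}\Bigr).$$
So $\widetilde{C_\varphi}(z)\to 0$ as $|z|\to\infty$ if and only if the exponent tends to $-\infty$, that is, if and only if the quantity in statement (2) tends to $+\infty$. This proves both directions at once, since $x\mapsto e^{-x}$ is a homeomorphism of $[-\infty,\infty)$ onto $[0,\infty)$ in the extended sense.

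The steps are as follows.
\begin{enumerate}
\item Verify the formula for $C_\varphi k_z$ and the evaluation $\langle f,k_z\rangle=e^{-|z|^2/2}f(z)$.
\item Check that $C_\varphi k_z\in H^2(\mathbb C^n,d\mu)$. This holds because $C_\varphi$ is bounded by Theorem \ref{thm72}(2) under the stated hypotheses, which justifies the use of the reproducing property.
\item Simplify the modulus. We have $|e^{\langle Az,z\rangle}|=e^{\Re e\langle Az,z\rangle}$, and similarly for the $B$ term.
\end{enumerate}

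The hypotheses $\Vert A\Vert=1$ and $\langle A\zeta,B\rangle=0$ serve only to guarantee boundedness, so that the Berezin transform is defined. They also ensure that the quantity in (2) is bounded below, although that fact is not needed for the equivalence. The only point needing care is the conjugation convention in the inner product, namely whether one obtains $\langle Az,z\rangle$ or $\langle z,Az\rangle$. The real parts of these two agree, so the final statement is unaffected either way. I expect no genuine obstacle.
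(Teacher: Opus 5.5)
Your proposal is correct and follows the paper's own proof: you compute $\widetilde{C_\varphi}(z)=e^{-|z|^2+\langle Az+B,z\rangle}$ via the reproducing property, then read off $|\widetilde{C_\varphi}(z)|=e^{-\{|z|^2-\Re e\,\langle Az+B,z\rangle\}}$. Two small points: the map $x\mapsto e^{-x}$ sends $(-\infty,+\infty]$ (not $[-\infty,\infty)$) onto $[0,\infty)$, and $C_\varphi k_z$ is a constant multiple of $K_{A^\star z}$, so it lies in $H^2(\mathbb C^n,d\mu)$ without any appeal to the boundedness of $C_\varphi$.
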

\begin{proof}
For all $z\in \mathbb C^n,$ we have $C_{\varphi} k_z (w)=e^{-\frac {|z|^2}2}e^{\langle Aw+B, z\rangle}.$ Then:
\begin{eqnarray*}
\widetilde{C_{\varphi}} (z)
&=&\langle C_{\varphi} k_z, k_z\rangle\\
&=&e^{-|z|^2}\int_{\mathbb C^n} e^{\langle Aw+B, z\rangle}\overline{K_z (w)}e^{-|w|^2}dV(w)\\
&=&e^{-|z|^2}e^{\langle Az+B, z\rangle}.
\end{eqnarray*}
For the latest equality, we applied the reproducing kernel property. This implies that $\left \vert \widetilde{C_{\varphi}} (z)\right \vert=e^{-|z|^2+\Re e\hskip 1truemm \langle Az+B, z\rangle}.$
\end{proof} 

\subsection{The case $n=1$}\label{ssec73}
We first restrict to the one-dimensional case $n=1.$ In this case, Theorem \ref{thm72}
 simply says that if $C_{\varphi}$ is bounded on $H^2 (\mathbb C^n, d\mu),$ then $\varphi (z)=az+b,$ where $a$ and $b$ are complex numbers with $|a|\leq 1,$ and if $|a|=1,$ then $b=0.$ In view of Lemma \ref{lem74}
 , the Berezin transform $\widetilde{C_{\varphi}}$ of $C_{\varphi}$ does not vanish at infinity if and only if $|z|^2(1-\Re e\hskip 1truemm a)$ does not converge to $\infty$ as $|z|\rightarrow \infty.$ This is the case if and only if $\Re e\hskip 1truemm a=1$ or equivalently, if and only if $a=1.$

Next, by Theorem \ref{thm73},
 if $|a|< 1,$ the operator $C_{\varphi}$ is compact on $H^2 (\mathbb C^n, d\mu);$ then by Theorem \ref{thm11}
 , $C_{\varphi}$ is a member of $\mathcal T^1.$ Again, by Theorem \ref{thm73},
  if $|a|=1,$ $C_{\varphi}$ is not compact on on $H^2 (\mathbb C^n, d\mu);$ in this case, Bauer, Fulsche and Rodriguez \cite[Proposition 3.8]{BFR24}
   proved that $C_{\varphi}$ is a member of $\mathcal T^1$ if and only if $a=1.$

The question now is whether the composition operators $C_{\varphi}, \hskip 2truemm \varphi (z)=az+b,$ with $|a|< 1,$ or $(a, b)=(1, 0),$ are localized. The answer is positive as the following proposition asserts.

\begin{prop}\label{pro75}
Suppose that $\varphi (z)=az+b,$ where $a$ and $b$ are complex numbers.
\begin{enumerate}
\item
If $|a|< 1,$ or $(a, b)=(1, 0),$ then the associated composition operator $C_{\varphi}$ is sufficiently localized.
\item
If $|a|=1, b=0$ and $a\neq 1,$ then the associated composition operator $C_{\varphi}$ is not weakly localized. It is not even a member of the Toeplitz algebra $\mathcal T^1.$
\end{enumerate}
\end{prop}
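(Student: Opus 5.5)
Part (1) will go through the criterion of Corollary \ref{cor311}: it is enough to find $\epsilon>0$ and $C>0$ with $\left\vert \langle C_\varphi k_z,k_w\rangle\right\vert\le Ce^{-\epsilon|z-w|^2}$ for all $z,w\in\mathbb C$. First compute the matrix entries exactly. Since $C_\varphi k_z(w)=e^{-\frac{|z|^2}2}e^{(aw+b)\bar z}$, the reproducing property gives
$$\langle C_\varphi k_z,k_w\rangle=e^{-\frac{|w|^2}2}C_\varphi k_z(w)=e^{-\frac{|z|^2+|w|^2}2}e^{aw\bar z+b\bar z},$$
and so
$$\left\vert\langle C_\varphi k_z,k_w\rangle\right\vert=\exp\Bigl(-\tfrac{|z|^2+|w|^2}2+\Re e\,(a w\bar z)+\Re e\,(b\bar z)\Bigr).$$
If $(a,b)=(1,0)$, then $C_\varphi=I=V_0$, and this is already covered by Proposition \ref{pro41}. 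Indeed the exponent is $-\frac12|z-w|^2$ in that case.

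For $|a|<1$, write $Q(z,w)=\frac{|z|^2+|w|^2}2-\Re e(aw\bar z)$. This is a real quadratic form on $\mathbb C^2\simeq\mathbb R^4$, and it is positive definite. To see this, note $|\Re e(aw\bar z)|\le |a||z||w|\le \frac{|a|}2(|z|^2+|w|^2)$, which gives $Q(z,w)\ge\frac{1-|a|}2(|z|^2+|w|^2)$. The linear term $\Re e(b\bar z)$ can then be absorbed: $\Re e(b\bar z)\le \frac{1-|a|}4|z|^2+\frac{|b|^2}{1-|a|}$. Combining these, the exponent is at most
$$-\frac{1-|a|}4\bigl(|z|^2+|w|^2\bigr)+\frac{|b|^2}{1-|a|}.$$
Finally, $|z-w|^2\le 2(|z|^2+|w|^2)$, so the exponent is at most $-\frac{1-|a|}8|z-w|^2+C_b$. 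This is condition (2) of Corollary \ref{cor311} with $\epsilon=\frac{1-|a|}8$, and part (1) follows. If one wants to be explicit, Corollary \ref{cor311} also gives $C_\varphi^\star\in\mathcal{SL}$ for free.

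Part (2) will follow the recipe laid out at the start of Subsection \ref{ssec72}. Let $|a|=1$, $b=0$, $a\ne1$. By Theorem \ref{thm73}, $C_\varphi$ is not compact, because $\Vert A\Vert=|a|=1$. By the proof of Lemma \ref{lem74}, $|\widetilde{C_\varphi}(z)|=e^{-|z|^2(1-\Re e\,a)}$. Since $|a|=1$ and $a\ne 1$, we have $\Re e\,a<1$, so the Berezin transform vanishes at infinity.

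Now argue by contradiction. If $C_\varphi$ were weakly localized, Theorem \ref{thm12} would place it in $\mathcal T^1$. Theorem \ref{thm11} would then make it compact, which it is not. The same application of Theorem \ref{thm11} shows that $C_\varphi\notin\mathcal T^1$ at all, which gives the second claim of (2).

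The only step needing care is the uniform Gaussian bound in (1), specifically absorbing the linear term $\Re e(b\bar z)$ into the quadratic decay. The Young-inequality split above handles it, so I expect no genuine obstacle.
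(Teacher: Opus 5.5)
Your argument is correct. Part (2) is exactly the paper's reasoning: non-compactness from Theorem \ref{thm73}, then $|\widetilde{C_\varphi}(z)|=e^{-(1-\Re e\,a)|z|^2}\to 0$, then Theorems \ref{thm11} and \ref{thm12}. The paper actually gives this in the discussion of Subsections \ref{ssec72}--\ref{ssec73} rather than inside the proof itself.

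Part (1) is done by a different technique. The paper inserts the kernel into Lemma \ref{lem32} and evaluates the Gaussian integral exactly. It obtains a constant times $e^{-|z|^2\left(1-\frac{|pa-p+2|^2}{4}\right)+p\,\Re e(b\bar z)}$, and from this reads off $C_\varphi\in\mathcal L_p$ for $2<p<\frac{4(1-\Re e\,a)}{|1-a|^2}$. You instead bound $|\langle C_\varphi k_z,k_w\rangle|$ pointwise and appeal to Corollary \ref{cor311}; its admissibility hypothesis is supplied by the boundedness in Theorem \ref{thm72}.

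Your route has several advantages. It is shorter and needs no complex Gaussian integral. It yields the stronger decay $e^{-\frac{1-|a|}{4}(|z|^2+|w|^2)}$, which reflects compactness and not just localization. It also transfers verbatim to $n\ge 2$ with $\Vert A\Vert<1$, giving another proof of Proposition \ref{pro76}.

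What it gives up is sharpness. The estimate $|\Re e(aw\bar z)|\le |a||z||w|$ discards the phase of $a$, so you only reach $2<p<\frac{8}{3+|a|}$. This interval collapses as $|a|\to 1$, even when $a\to 1^-$ along the reals, where the exact range $p<\frac{4}{1-a}$ blows up. A one-sided bound also cannot give the non-membership results (Theorem \ref{thm42}, Theorem \ref{thm713}) that the exact formula delivers. For the proposition as stated, which only needs some $p>2$, this loss is harmless.
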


\begin{proof}
We have:
\begin{eqnarray*}
\langle C_{\varphi}k_z, k_w\rangle
&=&e^{-\frac {|z|^2+|w|^2}2}\langle C_{\varphi}K_z, K_w\rangle\\
&=&e^{-\frac {|z|^2+|w|^2}2}C_{\varphi}K_z (w)\\
&=&e^{-\frac {|z|^2+|w|^2}2}K_z (aw+b)\\
&=&e^{-\frac {|z|^2+|w|^2}2}e^{(aw+b)\cdot \bar z}.
\end{eqnarray*}
So
\begin{eqnarray*}
&&\int_{\mathbb C} \left \vert \langle C_{\varphi}k_z, k_w\rangle\right \vert^p e^{\left (\frac p2-1)|z-w|^2 \right )}dV(w)\\
&=&e^{-\frac {p|z|^2}2}e^{\Re e\hskip 1truemm (pb\bar z)}\int_{\mathbb C} e^{p\Re e\hskip 1truemm (paw\bar z)}e^{-\frac {p|w|^2}2}e^{\left (\frac p2-1 \right )|z-w|^2}dV(w)\\
&=&e^{-\frac {p|z|^2}2}e^{\Re e\hskip 1truemm (pb\bar z)}\int_{\mathbb C} e^{p\Re e\hskip 1truemm (paw\bar z)}e^{-\frac {p|w|^2}2}e^{\left (\frac p2-1\right )\left \{|z|^2+|w|^2 -2\Re e\hskip 1truemm (w\bar z) \right \}}dV(w)\\
&=&e^{-|z|^2}e^{\Re e\hskip 1truemm (pb\bar z)}\int_{\mathbb C} e^{-|w|^2}e^{\Re e\hskip 1truemm \{w\bar z(pa -p+2)\}}dV(w)\\
&=&e^{-|z|^2}e^{\Re e\hskip 1truemm (pb\bar z)}\int_{\mathbb C} e^{-\left \vert w-\frac {(pa-p+2)z}2\right \vert^2+\frac {|pa-p+2|^2|z|^2}4}dV(w)\\
&=&e^{-|z|^2\{1-\frac {|pa-p+2|^2}4\}}e^{\Re e\hskip 1truemm (pb\bar z)}\int_{\mathbb C} e^{-|\zeta|^2}dV(\zeta).
\end{eqnarray*}

It next suffices to show that $\sup \limits_{z\in \mathbb C} e^{-|z|^2\{1-\frac {|pa-p+2|^2}4\}}e^{\Re e\hskip 1truemm (pb\bar z)}<\infty$ for some positive number $p>2.$ 

If $|a|< 1,$ it suffices to prove that $1-\frac {|pa-p+2|^2}4>0;$ this inequality is equivalent to the inequality $p<\frac {4(1-\Re e\hskip 1truemm a)}{|1-a|^2}.$ We easily check that $\frac {4(1-\Re e\hskip 1truemm a)}{|1-a|^2}>2$ whenever $|a|< 1.$ 

We have thus proved that $C_{\varphi}\in \mathcal L_p$ for all $p\in \left (2, \frac {4(1-\Re e\hskip 1truemm a)}{|1-a|^2}\right ).$

If $(a, b)=(1, 0),$ then $C_{\varphi}$ is the identity operator and it is well-known that it is strongly localized ($C_{\varphi}=V_0$ in Subsection \ref{ssec41}).
\end{proof}

\subsection{The case $n\geq 2$}\label{ssec74}
We next consider the case where $n\geq 2.$ We have

\begin{eqnarray*}
\langle C_\varphi k_z, k_w\rangle&=e^{-\frac {|z|^2+|w|^2}2}e^{\langle Aw+B, z\rangle}\\
&=e^{-\frac {|z|^2+|w|^2}2}e^{\langle w, A^\star z\rangle+\langle B, z\rangle}.
\end{eqnarray*}
So
\begin{eqnarray*}
&&\int_{\mathbb C^n} \left \vert\langle C_\varphi k_z, k_w\rangle \right \vert^pe^{\left (\frac p2-1 \right )|z-w|^2}dV(w)\\
&=&e^{-p\frac {|z|^2}2}\int_{\mathbb C^n} e^{-p\frac {|w|^2}2} e^{p\{\Re e\hskip 1truemm \langle w, A^\star z\rangle+\langle B, z\rangle \}}e^{\left (\frac p2-1 \right )\{|z|^2+|w|^2-2\Re e \hskip 1truemm \langle w, z\rangle \}}dV(w)\\
&=&e^{-|z|^2+p\Re e\hskip 1truemm \langle B, z\rangle}\int_{\mathbb C^n} e^{-|w|^2}e^{\Re e\hskip 1truemm \langle w, pA^\star z\rangle-(p-2)\langle w, z\rangle}dV(w)\\
&=&e^{-|z|^2+p\Re e\hskip 1truemm \langle B, z\rangle}\int_{\mathbb C^n} e^{-\left \vert w-\frac p2 A^\star z+\left (\frac p2-1\right )z\right \vert^2}e^{\left \vert\frac p2 A^\star z-\left (\frac p2-1\right )z\right \vert^2}dV(w)\\
&=&e^{-|z|^2+p\Re e\hskip 1truemm \langle z, B\rangle+\frac {p^2}4|A^\star z|^2+\left (\frac p2-1\right )^2|z|^2-\Re e\hskip 1truemm \langle z, p\left (\frac p2-1\right )A^\star z \rangle}\int_{\mathbb C^n} e^{-|\zeta|^2}dV(\zeta),
\end{eqnarray*}
where we applied the change of variable $\zeta=w-\frac p2 A^\star z+\left (\frac p2-1\right )z.$ For $C:=\int_{\mathbb C^n} e^{-|\zeta|^2}dV(\zeta)<\infty,$ we obtain:
\begin{equation}\label{eq71}
\int_{\mathbb C^n} \left \vert\langle C_\varphi k_z, k_w\rangle \right \vert^pe^{\left (\frac p2-1 \right )|z-w|^2}dV(w)=Ce^{p\left (\frac p4-1\right )|z|^2+\Re e\hskip 1truemm \langle z, pB-p\left (\frac p2-1\right )A^\star z\rangle+\frac {p^2}4\left \vert A^\star z\right \vert^2}.
\end{equation}

Our first result is the following.
\begin{prop}\label{pro76}
We  suppose that $\left \Vert A\right \Vert <1.$ The composition operator $C_\varphi$ belongs to $\mathcal L_p$ if $2<p<\frac 4{1+\left \Vert A\right \Vert}.$ In particular, $C_\varphi$ is sufficiently localized.
\end{prop}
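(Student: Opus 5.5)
The plan is to apply Lemma \ref{lem32} and work directly with the closed formula \eqref{eq71}. By Lemma \ref{lem32}, $C_\varphi\in\mathcal L_p$ as soon as the right-hand side of \eqref{eq71} is bounded uniformly in $z\in\mathbb C^n$. Write $a:=\left\Vert A\right\Vert<1$. The exponent in \eqref{eq71} splits into a quadratic form and a linear term:
$$Q_p(z)=p\left(\tfrac p4-1\right)|z|^2-p\left(\tfrac p2-1\right)\Re e\,\langle z, A^\star z\rangle+\tfrac{p^2}4\left\vert A^\star z\right\vert^2$$
and
$$\ell(z)=p\,\Re e\,\langle z,B\rangle .$$
The goal is to show $Q_p(z)\le -c|z|^2$ for some $c>0$ whenever $2<p<\frac 4{1+a}$. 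Then the exponent is at most $-c|z|^2+p|B||z|$, which is bounded above on $\mathbb C^n$, and taking the supremum over $z$ gives $C_\varphi\in\mathcal L_p$.

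To estimate $Q_p$, I use $\Vert A^\star\Vert=\Vert A\Vert=a$, which gives $|\Re e\,\langle z,A^\star z\rangle|\le a|z|^2$ and $|A^\star z|^2\le a^2|z|^2$. Since $p>2$, the coefficient $-p(\frac p2-1)$ is negative, so the middle term is bounded by $p(\frac p2-1)a|z|^2$. Hence
$$Q_p(z)\le p|z|^2\left[\tfrac p4-1+\left(\tfrac p2-1\right)a+\tfrac p4a^2\right]=p|z|^2\left[\tfrac p4(1+a)^2-(1+a)\right]=p(1+a)\left(\tfrac{p(1+a)}4-1\right)|z|^2 .$$
This is strictly negative definite exactly when $p<\frac4{1+a}$, so we may take $c=p(1+a)\left(1-\frac{p(1+a)}4\right)>0$.

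Finally, $a<1$ gives $\frac4{1+a}>2$, so the interval $\left(2,\frac4{1+a}\right)$ is nonempty. Hence $C_\varphi\in\mathcal L_p$ for some $p>2$, that is, $C_\varphi$ is sufficiently localized by Definition \ref{def24}.

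The only delicate point is the sign bookkeeping in the cross term $\Re e\,\langle z,A^\star z\rangle$: the bound $-\Re e\,\langle z,A^\star z\rangle\le a|z|^2$ must be used in the right direction, and the coefficients must be recombined into the perfect factor $(1+a)\left(\frac{p(1+a)}4-1\right)$. The absence of any condition on $B$ reflects that, once the quadratic form is strictly negative, the linear term is harmless. As a consistency check, the result recovers Theorem \ref{thm42} when $A=-rI_n$ and $B=0$.
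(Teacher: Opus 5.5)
Your proof is correct and follows essentially the same route as the paper: starting from \eqref{eq71}, you bound the cross term and $|A^\star z|^2$ using $\Vert A^\star\Vert=\Vert A\Vert$ and factor the resulting coefficient as $(1+\Vert A\Vert)\bigl(p(1+\Vert A\Vert)-4\bigr)$, up to the harmless factor $p/4$. The only difference is cosmetic: you bound the exponent directly by $-c|z|^2+p|B||z|$, which is visibly bounded above, whereas the paper shows $g(z,p)\to-\infty$ as $|z|\to\infty$ and then uses continuity on a compact ball.
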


\begin{proof}
According to \eqref{eq71}, we must show that $\sup \limits_{z\in \mathbb C^n} e^{\frac p4g(z, p)}<\infty,$ where 
$$g(z, p)=\left (p-4\right )|z|^2+\Re e\hskip 1truemm \langle z, 4B-(2p-4)A^\star z\rangle+p\left \vert A^\star z\right \vert^2.$$
It suffices to prove that $\lim \limits_{|z|\rightarrow \infty} g(z, p)=-\infty.$ Indeed, in this case, for every $\alpha>0,$ there exists $\beta=\beta (\alpha)$ such that for every $z\in \mathbb C^n,$ the following implication holds.
$$|z|>\beta \Rightarrow g(z, p)<-\alpha.$$
We fix $\alpha$ and $\beta.$ Since the closed ball $\bar{B}(0, \beta)$ is a compact set of $\mathbb C^n$ and $g$ is a continuous function in $\mathbb C^n,$ there exists a real number $M$ such that the following implication also holds.
$$|z|\leq \beta \Rightarrow g(z, p)\leq M.$$
We then obtain that $\sup \limits_{z\in \mathbb C^n} e^{\frac p4g(z, p)}<e^{\frac p4\max \{-\alpha,M\}}<\infty.$

Let us next show that $\lim \limits_{|z|\rightarrow \infty} g(z, p)=-\infty.$ We have 
\begin{eqnarray*}
g(z, p)&\leq & (p-4)|z|^2+4|B||z|+(2p-4)\left \Vert A \right \Vert |z|^2+p\left \Vert A\right \Vert^2 |z|^2\\
&=&|z|^2 \left \{p-4+\left (2p-4\right )\left \Vert A \right \Vert +p\left \Vert A\right \Vert^2+\frac {4|B|}{|z|}\right \}.
\end{eqnarray*}
It is easy to check that $p-4+\left (2p-4\right )\left \Vert A \right \Vert +p\left \Vert A\right \Vert^2 <0$ whenever $2<p<\frac 4{1+\left \Vert A\right \Vert}.$ The conclusion follows.
\end{proof}

We next suppose that $\left \vert A\right \vert=1.$ We record the following theorem \cite[Theorem 3]{CMS03} (cf. originally \cite{HJ90}).

\begin{thm}\label{thm77}
If $A$ is an $n\times n$ matrix of rank $k,$ then $A$ can be written as $A=V\Sigma W,$ where $V, W$ are $n\times n$ unitary matrices, and $\Sigma$ is a diagonal matrix whose diagonal entries $\{\sigma_i\}_{i=1}^n$ are such that $\sigma_1\geq \sigma_2\geq \cdots \geq \sigma_k>\sigma_{k+1}=\cdots=\sigma_n=0.$ The $\sigma_i$ are the non negative square roots of the eigenvalues of $AA^\star;$ if we require that they are listed in decreasing order, then $\Sigma$ is uniquely determined from $A.$ 

If $\left \Vert A\right \Vert <1,$ then the $\sigma_i$ will all be less than or equal to 1, and at least one will be equal to 1 if $\left \Vert A\right \Vert=1.$
\end{thm}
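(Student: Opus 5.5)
This is the singular value decomposition, quoted from \cite[Theorem 3]{CMS03} and \cite{HJ90}, so I only need the standard argument in finite dimensions. The matrix $A^\star A$ is Hermitian and positive semidefinite, since $\langle A^\star A\zeta,\zeta\rangle=|A\zeta|^2\geq 0$. By the spectral theorem there is an orthonormal basis $\{u_i\}_{i=1}^n$ of eigenvectors, $A^\star Au_i=\lambda_iu_i$ with $\lambda_i\geq0$. I order them so that $\lambda_1\geq\cdots\geq\lambda_n$ and set $\sigma_i=\sqrt{\lambda_i}$. The identity $\ker A^\star A=\ker A$ (from $|A\zeta|^2=\langle A^\star A\zeta,\zeta\rangle$) shows that exactly $k=\operatorname{rank}A$ of the $\sigma_i$ are positive.

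Next, for $i\leq k$ I put $v_i=\sigma_i^{-1}Au_i$. Then $\langle v_i,v_j\rangle=(\sigma_i\sigma_j)^{-1}\langle A^\star Au_i,u_j\rangle=\delta_{ij}$, so these vectors are orthonormal, and I complete them to an orthonormal basis $\{v_i\}_{i=1}^n$. Let $V$ be the unitary matrix with columns $v_i$ and let $U$ be the unitary matrix with columns $u_i$. Then $AU=V\Sigma$: for $i\leq k$ this is the definition of $v_i$, and for $i>k$ we have $Au_i=0$ because $|Au_i|^2=\lambda_i=0$. Setting $W=U^\star$ gives $A=V\Sigma W$.

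For uniqueness, $AA^\star=V\Sigma^2V^\star$, so the $\sigma_i^2$ are exactly the eigenvalues of $AA^\star$, counted with multiplicity. The same conclusion holds for $A^\star A=W^\star\Sigma^2W$, which has the same spectrum. Once the decreasing order is imposed, $\Sigma$ is therefore determined by $A$.

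For the final assertion, unitary invariance of the operator norm gives $\|A\|=\|\Sigma\|=\max_i\sigma_i=\sigma_1$. If $\|A\|\leq1$ then every $\sigma_i\leq1$, and if $\|A\|=1$ then $\sigma_1=1$. (The statement writes $\|A\|<1$ in that clause, but the intended hypothesis is presumably $\|A\|\leq 1$; with $\|A\|<1$ the bound holds all the more.)

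The only step needing care is the orthonormality and completion of the $v_i$, which is where the zero singular values are handled. The rest is routine linear algebra.
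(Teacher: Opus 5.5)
Your proof is correct: it is the standard spectral-theorem construction of the singular value decomposition. You are also right to read the clause ``$\|A\|<1$'' as the intended $\|A\|\leq 1$, and your identity $\|A\|=\sigma_1$ covers either reading. The paper gives no proof of its own here; it simply quotes \cite[Theorem 3]{CMS03} and \cite{HJ90}, and your argument is essentially the textbook one found in those sources.
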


This decomposition of the matrix $A$ is called {\textit {singular value decomposition.}} 

\subsection{The case where $A$ is a unitary $n\times n$ matrix}\label{ssec75}
Let $W$ be a unitary $n\times n$ matrix in $\mathbb C^n$ and let $B$ be an $n\times 1$ matrix in $\mathbb C^n.$ For all $\zeta \in \mathbb C^n,$ we have $|W\zeta|=|\zeta|.$ Then if the composition operator $C_\varphi, \varphi (z)=Wz+B$ is bounded on $H^2 (\mathbb C^n, d\mu),$ according to Theorem \ref{thm72}
, we have $\langle W\zeta, B\rangle=0$ for all $\zeta \in \mathbb C^n:$ since $W$ is onto, we conclude that $B=0.$

We denote by $\mathbb S$ the unit sphere in $\mathbb C^n.$ We first prove the following proposition.
\begin{prop}\label{pro78}
Let $W$ be a unitary $n\times n$ matrix in $\mathbb C^n$ and let $z'$ be a point of $\mathbb S.$ The following two assertions are equivalent.
\begin{enumerate}
\item
$\Re e\hskip 1truemm \langle Wz', z'\rangle=1;$
\item
$z'=Wz'.$
\end{enumerate}
\end{prop}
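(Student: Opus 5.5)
The direction (2) $\Rightarrow$ (1) is immediate. If $Wz'=z'$, then $\langle Wz', z'\rangle=\langle z', z'\rangle=|z'|^2=1$, since $z'\in\mathbb S$. In particular its real part equals $1$.

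For (1) $\Rightarrow$ (2), we use the equality case of the Cauchy--Schwarz inequality, through an explicit norm computation. Because $W$ is unitary, $|Wz'|=|z'|=1$. Expanding the square of the norm of the difference gives
\begin{equation*}
|Wz'-z'|^2=|Wz'|^2+|z'|^2-2\Re e\hskip 1truemm \langle Wz', z'\rangle=2-2\Re e\hskip 1truemm \langle Wz', z'\rangle .
\end{equation*}
Under assumption (1) the right-hand side vanishes. Hence $Wz'=z'$, which is (2).

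No step here is a real obstacle. The only point to check is the expansion of the norm: we need the convention $\langle u,v\rangle=\sum u_j\overline{v_j}$, for which $|u-v|^2=|u|^2+|v|^2-2\Re e\hskip 1truemm\langle u,v\rangle$ holds. We also need the fact that a unitary matrix preserves the Euclidean norm, as already noted at the start of Subsection~\ref{ssec75}.

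Incidentally, the same identity shows that $\Re e\hskip 1truemm\langle Wz',z'\rangle\le 1$ for every $z'\in\mathbb S$. This is presumably what will be combined with Lemma~\ref{lem74} (with $B=0$) to decide when the Berezin transform of $C_\varphi$ vanishes at infinity.
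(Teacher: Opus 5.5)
Your proof is correct and is essentially the paper's argument. Both use unitarity to get $|Wz'|=|z'|=1$, then the identity $|Wz'-z'|^2=|Wz'|^2+|z'|^2-2\Re e\hskip 1truemm\langle Wz',z'\rangle$, which vanishes under (1) and forces $Wz'=z'$.
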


\begin{proof}
The implication $(2)\Rightarrow (1)$ is clear. Conversely, suppose (1), that is:
$$\Re e\hskip 1truemm \langle Wz', z'\rangle=1=|z'|^2=\frac {|z'|^2+|Wz'|^2}2.$$
Then
$$0=|Wz'|^2-2\Re e\hskip 1truemm \langle Wz', z'\rangle+|z'|^2=|Wz'-z'|^2.$$
This implies that $Wz'=z'.$ 
\end{proof}

We next deduce the following corollary.

\begin{cor}\label{cor79}
The only unitary $n\times n$ matrix $W$ in $\mathbb C^n$ whose composition operator $C_\varphi, \varphi (z)=Wz$ is weakly localized is the identity matrix. 
\end{cor}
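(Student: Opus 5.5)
The plan is to treat the two directions separately, with $B=0$ forced by the discussion preceding Proposition \ref{pro78}. For $W=I_n$, $C_\varphi$ is the identity $V_0$, which is strongly localized by Proposition \ref{pro41}, hence weakly localized by Corollary \ref{cor316}. So the substance is the converse: if $W\neq I_n$ is unitary, then $C_\varphi$, $\varphi(z)=Wz$, is not weakly localized.

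The argument will mirror the one used in the paper for Axler--Zheng type operators and for Proposition \ref{pro75}(2). By Theorem \ref{thm73}, since $\|W\|=1$, the operator $C_\varphi$ is not compact. Suppose, for contradiction, that $C_\varphi$ were weakly localized. Then Theorem \ref{thm12} gives $C_\varphi\in\mathcal T^1$. In that case Theorem \ref{thm11} says $C_\varphi$ is compact if and only if its Berezin transform vanishes at infinity, and the latter is what we will verify, giving the contradiction. By Lemma \ref{lem74} (or its proof), $|\widetilde{C_\varphi}(z)|=e^{-|z|^2+\Re e\,\langle Wz,z\rangle}=e^{-|z|^2(1-\Re e\,\langle Wz',z'\rangle)}$ with $z=|z|z'$, $z'\in\mathbb S$.

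Everything therefore reduces to a uniform statement on the sphere, and this is where I expect the main obstacle. One must show
$$\delta:=\min_{z'\in\mathbb S}\bigl(1-\Re e\,\langle Wz',z'\rangle\bigr)>0.$$
Given this, $|\widetilde{C_\varphi}(z)|\le e^{-\delta|z|^2}\to0$. Since $\mathbb S$ is compact and the function is continuous, the minimum is attained, and it is nonnegative by Cauchy--Schwarz. By Proposition \ref{pro78}, it vanishes at $z'$ exactly when $Wz'=z'$. Here the argument has a genuine gap: for $W\neq I_n$, $W$ may still have the eigenvalue $1$, i.e. a nontrivial fixed subspace. Then $\widetilde{C_\varphi}(tz')=1$ along those directions, and the Berezin transform does \emph{not} vanish at infinity.

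So the step I would handle first is this case split. If $1$ is not an eigenvalue of $W$, the compactness argument gives $\delta>0$ and the contradiction above goes through. If $W$ has a nontrivial fixed subspace $E\neq\mathbb C^n$, I would use a unitary change of variables, which preserves weak localization by conjugation with the unitary composition operator. This reduces to $W=I_E\oplus W'$ with $W'$ unitary on $E^\perp$ and without eigenvalue $1$. Then $H^2(\mathbb C^n,d\mu)=H^2(E)\otimes H^2(E^\perp)$ and $C_\varphi=I\otimes C_{W'}$.

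For this tensor product I would argue directly from Definition \ref{def25}. The quantity $|\langle C_\varphi k_z,k_w\rangle|=e^{-|z_1-w_1|^2/2}\,|\langle C_{W'}k_{z_2},k_{w_2}\rangle|$ factorizes, so the integrals in Definition \ref{def25} factor. Weak localization of $C_\varphi$ would then force weak localization of $C_{W'}$ on $E^\perp$, which is excluded by the first case. I expect this reduction to be the most delicate step to write cleanly.
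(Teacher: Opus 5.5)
Your proposal is correct and follows the paper's proof. It uses the same split according to whether $1$ is an eigenvalue of $W$. When it is not, it uses the same argument: the Berezin transform vanishes at infinity, $C_\varphi$ is not compact, and Theorems \ref{thm11} and \ref{thm12} give the contradiction. When it is, it uses the same kernel factorization after a unitary change of variables. The only difference is that the paper diagonalizes $W$ fully, isolates one coordinate with eigenvalue $d_2\neq 1$ and appeals to Proposition \ref{pro75}(2), whereas you isolate the whole block $W'$ and reuse the first case in dimension $n-j$.

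One correction: the tail integrals in Definition \ref{def25} do not literally factor. What you need, and what the paper uses, is the lower bound coming from the inclusion $\{|z_2-w_2|\ge r\}\subset\{|z-w|\ge r\}$, namely $\sup_z\int_{|z-w|\ge r}|\langle C_\varphi k_z,k_w\rangle|\,dV(w)\ge(2\pi)^j\sup_{z_2}\int_{|z_2-w_2|\ge r}|\langle C_{W'}k_{z_2},k_{w_2}\rangle|\,dV(w_2)$. The same bound holds for the adjoint $C_{W^\star}$, since $W^\star=I_E\oplus W'^\star$.
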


\begin{proof}
Denote by $I$ the identity matrix in $\mathbb C^n.$ Then for $\varphi (z)=Iz=z,$ it is well-known that $C_\varphi$ is strongly localized. Next suppose that $W\neq I,$ that is, there exists a $z'\in \mathbb S$ such that $z'\neq Wz'.$

We first suppose that for all $z'\in \mathbb S,$ we have $z'\neq Wz'.$ According to the previous lemma, for all $z'\in \mathbb S,$ we have $\Re e\hskip 1truemm \langle Wz', z'\rangle<1=|z'|^2;$ since $\mathbb S$ is a compact set in $\mathbb C^n,$ there is a constant $C\in (0, 1)$ such that for all $z'\in \mathbb S,$ we have $\Re e\hskip 1truemm \langle Wz', z'\rangle \leq C.$ We claim that the Berezin transform $\widetilde{C_\varphi}$ of $C_\varphi$ vanishes at infinity; since $C_\varphi$ is not compact on the Fock space $H^2 (\mathbb C^n, d\mu),$ it follows from Theorem \ref{thm11}
  that $C_\varphi$ is not weakly localized. Indeed, for the claim:
$$\widetilde{C_\varphi} (z)=e^{-|z|^2+\Re e\hskip 1truemm \langle Wz, z\rangle};$$
for $z\neq 0,$ if we write $z=|z|z',$ we get:
\begin{eqnarray*}
|z|^2-\Re e\hskip 1truemm \langle Wz, z\rangle
&=&|z|^2 \left (1-  \Re e\hskip 1truemm \langle Wz, z\rangle \right )\\
&\geq& |z|^2 (1-C).
\end{eqnarray*}
This implies that $\widetilde{C_\varphi} (z)\rightarrow 0\quad (|z|\rightarrow \infty).$

We now suppose that there are two points $z'_1$ and $z'_2$ on $\mathbb S$ such that $Wz'_1=z'_1$ and $Wz'_1\neq z'_2.$ We recall that each unitary $n\times n$ matrix $W$ can be diagonalized in the form $W=VDV^\star,$ where $V$ is a unitary $n\times n$ matrix and $D$ is a diagonal matrix whose entries $\{d_j\}_{j=1}^n$ are unimodular. In our case, since $z'_1$ is an eigenvector of $W$ associated to the eigenvalue 1, we can take the successive entries of $D$ equal to $1, d_2,\cdots, d_n,$ with $d_2\neq 1.$ Then, in view of the equality: 
$$\langle C_\varphi k_z, k_w\rangle=e^{-\frac {|z|^2+|w|}2}e^{\langle Ww, z\rangle}=e^{-\frac {|V^\star z|^2+|V^\star w|}2}e^{\langle DV^\star w, V^\star z\rangle},$$ 
when we apply the change of variables $\xi=V^\star w$ and $\zeta=V^\star z,$ we get:
$$
\int_{|z-w|>r} \left \vert \langle C_\varphi k_z, k_w\rangle \right \vert dV(w)=\int_{|\zeta-\xi|>r} e^{-\frac {|\zeta|^2+|\xi|^2}2}e^{\Re e\hskip 1truemm \langle D\xi, \zeta\rangle}dV(\xi).
$$
We use the implication:
$$\{\zeta_1, \xi_1, \zeta_3, \xi_3,\cdots, \zeta_n, \xi_n \hskip 2truemm {\rm {arbitrary}},\hskip 2truemm |\zeta_2-\xi_2|>r\}\quad \Rightarrow \quad |\zeta-\xi|>r.$$
Then:
\begin{multline*}
\int_{|z-w|>r} \left \vert \langle C_\varphi k_z, k_w\rangle \right \vert dV(w)\geq \\
\int_{\mathbb C} e^{-\frac {\zeta_1-\xi_1|^2}2}d\lambda (\xi_1)\times  \Pi_{j=3}^n \int_{\mathbb C} e^{-\frac {|d_j \zeta_j-\xi_j|^2}2}d\lambda (\xi_j)\times \int_{|\zeta_2-\xi_2|>r} e^{-\frac {|d_2 \zeta_2-\xi_2|^2}2}d\lambda (z_2),
\end{multline*}
where $d\lambda$ denotes the Lebesgue area measure in $\mathbb C.$ If we write $C:=\int_{\mathbb C} e^{-\frac {|d_j\zeta_j-\zeta_j|^2}2}d\lambda (\xi_j)<\infty,$ we obtain:
\begin{equation*}
\sup \limits_{z\in \mathbb C^n} \int_{|z-w|>r} \left \vert \langle C_\varphi k_z, k_w\rangle \right \vert dV(w)\geq C^{n-1} \sup \limits_{\zeta_2\in \mathbb C}\int_{|\zeta_2-\xi_2|>r} e^{-\frac {|d_2 \zeta_2-\xi_2|^2}2}d\lambda (\xi_2).
\end{equation*}
If $C_\varphi$ were weakly localized, this would contradict assertion (2) of Proposition \ref{pro75}.
 The conclusion follows.

\begin{rem}\label{rem710}
In the above proof, we even proved that the only unitary $n\times n$ matrix $W$ in $\mathbb C^n$ whose composition operator $C_\varphi, \varphi (z)=Wz$ belongs to the Toeplitz algebra $\mathcal T^1$ is the identity matrix. 
\end{rem}
\end{proof}

\subsection{The case where $A=V\Sigma V^\star$}\label{ssec76}
In this subsection, we suppose that in its spectral value decomposition, $A=V\Sigma V^\star,$ that is $W=V^\star.$ We first prove the following lemma.

\begin{lem}\label{lem711}
Let $V$ and $U$ be a unitary $n\times n$ matrix in $\mathbb C^n$ and a unitary $(n-j)\times (n-j)$ unitary matrix in $\mathbb C^{n-j}$respectively. Let $B$ be a $n\times 1$ matrix in $\mathbb C^n$ and $D$ be a diagonal $(n-j)\times (n-j)$ matrix in $\mathbb C^{n-j}$ whose successive entries are $\sigma_{j+1}\geq \cdots \sigma_n$ with all $\sigma_j\in [0, 1), l\in \{j+1, \cdots, n\}.$ For $\Sigma_1=\left(
\begin{array}{r|l}
I_j&0\\ 
\hline
0&DU
\end{array}
\right)$ 
and $A=V\Sigma_1 V^\star,$ we suppose that the composition operator $C_\varphi, \varphi (z)=Az+B,$ is bounded on the Fock space $H^2 (\mathbb C^n, d\mu).$ If we denote by $B'_i, i=1,\cdots, n,$ the coordinates of the $n\times 1$ matrix $B'=V^\star B,$ then $B'_i=0$ for all $i\in \{1,\cdots, j\}.$
\end{lem}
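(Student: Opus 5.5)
The plan is to apply the necessary condition in assertion (1) of Theorem \ref{thm72}: since $C_\varphi$ is bounded, we have $\langle A\zeta, B\rangle=0$ for every $\zeta\in\mathbb C^n$ with $|A\zeta|=|\zeta|$. It therefore suffices to find enough vectors $\zeta$ with $|A\zeta|=|\zeta|$ and then translate the orthogonality relations into the coordinates $B'=V^\star B$.

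\textbf{Choosing the test vectors.} For $i\in\{1,\dots,j\}$, let $\epsilon_i$ be the $i$-th canonical basis vector of $\mathbb C^n$ and put $\zeta=V\epsilon_i$. Because of the block structure of $\Sigma_1$, we get $\Sigma_1\epsilon_i=\epsilon_i$ for $i\le j$, since the upper-left block is $I_j$ and the off-diagonal blocks vanish. Hence
$$A\zeta=V\Sigma_1V^\star V\epsilon_i=V\Sigma_1\epsilon_i=V\epsilon_i=\zeta,$$
so $|A\zeta|=|\zeta|=1$. Theorem \ref{thm72}(1) then gives $0=\langle A\zeta,B\rangle=\langle V\epsilon_i,B\rangle$.

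\textbf{Reading off the coordinates.} Since $V$ is unitary,
$$\langle V\epsilon_i,B\rangle=\langle \epsilon_i,V^\star B\rangle=\langle\epsilon_i,B'\rangle=\overline{B'_i}.$$
Therefore $B'_i=0$ for every $i\in\{1,\dots,j\}$, which is the claim.

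There is essentially no obstacle. The only points to check are that $\Sigma_1\epsilon_i=\epsilon_i$ follows from the block form (the entries of $DU$ play no role here), and that the inner-product convention $\langle x,y\rangle=\sum x_k\overline{y_k}$ is used consistently, so that $\langle Vx,y\rangle=\langle x,V^\star y\rangle$.
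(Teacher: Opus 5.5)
Your proposal is correct and follows essentially the same route as the paper. Both apply Theorem \ref{thm72}(1) to vectors $z=V\xi$ with $\xi\in\mathbb C^j\times\{0\}$, so that $Az=z$, and then rewrite $\langle Az,B\rangle=\langle\xi,B'\rangle$. The only difference is that you restrict to the canonical basis vectors $\xi=\epsilon_i$, whereas the paper takes a general $\xi$ in that subspace.
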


\begin{proof} [Proof of the Lemma]
It suffices to show that $\langle B', \xi\rangle =0$ for all $\xi=\left (\xi_1,\cdots,\xi_j,0,\cdots,0\right )^T\in \mathbb C^j\times \{0_{\mathbb C^{n-j}}\}.$ According to $(1)$ in Theorem \ref{thm72},
 we have the implication $|Az|=|z|\Rightarrow \langle Az, B\rangle=0.$ We take $z=V\xi$ with $\xi=\left (\xi_1,\cdots,\xi_j,0,\cdots,0\right )^T.$ We get $Az=V\Sigma_1 V^\star z=V\Sigma_1 \xi=V\xi=z,$ since $\Sigma_1 \xi=\xi.$ Hence $|Az|=|z|.$ This implies that $\langle Az, B\rangle=0,$ or equivalently $\langle \Sigma_1 V^\star z, B'\rangle=0,$ or equivalently $\langle \Sigma_1 \xi, B'\rangle=\langle \xi, B'\rangle=0.$ This concludes the proof.
\end{proof}

We prove the following proposition.

\begin{prop}\label{pro712}
Let $A$ and $B$ be an $n\times n$ matrix and an $n\times 1$ matrix in $\mathbb C^n$ respectively, as described in $(1)$ of Theorem \ref{thm72}. 
 We suppose that in its spectral value decomposition, $A=V\Sigma V^\star,$ where $V$ is a unitary $n\times n$ matrix in $\mathbb C^n$ and the diagonal entries of $\Sigma$ are $\sigma_1=\cdots =\sigma_j=1, \sigma_{j+1}\geq \cdots \geq \sigma_n,$ with $\sigma_l \in [0, 1), l=k+1,\cdots,n.$ Then for $\varphi (z)=Az+B,$ the associated composition operator $C_\varphi$ is sufficiently localized. 
More precisely, the composition operator $C_\varphi$ belongs to $\mathcal L_p, 2<p<4.$
\end{prop}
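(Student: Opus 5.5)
The plan is to work directly from the closed formula \eqref{eq71}, which reduces membership of $C_\varphi$ in $\mathcal L_p$ (via Lemma \ref{lem32}) to showing that
$$\sup_{z\in\mathbb C^n}\exp\Big\{p\Big(\frac p4-1\Big)|z|^2+\Re e\,\langle z, pB-p\big(\tfrac p2-1\big)A^\star z\rangle+\frac{p^2}4|A^\star z|^2\Big\}<\infty$$
for each fixed $p\in(2,4)$. The key simplification is to diagonalize the quadratic form using $A=V\Sigma V^\star$. I will set $z=V\zeta$; since $V$ is unitary, $|z|=|\zeta|$. Because $\Sigma$ is real diagonal we have $A^\star=V\Sigma V^\star$, hence $A^\star z=V\Sigma\zeta$, $\langle z,A^\star z\rangle=\sum_i\sigma_i|\zeta_i|^2$ (which is real) and $|A^\star z|^2=\sum_i\sigma_i^2|\zeta_i|^2$. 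Finally $\langle z,B\rangle=\langle \zeta,B'\rangle$ with $B'=V^\star B$.

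Next I invoke Lemma \ref{lem711}, with $U=I_{n-j}$ and $D=\mathrm{diag}(\sigma_{j+1},\dots,\sigma_n)$, so that $\Sigma_1=\Sigma$. Since $C_\varphi$ is bounded, this gives $B'_i=0$ for $i\le j$. The exponent then splits coordinatewise as
$$\sum_{i=1}^n c_p(\sigma_i)|\zeta_i|^2+p\sum_{i=j+1}^n\Re e\,(\zeta_i\overline{B'_i}),$$
where
$$c_p(\sigma)=\frac p4\big[p-4-(2p-4)\sigma+p\sigma^2\big]=\frac p4(1-\sigma)\big(p(1-\sigma)-4\big).$$

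For $i\le j$ we have $\sigma_i=1$, so $c_p(\sigma_i)=0$, and there is no linear term in these coordinates; they therefore contribute nothing. For $i>j$ we have $0<1-\sigma_i\le1$, so $p(1-\sigma_i)\le p<4$, which gives $c_p(\sigma_i)<0$. Each such coordinate thus contributes $-a_i|\zeta_i|^2+p|B'_i||\zeta_i|$ with $a_i>0$, and this is bounded above by $p^2|B'_i|^2/(4a_i)$. Summing over $i$, the exponent is bounded uniformly in $z$, so $C_\varphi\in\mathcal L_p$ for every $p\in(2,4)$. In particular $C_\varphi$ is sufficiently localized.

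The only delicate point is the neutral directions $\sigma_i=1$. There the quadratic coefficient vanishes identically, so any surviving linear term $\Re e(\zeta_i\overline{B'_i})$ would make the supremum infinite. This is exactly why Lemma \ref{lem711} is needed: the boundedness condition of Theorem \ref{thm72} kills those components of $B'$. I also note the index mismatch in the statement ($\sigma_l$ for $l=k+1,\dots,n$), which should read $l=j+1,\dots,n$; I will adopt that reading. The restriction $p<4$ is used precisely in the case $\sigma_i=0$, where it gives $c_p(0)=\frac p4(p-4)<0$.
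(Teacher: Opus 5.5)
Your proposal is correct and follows essentially the same route as the paper: formula \eqref{eq71}, the change of variables $V^\star z$, Lemma \ref{lem711} with $U=I_{n-j}$ to kill $B'_1,\dots,B'_j$, and coordinatewise negativity of $(p-4)-(2p-4)\sigma_l+p\sigma_l^2$ for $\sigma_l\in[0,1)$ and $p\in(2,4)$. Your final step is actually more accurate than the paper's: the exponent vanishes identically in the neutral directions $\zeta_{j+1}=\cdots=\zeta_n=0$, so it does not tend to $-\infty$ as the paper asserts, but your completing-the-square bound correctly shows it is bounded above, which is all that is needed.
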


\begin{proof}
From Lemma \ref{lem33}
 and the equality (\ref{eq71}),
  it follows that
$$\int_{\mathbb C^n} \left \vert U_zC_\varphi U_z \mathbbm 1\right \vert^p d\mu=e^{\frac p4g(z, p)},$$
where
$$g(z, p)=(p-4)|z|^2+\Re e\hskip 1truemm \langle z, 4B-(2p-4)A^\star z\rangle+p|A^\star z|^2.$$
We have $A^\star z=V\Sigma V^\star z$ and then $|A^\star z|=|\Sigma V^\star z|$ and $\Re e\hskip 1truemm \langle z, A^\star z \rangle=\Re e\hskip 1truemm \langle V^\star z, \Sigma V^\star z\rangle.$ Then:
$$g(z, p)=(p-4)|V^\star z|^2-(2p-4)\Re e\hskip 1truemm \langle V^\star z, \Sigma V^\star z\rangle+4\Re e\hskip 1truemm \langle V^\star z, B'\rangle+p|\Sigma V^\star z|^2,$$
where $B'=V^\star B.$ To simplify, we apply the change of variable $w=V^\star z;$ we obtain:

\begin{eqnarray*}
g(z, p) 
&=&(p-4)|w|^2-(2p-4)\Re e\hskip 1truemm \langle w, \Sigma w \rangle+4\Re e\hskip 1truemm \langle w, B'\rangle+p|\Sigma w|^2\\
&=&(p-4)\left (|w_1|^2+\cdots +|w_j|^2+|w_{j+1}|^2+\cdots +|w_n|^2\right )\\
&& -(2p-4)\left (|w_1|^2+\cdots +|w_j|^2+\sigma_{j+1}|w_{j+1}|^2+\cdots +\sigma_n|w_n|^2\right ) \\
&&  +4\Re e\hskip 1truemm \left (w_1\overline{B'_1}+\cdots w_j\overline{B'_j}+w_{j+1}\overline{B'_{j+1}}+w_n \overline{B'_n}  \right )\\
&&+p\left (|w_1|^2+\cdots +|w_j|^2+\sigma_{j+1}^2|w_{j+1}|^2+\cdots +\sigma_n^2|w_n|^2\right )\\
&=&\sum_{l=j+1}^n |w_l|^2\left \{(p-4)-(2p-4)\sigma_j+p\sigma_l^2  \right \}+\\
&& 4\Re e\hskip 1truemm \left (w_1\overline{B'_1}+\cdots +w_j\overline{B'_j}+w_{j+1}\overline{B'_{j+1}}+w_n \overline{B'_n}  \right ).
\end{eqnarray*}
From Lemma \ref{lem711}
 with $U=I_{n-j}$, we recall that $B'_i=0$ for all $i\in \{1,\cdots, j\}.$ So 
$$g(z, p)=\sum_{l=j+1}^n \left \{|w_l|^2\left ((p-4)-(2p-4)\sigma_l+p\sigma_l^2\right )+4\Re e\hskip 1truemm \left (w_l\overline{B'_l}\right )\right \}.$$
It is easy to check that 
\begin{equation}\label{eq72}
(p-4)-(2p-4)\sigma_l+p\sigma_l^2<0 
\end{equation}
for all $l\in \{j+1,\cdots, n\}$ and $p\in (2, 4).$ Indeed, consider this expression as a trinomial in $\sigma_j$ and use the assumptions $\sigma_j \in [0, 1), j=k+1,\cdots,n$ and $2<p<4.$ We conclude that $\lim \limits_{|z|\rightarrow \infty} g(z, p)=-\infty$ for all $2<p<4.$ 
The conclusion follows that the composition operator $C_\varphi$ belongs to $\mathcal L_p, 2<p<4.$ 
\end{proof} 

\subsection{The case where $A=V\Sigma W$}\label{ssec77}
According to Theorem \ref{thm77},
 we suppose that $A=V\Sigma W,$ where $V$ and $W$ are unitary $n\times n$ matrices in $\mathbb C^n$ and 
$\Sigma=\left (
\begin{array}{r|l}
I_j&0\\ 
\hline
0&D
\end{array}
\right )
,
$
where $I_j$ is the $j\times j$ identity matrix in $\mathbb C^j$ and $D$ is the $(n-j)\times (n-j)$ diagonal matrix whose successive diagonal entries $\sigma_{j+1}, \cdots, \sigma_n$ all belong to [0, 1) and are such that  $\sigma_{j+1}\geq \cdots \sigma_n.$ 
We next prove the following theorem.
\begin{thm}\label{thm713}
We suppose that $WV$ is not of the form {\rm { $\left (
\begin{array}{r|l}
I_j&0\\ 
\hline
0&U
\end{array}
\right )
$
}}
for some unitary $(n-j)\times (n-j)$ matrix $U$ in $\mathbb C^{n-j}.$ Then $C_\varphi, \varphi (z)=V\Sigma W z+B,$ is not sufficiently localized.
\end{thm}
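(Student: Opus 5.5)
The plan is to show directly that $C_\varphi\notin\mathcal L_p$ for every $p>2$, using the exact formula \eqref{eq71}. By Lemma \ref{lem32} and \eqref{eq71},
$$\int_{\mathbb C^n}\left\vert U_zC_\varphi U_z\mathbbm 1\right\vert^pd\mu=C\,e^{\frac p4 g(z,p)},$$
where
$$g(z,p)=(p-4)|z|^2-(2p-4)\Re e\hskip 1truemm\langle z,A^\star z\rangle+4\Re e\hskip 1truemm\langle z,B\rangle+p|A^\star z|^2 .$$
So it suffices to find, for each fixed $p>2$, a direction along which $g(z,p)\to+\infty$. The natural candidates are the vectors on which $A^\star$ is isometric, namely $z=V\xi$ with $\xi=(\xi_1,\dots,\xi_j,0,\dots,0)^T$. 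We assume $j\geq 1$, i.e.\ $\Vert A\Vert=1$; when $j=0$ the block condition is vacuous and the case is already covered by Proposition \ref{pro76}.

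First I would compute $g$ on such vectors. Since $\Sigma\xi=\xi$, we get $A^\star z=W^\star\Sigma V^\star V\xi=W^\star\xi$, so $|A^\star z|=|\xi|=|z|$. Also
$$\langle z,A^\star z\rangle=\langle V\xi,W^\star\xi\rangle=\langle WV\xi,\xi\rangle .$$
Next I would remove the linear term in $B$ using the boundedness condition of Theorem \ref{thm72}(1), exactly as in Lemma \ref{lem711}. Take $\zeta=W^\star\xi$; then $A\zeta=V\Sigma\xi=V\xi$ and $|A\zeta|=|\xi|=|\zeta|$, so the theorem gives $\langle V\xi,B\rangle=0$. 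Putting these together,
$$g(V\xi,p)=(p-4)|\xi|^2-(2p-4)\Re e\hskip 1truemm\langle WV\xi,\xi\rangle+p|\xi|^2=(2p-4)\left(|\xi|^2-\Re e\hskip 1truemm\langle WV\xi,\xi\rangle\right).$$

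The key step, and the only place the hypothesis enters, is producing a unit vector $\xi^0\in\mathbb C^j\times\{0\}$ with $\Re e\hskip 1truemm\langle WV\xi^0,\xi^0\rangle<1$. The argument of Proposition \ref{pro78} shows that for a unitary matrix and a unit vector, $\Re e\hskip 1truemm\langle WV\xi,\xi\rangle=1$ forces $WV\xi=\xi$. So if no such $\xi^0$ existed, $WV$ would fix $\mathbb C^j\times\{0\}$ pointwise. A unitary matrix that fixes a subspace also preserves its orthogonal complement $\{0\}\times\mathbb C^{n-j}$, so $WV$ would be block diagonal of the form $\left(\begin{array}{r|l}I_j&0\\ \hline 0&U\end{array}\right)$ with $U$ unitary. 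This is excluded by hypothesis.

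Finally, set $c=1-\Re e\hskip 1truemm\langle WV\xi^0,\xi^0\rangle>0$ and $z=tV\xi^0$. Then $g(z,p)=(2p-4)c\,t^2\to+\infty$ as $t\to\infty$ for every $p>2$, so $\sup_z\int\vert U_zC_\varphi U_z\mathbbm 1\vert^pd\mu=\infty$ and $C_\varphi\notin\mathcal L_p$. Since $\mathcal{SL}=\cup_{p>2}\mathcal L_p$, this shows $C_\varphi$ is not sufficiently localized. I expect the main obstacle to be only the bookkeeping: checking that the isometric vectors of $A^\star$ are exactly the ones used in the boundedness condition, so that the $B$-term vanishes on them.
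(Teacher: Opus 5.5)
Your proposal is correct and follows the paper's proof essentially step for step. You restrict $g(z,p)$ from \eqref{eq71} to $z=tV\xi$ with $\xi\in\mathbb C^j\times\{0\}$, use Proposition \ref{pro78} together with the block-structure equivalence to obtain $\Re e\hskip 1truemm \langle WV\xi,\xi\rangle<|\xi|^2$, and let $t\to\infty$. The only difference is that you eliminate the term $4\Re e\hskip 1truemm\langle V\xi,B\rangle$ via Theorem \ref{thm72}(1), which tacitly assumes $C_\varphi$ is bounded, whereas the paper just notes that this term is linear in $t$ and so is dominated by the quadratic one; your extra step is harmless but unnecessary.
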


\begin{proof}
It is easy to check that the following two assertions are equivalent.
\begin{enumerate}
\item
$WV \left (
\begin{array}{clcr}
\zeta_1\\
\vdots\\
\zeta_j\\
0\\
\vdots\\
0
\end{array}
\right )
=\left (
\begin{array}{clcr}
\zeta_1\\
\vdots\\
\zeta_j\\
0\\
\vdots\\
0
\end{array}
\right )
$
for all $(\zeta_1,\cdots, \zeta_j)\in \mathbb C^j\setminus \{0_{\mathbb C^{n-j}}\};$
\item
$WV=\left (
\begin{array}{r|l}
I_j&0\\ 
\hline
0&U
\end{array}
\right )
$
for some unitary $(n-j)\times (n-j)$ matrix $U$ in $\mathbb C^{n-j}.$
\end{enumerate}

We now suppose that $WV$ is not of the form 
$WV=\left (
\begin{array}{r|l}
I_j&0\\ 
\hline
0&U
\end{array}
\right )
$ Then there exists $(\zeta_1,\cdots, \zeta_j) \in \mathbb C^j \setminus \{0_{\mathbb C^j}\}$ such that $WV \left (
\begin{array}{clcr}
\zeta_1\\
\vdots\\
\zeta_j\\
0\\
\vdots\\
0
\end{array}
\right )
\neq \left (
\begin{array}{clcr}
\zeta_1\\
\vdots\\
\zeta_j\\
0\\
\vdots\\
0
\end{array}
\right ).$ We write $\zeta=\left (
\begin{array}{clcr}
\zeta_1\\
\vdots\\
\zeta_j\\
0\\
\vdots\\
0
\end{array}
\right ).$ According to Proposition \ref{pro78},
 we get:
$$\Re e\hskip 1truemm \langle WV\zeta, \zeta\rangle<|\zeta|^2.$$ 
It follows from (\ref{eq71}) 
that
$$\int_{\mathbb C^n} \left \vert U_z C_\varphi U_z \mathbbm 1\right \vert^p d\mu=Ce^{\frac p4 g(z, p)},$$
where
$$g(z, p)=\left (p-4\right )|z|^2+\Re e\hskip 1truemm \langle z, 4B-\left (2p-4\right )W^\star \Sigma V^\star z\rangle+p\left \vert W^\star \Sigma V^\star z\right \vert^2.$$
We take $z=V\zeta;$ then $\Sigma V^\star z=\zeta$ and $W^\star \Sigma V^\star z=W^\star \zeta.$ So
\begin{eqnarray*}
g(z, p)&=&\left (p-4+p\right )|\zeta|^2-\left (2p-4\right )\Re e\hskip 1truemm \langle V\zeta, W^\star \zeta\rangle +4\Re e\hskip 1truemm \langle \zeta, B'\rangle\\
&=&\left (2p-4\right )\left \{|\zeta|^2-\Re e\hskip 1truemm \langle WV \zeta, \zeta\rangle \right \} +4\Re e\hskip 1truemm \langle \zeta, B'\rangle .
\end{eqnarray*}
Here, $B'=V^\star B.$ Since $|\zeta|^2-\Re e\hskip 1truemm \langle WV \zeta, \zeta\rangle >0,$ we obtain that
$$g(Rz, p)\rightarrow \infty \quad (R\rightarrow \infty).$$
This implies that $\sup \limits_{z\in \mathbb C^n} \int_{\mathbb C^2} \left \vert U_z C_\varphi U_z \mathbbm 1\right \vert^p d\mu=\infty$ for all $p>2:$ so $C_\varphi$ is not sufficiently localized.
\end{proof}

Under the hypotheses of Theorem \ref{thm713},
 one may ask the question whether $C_\varphi, \varphi (z)=V\Sigma Wz+B,$ is weakly localized. We answer this question in the negative for $n=2.$ For this case, we show that $C_\varphi$ is not even a member of the Toeplitz algebra $\mathcal T^1.$ One would expect that this result extends to higher dimension.
\begin{thm}\label{thm714}
We suppose that $n=2$ and $WV$ is not of the form {\rm {$\left (
\begin{array}{clcr}
1&0\\ 
0&e^{i\theta}
\end{array}
\right )
$
}}
for some real number $\theta.$ Then $C_\varphi, \varphi (z)=V\Sigma Wz+B,$ is not a member of the Toeplitz algebra $\mathcal T^1.$
\end{thm}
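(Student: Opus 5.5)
The plan is to combine Theorem \ref{thm11} (Bauer--Isralowitz) with Lemma \ref{lem74}. By Theorem \ref{thm73}, $C_\varphi$ is never compact when $\left\Vert A\right\Vert=1$. So it suffices to show that the Berezin transform
$$\left\vert\widetilde{C_\varphi}(z)\right\vert=e^{-|z|^2+\Re e\hskip 1truemm\langle Az+B,z\rangle}$$
vanishes at infinity, which by Lemma \ref{lem74} means
$$|z|^2-\Re e\hskip 1truemm\langle Az,z\rangle-\Re e\hskip 1truemm\langle B,z\rangle\to\infty.$$
I would organize this around fixed points of $A$ on the unit sphere $\mathbb S$. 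If $\Re e\hskip 1truemm\langle Az',z'\rangle<1$ for every $z'\in\mathbb S$, then by compactness of $\mathbb S$ there is $c<1$ with $\Re e\hskip 1truemm\langle Az,z\rangle\le c|z|^2$ for all $z$. The linear term $\Re e\hskip 1truemm\langle B,z\rangle\le|B||z|$ is negligible, so the quantity above is at least $(1-c)|z|^2-|B||z|\to\infty$. The cases below identify when such a $z'$ with $\Re e\hskip 1truemm\langle Az',z'\rangle=1$ can exist.

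\textbf{Step 1.} Since $n=2$ and $\left\Vert A\right\Vert=1$, Theorem \ref{thm77} gives $\Sigma=\mathrm{diag}(1,\sigma)$ with $\sigma\in[0,1]$. First suppose $\sigma<1$ and let $z'\in\mathbb S$ satisfy $\Re e\hskip 1truemm\langle Az',z'\rangle=1$. Since $|Az'|\le 1$, the argument of Proposition \ref{pro78} (equality in Cauchy--Schwarz) forces $Az'=z'$, hence $|\Sigma Wz'|=|Wz'|$. As $\sigma<1$, this forces $Wz'=\lambda e_1$ with $|\lambda|=1$. Then $z'=Az'=\lambda V\Sigma e_1=\lambda Ve_1$, and substituting back gives $WVe_1=e_1$. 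Since $WV$ is unitary, this means $WV=\mathrm{diag}(1,e^{i\theta})$ for some real $\theta$, which the hypothesis excludes. Hence no such $z'$ exists, the compactness argument above applies, and $\widetilde{C_\varphi}$ vanishes at infinity.

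\textbf{Step 2.} Now suppose $\sigma=1$. Then $A=VW$ is unitary, so $B=0$ by the discussion opening Subsection \ref{ssec75}. Moreover $WV=V^\star AV$ is unitarily conjugate to $A$, so the hypothesis says $A$ is not conjugate, by the specific matrix $V$, to $\mathrm{diag}(1,e^{i\theta})$; in particular $A\neq I$. I would then follow the proof of Corollary \ref{cor79} and Remark \ref{rem710}. If $A$ has no unit fixed vector, Proposition \ref{pro78} again gives a uniform bound $\Re e\hskip 1truemm\langle Az',z'\rangle\le c<1$ on $\mathbb S$, and the Berezin argument concludes. If $A$ does have a unit fixed vector, then since $A\neq I$ we can diagonalize $A=V'\,\mathrm{diag}(1,d)\,V'^\star$ with $d\neq 1$, $|d|=1$. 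Composition with the unitary $V'$ is a unitary operator on $H^2(\mathbb C^2,d\mu)$ that preserves $\mathcal T^1$. Under it, $C_\varphi$ becomes $I\otimes C_{dz}$, and $C_{dz}\notin\mathcal T^1$ by Proposition \ref{pro75}(2) together with \cite[Proposition 3.8]{BFR24}.

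The main obstacle is this last tensor-product reduction. One must transfer non-membership in the Toeplitz algebra from one variable to the product $I\otimes C_{dz}$. I would try to do this with the localization argument already used in Corollary \ref{cor79}, which splits the integral $\int_{|z-w|>r}\left\vert\langle C_\varphi k_z,k_w\rangle\right\vert dV(w)$ coordinate by coordinate and bounds it below by a constant times the corresponding one-variable integral for $C_{dz}$. That yields failure of weak localization. To upgrade this to $C_\varphi\notin\mathcal T^1$, I would rely on Remark \ref{rem710}, or alternatively test $C_\varphi$ against Berezin transforms along the direction $\zeta_2$ (the $d$-eigendirection of $\mathrm{diag}(1,d)$), where $C_\varphi$ acts like $C_{dz}$.
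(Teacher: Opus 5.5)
Your Step 1 already proves the theorem in the setting the paper intends. In Subsection~\ref{ssec77} the diagonal entries of $D$ lie in $[0,1)$, so for $n=2$ and the excluded form $\mathrm{diag}(1,e^{i\theta})$ one has $\Sigma=\mathrm{diag}(1,\sigma)$ with $\sigma\in[0,1)$, exactly as the paper's proof assumes. Your skeleton is the paper's: non-compactness by Theorem~\ref{thm73}, vanishing of the Berezin transform via Lemma~\ref{lem74}, and the conclusion by Theorem~\ref{thm11}. The key step, however, is different.

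The paper asserts that otherwise some $\zeta\neq 0$ satisfies $|\zeta|^2=\Re\langle\Sigma WV\zeta,\zeta\rangle$. It then treats $\zeta$ as a critical point of $q(\zeta)=|\zeta|^2-\Re\langle\Sigma WV\zeta,\zeta\rangle$ and extracts a sign contradiction from the resulting linear system. You instead make the compactness reduction explicit. You then use the equality case of Cauchy--Schwarz, as in Proposition~\ref{pro78}, to obtain a fixed vector $Az'=z'$. From there, $\sigma<1$ forces $Wz'\in\mathbb C e_1$, and unitarity of $WV$ forces $WVe_1=e_1$.

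Your route is the more reliable one. Redoing the paper's computation, the critical-point equations are $(1-\Re a_1)\zeta_1=\frac12(a_2+\sigma\bar b_1)\zeta_2$ and $(1-\sigma\Re b_2)\zeta_2=\frac12(\bar a_2+\sigma b_1)\zeta_1$. Note the plus sign in the second equation, where the paper has a minus. These give $4(1-\Re a_1)(1-\sigma\Re b_2)=|a_2+\sigma\bar b_1|^2$, which only says the form is degenerate and is no contradiction by itself. Ruling it out when $a_1\neq 1$ needs exactly the unitarity of $WV$ together with $\sigma<1$, which is what your fixed-vector argument supplies.

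Your Step 2 ($\sigma=1$) lies outside these hypotheses and should be dropped, because as written it has a genuine gap.
\begin{itemize}
\item If $A\neq I$ is unitary with a unit fixed vector, the Berezin transform has modulus $e^{-(1-\Re d)|\zeta_2|^2}$ in the rotated coordinates. This equals $1$ along the $\zeta_1$-axis, so Theorem~\ref{thm11} gives nothing.
\item Remark~\ref{rem710} does not fill the hole. In this case the proof of Corollary~\ref{cor79} only shows that the off-diagonal integrals fail to decay, i.e.\ that $C_\varphi$ is not weakly localized.
\item Failure of weak localization does not imply $C_\varphi\notin\mathcal T^1$. By Theorem~\ref{thm12}, $\mathcal T^1$ is the $C^\star$-algebra generated by $\mathcal{WL}$, not $\mathcal{WL}$ itself.
\end{itemize}
Passing from $C_{dz}\notin\mathcal T^1$ to $I\otimes C_{dz}\notin\mathcal T^1$ would require an extra ingredient. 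One example is the description of $\mathcal T^1$ as those $T$ for which $a\mapsto V_aTV_{-a}$ is norm continuous, with $V_a$ the Weyl operators of Subsection~\ref{ssec41}. Neither your proposal nor the paper provides such an ingredient.
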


\begin{proof}
We recall that according to Theorem \ref{thm73},
 $C_\varphi$ is not a compact operator on $H^2 (\mathbb C^2, d\mu).$ We shall prove that its Berezin transform $\widetilde {C_\varphi}$ vanishes at infinity. It will then follow from Theorem \ref{thm11}
  that $C_\varphi$ is not a member of $\mathcal T^1.$ From Lemma \ref{lem74}, we must show that
\begin{equation*}
\lim \limits_{|z|\rightarrow \infty} \left \{|z|^2-\Re e \hskip 1truemm \left (\langle Az, z\rangle +\langle B, z\rangle\right )\right \}=\infty.
\end{equation*}
We use a contradiction argument. Otherwise, there would exist a $z\in \mathbb C^2\setminus \{0\}$ such that
$$|z|^2=\Re e \hskip 1truemm \langle Az, z\rangle=\Re e \hskip 1truemm  (\langle V\Sigma Wz, z\rangle).$$
If we write $z=V\zeta,$ This amounts to
\begin{equation}\label{eq73}
|\zeta|^2=\Re e \hskip 1truemm \left(\langle \Sigma WV\zeta, \zeta\rangle\right)
\end{equation}
for some $\zeta\in \mathbb C^2\setminus \{0\}.$ Since $|\zeta|^2\geq \Re e \hskip 1truemm \left(\langle \Sigma WV\zeta, \zeta\rangle\right)$ for all $\zeta \in \mathbb C^2,$ such a solution $\zeta$ of \eqref{eq73} is an extremum of the quadratic form $q(\zeta):=|\zeta|^2-\Re e \hskip 1truemm \left(\langle \Sigma WV\zeta, \zeta\rangle\right).$ We write $\zeta =(\zeta_1, \zeta_2), \hskip 1truemm \Sigma=\left (
\begin{array}{clcr}
1&0\\
0&\sigma
\end{array}
\right )
$
with $\sigma \in [0, 1)$ and $WV=\left (
\begin{array}{clcr}
a_1&a_2\\
b_1&b_2
\end{array}
\right ).
$ The quadratic form is equal to
$$|\zeta|^2=\Re e \hskip 1truemm \left \{a_1|\zeta_1|^2+a_2\zeta_2\overline{\zeta_1}+\sigma\left (b_1\zeta_1\overline{\zeta_2}+b_2|\zeta_2|^2 \right )\right \};$$
for $\zeta_1=u_1+iv_1, \hskip 2truemm \zeta_2=u_2+iv_2,$ it takes the real form
\begin{multline*}
	q(u, v)=\left (u_1^2+v_1^2\right )\left (1- \Re e \hskip 1truemm a_1\right )+\left (u_2^2+v_2^2\right )\left (1-\sigma \Re e \hskip 1truemm b_2\right )-\\
	\Re e \hskip 1truemm \left \{a_2\left (u_2+iv_2\right )\left (u_1-iv_1\right )+\sigma b_1 \left (u_1+iv_1\right )\left (u_2-iv_2\right )\right \}.
\end{multline*}
The point $\zeta=(\zeta_1, \zeta_2)=\left (u_1+iv_1, u_2+iv_2\right )$ yields an extremum of $q(u, v)$ only if it is a stationary point, that is, a solution of the following system of four equations:
$$
\left \{
\begin{array}{cl}
\frac \partial {\partial u_j}q(u, v)&=0\\
\frac \partial {\partial v_j}q(u, v)&=0
\end{array}
\right.
\quad (j=1, 2).
$$ 
An easy calculation gives that a solution $\zeta=(\zeta_1, \zeta_2)$ of this system is such that 
$$
\left \{
\begin{array}{clcr}
\left (1-\Re e \hskip 1truemm a_1\right )\zeta_1&=\left (a_2+\sigma \overline{b_1}\right )\zeta_2\\
\left (1-\sigma\Re e \hskip 1truemm b_2\right )\zeta_2&=-\left (\overline{a_2}+\sigma b_1\right )\zeta_1
\end{array}
\right 
.
$$
The case where $1-\Re e \hskip 1truemm a_1=0$ corresponds to $a_1=1;$ since $WV$ is unitary, this implies that $a_2=0=b_1$ and $b_2=e^{i\theta}$ for some real number $\theta.$ So $WV=\left (
\begin{array}{clcr}
1&0\\
0&e^{i\theta}
\end{array}
\right ):
$ this case was excluded in the hypotheses of the theorem.\\
We next suppose that $1-\Re e \hskip 1truemm a_1 \neq 0.$ We deduce from the previous system that $$\zeta_2=-\frac {\left \vert a_2+\sigma \overline{b_1}\right \vert^2}{4\left (1-\Re e \hskip 1truemm a_1\right )\left (1-\sigma\Re e \hskip 1truemm b_2\right )}\zeta_2.$$ Since we exclude the zero solution, we get:
$$1=-\frac {\left \vert a_2+\sigma \overline{b_1}\right \vert^2}{4\left (1-\Re e \hskip 1truemm a_1\right )\left (1-\sigma\Re e \hskip 1truemm b_2\right )}.$$
We reached to a contradiction since the left-hand side is positive while the right-hand side is non-positive.

\end{proof}

We finally prove the following theorem.

\begin{thm}\label{thm715}
We suppose that $WV$ is of the form  $$ {\rm {\left(
\begin{array}{r|l}
I_j&0\\ 
\hline
0&U
\end{array}
\right)
}}$$
for some unitary $(n-j)\times (n-j)$ matrix $U$ in $\mathbb C^{n-j}$ different from $I_{n-j}.$ Then $C_\varphi, \varphi (z)=V\Sigma W z+B,$ is sufficiently localized.
\end{thm}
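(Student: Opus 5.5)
The plan is to reduce the statement to the situation of Subsection \ref{ssec76}, with the unitary block $U$ now carried along, and then to bound the exponent in \eqref{eq71} directly. Write $\Sigma=\mathrm{diag}(I_j,D)$ and set $\Sigma_1:=\Sigma\,WV$. The hypothesis $WV=\mathrm{diag}(I_j,U)$ gives $\Sigma_1=\mathrm{diag}(I_j,DU)$. Since $W=(WV)V^\star$, we get
$$A=V\Sigma W=V\Sigma (WV)V^\star=V\Sigma_1V^\star .$$
This is exactly the form in Lemma \ref{lem711}. Because $C_\varphi$ is bounded, that lemma yields $B'_i=0$ for $1\le i\le j$, where $B'=V^\star B$.

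Next I would use \eqref{eq71}, together with Lemma \ref{lem32}, to write $\int_{\mathbb C^n}|U_zC_\varphi U_z\mathbbm 1|^pd\mu=Ce^{\frac p4 g(z,p)}$, where
$$g(z,p)=(p-4)|z|^2+\Re e\,\langle z,4B-(2p-4)A^\star z\rangle+p|A^\star z|^2 .$$
I then substitute $w=V^\star z$ and split $w=(w',w'')\in\mathbb C^j\times\mathbb C^{n-j}$. Using $A^\star z=V\Sigma_1^\star w$, $\Sigma_1^\star w=(w',U^\star Dw'')$ and $B'_i=0$ for $i\le j$, the $w'$-terms carry the coefficient $(p-4)-(2p-4)+p=0$ and contain no linear part. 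This leaves
$$g=(p-4)|w''|^2-(2p-4)\Re e\,\langle w'',U^\star Dw''\rangle+p|U^\star Dw''|^2+4\Re e\,\langle w'',B''\rangle ,$$
where $B''$ denotes the last $n-j$ coordinates of $B'$.

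The key step is to estimate the $w''$-part uniformly even though $U$ is an arbitrary unitary matrix, so no diagonal structure is available. Put $s=\sigma_{j+1}<1$. Then $|U^\star Dw''|\le s|w''|$, and by Cauchy--Schwarz $|\Re e\,\langle w'',U^\star Dw''\rangle|\le s|w''|^2$. For $p>2$ this gives
$$g\le \bigl((p-4)+(2p-4)s+ps^2\bigr)|w''|^2+4|B''||w''| .$$
The coefficient factors as $(1+s)\bigl(p(1+s)-4\bigr)$, exactly as in Proposition \ref{pro76}, so it is negative for $2<p<\frac4{1+s}$. This interval is nonempty because $s<1$.

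Hence $g$ is bounded above by $\sup_{t\ge0}(-ct^2+4|B''|t)<\infty$ with $c>0$, and therefore $\sup_z\int|U_zC_\varphi U_z\mathbbm 1|^pd\mu<\infty$. This shows $C_\varphi\in\mathcal L_p$ for $2<p<\frac4{1+\sigma_{j+1}}$, so $C_\varphi$ is sufficiently localized.

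The main subtlety is that $g$ does not tend to $-\infty$: it vanishes identically along the $w'$-directions. So the compactness argument of Proposition \ref{pro76} does not apply. Instead one must argue only that $g$ is bounded above, which relies crucially on the vanishing of $B'_1,\dots,B'_j$ from Lemma \ref{lem711}. When $j=n$ there is no $w''$ block and $g\equiv0$, and the conclusion is immediate.
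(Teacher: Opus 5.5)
Your proposal is correct and follows essentially the same route as the paper. You rewrite $A=V\Sigma_1V^\star$ with $\Sigma_1=\mathrm{diag}(I_j,DU)$, use Lemma \ref{lem711} to kill $B'_1,\dots,B'_j$, and pass to $w=V^\star z$ in the exponent from \eqref{eq71}. You then bound what remains by $\bigl((p-4)+(2p-4)s+ps^2\bigr)|w''|^2+4|B''||w''|$; your condition $p<\frac{4}{1+\sigma_{j+1}}$ is the same as the paper's $\max_l\sigma_l<\frac{4-p}{p}$. Your remark that $g$ only needs to be bounded above, since it vanishes identically in the first $j$ directions, is exactly what the paper's final bound implicitly uses.
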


\begin{proof}
If $WV$ is of the form $\left (
\begin{array}{r|l}
I_j&0\\ 
\hline
0&U
\end{array}
\right )
$
for some unitary $(n-j)\times (n-j)$ matrix $U$ in $\mathbb C^{n-j}$ different from $I_{n-j},$ we have $W=\left (
\begin{array}{r|l}
I_j&0\\ 
\hline
0&U
\end{array}
\right )V^\star$ and $V\Sigma W=V\Sigma_1 V^\star,$ with 
$\Sigma_1=\left (
\begin{array}{r|l}
I_j&0\\ 
\hline
0&D
\end{array}
\right )\left (
\begin{array}{r|l}
I_j&0\\ 
\hline
0&U
\end{array}
\right )=\left (
\begin{array}{r|l}
I_j&0\\ 
\hline
0&DU
\end{array}
\right ).$

From Proposition \ref{pro33}
 and the equality \eqref{eq71}
it follows that
$$\int_{\mathbb C^n} \left \vert U_z C_\varphi U_z \mathbbm 1\right \vert^p d\mu=Ce^{\frac p4 g(z, p)},$$
where
$$g(z, p)=\left (p-4\right )|z|^2-\left (2p-4\right )\Re e\hskip 1truemm \langle z, V\Sigma_1^\star V^\star z\rangle+4\Re e \hskip 1truemm \langle z, B\rangle+p\left \vert V \Sigma_1 V^\star z\right \vert^2.$$
We have $\left \vert V \Sigma_1^\star V^\star z\right \vert=\left \vert \Sigma_1^\star V^\star z\right \vert, \Re e\hskip 1truemm \langle z, V\Sigma_1^\star V^\star z\rangle=\Re e\hskip 1truemm \langle V^\star z, \Sigma_1^\star V^\star z\rangle$ and $\langle z, B\rangle=\langle V^\star z, B'\rangle.$ Then 
$$g(z, p)=\left (p-4\right )|V^\star z|^2-\left (2p-4\right )\Re e\hskip 1truemm \langle V^\star z, \Sigma_1^\star V^\star z\rangle+4\Re e 1truemm \langle V^\star z, B'\rangle+p\left \vert \Sigma_1 V^\star z\right \vert^2,$$
where $B'=V^\star B.$ To simplify, we apply the change of variable $w=V^\star z;$ we obtain:
\begin{eqnarray*}
g(z, p)&=&\left (p-4\right )|w|^2-\left (2p-4\right )\Re e\hskip 1truemm \langle w, \Sigma_1^\star w\rangle+4\Re e \,\langle w, B'\rangle+p\left \vert \Sigma_1^\star w\right \vert^2\\
&=&\left (p-4\right )\left (|w_1|^2+\cdots +|w_j|^2+|w_{j+1}|^2+\cdots +|w_n|^2\right )-(2p-4)\left (|w_1|^2+\cdots +|w_j|^2+ \right.\\
&& \left.\Re e \,\,\langle w', U^\star Dw'\rangle \right ) +4\Re e \hskip 1truemm \left (w_1\overline {B'_1}+\cdots +w_j\overline {B'_j}+w_{j+1}\overline {B'_{j+1}}+\cdots +w_n\overline {B'_n}\right )\\
&& +p\left (|w_1|^2+\cdots +|w_j|^2+\left \vert U^\star Dw'\right \vert^2 \right )\\
&=&(p-4)|w'|^2-(2p-4)\Re e \hskip 1truemm \langle w', U^\star Dw'\rangle+p\left \vert Dw'\right \vert^2+4\Re e \hskip 1truemm \langle w', B''\rangle. 
\end{eqnarray*}
We applied Lemma 7.10 
 and the notations $w'=\left (w_{j+1},\cdots, w_n\right )^T$ and $B''=\left (B'_{j+1},\cdots, B'_n\right )^T.$

We deduce that
$$g(z, p)\leq (p-4)|w'|^2+(2p-4)|w'||Dw'|+p|Dw'|^2+4|w'||B''|.$$
It is easy to check that $|Dw'|\leq \left (\max_{l=j+1}^n \sigma_l\right )|w'|.$ We then obtain that
$$g(z, p)\leq |w'|^2\left (p-4+(2p-4)\max_{l=j+1}^n \sigma_l+p\max_{l=j+1}^n \sigma_l^2\right )+4|w'||B''|.$$
Since $\max_{l=j+1}^n \sigma_l <1,$ we get the inequality
$$p-4+(2p-4)\max_{l=j+1}^n \sigma_l+p\max_{l=j+1}^n \sigma_l^2<0$$
for all $p\in (2, 4)$ such that $0\leq \max_{l=j+1}^n \sigma_l <\frac {4-p}p.$ Such a $p$ exists because $\lim \limits_{p\stackrel{>}{\rightarrow} 2} \frac {4-p}p=1.$

This implies that $C_\varphi \in \mathcal L_p$ for all $p\in (2, 4)$ such that $0\leq \max_{l=j+1}^n \sigma_l <\frac {4-p}p.$ The conclusion follows.
\end{proof}

\section{The singular integral operators of convolution type introduced by K. Zhu}\label{sec8}
\subsection{Definitions and theorems}\label{ssec81}
For $\varphi\in H^2 (\mathbb C^n, d\mu),$ consider the integral operator $S_\varphi$ densely defined on $H^2 (\mathbb C^n, d\mu)$ by
\begin{equation}\label{eq81}
S_\varphi F(z)=\int_{\mathbb C^n} F(w)e^{\langle z, w\rangle}\varphi (z-\bar w)d\mu (w), \quad z\in \mathbb C^n, \quad F\in \mathcal S.
\end{equation}
It is easy to check that the operator $S_\varphi$ is admissible. Indeed, in the duality \eqref{eq21}, it holds: $\left (S_\varphi\right )^\star=S_\psi$ with $\psi (z)=\overline{\varphi (-\bar z)}.$

In 2015, K. Zhu \cite{Z15} proposed the following problem on the one-dimensional Fock space $H^2 (\mathbb C, d\mu)$ in the complex plane $\mathbb C.$ Characterize those functions $\varphi\in H^2 (\mathbb C, d\mu)$ such that the integral operator $S_\varphi$ in \eqref{eq81} is bounded on $H^2 (\mathbb C, d\mu).$

This problem was solved in 2020 for general $n\geq 1$ by Cao, Li, Shen, Wick and Yan \cite{CLSWY20}.
 For $\zeta =\left (\zeta_1,\cdots,\zeta_n \right )$ and $w =\left (w_1,\cdots,w_n \right )\in \mathbb C^n,$ the notations $\zeta^2$ and $\zeta\cdot w$ respectively stand for $\zeta^2=\zeta_1^2+\cdots+\zeta_n^2.$  Their result is the following.

\begin{thm}\label{thm81}
The integral operator $S_\varphi$ in \eqref{eq81} is bounded on $H^2 (\mathbb C, d\mu)$ if and only if there exists an $m\in L^\infty (\mathbb R^n)$ such that
\begin{equation}\label{eq82}
\varphi (z)=\int_{\mathbb R^n} m(x)e^{-\frac 12\left (x-iz\right )^2}dx, \quad z\in \mathbb C^n.
\end{equation}
Moreover, we have that
$$\left \Vert S_\varphi \right \Vert_{H^2 (\mathbb C^n, d\mu) \rightarrow H^2 (\mathbb C^n, d\mu)}=2^n\left \Vert m\right \Vert_{L^\infty (\mathbb R^n)}.$$
\end{thm}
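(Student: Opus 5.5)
The plan is to conjugate $S_\varphi$ by the Bargmann transform. This should turn it into a Fourier multiplier on $L^2(\mathbb R^n)$, after which boundedness and the norm identity reduce to the classical fact that a Fourier multiplier is bounded on $L^2(\mathbb R^n)$ exactly when its symbol is in $L^\infty$, with operator norm equal to the sup-norm of the symbol.

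\textbf{Step 1 (setup).} Recall the Bargmann transform $\mathcal B:L^2(\mathbb R^n,dx)\to H^2(\mathbb C^n,d\mu)$, a unitary integral operator with Gaussian kernel of the form $c_n\exp\bigl(-\tfrac12 z^2+\sqrt2\, z\cdot x-\tfrac12 x^2\bigr)$, normalized to this Gaussian measure. Record how it intertwines the basic operators:
\begin{itemize}
\item translations $f\mapsto f(\cdot-a)$, $a\in\mathbb R^n$, become the Weyl operators $U$-type shifts of $H^2(\mathbb C^n,d\mu)$ along the real directions;
\item modulations $f\mapsto e^{i\xi\cdot x}f$ become Weyl shifts along the imaginary directions.
\end{itemize}

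\textbf{Step 2 (sufficiency).} Assume $\varphi$ has the form \eqref{eq82} with $m\in L^\infty(\mathbb R^n)$.
\begin{itemize}
\item Take $F=\mathcal B f$ with $f$ Schwartz; equivalently take $F\in\mathcal S$, since both classes are dense.
\item Insert \eqref{eq82} into \eqref{eq81} and use Fubini, justified by absolute convergence of the Gaussian integrals for fixed $z$.
\item Evaluate the $w$-integral by the reproducing property: the factor $e^{-\frac12(x-i(z-\bar w))^2}$ is, in $w$, an exponential-quadratic function.
\item Then perform the $x$-integral.
\end{itemize}
The expected outcome is the identity
$$S_\varphi\,\mathcal B f=\mathcal B\bigl(\mathcal F^{-1}(\widetilde m\,\widehat f)\bigr),$$
where $\widetilde m(\xi)=2^n m(\pm\xi/\sqrt2)$ or a similar rescaling. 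The factor $2^n$ should come from the Gaussian normalizations $(2\pi)^{n/2}$ and $\pi^{-n}$ combining. Unitarity of $\mathcal B$ and of the Fourier transform then gives
$$\|S_\varphi\|=\|\widetilde m\|_\infty=2^n\|m\|_\infty .$$

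\textbf{Step 3 (necessity).} Suppose $S_\varphi$ is bounded.
\begin{itemize}
\item First check directly from the kernel $e^{\langle z,w\rangle}\varphi(z-\bar w)$ that $S_\varphi$ commutes with the Weyl shifts corresponding to real translations. This is a one-line change of variables $w\mapsto w+a$, $a\in\mathbb R^n$: the combination $z-\bar w$ is invariant when $z$ and $w$ are both shifted by the same real $a$, and the exponential factors cancel against those of $k_a$.
\item Hence $T:=\mathcal B^{-1}S_\varphi\mathcal B$ is a bounded translation-invariant operator on $L^2(\mathbb R^n)$, and so $T$ is a Fourier multiplier with some symbol $\widetilde m\in L^\infty$.
\item Define $m$ from $\widetilde m$ by inverting the rescaling of Step 2, and set $\varphi_m$ by \eqref{eq82}. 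Step 2 shows $S_{\varphi_m}=\mathcal B T\mathcal B^{-1}=S_\varphi$.
\item Finally, $S_\varphi$ determines $\varphi$: $\langle S_\varphi K_u,K_v\rangle=e^{\langle v,u\rangle}\varphi(v-\bar u)$, and $v-\bar u$ ranges over all of $\mathbb C^n$. Therefore $\varphi=\varphi_m$.
\end{itemize}

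\textbf{Main obstacle.} I expect the hardest part to be the explicit Gaussian computation in Step 2. It must produce precisely a multiplier, with the correct rescaling and the exact constant $2^n$, and the interchange of integrals must be justified when $m$ is merely bounded. I would first carry it out for $n=1$ and $f$ a Gaussian (i.e.\ $F=k_a$), where everything is explicit. For general $n$ the computation factorizes coordinatewise. Passage to general $f$ then follows by density and the uniform bound.
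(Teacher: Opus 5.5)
Your route is the one the paper itself only sketches; it defers the details to Cao--Li--Shen--Wick--Yan. You conjugate $S_\varphi$ by the Bargmann transform, check that it commutes with the real Weyl shifts $V_a$, $a\in\mathbb R^n$, and invoke Proposition~\ref{pro82}. Your substitution $w\mapsto w+a$ does work, and $V_a\mathcal S\subset\mathcal S$. The structural pieces are sound. Fubini is justified for $F\in\mathcal S$ because $|e^{-\frac12(x-i\zeta)^2}|=e^{-\frac12|x+\mathrm{Im}\,\zeta|^2+\frac12|\mathrm{Re}\,\zeta|^2}$, and $\varphi$ is recovered from $\langle S_\varphi K_u,K_v\rangle=e^{\langle v,u\rangle}\varphi(v-\bar u)$. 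One technical caveat: after Fubini, $F$ is paired with $w\mapsto e^{\langle w,z\rangle}e^{-\frac12(x+i\bar z-iw)^2}$. This function grows like $|e^{\frac12 w^2}|$ and is not in $H^2(\mathbb C^n,d\mu)$. So the ``reproducing property'' must be justified through absolute convergence and the mean value property, which is fine for $F=k_a$.

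The genuine gap is the norm constant. You assert that the Gaussian normalizations ``should'' combine into $2^n$, but no correct computation can produce $2^n$ from \eqref{eq82}. Take $m\equiv 1$. Shifting the contour gives $\varphi(z)=\int_{\mathbb R^n}e^{-\frac12(x-iz)^2}\,dx=(2\pi)^{n/2}$ for every $z$. Hence $S_\varphi F(z)=(2\pi)^{n/2}\int_{\mathbb C^n}F(w)\overline{K_z(w)}\,d\mu(w)=(2\pi)^{n/2}F(z)$, and $\|S_\varphi\|=(2\pi)^{n/2}\neq 2^n=2^n\|m\|_{L^\infty}$.

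Carrying out your Step 2 honestly gives a different identity. Use the factorization $e^{\langle z,w\rangle}e^{-\frac12(x_0-i(z-\bar w))^2}=e^{-\frac12|x_0|^2}E_{x_0}(z)\overline{E_{x_0}(w)}$ with $E_{x_0}(z)=e^{ix_0\cdot z+\frac12 z^2}$, together with $B[e^{ix_0\cdot x}]=(2\pi)^{n/4}e^{-|x_0|^2/4}E_{x_0}$. With the paper's $B$ and $\widehat f(\xi)=\int f(x)e^{-ix\cdot\xi}\,dx$, this yields the exact identity $B^{-1}S_\varphi B=(2\pi)^{n/2}\mathcal F^{-1}m\,\mathcal F$, with no rescaling of $m$. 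With your $\sqrt2$-normalized transform the symbol becomes $(2\pi)^{n/2}m(\sqrt2\,\xi)$, not $m(\pm\xi/\sqrt2)$; this is harmless for the sup norm. So your argument proves the characterization together with $\|S_\varphi\|=(2\pi)^{n/2}\|m\|_{L^\infty}$. The identity with $2^n$, as stated, is inconsistent with the normalization \eqref{eq82} and cannot be established. The constant has to be read off from the computation, not fitted to the target.
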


The proof uses the Bargmann transform $B$ defined on $L^2 (\mathbb R^n)$ as follows
$$Bf (z)=\left (\frac 2\pi\right )^{\frac n4} e^{\frac {z^2}2}\int_{\mathbb R^n} f(x)e^{-(x-z)^2}dx, z\in \mathbb C^n.$$
The Bargmann transform $B$ is a unitary operator from $L^2 (\mathbb R^n)$ to $H^2 (\mathbb C, d\mu);$ it is one-to-one, onto, and isometric in the sense that
$$\int_{\mathbb R^n} |f(x)|^2dx=\int_{\mathbb C^n} |Bf (z)|^2d\mu (z).$$
It appears that if the integral operator $S_\varphi$ is bounded on $H^2 (\mathbb C, d\mu),$ then the operator $T:=B^{-1}S_\varphi B$ is bounded on $L^2 (\mathbb R^n)$ and commutes with translations. The proof of Theorem  \ref{thm81}
next relies on the following elementary fact from Harmonic Analysis characterizing the translation invariant operators that are bounded on $L^2 (\mathbb R^n)$  (\cite[Chapter II, Proposition 2]{S70}, \cite[Theorem 2.5]{G08}).

\begin{prop}\label{pro82}
Let $T$ be a bounded linear transformation mapping $L^2 (\mathbb R^n)$ into itself. The following two statements are equivalent.
\begin{enumerate}
\item
$T$ commutes with translations;
\item
There exists a bounded measurable function $m $ {\rm {(a "multiplier")}} so that $\widehat{Tf} (y)=m(y)\widehat f(y)$ for all $f\in L^2 (\mathbb R^n).$
\end{enumerate}
In this case the norm of $T: L^2 (\mathbb R^n)\rightarrow L^2 (\mathbb R^n)$ is equal to $\left \Vert m\right \Vert_{L^\infty}.$
\end{prop}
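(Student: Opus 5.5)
The plan is to prove the two directions separately, following the classical characterization of translation-invariant $L^2$ operators. Throughout, assume the Fourier transform is normalized as in \cite{G08}, $\widehat f(y)=\int_{\mathbb R^n} f(x)e^{-2\pi i x\cdot y}dx$, so that it is unitary on $L^2(\mathbb R^n)$. Write $\tau_h f(x)=f(x-h)$ for the translations; each $\tau_h$ is unitary on $L^2(\mathbb R^n)$ and satisfies $\widehat{\tau_h f}(y)=e^{-2\pi i h\cdot y}\widehat f(y)$.

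The implication $(2)\Rightarrow(1)$ and the easy half of the norm identity are routine. Assume $\widehat{Tf}=m\widehat f$ with $m\in L^\infty$. Then $\widehat{T\tau_h f}=m e^{-2\pi i h\cdot y}\widehat f=\widehat{\tau_h Tf}$. Injectivity of the Fourier transform on $L^2$ gives $T\tau_h=\tau_h T$. Plancherel gives $\|Tf\|_2=\|m\widehat f\|_2\le \|m\|_\infty\|f\|_2$. For the reverse norm inequality, fix $\epsilon>0$ and a set $E$ of finite positive measure on which $|m|>\|m\|_\infty-\epsilon$. Take $f$ with $\widehat f=\chi_E$; then $\|Tf\|_2\ge(\|m\|_\infty-\epsilon)\|f\|_2$. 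Hence $\|T\|=\|m\|_\infty$.

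For $(1)\Rightarrow(2)$, pass to the Fourier side and set $\widehat T=\mathcal F T\mathcal F^{-1}$. This is bounded on $L^2$ and commutes with every modulation $M_h g(y)=e^{-2\pi i h\cdot y}g(y)$. The key step is to show that $\widehat T$ commutes with multiplication by every $\psi\in L^\infty$. By linearity, $\widehat T$ commutes with multiplication by trigonometric polynomials in $y$. To reach all of $L^\infty$, use bounded a.e. approximation on compact sets together with dominated convergence, and argue locally as follows. Fix $g_0(y)=e^{-\pi|y|^2}$, which is strictly positive, and define $m=\widehat T(g_0)/g_0$, a measurable function. For $\psi$ a trigonometric polynomial we get $\widehat T(\psi g_0)=\psi\,\widehat T g_0=m\psi g_0$.

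The density argument goes in two stages. First, the functions $\psi g_0$ are dense in $L^2$; this follows, for instance, from Stone--Weierstrass applied to periodizations on large cubes, plus the decay of $g_0$. Since $\widehat T$ is bounded, this gives $\|m\psi g_0\|_2\le\|\widehat T\|\,\|\psi g_0\|_2$ for all such $\psi$. Second, choosing $\psi$ so that $|\psi g_0|^2$ approximates $\chi_E/|E|\cdot$(a normalizing factor) shows $|m|\le\|\widehat T\|$ a.e., so $m\in L^\infty$. Now $\widehat T$ and multiplication by $m$ are two bounded operators that agree on a dense set, hence everywhere. Consequently $\widehat{Tf}=m\widehat f$.

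I expect the main obstacle to be this density step: justifying that trigonometric-polynomial multiples of $g_0$ are dense in $L^2$, and extracting $\|m\|_\infty\le\|\widehat T\|$ rigorously. If that becomes awkward, I will replace trigonometric polynomials by the closed span of $\{M_h g_0\}$. This span is dense because its orthogonal complement consists of $g$ with $\widehat{(g_0\bar g)}\equiv0$, forcing $g_0\bar g=0$, i.e. $g=0$. Commutation with each $M_h$ then gives $\widehat T(M_hg_0)=mM_hg_0$ directly, and the $L^\infty$ bound follows by the Lebesgue differentiation-type choice of test functions described above.
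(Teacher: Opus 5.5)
The paper gives no proof of this proposition; it quotes it from Stein and Grafakos. Your argument is correct and is essentially the standard proof of this classical fact: conjugate by the Fourier transform so that translations become modulations $M_h$, set $m=\widehat T g_0/g_0$ with the strictly positive Gaussian $g_0$, and use density of the span of $\{M_hg_0\}$, which your orthogonal-complement argument (Fourier uniqueness for $g_0\bar g\in L^1$) settles cleanly.

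The only step to write out carefully is $\|m\|_\infty\le\|\widehat T\|$. For $E$ of finite measure, pick $\psi_kg_0\to\chi_E$ in $L^2$ and a.e., then apply Fatou to $\int|m|^2|\psi_kg_0|^2\le\|\widehat T\|^2\int|\psi_kg_0|^2$ to get $\int_E|m|^2\le\|\widehat T\|^2|E|$. The announced detour through commutation with all $L^\infty$ multiplications is never used and can be dropped.
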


We next state the following remark.

\begin{rem}\label{rem83}
Suppose that $S_\varphi$ is bounded on $H^2 (\mathbb C, d\mu).$ Then its adjoint $\left (S_\varphi\right )^\star$ is given by $\left (S_\varphi\right )^\star=S_{\widetilde {\varphi}}$ with $\widetilde {\varphi} (z)=\overline{\varphi (-\bar z)}, \hskip 2truemm z\in \mathbb C^n$ and its associated multiplier is $\widetilde{m} (x)=\overline{m(x)}, \hskip 2truemm x\in \mathbb R^n.$ 
\end{rem}

In their paper, Cao, Li, Shen, Wick and Yan \cite{CLSWY20}
 also showed that a bounded operator $S_\varphi$ is compact if and only if $\varphi \equiv 0.$ We suppose that $S_\varphi$  is bounded on $H^2 (\mathbb C, d\mu)$ and we ask the following question. Is the operator $S_\varphi$ weakly localized? Are there weakly localized operators $S_\varphi$ which are not XZ-sufficiently localized? 

In view of Theorem \ref{thm11} 
and Theorem \ref{thm12}, we first prove that for each symbol $\varphi$ satisfying \eqref{eq82} and which does not vanish identically, the Berezin transform $\widetilde{S_\varphi}$ of the associated operator $S_\varphi$ does not vanish at infinity. Indeed, it follows from \eqref{eq81} and an application of the reproducing kernel formula that
\begin{equation}\label{eq83}
\left \vert\langle S_\varphi k_z, k_w\rangle\right \vert=e^{-\frac {|z-w|^2}2}\left \vert\varphi (w-\bar z)\right \vert
\end{equation}
and hence $\widetilde{S_\varphi} (z)=\langle S_\varphi k_z, k_z\rangle=\varphi \left (z-\bar z\right ).$ The identity $\varphi \left (z-\bar z\right )=0$ for all $z\in \mathbb C^n$ would imply that $\varphi \equiv 0$ in $\mathbb C^n$ according to the analytic continuation principle. Otherwise, there exists $z\in \mathbb C^n$ such that  $\varphi \left (z-\bar z\right )\neq 0.$ We keep $z-\bar z=2i\Im m\hskip 1truemm z$ fixed and we let $\Re e\hskip 1truemm z$ tend to infinity. Then $|z|\rightarrow \infty$ while $\varphi \left (z-\bar z\right )$ remains constant and different from zero.

We deduce the following corollary.

\begin{cor}\label{cor84}
We suppose that the symbol $\varphi$ satisfies \eqref{eq82} and does not vanish identically in $\mathbb C^n.$ The following two statements are equivalent.
\begin{enumerate}
\item
{\rm {i)}} $S_\varphi$ is weakly localized {\rm (resp. {ii)}} $S_\varphi$ is {\rm {XZ-}}sufficiently localized, {\rm iii)} $S_\varphi$ is sufficiently localized{\rm )};
\item
\begin{enumerate}
\item[i)]  We have
$$\lim \limits_{r\rightarrow \infty} \sup \limits_{z\in \mathbb C^n} \int_{|z-w|\geq r} e^{-\frac {|z-w|^2}2}\left \vert \varphi (w-\bar z) \right \vert dV(w)=0;$$ 
{\rm {(resp.}}
\item[ii)] there exist two positive constants $C$ and $\beta>2n$ such that for all $z,w\in \mathbb C^n,$ we have
$$e^{-\frac {|z-w|^2}2}\left \vert \varphi (w-\bar z) \right \vert \leq \frac C{\left (1+|w-z|\right )^\beta},$$
\item[iii)] we have $$\sup \limits_{z\in \mathbb C^n} \int_{\mathbb C^n} \left \vert \varphi (w-\bar z) \right \vert^p e^{-|w-z|^2}dV(w)<\infty$$
for some $p>2$ or equivalently, there exist two positive constants $C$ and $\epsilon$ such that for all $z,w\in \mathbb C^n,$ we have
$$e^{-\frac {|z-w|^2}2}\left \vert \varphi (w-\bar z) \right \vert \leq e^{-\epsilon |z-w|^2}{\rm ).}$$
\end{enumerate}
\end{enumerate}
\end{cor}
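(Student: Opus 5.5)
The three equivalences come from substituting the identity \eqref{eq83},
$$\left\vert\langle S_\varphi k_z, k_w\rangle\right\vert=e^{-\frac{|z-w|^2}2}\left\vert\varphi(w-\bar z)\right\vert,$$
into the definitions, and using Remark \ref{rem83} to handle the adjoint. I would start by computing \eqref{eq83} from \eqref{eq81}. Apply $S_\varphi$ to $k_z$ and pair with $K_w$. The integral $\int_{\mathbb C^n} e^{\langle \zeta, z\rangle}e^{\langle w,\zeta\rangle}\varphi(w-\bar\zeta)\,d\mu(\zeta)$ becomes $e^{\langle w,z\rangle}\varphi(w-\bar z)$ by the reproducing property. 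This needs some care, because $\varphi(w-\bar\zeta)$ is antiholomorphic in $\zeta$ through $\bar\zeta$, so the integral should be read as $\langle e^{\langle\cdot,z\rangle},\ \overline{\ldots}\rangle$. The modulus then follows from $|e^{\langle w,z\rangle}|e^{-(|z|^2+|w|^2)/2}=e^{-|z-w|^2/2}$.

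For the adjoint, Remark \ref{rem83} gives $(S_\varphi)^\star=S_{\widetilde\varphi}$ with $\widetilde\varphi(z)=\overline{\varphi(-\bar z)}$. Applying \eqref{eq83} to $\widetilde\varphi$ gives $|\langle S_\varphi^\star k_z,k_w\rangle|=e^{-|z-w|^2/2}|\varphi(\bar w - z)|$. The substitution $w\mapsto$ (reflection) relates this to the same type of quantity as before. More simply, $|\langle S^\star_\varphi k_z,k_w\rangle|=|\langle S_\varphi k_w,k_z\rangle|$, so the $T^\star$ conditions in Definitions \ref{def25} and \ref{def23} reduce to conditions on the same kernel with the roles of $z$ and $w$ swapped.

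\textbf{Case (ii).} Definition \ref{def23} is literally condition ii) once \eqref{eq83} is inserted.

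\textbf{Case (iii).} By Corollary \ref{cor311}, sufficient localization is equivalent to the bound $Ce^{-\epsilon|z-w|^2}$ on the kernel. By Lemma \ref{lem32}, $p$-localization means
$$\sup_z\int_{\mathbb C^n}|\varphi(w-\bar z)|^pe^{-p|z-w|^2/2}e^{(p/2-1)|z-w|^2}\,dV(w)<\infty,$$
and this simplifies to the stated integral condition. (The statement's displayed bound omits the constant $C$; I will read it with $C$.)

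\textbf{Case (i).} Since $S_\varphi$ is bounded, Proposition \ref{pro36} shows that only the two tail conditions matter. The $T$ tail condition is exactly i). The $T^\star$ tail condition is
$$\sup_z\int_{|z-w|\ge r}e^{-|z-w|^2/2}|\varphi(z-\bar w)|\,dV(w).$$
This is the main obstacle: I must show this is the same supremum as in i). Write $z=x+iy$ and $w=u+iv$. Then $w-\bar z=(u-x)+i(v+y)$ and $z-\bar w=(x-u)+i(y+v)$. The integrand therefore depends on $(w-z)$ only through $u-x$, together with $v+y$ and $v-y$. Substituting $w'=\bar{z}' $-type reflections, namely $u\mapsto 2x-u$ with $v$ fixed, maps one integral onto the other while preserving $|z-w|$ and Lebesgue measure. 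Hence the two suprema coincide. I expect this bookkeeping of real and imaginary parts to be the only delicate point.
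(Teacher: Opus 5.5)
Your proposal is correct and follows the paper's route: the paper derives \eqref{eq83} and reads the three conditions directly off the definitions (via Lemma \ref{lem32} and Corollary \ref{cor311}). You additionally make explicit what the paper leaves implicit for weak localization, namely that Proposition \ref{pro36} takes care of the two $\sup$ conditions, and that the reflection $u\mapsto 2x-u$ makes the $S_\varphi^\star$ tail integral coincide with the $S_\varphi$ one for every $z$. One small slip: Remark \ref{rem83} gives $|\widetilde\varphi(w-\bar z)|=|\varphi(z-\bar w)|$, not $|\varphi(\bar w-z)|$, but your subsequent route through $|\langle S_\varphi k_w,k_z\rangle|$ already uses the correct expression.
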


For $n=1,$ Bauer, Fulsche and Rodriguez \cite[Theorem 3.19, assertion 1]{BFR24}  
proved that a bounded operator $S_\varphi$ is a member of the Toeplitz algebra if and only if $m\in BUC \hskip 1truemm (\mathbb R).$ Using the same arguments, this result can be extended to upper dimensions to get the following theorem.

\begin{thm}\label{thm85}
Suppose that $n$ is a positive integer. The following two assertions are equivalent.
\begin{enumerate}
\item
The bounded operator $S_\varphi$ is a member of the Toeplitz algebra $\mathcal T^1;$
\item
$m\in BUC \hskip 1truemm (\mathbb R^n).$
\end{enumerate}
\end{thm}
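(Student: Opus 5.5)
Under the Bargmann transform $B$, the operator $S_\varphi$ corresponds to the Fourier multiplier $T_m=B^{-1}S_\varphi B$ on $L^2(\mathbb R^n)$, and the plan is to transport the whole question to $L^2(\mathbb R^n)$, following the one-dimensional argument of Bauer, Fulsche and Rodriguez. The basic tool is the quantum-harmonic-analysis description of $\mathcal T^1$ on the Fock space: $\mathcal T^1$ coincides with the algebra of bounded operators $A$ for which $z\mapsto W_zAW_z^\star$ is continuous in operator norm. Here $W_z$ are the Weyl operators, which up to unimodular constants are the unitaries $U_z$ and $V_a$ of Section \ref{sec2} and Subsection \ref{ssec41}. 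I take this characterization as known, and its proof is dimension-free. Under $B$, the operators $W_z$ with $z=x+iy$ become the time-frequency shifts $f(t)\mapsto e^{i\langle y,t\rangle}f(t-x)$ on $L^2(\mathbb R^n)$, up to harmless normalizations of the variables.

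The first step is to compute the conjugates of $T_m$. It commutes with translations by Proposition \ref{pro82}. Conjugating by a modulation $e^{i\langle y,\cdot\rangle}$ produces $T_{m(\cdot-y)}$, the multiplier translated by $y$ (after rescaling $y$ to the conventions of the Bargmann transform). Hence
$$W_zS_\varphi W_z^\star=B\,T_{m(\cdot - cy)}B^{-1}$$
for a fixed constant $c$ depending only on the normalization. By the norm statement in Proposition \ref{pro82}, or equivalently the identity $\|S_\varphi\|=2^n\|m\|_\infty$ of Theorem \ref{thm81}, this gives
$$\|W_zS_\varphi W_z^\star-W_{z'}S_\varphi W_{z'}^\star\|=\|m(\cdot-cy)-m(\cdot-cy')\|_{L^\infty(\mathbb R^n)}.$$

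For the implication (2)$\Rightarrow$(1): if $m\in BUC(\mathbb R^n)$, the right-hand side tends to $0$ as $y'\to y$. Therefore $z\mapsto W_zS_\varphi W_z^\star$ is norm continuous, and $S_\varphi\in\mathcal T^1$. For the implication (1)$\Rightarrow$(2): if $S_\varphi\in\mathcal T^1$, the map is norm continuous at $z=0$, so $\|m(\cdot-h)-m\|_\infty\to0$ as $h\to0$. An $L^\infty$ function with this property agrees almost everywhere with a bounded uniformly continuous function. Indeed, convolving with an approximate identity $\rho_\epsilon$ gives functions $m*\rho_\epsilon\in BUC$ that converge to $m$ uniformly. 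The sequence is therefore uniformly Cauchy, and its limit is a BUC representative of $m$. Boundedness of $m$ comes for free from Theorem \ref{thm81}.

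The main obstacle is making the intertwining identity precise in $n$ dimensions: the Weyl operators must be tracked through $B$ with the correct constants and phases, and one must check that these phases cancel in $W_zS_\varphi W_z^\star$. A cleaner route is to verify the identity on the level of symbols. Using the kernel formula \eqref{eq81}, compute $W_zS_\varphi W_z^\star=S_{\varphi_z}$ for an explicit $\varphi_z$, and then check via \eqref{eq82} that $\varphi_z$ has multiplier $m(\cdot - cy)$. Since everything factorizes over coordinates, the one-dimensional computation of Bauer, Fulsche and Rodriguez should carry over coordinatewise.
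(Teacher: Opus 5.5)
Your proposal is correct and takes essentially the paper's approach. The paper gives no separate proof and only says that the one-dimensional argument of Bauer, Fulsche and Rodriguez carries over to $\mathbb C^n$. That argument characterizes $\mathcal T^1$ by norm continuity of $z\mapsto W_zAW_z^\star$ and observes that, under the Bargmann transform, this conjugation translates the multiplier $m$; your proposal carries this out in detail.

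Two small slips do not affect the argument. First, $U_z$ is $V_z$ composed with the reflection $f\mapsto f(-\cdot)$, not a Weyl operator up to a phase, but you only use the $V_z$. Second, your norm identity omits the factor $2^n$ (and a possible rescaling of $m$), which does not matter since continuity is all you need.
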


Combining with the previous corollary, we obtain the following result, which may be of independent interest.

\begin{cor}\label{cor86}
We suppose that the bounded function $m$ in $\mathbb R^n$ satisfies the following estimate.
\begin{equation}\label{eq84}
	\lim \limits_{r\rightarrow \infty} \sup \limits_{y\in \mathbb R^n} \int_{|s|^2+|t|^2\geq r^2}e^{-\frac {|t|^2}2}\left \vert \int_{\mathbb R^n}   m\left (\xi-t-2\Im m\hskip 1truemm z\right )e^{-\frac{|\xi|^2}2}e^{i\xi\cdot s}d\xi\right \vert dsdt=0.
\end{equation}
Then $m\in BUC \hskip 1truemm (\mathbb R^n).$
\end{cor}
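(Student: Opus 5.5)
The plan is to show that \eqref{eq84} is exactly condition i) of Corollary \ref{cor84}, rewritten in terms of the multiplier $m$. Corollary \ref{cor84} then says $S_\varphi$ is weakly localized, and the chain of earlier results gives $m\in BUC(\mathbb R^n)$. We may assume $m\not\equiv 0$; otherwise $m\in BUC(\mathbb R^n)$ trivially.

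\emph{Step 1: express $\varphi(w-\bar z)$ through $m$.} Let $\varphi$ be given by \eqref{eq82}. Write $u=w-\bar z$ and compute
$$-\tfrac12(x-iu)^2=-\tfrac12 x^2+ix\cdot u+\tfrac12 u^2.$$
Then combine the factor $e^{-\frac{|z-w|^2}{2}}$ with $\big|e^{\frac12 u^2}\big|=e^{\frac12\Re e\,(u^2)}$. Put $z-w=s'+it'$ and $\Im m\, z=y$, so that $u=w-\bar z=-(s'+it')+2iy$ up to sign conventions. Expanding $\Re e\,(u^2)$, the real Gaussian weight should reduce to $e^{-\frac{|t|^2}{2}}$ for a suitable real vector $t$ built from $t'$ and $y$. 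Next, complete the square in the $x$-integral and shift the variable, $\xi=x+(\text{imaginary-part shift})$. This should produce
$$e^{-\frac{|z-w|^2}2}|\varphi(w-\bar z)|=e^{-\frac{|t|^2}2}\Big|\int_{\mathbb R^n}m(\xi-t-2y)e^{-\frac{|\xi|^2}2}e^{i\xi\cdot s}\,d\xi\Big|,$$
where $(s,t)$ is the real coordinate form of $z-w$, up to a fixed linear change of variables. This is the algebraic core of the argument. The step I expect to be the main obstacle is tracking the signs and factors of $2$ so that the shift is exactly $-t-2\Im m\, z$, matching \eqref{eq84}. The hypothesis \eqref{eq84} writes $\Im m\, z$, while its supremum is over $y$; I read these as the same variable.

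\emph{Step 2: change variables in the weak-localization integral.} Substitute $w\mapsto(s,t)$ with $|z-w|^2=|s|^2+|t|^2$; this is a measure-preserving affine map of $\mathbb R^{2n}$. The integral
$$\int_{|z-w|\ge r}e^{-\frac{|z-w|^2}2}|\varphi(w-\bar z)|\,dV(w)$$
becomes the integral in \eqref{eq84}. Its dependence on $z$ is only through $\Im m\, z=y$, which is consistent with the translation invariance of $S_\varphi$ in $\Re e\, z$. So the $\sup_{z\in\mathbb C^n}$ equals the $\sup_{y\in\mathbb R^n}$. Hence \eqref{eq84} is precisely condition i) of Corollary \ref{cor84}.

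\emph{Step 3: conclude.} By Corollary \ref{cor84}, $S_\varphi$ is weakly localized; here $m\not\equiv0$ ensures $\varphi\not\equiv0$, by injectivity of the transform in \eqref{eq82}. By Theorem \ref{thm12}, $S_\varphi\in C^\star(\mathcal{WL})=\mathcal T^1$. Theorem \ref{thm85} then gives $m\in BUC(\mathbb R^n)$.
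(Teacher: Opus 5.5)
Your proposal follows the paper's proof: the paper reduces \eqref{eq84} to weak localization of $S_\varphi$ through the identity of Lemma \ref{lem87} (with $s=\Re e\,(w-z)$ and $t=\Im m\,(w-z)$, so $y=\Im m\, z$ enters only the shift of $m$, not the Gaussian weight), and then concludes via Corollary \ref{cor84}, Theorem \ref{thm12} and Theorem \ref{thm85}. Your Step 1 is only sketched, and the weight does not reduce to $e^{-\frac{|t|^2}2}$ before completing the square; but the computation goes through once you do it: write $w-\bar z=s+i\tau$ with $\tau=t+2\Im m\, z$, and the factor $e^{-\frac12|\tau|^2}$ coming from $e^{-\frac{|z-w|^2}2}\bigl|e^{\frac12 u^2}\bigr|$ is cancelled by the factor $e^{\frac12|\tau|^2}$ produced when you complete the square in $x$, leaving exactly $e^{-\frac{|t|^2}2}$.
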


\begin{proof}
It suffices to show that the estimate \eqref{eq84} is equivalent to the fact that the bounded operator $S_\varphi$ associated to $m$ is weakly localized. This will follow from the following lemma.

\begin{lem}\label{lem87}
For all $z, w\in \mathbb C^n,$ we have the equality
$$e^{-\frac {|z-w|^2}2}\left \vert \varphi (w-\bar z) \right \vert=e^{-\frac {|t|^2}2}\left \vert \int_{\mathbb R^n}   m\left (\xi-t-2\Im m\hskip 1truemm z\right )e^{-\frac{|\xi|^2}2}e^{i\xi\cdot s}d\xi\right \vert,$$
where $s=\Re e\hskip 1truemm (w-z)$ and $t=\Im m\hskip 1truemm (w-z).$
\end{lem}

\begin{proof}[Proof of the lemma]
Applying successively the changes of variables $\zeta=w-z$ and $\zeta=s+it,$ and \eqref{eq82}, we have

\begin{eqnarray*}
&&e^{-\frac {|z-w|^2}2}\left \vert \varphi (w-\bar z) \right \vert\\
&=&
e^{-\frac {|\zeta|^2}2}\left \vert \varphi (\zeta+2i\Im m\hskip 1truemm z) \right \vert\\
&=&e^{-\frac {|s|^2+|t|^2}2}\left \vert \int_{\mathbb R^n} m(x)e^{-\frac 12\left (x-i(s+i(t+2\Im m\hskip 1truemm z)   \right )^2}dx\right \vert\\
&=&e^{-\frac {|s|^2+|t|^2}2}\left \vert \int_{\mathbb R^n} m(x)e^{-\frac 12\left (|x|^2-2ix\cdot (s+i(t+2\Im m\hskip 1truemm z-(s+i(t+2\Im m\hskip 1truemm z))^2 \right )}dx\right \vert\\
&=&e^{-\frac {|s|^2+|t|^2}2}e^{\frac 12\Re e\hskip 1truemm (s+i(t+2\Im m\hskip 1truemm z)^2}\left \vert \int_{\mathbb R^n} m(x)e^{-\frac 12|x|^2-x\cdot (t+2\Im m\hskip 1truemm z)}e^{ix\cdot s}dx\right \vert\\
&=&e^{-\frac {|s|^2+|t|^2}2}e^{\frac 12(|s|^2-|t+2\Im m\hskip 1truemm z|^2)}\left \vert \int_{\mathbb R^n} m(x)e^{-\frac 12\left (\left \vert x+(t+2\Im m\hskip 1truemm z)\right \vert^2+\left \vert \frac 12(t+2\Im m\hskip 1truemm z)\right \vert^2\right )}e^{ix\cdot s}dx\right \vert\\
&=&e^{-\frac {|t|^2}2}\left \vert \int_{\mathbb R^n} m(x)e^{-\frac 12\left \vert x+t+2\Im m\hskip 1truemm z\right \vert^2}e^{ix\cdot s}dx\right \vert\\
&=&e^{-\frac {|t|^2}2}\left \vert \int_{\mathbb R^n} m(\xi-t-2\Im m\hskip 1truemm z)e^{-\frac 12\left \vert \xi\right \vert^2}e^{i\xi\cdot s}d\xi\right \vert.
\end{eqnarray*}
\end{proof}
\end{proof}
The next question is whether the converse implication is true, i.e. if $m\in BUC \hskip 1truemm (\mathbb R^n),$ is the associated operator $S_\varphi$  weakly localized? We shall prove in the last subsection below that this is true in the particular case where $m$ is the Fourier transform of an integrable function $g$ in $\mathbb R^n.$  In this case, $m$ belongs to the space $\mathcal C_0 (\mathbb R^n)$ of continuous functions in $\mathbb R^n$ which vanish at infinity.

We next prove the following theorem.

\begin{thm}\label{thm88}
Every bounded operator $S_\varphi$ is a member of $\mathcal L_2.$
\end{thm}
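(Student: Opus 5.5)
By Lemma \ref{lem32} with $p=2$, membership in $\mathcal L_2$ means
$$\sup_{z\in\mathbb C^n}\int_{\mathbb C^n}\left\vert\langle S_\varphi k_z,k_w\rangle\right\vert^2dV(w)<\infty .$$
The plan is to get this directly from the boundedness of $S_\varphi$, exactly as in the proof of Lemma \ref{lem35}. For any bounded operator $T$ and any $z\in\mathbb C^n$, the reproducing kernel property gives $\langle Tk_z,k_w\rangle=e^{-\frac{|w|^2}{2}}Tk_z(w)$. Hence
$$\int_{\mathbb C^n}\left\vert\langle Tk_z,k_w\rangle\right\vert^2dV(w)=\pi^n\left\Vert Tk_z\right\Vert^2\le \pi^n\left\Vert T\right\Vert^2,$$
since $\Vert k_z\Vert=1$. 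Equivalently, $\int|U_zTU_z\mathbbm 1|^2d\mu=\Vert U_zTk_z\Vert^2\le\Vert T\Vert^2$, because $U_z$ is unitary.

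Applying this to $T=S_\varphi$, Theorem \ref{thm81} gives $\Vert S_\varphi\Vert=2^n\Vert m\Vert_{L^\infty}$. This yields the uniform bound
$$\sup_z\int|U_zS_\varphi U_z\mathbbm 1|^2\,d\mu\le 4^n\Vert m\Vert_{L^\infty}^2 .$$
The adjoint $S_\varphi^\star=S_{\widetilde\varphi}$ from Remark \ref{rem83} is also bounded, with multiplier $\overline m$. So the same estimate holds for the adjoint, in case $\mathcal L_2$ is read symmetrically as in Theorem \ref{thm317}.

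As a sanity check, the formula \eqref{eq83} turns the quantity into
$$\int e^{-|\zeta|^2}\,|\varphi(\zeta+2i\,\Im m\,z)|^2\,dV(\zeta).$$
Its boundedness in $z$ is then the statement that every translate $\varphi(\cdot+2i\,\Im m\,z)$ has Fock norm controlled by $\Vert m\Vert_\infty$. One could also verify this directly via Lemma \ref{lem87} and Plancherel in the variable $s$: for fixed $t$ and $y=\Im m\,z$, Plancherel gives $\int|\cdot|^2ds=(2\pi)^n\int|m(\xi-t-2y)|^2e^{-|\xi|^2}d\xi\le C\Vert m\Vert_\infty^2$, and then integrating against $e^{-|t|^2}$ stays bounded.

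There is no real obstacle here. The only point needing care is that $S_\varphi$ is initially defined only on $\mathcal S$. We use its bounded extension, and $k_z\in\mathcal S$, so $S_\varphi k_z$ is given by the integral formula \eqref{eq81}, and Lemma \ref{lem32} applies verbatim.
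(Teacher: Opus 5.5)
Your proof is correct, but it takes a genuinely different route from the paper.

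The paper never uses the boundedness of $S_\varphi$ as an operator. It uses \eqref{eq83} (via Corollary \ref{cor84}) to reduce to the explicit quantity $\int_{\mathbb C^n}|\varphi(w-\bar z)|^2e^{-|w-z|^2}\,dV(w)$. It then writes $|\varphi(s+it)|$ from the representation \eqref{eq82} and applies Plancherel in $s$ to get $\int_{\mathbb R^n}|m(x)|^2e^{-(|x|^2+2x\cdot t)}dx\le C\Vert m\Vert_{L^\infty}^2e^{|t|^2}$. It closes with a Gaussian integral in $t$. This is essentially the computation in your ``sanity check'' paragraph.

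You instead note that $\int|U_zTU_z\mathbbm 1|^2\,d\mu=\Vert U_zTk_z\Vert^2\le\Vert T\Vert^2$ for every bounded $T$. This is exactly the argument in the proof of Lemma \ref{lem35}, which uses only boundedness (obtained there from Theorem \ref{thm34}).

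Your route buys more. It shows $\mathcal L(H^2(\mathbb C^n,d\mu))\subset\mathcal L_2$ outright, so three results become immediate:
\begin{enumerate}
\item Theorem \ref{thm88} itself;
\item the second inclusion of Theorem \ref{thm317}, which the paper proves via Fatou's lemma and uniform boundedness;
\item the strictness in Corollary \ref{cor89}, from $\mathcal T^1\subsetneq\mathcal L(H^2(\mathbb C^n,d\mu))$.
\end{enumerate}
You also need neither the multiplier representation nor the norm identity of Theorem \ref{thm81}; only finiteness of $\Vert S_\varphi\Vert$ matters, not the exact value $2^n\Vert m\Vert_{L^\infty}$.

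In exchange, the paper's computation is a self-contained verification from $m\in L^\infty$ alone. It shows the translates $\varphi(\cdot+2i\,\Im m\, z)$ have uniformly bounded Gaussian-weighted $L^2$ norms. That is a concrete statement about the symbol rather than a consequence of an abstract operator bound.
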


\begin{proof}
We suppose that $\varphi$ satisfies \eqref{eq82} and does not vanish identically. According to Corollary \ref{cor84},
 we must show that
\begin{equation}\label{eq85}
\sup \limits_{z\in \mathbb C^n} \int_{\mathbb C^n} \left \vert \varphi (w-\bar z) \right \vert^2 e^{-|w-z|^2}dV(w)<\infty.
\end{equation}
For $\zeta=s+it,$ we have
\begin{eqnarray*}
\varphi (\zeta)
&=&\int_{\mathbb R^n} m(x)e^{-\frac 12\left (x-i(s+it) \right )^2}dx\\
&=&e^{\frac 12\left (|s|^2-|t|^2+2is\cdot t \right )}\int_{\mathbb R^n} m(x)e^{-\frac {|x|^2}2-x\cdot t}
e^{ix\cdot s}dx
\end{eqnarray*}
and then
$$\left \vert \varphi (s+it)\right \vert=e^{\frac 12\left (|s|^2-|t|^2\right )}\left \vert \int_{\mathbb R^n} m(x)e^{-\frac {|x|^2}2-x\cdot t}
e^{ix\cdot s}dx\right \vert.$$
Applying the change of variable $w-\bar z=\zeta,$ we get $w-z=\zeta-2i\Im  m\hskip 1truemm z$ and 
\begin{eqnarray*}
\int_{\mathbb C^n} \left \vert \varphi (w-\bar z) \right \vert^2 e^{-|w-z|^2}dV(w)
&=&\int_{\mathbb C^n} \left \vert \varphi (\zeta) \right \vert^2 e^{-|\zeta-2i\Im m \hskip 1truemm z|^2}dV(\zeta)\\
&=&\int_{\mathbb C^n} \left \vert \varphi (s+it) \right \vert^2 e^{-|s|^2-|t-2\Im m \hskip 1truemm z|^2}dsdt\\
&=&\int_{\mathbb C^n} e^{|s|^2-|t|^2}\left \vert \int_{\mathbb R^n} m(x)e^{-\frac {|x|^2}2-x\cdot t}
e^{ix\cdot s}dx\right \vert^2 e^{-|s|^2-|t-2\Im m \hskip 1truemm z|^2}dsdt\\
&=&\int_{\mathbb R^n} I(t, z)e^{-|t|^2-|t-2\Im m \hskip 1truemm z|^2}dt,
\end{eqnarray*}
where $I(t, z):=\int_{\mathbb R^n} \left \vert \int_{\mathbb R^n}  m(x)e^{-\frac{|x|^2}2-x\cdot t}
e^{ix\cdot s}dx\right \vert^2ds.$
By the Plancherel formula, we obtain:
\begin{eqnarray*}
I(t, z)
&=&C_n\int_{\mathbb R^n}  |m(x)|^2e^{-(|x|^2+2x\cdot t)}dx\\
&\leq& C_n\left \Vert m\right \Vert_{L^\infty}^2\int_{\mathbb R^n} e^{-(|x|^2+2x\cdot t)}dx\\
&=&C_n\left \Vert m\right \Vert_{L^\infty}^2\int_{\mathbb R^n} e^{-(|x+t|^2-|t|^2)}dx\\
&=&C'_n\left \Vert m\right \Vert_{L^\infty}^2 e^{|t|^2}.
\end{eqnarray*}
We conclude that
\begin{eqnarray*}
\int_{\mathbb C^n} \left \vert \varphi (w-\bar z) \right \vert^2 e^{-|w-z|^2}dV(w)
&=&C'_n\left \Vert m\right \Vert_{L^\infty}^2\int_{\mathbb R^n} e^{-|t-2\Im m \hskip 1truemm z|^2}dt\\
&=&C'_n\left \Vert m\right \Vert_{L^\infty}^2\int_{\mathbb R^n} e^{-|\tau|^2}d\tau.
\end{eqnarray*}
This implies the estimate \eqref{eq85}. 
\end{proof}

Combining with Theorem \ref{thm85},
 we obtain the following corollary.

\begin{cor}\label{cor89}
The inclusion $\mathcal T^1 \subset \mathcal L_2$ is strict.
\end{cor}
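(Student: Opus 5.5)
To prove Corollary \ref{cor89}, I will exhibit a bounded operator $S_\varphi$ with $m\notin BUC\,(\mathbb R^n)$. By Theorem \ref{thm88} it belongs to $\mathcal L_2$, and by Theorem \ref{thm85} it does not belong to $\mathcal T^1$. Together with the inclusion $\mathcal T^1\subset \mathcal L_2$ of Theorem \ref{thm317}, this gives strictness.

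First I choose the multiplier. I take $m(x)=\mathrm{sgn}(x_1)$, or more simply $m=\chi_{\{x_1>0\}}$. This function is bounded and measurable but not continuous, so it is certainly not in $BUC\,(\mathbb R^n)$. Any bounded $m$ with a jump discontinuity would do just as well.

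Next I define $\varphi(z)=\int_{\mathbb R^n} m(x)e^{-\frac12(x-iz)^2}dx$ as in \eqref{eq82}. Since $\left\vert e^{-\frac12(x-iz)^2}\right\vert$ equals $e^{-\frac12|x+\Im m\, z|^2}$ times a factor depending only on $z$, the integral converges absolutely and locally uniformly in $z$. Hence $\varphi$ is entire. By Theorem \ref{thm81} the operator $S_\varphi$ is bounded on $H^2(\mathbb C^n,d\mu)$, with norm $2^n\Vert m\Vert_\infty$. In particular $S_\varphi$ is admissible, and $\varphi\in H^2(\mathbb C^n,d\mu)$ because $\varphi$ is, up to a constant, $S_\varphi$ applied to a suitable kernel function. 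To place $S_\varphi$ in $\mathcal L_2$ I apply Theorem \ref{thm88}, whose proof uses only $m\in L^\infty$ and the Plancherel identity.

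The one step that needs care is the appeal to Theorem \ref{thm85}, namely that the multiplier attached to $S_\varphi$ is exactly the $m$ I chose, and not some other function that happens to lie in $BUC$. The correspondence $\varphi\leftrightarrow m$ in \eqref{eq82} is injective modulo null sets. Indeed, by the Bargmann-transform description, $B^{-1}S_\varphi B$ is the Fourier multiplier operator with symbol $m$ up to a normalization, and Proposition \ref{pro82} determines $m$ almost everywhere. Moreover, $m=\chi_{\{x_1>0\}}$ has no continuous representative, since any a.e. modification still has essential jumps across $\{x_1=0\}$. Therefore $m\notin BUC\,(\mathbb R^n)$ in the a.e. sense, Theorem \ref{thm85} applies, and $S_\varphi\notin\mathcal T^1$. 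This concludes the argument.
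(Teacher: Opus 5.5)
Your proof is correct and is essentially the paper's argument. You choose $m\in L^\infty(\mathbb R^n)\setminus BUC(\mathbb R^n)$, get $S_\varphi\in\mathcal L_2$ from Theorem \ref{thm88} and $S_\varphi\notin\mathcal T^1$ from Theorem \ref{thm85}; your explicit choice $m=\chi_{\{x_1>0\}}$ and the a.e.-uniqueness remark fill in details the paper leaves implicit.

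One small point: Theorem \ref{thm81} presupposes $\varphi\in H^2(\mathbb C^n,d\mu)$, so deducing that membership from the boundedness of $S_\varphi$ is slightly circular. It is cleaner to check it directly: the Plancherel computation in the proof of Theorem \ref{thm88}, taken at $z=0$, gives $\int_{\mathbb C^n}|\varphi|^2\,d\mu\le C\Vert m\Vert_{L^\infty}^2$.
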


\begin{proof}
Suppose that $m\in L^\infty\setminus BUC (\mathbb R^n).$ Then $S_\varphi \in \mathcal L_2$ and $S_\varphi \notin \mathcal T^1$ by Theorem \ref{thm88}
and Theorem \ref{thm85}
respectively.
\end{proof}

\subsection{Examples}\label{ssec82}
We refer to \cite[Section 4]{CLSWY20}
. Cf. also \cite{Z15} and \cite{Z19} 
. In both examples, we have $n=1.$
\subsubsection*{Example 1}
The basic bounded linear operator mapping $L^2 (\mathbb R^n)$ into itself which commutes with translations is given for n=1 by the Hilbert transform defined as
$$Hf (x):=\frac 1\pi\int_{\mathbb R} \frac {f(y)}{x-y}dy,$$
where the improper integral is taken in the sense of "principal value". Note that $\widehat{Hf} (x)=m(x)\widehat f(x)$ with $m(x)=-i\hskip 1truemm sgn x.$ Obviously, this multiplier $m$ is bounded, but it is not continuous. So it follows from Theorem \ref{thm85}
  that the associated operator $S_\varphi=BHB^{-1}$ is not a member of the Toeplitz algebra $\mathcal T=\mathcal T (\mathbb C).$ It is proved in  \cite{CLSWY20}
    that in this example, $\varphi (z)=\frac 2{\sqrt \pi} A\left (\frac z{\sqrt 2} \right ),$ where
$$A(z):=\int_0^z e^{u^2}du, \quad z\in \mathbb C$$
which is the antiderivative of $e^{u^2}$ satisfying $A(0)=0.$

\subsubsection*{Example 2}
Let $m(x)=e^{-2iax}$ where $a$ is a real number. This function belongs to $BUC (\mathbb R).$ So the associated operator $S_\varphi$
is a member of the Toeplitz algebra $\mathcal T=\mathcal T (\mathbb C).$ More precisely, we have $\varphi (z)=ce^{-\frac {a^2}2}e^{az}$ where $c$ is a constant independent of $a$ and up to a multiplicative constant, $S_\varphi$
is the unitary operator $V_a$ defined in Subsection \ref{ssec41}
 It was proved there that this operator is strongly localized.

\section{The proof of the main result}\label{sec9}
\subsection{A further class of examples of singular operators of convolution type. The case where $m=\widehat{g}$ with $g\in L^1 (\mathbb R^n)$}\label{ssec91}
We suppose that $m=\widehat{g}$ with $g\in L^1 (\mathbb R^n).$ We first give the corresponding expression of $\varphi.$ 

\begin{lem}\label{lem91}
For $m=\widehat{g}$ with $g\in L^1 (\mathbb R^n),$ we have:
\begin{equation}\label{eq91}
\varphi (z)=\int_{\mathbb R^n} g(\sigma)e^{-\frac {\sigma^2}2}e^{-\sigma\cdot z}d\sigma.
\end{equation}
\end{lem}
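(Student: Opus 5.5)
We substitute $m=\widehat g$ into formula \eqref{eq82} of Theorem \ref{thm81} and interchange the order of integration. Writing $\widehat g(x)=\int_{\mathbb R^n} g(\sigma)e^{-i\sigma\cdot x}\,d\sigma$, we get for fixed $z\in\mathbb C^n$
$$\varphi(z)=\int_{\mathbb R^n}\int_{\mathbb R^n} g(\sigma)e^{-i\sigma\cdot x}e^{-\frac12(x-iz)^2}\,d\sigma\,dx .$$
Fubini is justified because $|e^{-\frac12(x-iz)^2}|=e^{-\frac12(|x|^2+2x\cdot \Im m\, z-|\Im m\, z|^2+\dots)}$ is a Gaussian in $x$ (its real exponent is $-\frac12|x+\Im m\, z|^2$ plus a quantity depending only on $z$), hence integrable in $x$. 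Since $g\in L^1(\mathbb R^n)$, the double integral converges absolutely. Thus
$$\varphi(z)=\int_{\mathbb R^n} g(\sigma)J(\sigma,z)\,d\sigma,\qquad J(\sigma,z):=\int_{\mathbb R^n}e^{-i\sigma\cdot x}e^{-\frac12(x-iz)^2}\,dx.$$

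The main step is to compute $J$, which is the Fourier transform of a shifted complex Gaussian. Complete the square in the exponent:
$$-\frac12(x-iz)^2-i\sigma\cdot x=-\frac12\bigl(x-iz+i\sigma\bigr)^2-\sigma\cdot z-\frac{\sigma^2}2 .$$
This is checked by expanding $(x-i(z-\sigma))^2=x^2-2ix\cdot(z-\sigma)-(z-\sigma)^2$, where $(z-\sigma)^2=z^2-2\sigma\cdot z+\sigma^2$. Hence
$$J(\sigma,z)=e^{-\frac{\sigma^2}2}e^{-\sigma\cdot z}\int_{\mathbb R^n}e^{-\frac12(x-i(z-\sigma))^2}\,dx .$$

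The remaining Gaussian integral is $\int_{\mathbb R^n}e^{-\frac12(x-a)^2}dx=(2\pi)^{n/2}$ for every $a\in\mathbb C^n$. This follows coordinatewise by shifting the contour of integration, using Cauchy's theorem and the decay of $e^{-u^2/2}$ in horizontal strips; alternatively, the integral is entire in $a$ and constant for real $a$. The constant $(2\pi)^{n/2}$ is harmless, since it is absorbed into the normalisation of the Fourier transform (or of $m$) implicit in \eqref{eq82}. Substituting back gives \eqref{eq91}.

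The only genuinely delicate point is the contour shift with a complex, $n$-dimensional shift vector. We handle it one coordinate at a time, since the integrand factors into a product of one-variable Gaussians.
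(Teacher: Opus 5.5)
Your strategy (Fubini, completing the square, a coordinatewise contour shift) is sound. However, the completing-the-square identity has the wrong sign on $\sigma\cdot z$, and that error is exactly what produces the sign in \eqref{eq91}. Expanding correctly,
\[
-\tfrac12(x-iz)^2-i\sigma\cdot x=-\tfrac12\bigl(x-i(z-\sigma)\bigr)^2+\sigma\cdot z-\tfrac{\sigma^2}2 ,
\]
because $-\tfrac12\bigl(x-i(z-\sigma)\bigr)^2=-\tfrac12x^2+ix\cdot z-ix\cdot\sigma+\tfrac12z^2-\sigma\cdot z+\tfrac12\sigma^2$. Hence $J(\sigma,z)=(2\pi)^{n/2}e^{-\frac{\sigma^2}2}e^{+\sigma\cdot z}$.

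You can confirm this without any contour shift at $z=it$, $t\in\mathbb R^n$. The real substitution $u=x+t$ gives $J(\sigma,it)=(2\pi)^{n/2}e^{i\sigma\cdot t}e^{-\frac{\sigma^2}2}$, whereas your formula predicts $e^{-i\sigma\cdot t}$. So with the convention $\widehat g(x)=\int g(\sigma)e^{-i\sigma\cdot x}\,d\sigma$ that you use (and that Section 6 of the paper fixes), your argument actually proves
\[
\varphi(z)=(2\pi)^{n/2}\int_{\mathbb R^n} g(\sigma)e^{-\frac{\sigma^2}2}e^{\sigma\cdot z}\,d\sigma .
\]
This is \eqref{eq91} with $g(-\sigma)$ in place of $g(\sigma)$ and an extra factor $(2\pi)^{n/2}$. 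That factor is not ``absorbed'' anywhere, since neither \eqref{eq82} nor your Fourier convention contains a normalising constant. To get \eqref{eq91} literally you would need the opposite convention $\widehat g(x)=\int g(\sigma)e^{i\sigma\cdot x}\,d\sigma$, and you would still have the constant.

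The paper takes a different route. It evaluates $\varphi$ only on $i\mathbb R^n$, where $x-iz=x+t$ is real and no contour shift is needed. It then uses the self-duality of the Gaussian and extends to $\mathbb C^n$ by the identity principle, both sides being entire. Your route handles all $z$ at once and avoids the continuation step, at the cost of the complex contour shift, which you treat correctly.

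The paper's computation contains the same sign slip: its factor $e^{i(s-\sigma)\cdot t}$ should be $e^{-i(s-\sigma)\cdot t}$. So the lemma as printed is itself off by the reflection $\sigma\mapsto-\sigma$ and the constant. This is harmless for Theorem~\ref{thm92}, Lemma~\ref{lem93} and Theorem~\ref{thm94}, which use only $|g|$, bounds symmetric in $\sigma$, or all of $L^1(\mathbb R^n)$, a class invariant under $g\mapsto g(-\cdot)$. Still, you should state and prove the correct formula rather than assert an identity that fails on expansion.
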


\begin{proof}
For $z=it,\hskip 2truemm t\in \mathbb R^n,$ it follows from \eqref{eq82} that
$$\varphi (it)=\int_{\mathbb R^n} \widehat{g} (x)e^{-\frac 12(x+t)^2}dx.$$
Since the function $\xi \mapsto e^{-\frac {\xi^2}2}$ is its own Fourier transform, it follows from the inverse Fourier formula that
\begin{eqnarray*}
\varphi (it)&=&\int_{\mathbb R^n} g(\sigma)e^{-\frac 12(s-\sigma)^2}e^{i(s-\sigma)\cdot t}d\sigma|_{s=0}\\
&=&\int_{\mathbb R^n} g(\sigma)e^{-\frac {\sigma^2}2}e^{-i\sigma\cdot t}d\sigma.
\end{eqnarray*}

Applying the analytic continuation principle, we obtain that
$$\varphi (z)=\int_{\mathbb R^n} g(\sigma)e^{-\frac {\sigma^2}2}e^{-\sigma\cdot z}d\sigma.$$.
\end{proof}

We next prove the following theorem.

\begin{thm}\label{thm92}
Suppose that $m=\widehat{g}$ with $g\in L^1 (\mathbb R^n).$  Then its associated operator $S_\varphi$ is weakly localized. 
\end{thm}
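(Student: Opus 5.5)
We reduce to the kernel condition of Corollary \ref{cor84}, assertion (2)i). By \eqref{eq83} and Remark \ref{rem83}, $|\langle S_\varphi k_z,k_w\rangle| = e^{-\frac{|z-w|^2}2}|\varphi(w-\bar z)|$. The adjoint $S_\varphi^\star=S_{\widetilde\varphi}$ has multiplier $\overline{m}=\overline{\widehat g}=\widehat{\check g}$ with $\check g(\sigma)=\overline{g(-\sigma)}\in L^1$, so it is of the same type. Since $S_\varphi$ is bounded by Theorem \ref{thm81}, Proposition \ref{pro36} lets us check only the two tail conditions. By symmetry it suffices to show
$$\lim_{r\to\infty}\sup_{z}\int_{|z-w|\ge r} e^{-\frac{|z-w|^2}2}|\varphi(w-\bar z)|\,dV(w)=0 .$$

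Next we compute the kernel explicitly from Lemma \ref{lem91}. Put $\zeta=w-z=s+it$, so that $w-\bar z=\zeta+2i\,\Im m\, z$. Write $y=\Im m\,z\in\mathbb R^n$ and note $\sigma^2=|\sigma|^2$ for real $\sigma$. Formula \eqref{eq91} gives
$$|\varphi(w-\bar z)|\le \int_{\mathbb R^n}|g(\sigma)|e^{-\frac{|\sigma|^2}2}e^{-\sigma\cdot s}\,d\sigma ,$$
because the terms $t$ and $2y$ enter only through the unimodular factor $e^{-i\sigma\cdot(t+2y)}$. Multiplying by $e^{-\frac{|s|^2+|t|^2}2}$ and completing the square, $-\frac{|\sigma|^2}2-\sigma\cdot s-\frac{|s|^2}2=-\frac{|\sigma+s|^2}2$, we obtain the bound
$$e^{-\frac{|z-w|^2}2}|\varphi(w-\bar z)|\le e^{-\frac{|t|^2}2}\int_{\mathbb R^n}|g(\sigma)|e^{-\frac{|\sigma+s|^2}2}d\sigma=:e^{-\frac{|t|^2}2}\,G(s),$$
which is independent of $z$. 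This uniformity in $z$ is the key point.

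Now $G=|g|*e^{-|\cdot|^2/2}$ up to reflection, so $G\in L^1(\mathbb R^n)$ with $\|G\|_{L^1}=(2\pi)^{n/2}\|g\|_{L^1}$. Also $e^{-|t|^2/2}\in L^1(\mathbb R^n)$. Hence $H(s,t):=e^{-\frac{|t|^2}2}G(s)\in L^1(\mathbb R^{2n})$. We then get
$$\sup_z\int_{|w-z|\ge r}\cdots\,dV(w)\le\int_{|s|^2+|t|^2\ge r^2}H(s,t)\,ds\,dt\longrightarrow0 \quad (r\to\infty)$$
by dominated convergence. The same argument with $\check g$ handles the adjoint, and this completes the weak localization.

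The only delicate step is the completion of the square. One must check that the oscillatory factor really absorbs the $\Im m\,z$ dependence, and that the sign conventions in \eqref{eq91} (the bilinear $\sigma\cdot z$, and $\sigma^2=|\sigma|^2$) give exactly $e^{-|\sigma+s|^2/2}$ and not a growing Gaussian. If \eqref{eq91} turns out to need $\varphi(z)=\int g(\sigma)e^{-\sigma^2/2}e^{-i\sigma\cdot z}d\sigma$ instead, the roles of $s$ and $t$ swap, and the same $L^1$ domination argument goes through unchanged.
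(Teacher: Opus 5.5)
Your proof is correct and is essentially the paper's: both dominate the kernel by the $z$-independent function $e^{-|t|^2/2}\int_{\mathbb R^n}|g(\sigma)|e^{-|\sigma\pm s|^2/2}\,d\sigma\in L^1(\mathbb R^{2n})$. The paper makes the tail estimate explicit by splitting $\{|s|^2+|t|^2\ge r^2\}$ into $\{|t|\ge r/2\}\cup\{|s|\ge r/2\}$ instead of citing dominated convergence, and your $\check g(\sigma)=\overline{g(-\sigma)}$ for the adjoint is more accurate than the paper's $\widehat{\bar g}$. Your closing hedge is wrong, though moot: with $e^{-i\sigma\cdot z}$ the modulus would be $e^{\sigma\cdot(t+2\Im m\, z)}$, so the $\Im m\, z$-dependence would no longer be absorbed and uniformity in $z$ would fail; in fact \eqref{eq82} gives the real-bilinear exponent $e^{\pm\sigma\cdot z}$, which differs from \eqref{eq91} only by a sign and the constant $(2\pi)^{n/2}$, both harmless.
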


\begin{proof}
It follows from \eqref{eq83} and the previous lemma that
\begin{eqnarray}
\left \vert \langle S_\varphi k_z, k_w\rangle \right \vert
&=&e^{-\frac {|z-w|^2}2}\left \vert \int_{\mathbb R^n} e^{\sigma\cdot \left (\bar{z}-w\right )}g(\sigma)e^{-\frac {\sigma^2}2}d\sigma\right \vert \label{Sphi}\\
&\leq& e^{-\frac {|z-w|^2}2} \int_{\mathbb R^n} e^{\sigma\cdot \Re e\hskip 1truemm \left (z-w\right )}|g(\sigma)|e^{-\frac {|\sigma|^2}2}d\sigma. \nonumber
\end{eqnarray}
Applying the change of variable $z-w=s+it,$ we obtain:
\begin{eqnarray*}
\int_{|z-w|\geq r} \left \vert \langle S_\varphi k_z, k_w\rangle \right \vert dV(w)
&\leq& \int_{s^2+t^2 \geq r^2} e^{-\frac {|s|^2+|t|^2}2} \left (\int_{\mathbb R^n} e^{\sigma \cdot s}|g(\sigma)|e^{-\frac {\sigma^2}2}d\sigma \right )dsdt\\
&=&\int_{s^2+t^2 \geq r^2} e^{-\frac {t^2}2} \left (\int_{\mathbb R^n} |g(\sigma)|e^{-\frac {(\sigma-s)^2}2}d\sigma \right )dsdt\\
&\leq& I_1 (r)+I_2 (r),
\end{eqnarray*}
where
$$I_1 (r):=\int_{|t| \geq \frac r2, \hskip 1truemm s\in \mathbb R^n} e^{-\frac {|t|^2}2} \left (\int_{\mathbb R^n} |g(\sigma)|e^{-\frac {(s-\sigma)^2}2}d\sigma\right ) dsdt$$
and 
$$I_2 (r):=\int_{|s| \geq \frac r2, \hskip 1truemm t\in \mathbb R^n} e^{-\frac {|t|^2}2} \left (\int_{\mathbb R^n} |g(\sigma)|e^{-\frac {(s-\sigma)^2}2}d\sigma\right ) dsdt.$$
We have used the inclusion:
\begin{multline*}
	\left \{(s, t)\in \mathbb R^n \times \mathbb R^n: |s|^2+|t|^2 \geq r^2\right \}\subset \\
	 \left \{(s, t)\in \mathbb R^n \times \mathbb R^n: |t| \geq \frac r2\right \}\cup \left \{(s, t)\in \mathbb R^n \times \mathbb R^n: |s| \geq \frac r2\right \}.
\end{multline*}
We write $C:=\int_{\mathbb R^n} e^{-\frac {\tau^2}2}d\tau <\infty$ and $C(r):=\int_{|t| \geq \frac r2} e^{-\frac {t^2}2}dt;$ we have $\lim \limits_{r\rightarrow \infty} C(r)=0.$ On the one hand, by the Fubini-Tonelli theorem, we get:
\begin{eqnarray*}
I_1 (r)&=&\left (\int_{|t| \geq \frac r2} e^{-\frac {t^2}2}dt \right )\int_{\mathbb R^n} \left (\int_{\mathbb R^n} |g(\sigma)|e^{-\frac {(s-\sigma)^2}2}d\sigma\right ) dsdt\\
&\leq &C(r)\int_{\mathbb R^n}  \left (\int_{\mathbb R^n} e^{-\frac {(s-\sigma)^2}2}ds\right )|g(\sigma)|d\sigma\\
&=&C(r)C\left \Vert g\right \Vert_{L^1 (\mathbb R^n)}\rightarrow 0
\end{eqnarray*}
as $r\rightarrow \infty.$ On the other hand, we have:
\begin{eqnarray*}
I_2 (r)&=&\left (\int_{\mathbb R^n} e^{-\frac {t^2}2}dt \right )\int_{|s| \geq \frac r2} \left \vert \int_{\mathbb R^n} g(\sigma)e^{-\frac {|s-\sigma|^2}2}d\sigma\right \vert ds\\
&=&C\int_{|s| \geq\frac r2} \left (\int_{\mathbb R^n} |g(\sigma)|e^{-\frac {|s-\sigma|^2}2}d\sigma\right ) ds.
\end{eqnarray*}
To get the estimate $\lim \limits_{r\rightarrow \infty} I_2(r)=0,$ it suffices to show that
$$\lim \limits_{r\rightarrow \infty} \int_{|s| \geq \frac r2} \left (\int_{\mathbb R^n} |g(\sigma)|e^{-\frac {(s-\sigma)^2}2}d\sigma \right )ds=0;$$
This conclusion will follow as soon as we prove that the function $s\mapsto \int_{\mathbb R^n} |g(\sigma)|e^{-\frac {|s-\sigma|^2}2}d\sigma$ is integrable in $\mathbb R^n.$ Indeed, by the Fubini-Tonelli theorem, we have:
\begin{eqnarray*}
\int_{\mathbb R^n} \left (\int_{\mathbb R^n} |g(\sigma)|e^{-\frac {(s-\sigma)^2}2}d\sigma \right )ds
&=&\int_{\mathbb R^n} \left (\int_{\mathbb R^n} e^{-\frac {(s-\sigma)^2}2}ds\right )|g(\sigma)|d\sigma\\
&=&C\left \Vert g\right \Vert_{L^1 (\mathbb R^n)}<\infty.
\end{eqnarray*}
The second estimate $\lim \limits_{r\rightarrow \infty} \sup \limits_{z\in \mathbb C^n} \int_{|z-w|>r} \left \vert \langle S_\varphi^\star k_z, k_w\rangle \right \vert dV(w)=0$ is proved similarly using Remark \ref{rem83} 
the adjoint operator $S_\varphi^\star$ is associated to the multiplier $\bar m=\widehat{\bar g}\hskip 2truemm (\bar g\in L^1 (\mathbb R^n).$ The operator $S_\varphi$ is weakly localized. 
\end{proof} 

We next show the existence of an operator $S_\varphi,$ associated to a multiplier $m=\widehat{g}, \hskip 2truemm g\in L^1 (\mathbb R^n),$ which is sufficiently localized. We recall from \eqref{Sphi} that
$$\left \vert \langle S_\varphi k_z, k_w\rangle\right \vert=e^{-\frac {|z-w|^2}2}\left \vert \int_{\mathbb R^n} e^{\sigma \cdot (\bar{z}-w)}g(\sigma)e^{-\frac {\sigma^2}2}d\sigma\right \vert.$$
We apply the change of variables $\zeta=z-w$ and $y=\Im m\hskip 1truemm z.$ Then:
\begin{eqnarray}\nonumber
\left \vert \langle S_\varphi k_z, k_w\rangle\right \vert
&=&e^{-\frac {|\zeta|^2}2}\left \vert \int_{\mathbb R^n} e^{\sigma \cdot (-2iy+\zeta)}g(\sigma)e^{-\frac {\sigma^2}2}d\sigma\right \vert \label{c1}\\
&=&e^{-\frac {|\zeta|^2}2}\left \vert \mathcal F \left (e^{\sigma \cdot \zeta}g(\sigma)e^{-\frac {\sigma^2}2}\right )(y)\right \vert. \label{c2}
\end{eqnarray}
Here, $\mathcal F$ stands for the Fourier transform.

We now prove the following lemma.

\begin{lem}\label{lem93}
Let $g\in L^1 (\mathbb R^n)$ be compactly supported in a closed Euclidean ball $\bar{B} (0, A),$ with $A<\frac 12.$ Then $S_\varphi \in \mathcal{SL}.$
\end{lem}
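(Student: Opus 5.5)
We verify criterion (2) of Corollary \ref{cor311}, that is, we look for $\epsilon>0$ and $C>0$ with $\left\vert \langle S_\varphi k_z, k_w\rangle\right\vert \leq Ce^{-\epsilon|z-w|^2}$ for all $z,w\in\mathbb C^n$. Corollary \ref{cor311} gives the equivalence with sufficient localization directly, and condition (2) for $T$ automatically yields the same bound for $T^\star$. The tool is formula \eqref{c2}: with $\zeta=z-w$ and $y=\Im m\,z$,
$$\left\vert \langle S_\varphi k_z, k_w\rangle\right\vert=e^{-\frac{|\zeta|^2}2}\left\vert \mathcal F\left(e^{\sigma\cdot\zeta}g(\sigma)e^{-\frac{\sigma^2}2}\right)(y)\right\vert.$$
Here $\sigma\cdot\zeta$ is the bilinear (non-conjugated) pairing, so $|e^{\sigma\cdot\zeta}|=e^{\sigma\cdot\Re e\,\zeta}$. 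Likewise $|e^{-\sigma^2/2}|=e^{-|\sigma|^2/2}$ because $\sigma$ is real.

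We bound the Fourier transform trivially by the $L^1$ norm of the function being transformed. This removes the dependence on $y$, which gives uniformity in $z$:
$$\left\vert \langle S_\varphi k_z, k_w\rangle\right\vert\leq e^{-\frac{|\zeta|^2}2}\int_{\bar B(0,A)} e^{\sigma\cdot\Re e\,\zeta}|g(\sigma)|e^{-\frac{|\sigma|^2}2}d\sigma\leq \|g\|_{L^1}\, e^{-\frac{|\zeta|^2}2+A|\zeta|}.$$
The compact support is used here, through $\sigma\cdot\Re e\,\zeta\leq |\sigma||\zeta|\leq A|\zeta|$ on the support of $g$.

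It remains to absorb the linear term $A|\zeta|$. For $0<\delta<\tfrac12$ we have $A|\zeta|\leq \delta|\zeta|^2+\frac{A^2}{4\delta}$, and therefore
$$\left\vert \langle S_\varphi k_z, k_w\rangle\right\vert\leq \|g\|_{L^1}e^{\frac{A^2}{4\delta}}e^{-(\frac12-\delta)|z-w|^2}.$$
This is condition (2) of Corollary \ref{cor311} with $\epsilon=\tfrac12-\delta$, so $S_\varphi\in\mathcal{SL}$. One can also check through Lemma \ref{lem32} that $S_\varphi\in\mathcal L_p$ for an explicit range of $p$, since the integrand becomes $e^{(\frac p2-1-p\epsilon)|\zeta|^2}$ times a constant. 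Condition (2) does not use the restriction $A<\frac12$ at all.

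The main obstacle is therefore only bookkeeping: keeping the bilinear versus sesquilinear pairing straight in \eqref{c2}, so that the modulus of the exponential is exactly $e^{\sigma\cdot\Re e\,\zeta}$ and its growth is at most linear in $|\zeta|$. The hypothesis $A<\frac12$ most likely enters only later in the paper, for instance to obtain a concrete range of $p$ (such as $p<4$) or a comparison with the non-compactly supported examples.
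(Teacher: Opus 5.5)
Your proof is correct and is essentially the paper's: both bound \eqref{c2} using the triangle inequality and the support of $g$ by $\|g\|_{L^1}e^{-\frac{|\zeta|^2}{2}+A|\zeta|}$, and then appeal to Corollary \ref{cor311}. The only difference is how $A|\zeta|$ is absorbed: the paper uses $A|\zeta|\le A|\zeta|^2+\frac A4$, which gives $\epsilon=\frac12-A$ and is the one place where $A<\frac12$ is used (not later, as you guessed), whereas your bound $A|\zeta|\le\delta|\zeta|^2+\frac{A^2}{4\delta}$ gives every $\epsilon<\frac12$, so the hypothesis is indeed superfluous, and through Lemma \ref{lem32} you even get $S_\varphi\in\mathcal L_p$ for all $p<\frac1\delta$, hence $S_\varphi$ is strongly localized.
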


\begin{proof}
According to Corollary \ref{cor311}, it suffices to show that there exist two positive constants $C$ and $\epsilon$ such that for all $z, w\in \mathbb C^n,$ we have:
$$\left \vert \langle S_\varphi k_z, k_w\rangle\right \vert\leq Ce^{-\epsilon |\zeta|^2}.$$
It follows from \eqref{c1} that
\begin{eqnarray*}
\left \vert \langle S_\varphi k_z, k_w\rangle\right \vert
&\leq& e^{-\frac {|\zeta|^2}2}\int_{|\sigma|\leq A} e^{\sigma \cdot \Re e \hskip 1truemm \zeta}|g(\sigma)|e^{-\frac {\sigma^2}2}d\sigma\\
&\leq& e^{-\frac {|\zeta|^2}2}e^{A|\zeta|}\left \Vert g\right \Vert_{L^1 (\mathbb R^n)}\\
&\leq &Ce^{-|\zeta|^2\left (\frac 12-A\right )}.
\end{eqnarray*}
The required conclusion holds with $\epsilon=\frac 12-A.$
\end{proof}

We finally prove the following theorem, whose combination with Theorem \ref{thm92}
yields the second set inequality in our main result (Theorem \ref{thm13}). 

\begin{thm}\label{thm94}
Let $n=1.$ There exists a function $g\in L^1 (\mathbb R)$ such that the operator $S_\varphi$ associated to the multiplier $m=\widehat g$ is not XZ-sufficiently localized.
\end{thm}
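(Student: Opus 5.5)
Taking $n=1$, we start from the identity \eqref{c2}: with $\zeta=z-w$ and $y=\Im m\, z$,
$$\left\vert \langle S_\varphi k_z,k_w\rangle\right\vert = e^{-\frac{|\zeta|^2}2}\left\vert \mathcal F\left(e^{\sigma\zeta}g(\sigma)e^{-\frac{\sigma^2}2}\right)(y)\right\vert .$$
The variables $\zeta$ and $y$ can be chosen independently. Indeed, for given $\zeta,y$ we may take $z=iy$ and $w=z-\zeta$. So $S_\varphi$ is XZ-sufficiently localized if and only if there are $C$ and $\beta>2$ with
$$\sup_{y\in\mathbb R}\left\vert \mathcal F\left(e^{\sigma\zeta}g(\sigma)e^{-\frac{\sigma^2}2}\right)(y)\right\vert\leq C e^{\frac{|\zeta|^2}2}(1+|\zeta|)^{-\beta}\quad \text{for all } \zeta .$$
We restrict to real $\zeta=s$. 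Then $e^{\sigma s}$ is a positive weight. If $g\geq 0$, the supremum over $y$ of the Fourier transform is attained at $y=0$ and equals
$$\int_{\mathbb R} g(\sigma)e^{\sigma s-\frac{\sigma^2}2}\,d\sigma = e^{\frac{s^2}2}\int_{\mathbb R} g(\sigma)e^{-\frac{(\sigma-s)^2}2}\,d\sigma .$$
This turns the condition into the requirement that
$$G(s):=\int_{\mathbb R} g(\sigma)e^{-\frac{(\sigma-s)^2}2}\,d\sigma = (g\ast \gamma)(s)$$
decays like $(1+|s|)^{-\beta}$ for some $\beta>2$. Here $\gamma$ is the Gaussian.

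The whole proof therefore reduces to choosing a nonnegative $g\in L^1(\mathbb R)$ whose Gaussian smoothing decays slower than every power $|s|^{-\beta}$ with $\beta>2$. The natural choice is
$$g(\sigma)=(1+|\sigma|)^{-3/2}.$$
It is integrable, and since $\gamma$ has integral $\sqrt{2\pi}$ and is concentrated near $0$, we expect $G(s)\asymp(1+|s|)^{-3/2}$. A lower bound is all we need. For $|\sigma-s|\leq 1$ we have $e^{-\frac{(\sigma-s)^2}2}\geq e^{-1/2}$ and $g(\sigma)\geq (2+|s|)^{-3/2}$, which gives
$$G(s)\geq 2e^{-1/2}(2+|s|)^{-3/2}.$$
Now suppose $S_\varphi$ were XZ-sufficiently localized. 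Taking $y=0$ and $z-w=s$ real in the definition, we would get
$$2e^{-1/2}(2+|s|)^{-3/2}\leq G(s)\leq C(1+|s|)^{-\beta},$$
which is impossible as $s\to\infty$, since $\beta>2>3/2$. (Alternatively, any $g$ with $g\ge c|\sigma|^{-1-\delta}$ at infinity and $\delta<1$ works.) The operator is nonetheless bounded, because $m=\widehat g\in L^\infty$, and Theorem \ref{thm92} then makes it weakly localized.

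The main obstacle is the bookkeeping in \eqref{c2}. We must make sure that the sign conventions, namely $e^{\sigma\cdot(\bar z-w)}$ versus $e^{\sigma\cdot(-2iy+\zeta)}$, really let us put $y=0$ while $\zeta$ is an arbitrary real number. We must also check that the Fourier transform at $0$ of the positive function is exactly the integral above. Once these are confirmed, the proof is a one-line lower bound. If real $\zeta$ turned out not to give the positive weight, we would take $\zeta$ purely real in $z-w$ after shifting $y$, using the freedom of the independent parameters; the same Gaussian-convolution computation applies.
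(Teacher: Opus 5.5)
Your proof is correct, and it takes a genuinely different route from the paper's.

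Both arguments make the same specialization $z=x\in\mathbb R$, $w=0$. This reduces the XZ condition to the decay estimate $(1+|x|)^{\beta}\,\bigl|\int g(\sigma)e^{-(\sigma-x)^2/2}\,d\sigma\bigr|\le C$ for the Gaussian smoothing of $g$.

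From there the paper argues by contradiction and non-constructively. It assumes every $g\in L^1(\mathbb R)$ works and applies the uniform boundedness principle to the functionals $T_xg=(1+|x|)^\beta\int g(\sigma)e^{-(\sigma-x)^2/2}\,d\sigma$. It then uses $(L^1)^*=L^\infty$ to identify $\Vert T_x\Vert=(1+|x|)^\beta$, which is unbounded in $x$. You instead exhibit the explicit even, nonnegative $g(\sigma)=(1+|\sigma|)^{-3/2}$ and prove the elementary lower bound $(g\ast\gamma)(s)\ge 2e^{-1/2}(2+|s|)^{-3/2}$.

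What your route buys:
\begin{itemize}
\item A concrete example with a quantitative rate. The kernel $|\langle S_\varphi k_x,k_0\rangle|$ decays no faster than $|x|^{-3/2}$, so the XZ bound fails for every $\beta>3/2$, not just $\beta>2$.
\item It sidesteps a point the paper glosses over. In the paper's contradiction hypothesis, $C$ and $\beta$ depend on $g$, so the family $\{T_x\}$ is only well defined after one fixes $\beta=2$. That step is legitimate, since a bound with some $\beta>2$ implies the bound with $\beta=2$, but it is not stated.
\item Your worry about sign bookkeeping is unnecessary. Because your $g$ is even and your lower bound holds for all real $s$, it does not matter whether the Fourier normalization gives $e^{-\sigma z}$ or $e^{\sigma z}$ in Lemma \ref{lem91}, or an extra constant factor.
\end{itemize}

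What the paper's route buys: it needs no computation with a specific $g$. In its Baire-category form it also shows more: the set of $g\in L^1(\mathbb R)$ for which $S_\varphi$ is XZ-sufficiently localized is meager, so failure is the generic behaviour.
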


\begin{proof}
We use a contradiction argument. We suppose that for every $g\in L^1 (\mathbb R),$ the operator $S_\varphi$ associated to the multiplier $m=\widehat g$ is XZ-sufficiently localized, i.e. there exist two positive constants $C$ and $\beta$ with $\beta>2,$ such that for all $z, w\in \mathbb C,$ we have
$$e^{-\frac {|z-w|^2}2}\left (1+|z-w|\right )^\beta \left \vert \int_{\mathbb R} e^{\sigma(\bar z-w)}g(\sigma)e^{-\frac {\sigma^2}2}d\sigma\right \vert \leq C.$$ 
We take $z=x\in \mathbb R$ and $w=0.$ For every $x\in \mathbb R,$ we obtain:
\begin{eqnarray} 
e^{-\frac {x^2}2}\left (1+|x|\right )^\beta \left \vert \int_{\mathbb R} e^{\sigma x}g(\sigma)e^{-\frac {\sigma^2}2}d\sigma \right \vert
&=&\left \vert \left (1+|x|\right )^\beta \int_{\mathbb R} g(\sigma)e^{-\frac {(\sigma-x)^2}2}d\sigma\right \vert \nonumber\\
&\leq& C. \label{der}
\end{eqnarray}
For every $x\in \mathbb R,$ we define the linear functional $T_x$ on $L^1 (\mathbb R)$ by 
$$T_x g=\left (1+|x|\right )^\beta \int_{\mathbb R} g(\sigma)e^{-\frac {(\sigma-x)^2}2}d\sigma.$$
Clearly,
$$\left \vert T_x g\right \vert\leq \left (1+|x|\right )^\beta \left \Vert g\right \Vert_{L^1 (\mathbb R)},
$$
since $\sup \limits_{x\in \mathbb R} e^{-\frac {x^2}2}=1.$ In other words, each linear functional $T_x, \hskip 2truemm x\in \mathbb R,$ is continuous on $L^1 (\mathbb R),$ i. e. each $T_x$ belongs to the topological dual $\left (L^1 (\mathbb R) \right )^*$ of $L^1 (\mathbb R).$

In view of \eqref{der}, for every $g\in L^1 (\mathbb R),$ we have
$$\sup \limits_{x\in \mathbb R} \left \vert T_x g\right \vert <\infty.$$
It then follows from the uniform boundedness principle  \cite[Theorem 2.2]{B11}
 that there exists a constant $c$ such that for all $x\in \mathbb R$ and $g\in L^1 (\mathbb R),$ we have
$$\left \vert T_x g\right \vert \leq c\left \Vert g\right \Vert_{L^1 (\mathbb R)}.$$
So by the Riesz representation theorem \cite[Theorem 4.14]{B11}
, for every $x\in \mathbb R,$ we get:
$$\sup \limits_{\sigma \in \mathbb R} \left (1+|x|\right )^\beta e^{-\frac {(\sigma-x)^2}2}\leq c.$$ 
We take $\sigma=x$ to obtain $\left (1+|x|\right )^\beta \leq c$ for every $x\in \mathbb R:$ this conclusion is of course false. We have reached to a contradiction. This finishes the proof of the theorem.

\end{proof}

The previous theorem yields the following corollary.

\begin{cor}\label{cor95}
The following inclusion is strict.
$${\rm {XZ}}-\mathcal {SL} \subsetneq \mathcal{WL}.$$
\end{cor}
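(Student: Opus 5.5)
The plan is to combine Theorem \ref{thm92} with Theorem \ref{thm94}, both stated in dimension $n=1$, and then say how to pass to general $n$. By Proposition \ref{pro315} we already have $\mathrm{XZ}\text{-}\mathcal{SL}\subset\mathcal{WL}$, so it only remains to exhibit one weakly localized operator that is not XZ-sufficiently localized.

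For $n=1$, Theorem \ref{thm94} produces a function $g\in L^1(\mathbb R)$ whose operator $S_\varphi$, associated with the multiplier $m=\widehat g$, is not XZ-sufficiently localized. Since $m=\widehat g$ with $g\in L^1(\mathbb R)$, Theorem \ref{thm92} shows that this same $S_\varphi$ is weakly localized. The operator is bounded by Theorem \ref{thm81}, because $\|m\|_{L^\infty}\le\|g\|_{L^1}$. Hence $S_\varphi\in\mathcal{WL}\setminus \mathrm{XZ}\text{-}\mathcal{SL}$, and the inclusion is strict when $n=1$.

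For general $n$, I would take the tensor-type symbol $g(\sigma)=g_1(\sigma_1)h(\sigma_2,\dots,\sigma_n)$. Here $g_1$ is the one-dimensional function from Theorem \ref{thm94}, and $h\in L^1(\mathbb R^{n-1})$ is supported in a ball of radius less than $\tfrac12$ and is nonzero, for instance a positive bump. Theorem \ref{thm92} still gives weak localization, since $g\in L^1(\mathbb R^n)$. By \eqref{Sphi}, the kernel factorizes as
\[
|\langle S_\varphi k_z,k_w\rangle|=e^{-\frac{|z_1-w_1|^2}2}\,\Big|\int_{\mathbb R}e^{\sigma_1(\bar z_1-w_1)}g_1(\sigma_1)e^{-\frac{\sigma_1^2}2}d\sigma_1\Big|\cdot(\text{same expression in the remaining variables with }h).
\]
Now suppose $S_\varphi$ were XZ-sufficiently localized with some exponent $\beta>2n$. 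Fix $z'=w'$ in the last $n-1$ coordinates at a point where the $h$-factor is nonzero. Such a point exists by analytic continuation, because that factor is the value of an entire function that does not vanish identically. With $z'=w'$ fixed, we have $|z-w|=|z_1-w_1|$, so the first factor would satisfy the bound of Definition \ref{def23} with the same $\beta$, and in particular with $\beta>2$. This is exactly what the uniform boundedness argument in the proof of Theorem \ref{thm94} rules out. That argument works for any fixed $\beta$, and in fact it refutes even the weaker statement that, for every $g_1$, the bound holds with some constant depending on $g_1$. One checks that this is the form of the hypothesis the argument really uses, with the constant depending on $g$, and the Banach--Steinhaus step handles exactly that.

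The main obstacle is this reduction from $n$ to $1$. Theorem \ref{thm94} is an existence statement proved by contradiction, so one has to make sure the contradiction hypothesis is precisely that the family of functionals $T_x$ is pointwise bounded on all of $L^1(\mathbb R)$. To be safe, I would rerun the proof of Theorem \ref{thm94} directly in $\mathbb R^n$. Take $z=x\in\mathbb R^n$ and $w=0$, and use the functionals $T_xg=(1+|x|)^\beta\int g(\sigma)e^{-|\sigma-x|^2/2}d\sigma$ on $L^1(\mathbb R^n)$. Uniform boundedness together with $(L^1)^*=L^\infty$ then gives $(1+|x|)^\beta\le c$, which is a contradiction. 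This yields some $g\in L^1(\mathbb R^n)$ with $S_\varphi\notin \mathrm{XZ}\text{-}\mathcal{SL}$, and Theorem \ref{thm92} shows this $S_\varphi$ lies in $\mathcal{WL}$.
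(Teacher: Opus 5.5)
Your argument is correct. For $n=1$ it is the paper's proof: combine Proposition \ref{pro315} with Theorem \ref{thm92} (every $S_\varphi$ with $m=\widehat g$, $g\in L^1$, is weakly localized) and Theorem \ref{thm94} (some such $S_\varphi$ is not XZ-sufficiently localized).

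Where you go further is dimension. The paper stops at $n=1$, even though Theorem \ref{thm94} is stated only for $n=1$; you also handle $n\ge 2$. Of your two routes, rerunning the Banach--Steinhaus argument directly in $\mathbb R^n$ is the cleaner one. With $z=x\in\mathbb R^n$ and $w=0$ you get $|\langle S_\varphi k_x,k_0\rangle|=|\int g(\sigma)e^{-|\sigma-x|^2/2}d\sigma|$, and $T_x$ has norm exactly $(1+|x|)^\beta$ on $L^1(\mathbb R^n)$. The tensor reduction also works. Since $h$ is a positive bump, you can take $z'=w'$ real, where the $h$-factor is $\int h(\sigma')e^{-|\sigma'|^2/2}d\sigma'>0$. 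So no analytic continuation is needed, and the support condition on $h$ plays no role.

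One point is left implicit by both you and the paper. In the contradiction hypothesis, the exponent $\beta=\beta(g)>2n$ may depend on $g$, not only the constant. Banach--Steinhaus, however, needs a single family $\{T_x\}$. The fix is to build the $T_x$ with the fixed exponent $2n$:
\begin{itemize}
\item any bound with $\beta(g)>2n$ implies the bound with exponent $2n$, so this family is pointwise bounded under the hypothesis;
\item the contradiction $(1+|x|)^{2n}\le c$ is just as good;
\item the resulting $g$ satisfies $\sup_x(1+|x|)^{2n}|\int g(\sigma)e^{-|\sigma-x|^2/2}d\sigma|=\infty$, so it fails the XZ bound for every $\beta>2n$ at once.
\end{itemize}
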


\subsection{The proof of the first set inequality in our main result (Theorem \ref{thm13})}\label{ssec92}

For singular integrals  of convolution type $S_\varphi$, the reverse inclusion in the first set equality in Theorem \ref{thm13}
takes the following form. According to Corollary \ref{cor84}
, for $\varphi \in H^2 (\mathbb C^n, d\mu),$ the implication $\rm{(i)}\Rightarrow \rm{(ii)}$ holds, where
\begin{enumerate}
\item[{(i)}]
there exist two positive constants $C$ and $\beta>2n$ such that for all $z, w\in \mathbb C^n,$ we have
$$e^{-\frac {|w-z|^2}2}\left \vert \varphi (w-\bar{z})\right \vert\leq \frac C{(1+|w-z|)^\beta}$$
and
\item[(ii)]
there exist two positive constants $C$ and $\epsilon$ such that for all $z, w\in \mathbb C^n,$ we have
$$e^{-\frac {|w-z|^2}2}\left \vert \varphi (w-\bar{z}\right \vert\leq Ce^{-\epsilon|w-z|^2}.$$
\end{enumerate}
We apply the change of variables $s+it=w-\bar{z}$ and $y=\frac {z-\bar{z}}{2i}.$ The implication ${\rm (i)}\Rightarrow {\rm (ii)}$ then amounts to the implication ${\rm (iii)}\Rightarrow {\rm (iv)},$ where
\begin{enumerate}
\item[(iii)]
there exist two positive constants $C$ and $\beta>2n$ such that for all $s, t \in \mathbb R^n,$ we have
$$\left \vert \varphi (s+it)\right \vert\leq \inf \limits_{y\in \mathbb R^n} \frac {Ce^{\frac {s^2+(t-2y)^2}2}}{(1+\sqrt {s^2+(t-2y)^2})^\beta}$$
and
\item[(iv)]
there exist two positive constants $C$ and $\epsilon<\frac 12$ such that for all $s, t\in \mathbb R^n,$ we have
$$\left \vert \varphi (s+it)\right \vert\leq C\inf \limits_{y\in \mathbb R^n}e^{\left (\frac 12-\epsilon\right )\left (s^2+(t-2y)^2\right )}.$$
\end{enumerate}
It is easy to check the assertion {\rm (iv)} is equivalent to the following assertion:
\begin{enumerate}
\item[(v)]
there exist two positive constants $C$ and $\epsilon<\frac 12$ such that for all $s, t\in \mathbb R^n,$ we have
$$\left \vert \varphi (s+it)\right \vert\leq Ce^{\left (\frac 12-\epsilon\right )s^2}.$$
\end{enumerate}
To reformulate the assertion {\rm (iv)}, we need to determine $\inf \limits_{y\in \mathbb R^n} \frac {e^{\frac {s^2+(t-2y)^2}2}}{\left (1+\frac {s^2+(t-2y)^2}2\right )^{\frac \beta 2}}.$ We shall use the following elementary lemma.

\begin{lem}\label{lem96}
Let $\tau$ be a positive number. We denote by $h$ the following real function of one real variable $x:$
$$h(x):=\frac {e^{\frac {2x}\beta}}{1+x} \quad (x\in (\tau, \infty)).$$
Then $\inf \limits_{x\in (\tau, \infty)} h(x)=\frac {e^{\frac {2\tau}\beta}}{1+\tau}$ if $\tau>\frac \beta 2-1$ and $\inf \limits_{x\in (\tau, \infty)} h(x)\leq \frac {2e^{\frac 2\beta\left (\frac \beta 2-1\right )}}\beta$ if $\tau\leq \frac \beta 2-1.$
\end{lem}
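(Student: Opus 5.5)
The argument is a one-variable calculus study of $h$ on $(0,\infty)$, which I will then restrict to $(\tau,\infty)$. Write $h(x)=e^{\frac{2x}\beta}(1+x)^{-1}$ and differentiate:
$$h'(x)=\frac{e^{\frac{2x}\beta}}{(1+x)^2}\left(\frac 2\beta(1+x)-1\right)=\frac{2e^{\frac{2x}\beta}}{\beta(1+x)^2}\left(x-\left(\frac\beta2-1\right)\right).$$
So $h$ is strictly decreasing on $(0,\frac\beta2-1]$ and strictly increasing on $[\frac\beta2-1,\infty)$. If $\frac\beta2-1\le 0$, only the increasing part is present on $(0,\infty)$. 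Since $\beta>2n\ge 2$ in the intended application, $\frac\beta2-1>0$; the argument does not need this, but it makes the second case nonvacuous. The global minimum of $h$ over $[0,\infty)$ is therefore attained at $x_0=\frac\beta2-1$, with value
$$h(x_0)=\frac{e^{\frac2\beta(\frac\beta2-1)}}{\beta/2}=\frac{2e^{\frac2\beta(\frac\beta2-1)}}{\beta}.$$

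\emph{Case $\tau>\frac\beta2-1$.} Here $h$ is increasing on $(\tau,\infty)$, so its infimum over this open interval is the one-sided limit $\lim_{x\to\tau^+}h(x)=h(\tau)=\frac{e^{2\tau/\beta}}{1+\tau}$, by continuity. The infimum is not attained on the open interval, but it equals $h(\tau)$, which is exactly the statement.

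\emph{Case $\tau\le\frac\beta2-1$.} If $\tau<x_0$, then $x_0$ lies in $(\tau,\infty)$ and the infimum is at most $h(x_0)=\frac{2e^{\frac2\beta(\frac\beta2-1)}}{\beta}$; in fact equality holds, but only the stated inequality is needed. If $\tau=x_0$, the point $x_0$ is excluded, but $h(x)\to h(x_0)$ as $x\to x_0^+$, so the infimum again equals $h(x_0)$ and the bound holds.

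The only point needing care is this boundary case $\tau=\frac\beta2-1$ together with the open interval. It is handled by continuity, so I expect no real obstacle: the lemma is elementary once the sign of $h'$ is factored as above.
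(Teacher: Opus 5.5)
Your proof is correct: the factorization $h'(x)=\frac{2e^{2x/\beta}}{\beta(1+x)^2}\bigl(x-(\frac\beta2-1)\bigr)$ gives the monotonicity of $h$. Both cases follow, including the boundary case $\tau=\frac\beta2-1$ handled by continuity, and you even get equality in the second case. The paper gives no proof, only calling the lemma elementary, and this sign analysis of $h'$ is the natural and surely intended argument; the one cosmetic point is that your claim that the minimum over $[0,\infty)$ sits at $x_0=\frac\beta2-1$ needs $x_0\ge 0$, which holds automatically wherever you use it, since $0<\tau\le x_0$ in the second case.
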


We take $\tau=\frac {s^2}2$ and $x=\frac {s^2+(t-2y)^2}2\in \left (\frac {s^2}2, \infty\right ).$ It follows from the previous lemma that $$\inf \limits_{y\in \mathbb R^n} \frac {e^{\frac {s^2+(t-2y)^2}2}}{\left (1+\frac {s^2+(t-2y)^2}2\right )^{\frac \beta 2}} \simeq \frac {e^{\frac {s^2}2}}{(1+|s|)^\beta}.$$ 
So the implication ${\rm (iii)}\Rightarrow {\rm (iv)}$ amounts to the implication ${\rm (vi)}\Rightarrow {\rm (v)},$ where  
\begin{enumerate}
\item[(vi)]
there exist two positive constants $C$ and $\beta>2n$ such that for all $s, t \in \mathbb R^n,$ we have
$$\left \vert \varphi (s+it)\right \vert\leq \frac {Ce^{\frac {s^2}2}}{(1+|s|)^\beta}.$$
\end{enumerate}

We now prove the following theorem.

\begin{thm}\label{thm97}
For $n=1,$ there exists a function $\varphi \in H^2 (\mathbb C, d\mu)$ which satisfies the assertion {\rm (vi)} but does not satisfy the assertion {\rm (v)}.
\end{thm}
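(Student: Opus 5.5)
The plan is to build $\varphi$ from an integrable $g$ through the representation \eqref{eq91} of Lemma \ref{lem91}, namely $\varphi(z)=\int_{\mathbb R} g(\sigma)e^{-\frac{\sigma^2}2}e^{-\sigma z}\,d\sigma$, and to pick $g$ positive with polynomial decay of order exactly $\beta$. Concretely, fix $\beta>2$ and take $g(\sigma)=(1+|\sigma|)^{-\beta}$, which lies in $L^1(\mathbb R)$. The function $\varphi$ is then entire: the integral converges locally uniformly in $z$ because of the Gaussian factor. Moreover $m=\widehat g$ is bounded and continuous, so $\varphi$ is one of the symbols of Section \ref{ssec91}.

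The key identity comes from completing the square. For $z=s+it$ we have
$$|\varphi(s+it)|\le \int_{\mathbb R}|g(\sigma)|e^{-\frac{\sigma^2}2-\sigma s}\,d\sigma=e^{\frac{s^2}2}\int_{\mathbb R} g(\sigma)e^{-\frac{(\sigma+s)^2}2}\,d\sigma ,$$
and equality holds when $t=0$, since $g>0$. Everything therefore reduces to the two-sided estimate
$$\int_{\mathbb R}(1+|\sigma|)^{-\beta}e^{-\frac{(\sigma+s)^2}2}\,d\sigma\asymp (1+|s|)^{-\beta},\qquad s\in\mathbb R.$$
This is where the only real, though routine, work lies. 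For the upper bound, split the integral into the region $|\sigma+s|\le |s|/2$, where $1+|\sigma|\ge 1+|s|/2$, and its complement, where the Gaussian factor is at most $e^{-s^2/8}$ and $g\le 1$. For the lower bound, restrict to $|\sigma+s|\le 1$, where $1+|\sigma|\le 2+|s|$.

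With this estimate in hand the three required properties follow.

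\emph{Assertion (vi) holds.} The upper bound gives $|\varphi(s+it)|\le Ce^{\frac{s^2}2}(1+|s|)^{-\beta}$ for all $s,t$, with $\beta>2=2n$.

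\emph{Assertion (v) fails.} Setting $t=0$, the lower bound gives $\varphi(s)\ge c\,e^{\frac{s^2}2}(1+|s|)^{-\beta}$. For every $\epsilon\in(0,\frac12)$ this quantity divided by $e^{(\frac12-\epsilon)s^2}$ equals $c\,e^{\epsilon s^2}(1+|s|)^{-\beta}$, which is unbounded as $s\to\infty$. So no pair $(C,\epsilon)$ works.

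\emph{$\varphi\in H^2(\mathbb C,d\mu)$.} Using (vi),
$$|\varphi(s+it)|^2e^{-(s^2+t^2)}\le C^2e^{-t^2}(1+|s|)^{-2\beta},$$
and the right-hand side is integrable over $\mathbb R^2$ because $2\beta>1$.

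I expect no step to be a genuine obstacle. The only points needing care are the elementary convolution estimate above and the justification of the bound on $|\varphi|$, which needs only the absolute integrability of the defining integral.
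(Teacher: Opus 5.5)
Your proof is correct, but it takes a different route from the paper's. The paper uses the closed-form function $\varphi(z)=e^{z^2/2}\left(\frac{\sin z}{z}\right)^\beta$ with an integer $\beta>2$. It checks (vi) and membership in $H^2(\mathbb C,d\mu)$ by hand: it uses $|\sin(s+it)|\le e^{|t|}$ and lets the factor $e^{-t^2/2}$ coming from $|e^{z^2/2}|$ absorb $e^{\beta|t|}$. It refutes (v) only along the sequence $s=(2k+1)\pi/2$, where $|\sin s|=1$.

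You instead define $\varphi$ as the Gaussian--Laplace transform \eqref{eq91} of the positive function $g(\sigma)=(1+|\sigma|)^{-\beta}$. Completing the square makes your bound on $|\varphi(s+it)|$ independent of $t$. Everything then reduces to the single two-sided estimate $\int_{\mathbb R}(1+|\sigma|)^{-\beta}e^{-(\sigma+s)^2/2}\,d\sigma\asymp(1+|s|)^{-\beta}$. Positivity of $g$ turns the lower bound into the failure of (v) on the whole real axis, not just on a sequence.

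What your route buys:
\begin{itemize}
\item It works for every real $\beta>2$. The paper needs an integer power so that $(\sin z/z)^\beta$ is entire.
\item It places $\varphi$ directly in the class $m=\widehat g$, $g\in L^1$, of Subsection~\ref{ssec91}. So boundedness of $S_\varphi$ and weak localization (Theorem~\ref{thm92}) are immediate.
\item It fits naturally beside Lemma~\ref{lem93}. There, compactly supported $g$ gives $\mathcal{SL}$; here, polynomial decay of order $\beta$ in $g$ gives exactly the XZ-type bound with exponent $\beta$.
\end{itemize}
The paper's route is fully explicit and needs no convolution estimate.

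One small slip in your upper bound. On the unbounded region $|\sigma+s|>|s|/2$, the pointwise bounds $e^{-s^2/8}$ and $g\le 1$ alone do not control the integral. Use $g\in L^1$ instead (you have $\beta>1$), which bounds that piece by $e^{-s^2/8}\|g\|_{L^1}\le C(1+|s|)^{-\beta}$.
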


\begin{proof}
Let $\beta$ be an integer greater than $2.$ We consider the entire function $\varphi (z)=e^{\frac {z^2}2}\left [\frac {\sin z}z \right ]^\beta.$

We first show that $\varphi \in H^2 (\mathbb C, d\mu).$ We have
$$\int_{\mathbb C} |\varphi (z)|^2d\mu (z)=I_1+I_2,$$
where $I_1:=\int_{s^2+t^2\leq 1} |\varphi (s+it)|^2 e^{-s^2-t^2}dsdt$ and $I_2:=\int_{s^2+t^2 >1} |\varphi (s+it)|^2 e^{-s^2-t^2}dsdt.$ We must prove that $I_1<\infty$ and $I_2<\infty.$

For $I_1,$ since there exists a positive constant $C$ such that $\left \vert \frac {\sin (s+it)}{s+it}\right \vert\leq C$ whenever $s^2+t^2\leq 1,$ we obtain:
$$I_1\leq C^{2\beta}\int_{s^2+t^2\leq 1} e^{s^2-t^2}e^{-s^2-t^2}dsdt\leq C^{2\beta}\int_{s^2+t^2\leq 1} ds dt<\infty.$$ 
For $I_2,$ we have $|\sin (s+it)|=\frac 12\left \vert e^{i(s+it)}-e^{-i(s+it)}\right \vert \leq \frac 12 (e^t+e^{-t})\leq e^{|t|};$ Since
$$I_2=\int_{s^2+t^2 >1} e^{s^2-t^2}\left \vert \frac {\sin (s+it)}{s+it}\right \vert^{2\beta} e^{-s^2-t^2}dsdt,$$
we get:
$$I_2\leq \int_{s^2+t^2 >1} \frac {e^{-2t^2+2\beta |t|}}{(s^2+t^2)^\beta}dsdt.$$
Next, it is easy to check that  there exists a positive constant $C=C(\beta)$ such that for all $t\in \mathbb R,$ we have $e^{-2t^2+2\beta |t|}\leq C.$ So
$$I_2\leq C\int_{s^2+t^2 >1} \frac 1{(s^2+t^2)^\beta}dsdt<\infty$$
because $\beta>2.$

We next prove the assertion {\rm (vi)}. On the one hand, we have:
\begin{eqnarray*}
\left \vert \varphi (s+it)\right \vert&=e^{\frac {s^2-t^2}2}\left \vert \frac {\sin (s+it)}{s+it}\right \vert^\beta\\
&\leq Ce^{\frac {s^2}2}\leq \frac {C'e^{\frac {s^2}2}}{(1+|s|)^\beta}
\end{eqnarray*}
whenever $s^2+t^2\leq 1.$ On the other hand, we have the implication:
$$s^2+t^2 >1 \Rightarrow |s|> \frac 1{\sqrt 2} \hskip 2truemm {\rm or} \left (|s|\leq \frac 1{\sqrt 2} \hskip 2truemm {\rm and} \hskip 2truemm |t|> \frac 1{\sqrt 2} \right ).$$
When $|s|> \frac 1{\sqrt 2},$ we obtain:
$$\left \vert \varphi (s+it)\right \vert \leq e^{\frac {s^2}2}\frac {e^{-\frac {t^2}2+\beta |t|}}{|s|^\beta}\leq C_\beta \frac {e^{\frac {s^2}2}}{(1+|s|)^\beta}.$$
When $|s|\leq \frac 1{\sqrt 2} \hskip 2truemm {\rm and} \hskip 2truemm |t|> \frac 1{\sqrt 2},$ we get:
$$\left \vert \varphi (s+it)\right \vert \leq c_\beta e^{\frac {s^2}2}\frac {e^{-\frac {t^2}2+\beta |t|}}{(1+|s|)^\beta}\leq C_\beta \frac {e^{\frac {s^2}2}}{(1+|s|)^\beta}.$$

We finally show that $\varphi$ does not satisfy the assertion {\rm (v)}. We take $t=0$ and $s=\frac {(2n+1)\pi}2 \hskip 2truemm (n\in \mathbb N);$ then:
$$|\varphi (s)|=e^{\frac {s^2}2}\left \vert \frac {\sin (s)}{s}\right \vert^\beta =\frac {e^{\frac {s^2}2}}{|s|^\beta}.$$
If the assertion {\rm (v)} were true, there would exist two positive constants $C$ and $\epsilon$ such that
$$\frac {e^{\frac {s^2}2}}{|s|^\beta}\leq Ce^{\left (\frac 12-\epsilon \right )s^2}$$
for all $s=\frac {(2n+1)\pi}2 \hskip 2truemm (n\in \mathbb N);$ this amounts to the estimate:
$$e^{\epsilon s^2}\leq C|s|^\beta.$$
Letting $n$ tend to $\infty,$ we conclude that this estimate is false. 
\end{proof}

We deduce the following corollary.

\begin{cor}\label{cor98}
The following inclusion is strict:
$$\mathcal {SL}\subsetneq {\rm XZ-}\mathcal {SL}.$$

\end{cor}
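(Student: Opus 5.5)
The plan is to take $n=1$ and the operator $S_\varphi$ with symbol $\varphi(z)=e^{\frac{z^2}2}\left[\frac{\sin z}z\right]^\beta$ from Theorem \ref{thm97}, with $\beta$ an integer greater than $2$. By Proposition \ref{pro310} we already have $\mathcal{SL}\subset{\rm XZ-}\mathcal{SL}$, so it remains to show that $S_\varphi$ belongs to ${\rm XZ-}\mathcal{SL}$ but not to $\mathcal{SL}$. The reduction in Subsection \ref{ssec92} converts conditions (i) and (ii) of Corollary \ref{cor84} into (vi) and (v) respectively. Theorem \ref{thm97} states that $\varphi$ satisfies (vi) and fails (v). Hence condition (ii) of Corollary \ref{cor84} holds and condition (iii) fails, which gives $S_\varphi\in{\rm XZ-}\mathcal{SL}\setminus\mathcal{SL}$.

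Two points need to be checked first.

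\emph{Step 1: $S_\varphi$ is admissible, and in fact bounded.} Corollary \ref{cor84} is stated for symbols satisfying \eqref{eq82}, that is, for bounded $S_\varphi$. I would not go through the multiplier $m$ at all. Admissibility of $S_\varphi$ for $\varphi\in H^2$ is noted right after \eqref{eq81}, since $(S_\varphi)^\star=S_\psi$ with $\psi(z)=\overline{\varphi(-\bar z)}$. Moreover, the identity \eqref{eq83} only uses the reproducing formula, so it holds for any admissible $S_\varphi$. Therefore Definitions \ref{def23} and \ref{def24}, together with Corollary \ref{cor311}, apply directly to the quantity $e^{-\frac{|z-w|^2}2}|\varphi(w-\bar z)|$. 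As a side remark, membership in ${\rm XZ-}\mathcal{SL}$ implies boundedness via Proposition \ref{pro315} and Theorem \ref{thm34}.

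\emph{Step 2: rigorous equivalence of the conditions.} The passage (i)$\Leftrightarrow$(iii)$\Leftrightarrow$(vi) and (ii)$\Leftrightarrow$(v) is only sketched in the paper, so I would verify the two directions actually used.

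For (vi)$\Rightarrow$(i), set $s+it=w-\bar z$, $y=\Im z$, and $w-z=s+i(t-2y)$. Write $R^2=s^2+(t-2y)^2\ge s^2$. The bound in (vi) then gives
\[
e^{-\frac{R^2}2}|\varphi(s+it)|\le C\,\frac{e^{-\frac{(t-2y)^2}{2}}}{(1+|s|)^\beta}.
\]
Next I would use $(1+R)^\beta\lesssim(1+|s|)^\beta(1+|t-2y|)^\beta$ and $\sup_u(1+|u|)^\beta e^{-u^2/2}<\infty$. Together these yield $\lesssim(1+R)^{-\beta}$, which is (i) with $\beta>2=2n$. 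This argument replaces Lemma \ref{lem96}.

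For the failure of (ii), I would choose $z$ real ($y=0$) and $w=s$ real, with $s=\frac{(2k+1)\pi}2$. Then $|z-w|$ can be taken equal to $s$ (for example $z=0$, $w=s$), and the quantity becomes $e^{-\frac{s^2}2}|\varphi(s)|=s^{-\beta}$. This is not bounded by $Ce^{-\epsilon s^2}$, so (ii) fails. By Corollary \ref{cor311}, $S_\varphi\notin\mathcal{SL}$.

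The main obstacle is the bookkeeping in Step 1: making sure that Corollary \ref{cor84}, or more precisely its ingredients \eqref{eq83} and Corollary \ref{cor311}, is legitimately applied to $S_\varphi$ without first knowing that $\varphi$ has the form \eqref{eq82}. I would handle this by arguing directly from admissibility as described above, and then invoke Proposition \ref{pro310} to conclude that the inclusion $\mathcal{SL}\subsetneq{\rm XZ-}\mathcal{SL}$ is strict.
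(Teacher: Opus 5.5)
Your proposal is correct and is essentially the paper's argument. The paper deduces the corollary from Theorem \ref{thm97} via the reformulation in Subsection \ref{ssec92}; in place of Lemma \ref{lem96}, your factorization $(1+R)^\beta\le(1+|s|)^\beta(1+|t-2y|)^\beta$ (for (vi)$\Rightarrow$(i)) and your choice $z=0$, $w=\frac{(2k+1)\pi}{2}$ (for the failure of (ii)) make explicit the two implications that are actually needed.

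One small caution on Step 1: admissibility is not automatic for every $\varphi\in H^2(\mathbb C,d\mu)$, because $S_\varphi k_z\in H^2(\mathbb C,d\mu)$ can fail. For this particular $\varphi$, justify it directly from \eqref{eq83} and (i): $\|S_\varphi k_z\|^2=\pi^{-1}\int_{\mathbb C}|\langle S_\varphi k_z,k_w\rangle|^2dV(w)\le C\int_{\mathbb C}(1+|z-w|)^{-2\beta}dV(w)<\infty$, and the same argument works for $S_\psi$ with $\psi(z)=\overline{\varphi(-\bar z)}$.
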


\section{Final questions}\label{sec10}
At the end of this article, we list the following questions we were not able to solve:
\begin{enumerate}
\item
Are the Toeplitz operators studied in  Theorem \ref{thm514}
strongly localized? This is the case for the analogous operators on the Bergman space on the unit disc \cite{Z22}.
\item
Is the operator $T_\gamma$ defined in Subsection \ref{ssec62} 
weakly localized?
\end{enumerate}

\section*{Acknowledgement}
The authors wish to thank Aline Bonami for the simplification of the proof of Proposition \ref{pro61}
 and for suggesting the proof of Theorem \ref{thm94}.

\end{document}